\tikzset{node distance=2cm, auto}
 \numberwithin{equation}{section} 
\newtheoremstyle{quest}{\topsep}{\topsep}{}{}{\bfseries}{}{ }{\thmname{#1}\thmnote{ #3}.}
\theoremstyle{quest}
\theoremstyle{plain}
\theoremstyle{definition}
\newtheorem{theorem}{Theorem}[section]
\newtheorem{corollary}[theorem]{Corollary}
\newtheorem{proposition}[theorem]{Proposition}
\newtheorem{lemma}[theorem]{Lemma}
\newtheorem{hypothesis}{Hypothesis}
\newtheorem{definition}[theorem]{Definition}
\newtheorem{remark}[theorem]{Remark}
\definecolor{dkgreen}{rgb}{0,0.6,0}
\definecolor{gray}{rgb}{0.5,0.5,0.5}
\definecolor{mauve}{rgb}{0.58,0,0.82}
\tiny\color{gray},  
\title{\vspace{-50pt} Mixing in Reaction-Diffusion Systems: \\ Large Phase Offsets}
\author{ \Large Sameer Iyer \footnote{\url{sameer_iyer@brown.edu}. Division of Applied Mathematics, Brown University, 182 George Street, Providence, RI 02912, USA. Supported by NSF grants DMS 1611695 and DMS 1209437. } \hspace{10 mm} Bjorn Sandstede \footnote{\url{bjorn_sandstede@brown.edu}. Division of Applied Mathematics, Brown University, 182 George Street, Providence, RI 02912, USA. Partially supported by NSF grant DMS 1408742.}}
\date{October 20, 2016}
\DeclareMathOperator{\supp}{\text{supp}}
\newcommand{\p}{\ensuremath{\partial}}
\newcommand{\n}{\ensuremath{\nonumber}}
\newcommand{\ud}{\,\mathrm{d}}
\def\@Aboxed#1&#2\ENDDNE{%
  \settowidth\@tempdima{$\displaystyle#1{}$}%
  \addtolength\@tempdima{\fboxsep}%
  \addtolength\@tempdima{\fboxrule}%
  \global\@tempdima=\@tempdima
  \kern\@tempdima
  &
  \kern-\@tempdima
  \fcolorbox{red}{yellow}{$\displaystyle #1#2$}
}
\begin{document}
\maketitle
\vspace{-30pt}

\begin{abstract}
We consider Reaction-Diffusion systems on $\mathbb{R}$, and prove diffusive mixing of asymptotic states $u_0(kx - \phi_{\pm}, k)$, where $u_0$ is a periodic wave. Our analysis is the first to treat arbitrarily large phase-offsets $\phi_d = \phi_{+}- \phi_{-}$, so long as this offset proceeds in a sufficiently regular manner. The offset $\phi_d$ completely determines the size of the asymptotic profiles, placing our analysis in the large data setting. In addition, the present result is a global stability result, in the sense that the class of initial data considered are not near the asymptotic profile in any sense. We prove global existence, decay, and asymptotic self-similarity of the associated wavenumber equation. We develop a functional framework to handle the linearized operator around large Burgers profiles via the exact integrability of the underlying Burgers flow. This framework enables us to prove a crucial, new mean-zero coercivity estimate, which we then combine with a nonlinear energy method.  
\end{abstract}

\section{Introduction}

We consider Reaction-Diffusion systems posed on the spatially extended domain, $\mathbb{R}$: 
\begin{equation} \label{eqn.rxn.diff.}
\partial_tu - D \partial_{xx} u = f(u).
\end{equation}

Here, $t \ge 1$, $x \in \mathbb{R}$, $u: \mathbb{R} \rightarrow \mathbb{R}^d$, and $D \in \mathbb{R}^{d \times d}$ is a symmetric, positive definite reaction matrix, and $f$ is a smooth nonlinearity. We will consider situations when such systems possess periodic, traveling wave solutions of the form $u_0(kx-\omega t; k)$ for wavenumbers $k$ in a specific range, $(k_1, k_2)$. Here $\omega = \omega(k)$ is called the nonlinear dispersion relation. We are interested in the stability of these periodic waves under the following perturbation: consider an initial condition of the form
\begin{equation} \label{initial.data}
u(1,x) = u_0(k x + \phi_0(x); k) + v_0(x), \hspace{3 mm} \phi_0(x) \rightarrow \phi_{\pm} \text{ as } x \rightarrow \pm \infty. 
\end{equation}

The function $\phi_0(x)$ represents initial phase off-set, which has magnitude: 
\begin{equation} \label{defn.phi.d}
\phi_d = \phi_+ - \phi_-. 
\end{equation}

The wavenumber is defined to be the derivative of the phase, and so (\ref{defn.phi.d}) implies that the wavenumber is given a localized $L^1$ initial perturbation, with $L^1$ norm controlling $\phi_d$. Indeed, in \cite{Sandstede}, solutions with initial data of the type (\ref{initial.data}) are proven to mix diffusively, supposing that $\phi_d$ is sufficiently small. Similar hypotheses were used in \cite{Zumbrun1}, \cite{Zumbrun2}, \cite{Zumbrun3} (all requiring the phase-offset to be small initially) to establish nonlinear stability of the wave-trains in question. In the present paper, we establish the diffusive mixing of wavenumber perturbations while allowing the initial phase off-set, $\phi_d$, to be arbitrarily large, so long as the phase variation from $\phi_-$ at $x = -\infty$ to $\phi_+$ at $x = \infty$ occurs in a sufficiently regular manner (see (\ref{as.small}) - (\ref{shape.condition}) for the precise requirements we impose). We emphasize that the size of the phase offset, (\ref{defn.phi.d}), determines the size of the asymptotic phase profile that we will show convergence to, and therefore \textit{this is a large-data asymptotics problem.} 

To state our theorem, we define the space $L^1_\xi(1)$, which is the natural counterpart to the spatial $\mathcal{C}^1$ norm in frequency variables, $\xi$, (where the majority of our analysis takes place). Define the norms: 
\begin{equation}
||\hat{g}||_{L^1_\xi(1)} := ||\hat{g}(\xi)(1+|\xi|)||_{L^1}, \hspace{3 mm} ||g||_{H^2(2)} := ||(1+z^2) g(z)||_{H^2} = ||(1 + |\xi|^2) \hat{g}(\xi)||_{H^2}. 
\end{equation}

In order to state our theorem, we must also introduce the following explicit profiles: The function $e_\ast$ will denote the Gaussian error function: 
\begin{equation}
e_\ast(z) = \frac{1}{\sqrt{4\pi}} \int_{-\infty}^z e^{-\frac{\theta^2}{4}} d\theta. 
\end{equation}

We define the following self-similar Burgers profile:
\begin{equation} \label{fA}
f_A^\ast(z) := \frac{Ae'_\ast(z)}{1 + Ae_\ast(z)}, \hspace{3 mm} \log(1+A) = \phi_d. 
\end{equation}

For our analysis, it is important to understand the size of $f_A^\ast$ and variants thereof (defined by $\bar{b}$ in (\ref{barb.1})) in various norms. These estimates are given in subsection \ref{explicit}, see specifically (\ref{fAlarge.1}), (\ref{fAlarge}), and (\ref{barb.small.ck}).

\subsection{Main Results}

We will seek $2\pi$-periodic traveling wave solutions, $u(t,x) = u_0(k_0 x - \omega_0 t)$, to the system (\ref{eqn.rxn.diff.}). Denoting the characteristic coordinates by $\bar{\theta} = k_0 x - \omega_0 t$, this is equivalent to solutions, $u = u_0(\bar{\theta})$, to the following problem: 
\begin{align} \label{WT.1}
k^2 D\partial_{\bar{\theta}}^2 u + \omega \partial_{\bar{\theta}}u + f(u) = 0, \text{ for } k = k_0, \omega = \omega_0. 
\end{align}

The linear operator $\mathcal{L}_0$ is obtained upon linearizing the above equation around $u_0$: 
\begin{align} \label{WT.3}
\mathcal{L}_0 = \mathcal{L}(k_0) = k_0^2 D\partial_{\bar{\theta}}^2 + \omega_0 \partial_{\bar{\theta}} + f'(u_0(\theta)). 
\end{align}

$\mathcal{L}_0$ is closed and densely defined on $L^2_{per}(0, 2\pi)$, with domain $D(\mathcal{L}_0) = H^2_{per}(0, 2\pi)$. We will assume $0$ is a simple eigenvalue of $\mathcal{L}_0$ on $L^2_{per}(0,2\pi)$. This means that the the null space is spanned by $\partial_{\bar{\theta}} u_0$, which can be seen by a straightforward application of $\partial_{\bar{\theta}}$ to the system (\ref{WT.1}). For each fixed $\xi \in \mathbb{R}$, we can seek solutions to the linearized problem $\partial_t v = \mathcal{L}_0 v$ in the form: 
\begin{align}
u(t, \bar{\theta}) = e^{\lambda(\xi) t + i \xi \theta/ k_0} \tilde{v}(\theta, \xi). 
\end{align}

Inserting such a solution into (\ref{WT.3}) gives the eigenvalue problem: 
\begin{equation}
\tilde{L}(\xi) \tilde{v} = \lambda(\xi) \tilde{v}, 
\end{equation} 

where:
\begin{align}\nonumber
\tilde{\mathcal{L}}(\xi)\tilde{v} &= e^{-i\xi \bar{\theta}/k_0} \mathcal{L} \Big( e^{i\xi \bar{\theta}/k_0} \tilde{v}(\bar{\theta}, \xi) \Big) \\ \label{WT.2}
& = k_0^2 D  (\partial_{\bar{\theta}}+ i\xi/k_0)^2 \tilde{v} + \omega (\partial_{\bar{\theta}}+ i\xi/k_0) \tilde{v} + f'(u_0(\bar{\theta})) \tilde{v}
\end{align}

For each $\xi$, the operator $\tilde{L}(\xi)$ has discrete spectrum on $L^2_{per}(0, 2\pi)$ because the resolvent is compact. Hence, the eigenvalues of $\tilde{L}(\xi)$ are then labeled as $\lambda_j(\xi)$, and are assumed to be ordered such that $Re \lambda_{j+1}(0) \le Re \lambda_j(0)$. We will also assume that $\lambda_1(0)$ is the right-most element in the spectrum for $\xi = 0$. With the appropriate notation now fixed, let us state the hypothesis we make: 

\begin{hypothesis} \label{Hyp.1} Equation (\ref{eqn.rxn.diff.}) admits spectrally stable wave-train solutions, $u(t,x) = u_0(\bar{\theta})$, with $\bar{\theta} = k_0x - \omega_0 t$ for appropriate $k_0 \neq 0, \omega_0 \in \mathbb{R}$. $u_0$ is $2\pi$ periodic. This means the following: the linearized operator of (\ref{eqn.rxn.diff.}) around $u_0$ has a simple eigenvalue at $\lambda = 0$. The linear dispersion relation, denoted $\lambda(\xi)$, satisfies $\lambda(0) = 0$, is dissipative, so $\lambda''(0) < 0$, and there exist constants $\sigma_0, \xi_0, \alpha_0 > 0$ such that $Re \lambda(\xi) < -\sigma_0$ for $|\xi| > \xi_0$, and for the region $|\xi| \le \xi_0$, we have $Re \lambda(\xi) < -\alpha_0 \xi^2$. All other eigenvalues $\lambda_j(\xi)$, satisfy $Re \lambda_j(\xi) \le - \sigma_0$ for all $\xi \in [-\frac{k}{2}, \frac{k}{2}]$. 
\end{hypothesis}

Fix now one wave-train satisfying Hypothesis \ref{Hyp.1}, and define the following parameters: 
\begin{align} \n
&\omega_0 = \omega(k_0), \hspace{2 mm} c_p = \frac{\omega_0}{ k_0}, \hspace{2 mm} c_g = \omega'(k_0), \hspace{2 mm} \alpha = -\frac{\lambda''(0)}{2}, \\
& \hspace{15 mm} \beta = -\frac{1}{2}\omega''(k_0), \hspace{3 mm} \bar{\theta} = k_0 x - \omega_0 (t-1). 
\end{align}

For the linear dispersion relation, we then know that: 
\begin{align}
\lambda(\xi) = i(c_p - c_g)\xi - \alpha \xi^2 + o(\xi^3). 
\end{align}

Having fixed $k_0$, we will immediately take the convention that we call this parameter $k$ from now on. In order to state our main result, we must introduce the Bloch transform:  
\begin{align} \label{Bloch.1}
\mathcal{J}(f) := \tilde{f}(\xi, \nu) := \sum_{j \in \mathbb{Z}} e^{ij\nu} \hat{f}(\xi + jk), \hspace{5 mm} \xi \in [-\frac{k}{2}, \frac{k}{2}] \subset \mathbb{R}, \hspace{5 mm} \nu \in [0, 2\pi], 
\end{align}

Relevant properties of the Bloch transform will be discussed Section \ref{Sec.Mod}. Our main result is:
\begin{theorem} \label{thm.main} Fix any $0 < \phi_d < \infty$, and pick any $\rho_\ast > 0$, possibly large. Let $u_0(\cdot; k)$ be a spectrally stable wave-train with the dispersion relation $\omega = \omega(k)$ such that $\omega''(k) = \beta \neq 0$. Consider initial data in the form specified in (\ref{initial.data}), satisfying:  
\begin{equation} \label{given.size}
||\partial_x \phi_0||_{H^2(2)} + ||v_0||_{H^2(2)} + \phi_d  < \infty. \footnote{By the weighted embedding $H^2(2) \hookrightarrow L^1$, it is clear that $\phi_d \le ||\partial_x \phi_0||_{H^2(2)}$, so it is impossible to make the $H^2(2)$ norm of $\partial_x \phi(x)$ small while retaining large phase-offsets $\phi_d$.}
\end{equation}

Then there exists $\delta > 0$, sufficiently small, depending only on $\phi_d$ and not on $\rho_\ast$ such that\footnote{Note that by estimate (\ref{barb.small.ck}), conditions (\ref{as.small}) - (\ref{shape.condition}) define a large class of data. (\ref{shape.condition}) enforces that the initial datum $\p_x \phi_0$ must be comparable (in a weak sense) to $\frac{1}{\sqrt{T_0}}f_A^\ast(\frac{\cdot}{\sqrt{T_0}})$, which in particular implies that $\p_x \phi_0$ cannot be taken close to $f_A^\ast$ itself in any norm. Heuristically, $\frac{1}{\sqrt{T_0}}f_A^\ast(\frac{\cdot}{\sqrt{T_0}})$ defines a stable manifold of algebraic decay, to which we will relate the evolution of $\p_x \phi$.}:
\begin{align} \label{as.small}
&||\mathcal{F}\{\partial_x \phi_0 \}||_{L^1_\xi(1)} + ||(\partial_\nu + i\xi)^j \tilde{v}_0||_{L^2([-\frac{k}{2}, \frac{k}{2}] \times \mathbb{T} )} \le \delta, \text{ for $j = 0,1$ and if } \\ \label{shape.condition}
&|| \frac{1}{\sqrt{T_0}} f_A^\ast\Big( \frac{\cdot}{\sqrt{T_0}} \Big)  -\partial_x \phi_0(\cdot)||_{H^2(2)} \le \rho_\ast, \text{ for } T_0 = \Big(\frac{\phi_d}{\delta}\Big)^2,
\end{align}

the solution to (\ref{eqn.rxn.diff.}) exists globally, and can be written as the modulated wave-train: 
\begin{align}
u(t,x) = u_0(kx-\omega (t-1) + \phi_A^\ast(\frac{x-c_g(t-1))}{\sqrt{T_0}};k) + v(t,x), 
\end{align}

where:
\begin{align} \label{pA}
\phi_A^\ast(z) = \frac{\alpha}{\beta} \log \{1 + A e_\ast(\frac{x}{\sqrt{\alpha t}}) \}, \hspace{3 mm} \log\{1 + A \} = \phi_d,
\end{align}

and 
\begin{equation}
||v(t,\cdot)||_{L^\infty} \le C(\phi_d, \sigma) t^{-\frac{1}{2}+\sigma}, \text{ for any } \sigma > 0. 
\end{equation}

\end{theorem}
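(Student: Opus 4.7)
The strategy decomposes into three main stages followed by a closure argument.

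\emph{Stage 1: modulation, Bloch decomposition, and reduction to a Burgers system.} I would begin by writing
\begin{align*}
u(t,x) = u_0\bigl(kx - \omega(t-1) + \phi(t,x);\, k\bigr) + v(t,x),
\end{align*}
insert into (\ref{eqn.rxn.diff.}), and Bloch-decompose via (\ref{Bloch.1}) into a scalar PDE for $\phi$, obtained by projecting onto the $\xi=0$ kernel direction of $\tilde{\mathcal{L}}$ (spanned by $\partial_{\bar\theta} u_0$), coupled to a transverse equation for $v$. Setting $b := \partial_x \phi$ and using $\lambda(\xi) = i(c_p - c_g)\xi - \alpha \xi^2 + O(\xi^3)$ from Hypothesis \ref{Hyp.1}, the leading-order $b$-equation is a viscous Burgers equation
\begin{align*}
\partial_t b + c_g\, \partial_x b - \alpha\, \partial_x^2 b + \beta\, \partial_x(b^2) = \mathcal{N}(b,v,\partial\phi),
\end{align*}
while $v$ satisfies a semilinear equation whose symbol inherits the spectral gap $\mathrm{Re}\,\lambda_j \le -\sigma_0$. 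I would then linearize the Burgers part by the Cole-Hopf transform $b = (\alpha/\beta)\partial_x \log \Psi$, under which (\ref{pA}) corresponds to the explicit self-similar solution $\Psi = 1 + A e_\ast(x/\sqrt{\alpha t})$, and decompose the rescaled $b$ in self-similar variables $z = (x - c_g(t-1))/\sqrt{t+T_0}$ as $f_A^\ast(z) + w(t,z)$. The choice $T_0 = (\phi_d/\delta)^2$ in (\ref{shape.condition}) ensures that $w(1,\cdot)$ has $H^2(2)$-norm at most $\rho_\ast$, while the rescaling absorbs the largeness of $\phi_d$.

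\emph{Stage 2: mean-zero coercivity around large profiles.} The technical heart of the argument is a coercivity estimate, uniform in $A$, for the self-similar linearization of Burgers around $f_A^\ast$, in a suitable weighted Sobolev space. Because $\phi_d$ is conserved along the flow, this linearization has a one-dimensional kernel along $\partial_A f_A^\ast$, so coercivity can hold only on the mean-zero complement, which I would enforce dynamically by modulating $A = A(t)$. The plan is to conjugate through Cole-Hopf: in the $\Psi$-variable the operator becomes a similarity transform of an Ornstein-Uhlenbeck type generator twisted by the weight $1 + A e_\ast$, for which an $A$-uniform Poincar\'e-type inequality can be proved directly using the explicit Gaussian structure. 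This is the mechanism that unlocks arbitrarily large $\phi_d$.

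\emph{Stage 3: nonlinear closure and bootstrap.} With Stage 2 in hand, I would run a coupled energy method on $(w,v)$: the mean-zero part of $w$ decays by the coercivity estimate, the mean is eliminated by the modulation of $A$, and $v$ decays at the Gaussian rate $t^{-1/2}$ via the Bloch semigroup of $\tilde{\mathcal{L}}(\xi)$ on the space defined in (\ref{as.small}). Nonlinear and cross-terms are quadratic in the small quantities $(w,v)$, with coefficients controlled via the explicit estimates (\ref{fAlarge.1})--(\ref{barb.small.ck}) on $f_A^\ast$; the $\sigma$ loss in the $L^\infty$ decay of $v$ reflects the standard Sobolev/Bloch imbedding. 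A continuity/bootstrap argument in $\delta$ then closes global existence and delivers the stated asymptotics.

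\emph{Main obstacle.} The principal difficulty is Stage 2. For small $\phi_d$ the linearization sits near the trivial profile and coercivity is a standard Fokker-Planck matter; but for large $\phi_d$ the coefficient $f_A^\ast$ is order one in $L^\infty$, so the drift term $\beta\partial_z(f_A^\ast w)$ cannot be treated perturbatively. Only the exact Cole-Hopf integrability allows one to identify the single neutral direction $\partial_A f_A^\ast$ and prove a Poincar\'e-type inequality on its complement whose constant does not degenerate as $A \to \infty$, which is precisely what the announced functional framework is built to supply.
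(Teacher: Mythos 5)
Your Stages 1 and 2 are broadly aligned with the paper: the modulation/Bloch reduction to a perturbed Burgers equation for the wavenumber is exactly the starting point, and the key mechanism for large $\phi_d$ is indeed a mean-zero coercivity estimate obtained by conjugating through Cole--Hopf. One difference worth noting: the paper does not modulate $A(t)$ or prove a Poincar\'e inequality for a self-similar Ornstein--Uhlenbeck operator. Instead it fixes $A$ by $\log(1+A)=\phi_d$, observes that the mean of the wavenumber is exactly conserved (every nonlinear term carries a factor of $\xi$), so that the deviation $a=u^c-\bar b$ is mean-zero for all time, and then factors the linearized flow map exactly as $\Phi_b(t-1)=(d\mathcal{N}|_{b(t)})^{-1}\circ e^{\Delta(t-1)}\circ d\mathcal{N}|_{b_0}$ (see (\ref{rep.form})), so that the contraction (\ref{contraction.Sb}) is inherited directly from the heat semigroup on mean-zero data. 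Your spectral-gap route is a plausible alternative for the same step, and since the neutral direction $\partial_A f_A^\ast$ has nonzero mean, restricting to the mean-zero complement serves the same purpose as your modulation of $A$.

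The genuine gap is in Stage 3. The theorem allows $\rho_\ast$ in (\ref{shape.condition}) to be arbitrarily large while $\delta$ depends only on $\phi_d$; consequently your $w(1,\cdot)$ has $H^2(2)$-norm of order $\rho_\ast$ and is \emph{not} small, so the closure ``nonlinear and cross-terms are quadratic in the small quantities $(w,v)$'' fails at $t=1$: the term $w\partial_z w$ cannot be absorbed, and even the coercivity of Stage 2 only gives decay of the linear part, not smallness of the quadratic self-interaction of a large $w$. The paper resolves this with a two-regime structure that your plan omits: an intermediate regime of $N_0$ renormalization steps in which the only smallness used is the $L^1_\xi(1)$-smallness $\delta$ of the data and of the dilated profile $\frac{1}{\sqrt{T_0}}f_A^\ast(\cdot/\sqrt{T_0})$ (estimate (\ref{barb.small.ck})), which persists for $N_0$ iterations because $\delta$ is chosen as $L^{-2N_0}\delta_w$; energy estimates in this regime (Theorem \ref{thm.1} and Corollary \ref{cor.N0}) drive the $H^2(2)$-size of the deviation from $\rho_\ast$ down to a fixed small $\epsilon$, and only then does the asymptotic regime with the Cole--Hopf coercivity take over, once the transient smallness of the Burgers profile has been lost. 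Without this intermediate mechanism (or a substitute), your argument only proves the theorem for small $\rho_\ast$, i.e.\ for data already close to the rescaled Burgers profile, which is a strictly weaker statement.
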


\begin{remark}[Notation] Throughout the analysis, we use $C(\phi_d)$, and $C(\tilde{\phi_d})$ to mean constants which depend poorly on large $\phi_d, \tilde{\phi}_d$, respectively, without renaming these constants between lines. Similarly, we use the notation $\lesssim$ to mean $\le C$, where $C$ is some hard constant independent of the parameters at play. We will depict function spaces in frequency coordinates explicitly, such as $||\cdot||_{L^1_\xi}$. Thus, $||\cdot||_{H^1}$ means the usual, spatial $H^1$ norm. In the case of $H^2(2)$, the Fourier transform is an isomorphism between $H^2(2) \rightarrow H^2_\xi(2)$ via Plancherel, and so there is no ambiguity in the notation $H^2(2)$. Finally, we take our time variable to start at $t = 1$ as opposed to $t = 0$ which notationally allows us to replace $1+t$ by $t$ in many estimates. 
\end{remark}

\subsection{Outline of Proof:}

We briefly recall the main ideas in \cite{Sandstede}. The first step is to linearize the equation (\ref{eqn.rxn.diff.}) around the wave-train, $u_0(\cdot; k)$. The linearized operator possesses essential spectrum up to the imaginary axis. Two equations are subsequently extracted which then govern the dynamics of the perturbation: the first is the equation for the local wavenumber, which contains the spectral modes approaching the imaginary axis, and the second equation only has spectral modes in $\{\text{Re}(z) < \lambda_0 < 0 \}$. As solutions to the second, stable, equation decay exponentially, the first equation for the localized wavenumber governs the dynamics of the perturbation. Note that the local wavenumber is defined to be the derivative of the phase: 
\begin{equation}
u^c \approx \partial_x \phi(t,x),
\end{equation}

and so $||u^c||_{L^1} = \phi_d$. To ease notation, let us also now set the convention for the introduction only that $\theta \approx x$. $u^c$ obeys a Burgers type-equation:
\begin{equation} \label{eqn.crit.0}
u^c_t - \alpha u^c_{xx} = \beta u^c u^c_{x} + \text{ Asymptotically ``Irrelevant" Terms}.
\end{equation}

As we will state below in Lemma \ref{L.suff}, our main result, Theorem \ref{thm.main}, will hold provided we can show that: 
\begin{equation} \label{seek.1}
||\sqrt{t}u^c(t, \sqrt{t}z) - f^\ast_A(z)||_{H^2(2)} \le C(\phi_d, \sigma) t^{-\frac{1}{2}+\sigma} \text{ for some } \sigma > 0. 
\end{equation}

The ``irrelevant" terms in (\ref{eqn.crit.0}) are in the sense of \cite{BKL}, which formally are expected not to contribute to the long time asymptotics of $u^c$. This is seen through the Renormalization Group (RG) iteration scheme, whose purpose is to extract the asymptotic behavior that we seek from (\ref{seek.1}), and whose basic notions we now set. 

\subsection*{The Renormalization Group:}

First, through the selection of an appropriate time-scale, called $L$, which is usually large, one discretizes the desired convergence from (\ref{seek.1}) into: 
\begin{equation} \label{seek.2}
||L^n u^c (L^{2n}, L^n z) - f_A^\ast(z)||_{H^2(2)}  \lesssim L^{-n+\sigma}, \hspace{3 mm} \text{ for any } \sigma > 0. 
\end{equation}

Here $L^{2n} = t$, and $z$ is the self-similar variable $z = \frac{x}{\sqrt{t}}$. Clearly, (\ref{seek.2}) is the discretized version of (\ref{seek.1}). This motivates the consideration of the renormalized solution, as well as the renormalization map, $R_L$:
\begin{equation} \label{RGDI}
u^{n,c}(t,z) := L^n u^c(L^{2n}t, L^nz), \hspace{5 mm} u^{n,c}(1,z) := R_L \{ u^{n-1,c}(L^2, z) \},
\end{equation}

where the map $R_L$ is defined on functions $f: \mathbb{R} \rightarrow \mathbb{R}$ via: 
\begin{equation}
R_L f(z) = Lf(Lz). 
\end{equation}

According to (\ref{seek.2}), the object of study are the sequence of initial datum, $\Big\{u^{n,c}(1,z) \Big\}_{n \ge 0}$. According to (\ref{RGDI}), the way to obtain $u^{n,c}(1,z)$ is iterative: first flow forward the equation governing $u^{n-1,c}$ until time $L^2$, and then apply the renormalization map $R_L$. 

In the case when $||u^c(1, \cdot)||_{H^2(2)} \le \delta$ (which corresponds to $\phi_d \le \delta$ through Sobolev embeddings) the size of the asymptotic fixed point $f_A^\ast$ is also order $\delta$. For small fixed points, the RG iteration introduced in (\cite{BKL}) proves to be an effective mechanism to pick out the anticipated convergence. Indeed, the relevant techniques for doing so in the present context were introduced in \cite{Sandstede}. 

Our main difficulty is that for large phase-offsets, $\phi_d$, the anticipated limit point of the RG iteration is large in the function space $H^2(2)$. Let us emphasize that the largeness of the limit point, $f_A^\ast$ is \textit{intrinsic, and is essentially independent of the functional framework with which we choose to work.} This can be seen easily via the representation (\ref{fA}), and the calculations (\ref{fAlarge.1}).  

\subsection*{Intermediate Regime, Energy Estimates:}

Our approach is as follows: the initial wavenumber offset ($\partial_x \phi$ in (\ref{given.size})) is large in $H^2(2)$, but small in $L^1_\xi(1)$. We find an exact Burgers solution, $\bar{b}$, which is similarly small in $L^1_\xi(1)$, large in $H^2(2)$, and which converges asymptotically to the anticipated fixed point $f_A^\ast$. That is: 
\begin{equation}
||u^{n,c}(1,\cdot) - f^\ast_A(\cdot)||_{H^2(2)} \le ||u^{n,c}(1,\cdot) - \bar{b}^{(n)}(1,\cdot)||_{H^2(2)} + ||\bar{b}^{(n)}(1, \cdot) - f^\ast_A(\cdot)||_{H^2(2)}. 
\end{equation}

Necessarily, $\bar{b}$, together with the renormalized versions $\bar{b}^{(n)}$, must satisfy $\int_{\mathbb{R}} \bar{b}^{(n)} = \int u^{n,c} = \phi_d$. We then study the comparison, $a^{(n)} := u^{n,c} - \bar{b}^{(n)}$, which has zero mean:
\begin{align}
a^{(n)} := u^{n,c} - \bar{b}^{n}, \hspace{5 mm} \int_{\mathbb{R}} a^{(n)} = 0.
\end{align}

The mean-zero component $a$ solves a linearized Burgers equation: 
\begin{align} \label{alpha.lin.1}
&\Big( \partial_t - \partial_{xx} \Big) a^{(n)} = \bar{b}^{(n)}_x a^{(n)} + \bar{b}^{(n)} a^{(n)}_x + a^{(n)} a^{(n)}_x + \mathcal{O}(L^{-n}), \\
&\text{Initial Datum: }g^{(n)} := a^{(n)}(1,\cdot).
\end{align}

The $L^1_\xi(1)$ smallness of $a$ persists for an ``intermediate" time range after which $u^c, \bar{b}$ grow to size $\phi_d$. In this intermediate time range, we show that the iterates $a^{(n)}$ are driven down to size $\epsilon$: $a^{(n)} \in B_\epsilon(0) \subset H^2(2)$. This is achieved via a nonlinear energy method which effectively captures the interaction between the persisting $L^1_\xi(1)$ smallness, the Burgers type nonlinearities, and the weights required in controlling the $H^2(2)$ Sobolev norm. Importantly, the intermediate time range is propagated long enough to make the higher order, irrelevant terms in (\ref{alpha.lin.1}) become sufficiently irrelevant. 

More specifically, our energy estimates are applied to the quantity $\gamma$, defined as in: 
\begin{equation}
a^{(n)} = \bar{a}^{(n)} + \gamma^{(n)}, \text{ where } \bar{a}^{(n)} = e^{\partial_{xx}(t-1)} g^{(n)}(z).
\end{equation}

Thus, $a^{(n)}$ is separated out into the linear flow, which retains the mean-zero property, and $\gamma$, which represents the local, nonlinear deviation. The estimates that we obtain are summarized:
\begin{align} \label{LCE.1}
&\text{Linear Contractive Estimate: } ||R_L\{\bar{a}^{(n)}(L^2,z) \}||_{H^2(2)} \lesssim \frac{1}{L}||g^{(n)}||_{H^2(2)}, \\ \nonumber
&\text{Nonlinear Deviation Estimate: }||\gamma^{(n)}(L^2, z)||_{H^2(2)} \lesssim J(L) \Big( \delta_w ||g^{(n)}||_{H^2(2)} + L^{-2n}C(\phi_d) \\ \label{LCE.2}
& \hspace{30 mm} + L^{-n(1-p)} \delta_w ||\tilde{g}^{(n,s)}||_{B_{L^n}(2,2,2)} \Big).
\end{align}

Here, $\delta_w$ is a small parameter, and $J(L), C(\phi_d)$ are functions which depend poorly on their arguments, and are independent of $n$. The Bloch norms appearing on the right-hand side above are introduced in (\ref{norm.B}). The estimates (\ref{LCE.1}) - (\ref{LCE.2}) can be iterated finitely many times during this intermediate time range, say $n \in 1,...,N_0$ for some large but finite $N_0 < \infty$, in order to obtain: 
\begin{align} \label{eps.intro}
||g^{N_0}||_{H^2(2)} \le \epsilon, \text{ for some } \epsilon \text{ small.}
\end{align}

\subsection*{Asymptotic Regime, Coercivity Estimates:}

Once $a^{(n)}$ has reached $B_\epsilon(0) \subset H^2(2)$, we lose any smallness of $u^c, \bar{b}$. Our energy method then becomes ineffective, as the linearized terms $\bar{b}_x a + \bar{b} a_x$ in (\ref{alpha.lin.1}) are no longer small, which then may obstruct the required coercivity. That is, the following quantity:
\begin{align} \nonumber
\int \Big( \partial_t - \partial_{xx} \Big) a^{(n)} \cdot a^{(n)} &- \int \Big( \bar{b}^{(n)}_x a^{(n)} + \bar{b}^{(n)} a^{(n)}_x \Big) \cdot a^{(n)}
\end{align} 

cannot be bounded below by:
\begin{align} \label{fail.1}
\frac{\partial_t}{2} \int \Big|a^{(n)}\Big|^2 + \int \Big|a^{(n)}_x\Big|^2 \text{ for } n > N_0. 
\end{align}

To combat this, we develop a functional framework in Section \ref{s.ff} to study the semigroup associated with the linearized operator: 
\begin{equation}
S_{\bar{b}} = \partial_{xx} + \partial_x \Big( \bar{b} \cdot \Big), \text{ and } \Phi_{\bar{b}} \text{ the corresponding flow map.}
\end{equation}

This framework is built on the nonlinear Cole-Hopf map, $\mathcal{N}$, defined in (\ref{defn.CH}), which takes localized $L^1$ solutions to Burgers equation to non-integrable solutions to the heat-equation. Linearizing this map around $\bar{b}$ enables us to study $S_{\bar{b}}$ without requiring any smallness on the Burgers solution $\bar{b}$. The crucial coercivity estimate that we prove (see Proposition \ref{prop.Burg.1}) is: 
\begin{equation} \label{contraction.Sb.intro}
||R_L \Phi_{\bar{b}}(L^2-1) g||_{H^2(2)} \le \frac{C(\phi_d)}{L} ||g||_{H^2(2)} \hspace{3 mm} \text{ if } \hspace{3 mm}\int_{\mathbb{R}} g = 0. 
\end{equation}

This coercivity estimate is then used in place of (\ref{fail.1}) by writing the $a^{(n)}$-endpoint using Duhamel: 
\begin{align}
a^{(n)}(L^2, z) &= \Phi_{\bar{b}^{(n)}}(L^2-1)  g^{(n)}  + \text{ Quadratic and Higher Terms},
\end{align}

Applying $R_L$ to iterate forward then gives: 
\begin{align} \nonumber
||g^{(n+1)}||_{H^2(2)} &= ||R_L \alpha^{(n)}(L^2, z)||_{H^2(2)} \le || R_L \Phi_{\bar{b}^{(n)}}(L^2-1)  g^{(n)}||_{H^2(2)} + \text{ Quadratic} \\ \label{linear.decay}
& \lesssim \frac{1}{L} ||g^{(n)}||_{H^2(2)} + \text{	Quadratic}. 
\end{align}

Once linear decay (contraction) has been recovered above (\ref{linear.decay}), we are able to send $n \rightarrow \infty$ to obtain (\ref{seek.1}) and consequently prove our main theorem. 

\section{Modulation Equations} \label{Sec.Mod}

In this section, we will extract the equations for the phase (the derivative of which corresponds to the wavenumber equation) and the stable component of the perturbation. This procedure has been carried out in \cite{Sandstede}, and does not change for our setup. We will briefly outline the main steps for the purposes of self-containment, but we refer the reader to \cite{Sandstede} for details and proofs. 

Let us introduce the characteristic coordinates $\bar{\theta} = \bar{\theta}(t,x) = kx - \omega t$. For each fixed $t$, one can obtain $x$ from $\bar{\theta}$ and vice-versa. Rewriting the original system, (\ref{eqn.rxn.diff.}) in $(t, \bar{\theta})$ coordinates yields:
\begin{align} \label{eqn.bar}
\partial_t u = k^2 D\partial_{\bar{\theta}}^2 u + \omega \partial_{\bar{\theta}} u + f(u).
\end{align}

It seems natural to seek a solution to the above system, (\ref{eqn.bar}), of the type: 
\begin{align}
u(t,x) &= u_0(\bar{\theta}(t,x) + \phi(t,x); k + \partial_x \phi(t,x)) + v(t,x) \\
& = u_0(\bar{\theta} + \phi(t,\bar{\theta}); k + k\partial_{\bar{\theta}} \phi(t, \bar{\theta}) + v(t,\bar{\theta}). 
\end{align}

However, as explained in \cite{Sandstede}, Remark 3.1, one runs into technical issues when relating such dynamics back to those of the original wave train. To avert this, we introduce a nonlinear change of coordinates: 
\begin{align}
\bar{\theta} = \theta - \phi(\theta, t),
\end{align}

and consider $\theta$ as our new variable. Such a transformation is invertible due to $\partial_\theta \phi(\theta, t)$ being small uniformly in $\theta$ for all $t$, which we shall prove. In $(t,\theta)$ coordinates, we seek an ansatz of the form: 
\begin{align} \label{A.1}
u(t, \bar{\theta}) = u_0(\theta; k(1 + \partial_\theta \phi(t, \theta))) + w(t, \theta). 
\end{align}

Denote by: 
\begin{align}
u^\phi_0 := u_0(\theta; k(1 + \partial_\theta \phi)).
\end{align}

Upon inserting the ansatz (\ref{A.1}) into (\ref{eqn.bar}), we obtain: 
\begin{align} \nonumber
-\frac{\partial_t \phi}{1-\partial_\theta \phi} \partial_\theta u_0^\theta &- k \Big(\partial_\theta^2 \phi \frac{\partial_t \phi}{1-\partial_\theta \phi} - \partial_\theta \partial_t \phi \Big)\partial_k u^\phi_0 + \partial_t w  - \frac{\partial_t \phi}{1 - \partial_\theta \phi} \partial_t w \\ \nonumber
& = k^2D\Big[\Big(\frac{1}{1-\partial_\theta \phi} \partial_\theta + \frac{k \partial_\theta^2 \phi}{1-\partial_\theta \phi} \partial_k \Big)^2u^\phi_0 + \Big(\frac{1}{1-\partial_\theta \phi} \partial_\theta \Big)^2 w \Big] \\ \nonumber
& + \omega \frac{1}{1-\partial_\theta \phi} \Big(\partial_\theta u^\phi_0 + k \partial_\theta^2 \phi \partial_k u^\phi_0 + \partial_\theta w \Big) \\ \label{sys.res}
& - \Big(k^2 D \partial_\theta^2 u_0- \omega \partial_\theta u_0 + f(u_0) \Big) + f(u^\phi_0 + w). 
\end{align}

In order to analyze the resulting system, we need to use the Fourier-Bloch transformation, whose basics we now state. The ultimate goal is to split the relatively complicated system (\ref{sys.res}) into two equations, one for the ``critical" component, whose Fourier support approaches the origin, and one for the ``stable" component, whose Fourier support is bounded away from the origin, and is therefore expected to decay exponentially. Recall the definition given in (\ref{Bloch.1}), from which the following identity follows immediately: 
\begin{align} \label{Bloch.deriv}
\mathcal{J}(\p_\theta w)(\nu, \xi) =  \Big(\p_\nu + i \xi \Big) \tilde{w}(\nu, \xi). 
\end{align}

Let us define the four-parameter Bloch-wave space, $B_{L}(s,k,p)$, via: 
\begin{equation} \label{norm.B}
||\tilde{f}||_{B_{L}(s,k,p)}^2 := \sum_{\alpha \le k, \beta \le p} || \partial_\xi^\alpha \partial_\nu^\beta \tilde{f}\xi^s ||^2_{L^2((-\frac{Lk}{2}, \frac{Lk}{2}), L^2(\mathbb{T}) )}. 
\end{equation}

Clearly, the index, $s$, in the Bloch-norm represents spatial regularity. Note that in this notation, the assumption (\ref{as.small}) translates into $||\tilde{v_0}||_{B_1(1,0,1)} \le \delta$. In what follows for our energy estimates, we switch between spatial and Fourier/ Bloch space via Parseval's identity. First, we recall that $\lambda_1(\xi) \subset \mathbb{C}$ is the analytic curve defining the spectrum of the linearization around our given wavetrain which approaches the imaginary axis, $Re(z) = 0$ (see \cite{Sandstede}, pages 3544 - 3545, for the details of these spectral considerations). We then define the following projection: 
\begin{equation}
Q^c(\xi) := \frac{1}{2\pi i} \int_{\Gamma} \Big( \lambda - \tilde{\mathcal{L}}(\xi) \Big)^{-1} d\lambda,
\end{equation}

where $\Gamma \subset \mathbb{C}$ is a closed curve surrounding $\lambda_1(\xi)$, and $\tilde{\mathcal{L}}$ has been defined in (\ref{WT.2}). Note that it is possible to find a contour $\Gamma$ due to Hypothesis \ref{Hyp.1}.  Next, fix a normalized cutoff function:
\begin{align}
\chi(\xi) = 
\begin{cases} 
       1 \text{ for } |\xi| \le 1, \\
        0 \text{ for } |\xi| \ge 2.
   \end{cases}
\end{align}

Define the following Fourier mode-filters:
\begin{equation} \label{mf}
p_{mf}^c(\xi) =  Q^c(\xi) \chi\Big( \frac{8\xi}{l_1} \Big), \hspace{3 mm} p^s_{mf}(\xi) = 1 - p^c_{mf}(\xi), 
\end{equation}

where $l_1$ is an arbitrary parameter. We will introduce our unknowns of interest: 
\begin{definition} \label{defn.unknowns} The unknowns of interest are defined via:
\begin{align} \label{UNK.n}
\hat{u}^c = \mathcal{J}(\partial_\theta \phi), \hspace{3 mm} \tilde{u}^s = \mathcal{J}(w - S_1 \partial_\theta \phi), 
\end{align}
where $S_1$ is a Fourier multiplier with frequency support: 
\begin{align} \label{S1.prop.1}
\supp S_1 \subset \{|\xi| \in \frac{l_1}{8}, \frac{l_1}{4} \}.
\end{align}
The precise formulation of $S_1$ is given in \cite{Sandstede}, pp. 3558-3559, but the only relevant property for our purpose is (\ref{S1.prop.1}).
\end{definition}

By the arguments given in \cite{Sandstede}, in the case of the critical wavenumber unknown, $u^c$, the Bloch-wave transform coincides with the Fourier-transform, so we simply work with $\hat{u}^c$, whereas for the stable unknown, we must work with the full Bloch-transform, denoted by $\tilde{u}^s$. We will also define the following dispersion relation: 
\begin{equation} \label{disp.}
\lambda(\xi) = \chi\Big(\frac{\xi l_1}{4}\Big)\Big( |\xi|^2 +  o(\xi^3) \Big). 
\end{equation}

With this choice of unknown, we have: 
\begin{lemma} The ansatz $u$ given in (\ref{A.1}) satisfies equation (\ref{eqn.rxn.diff.}) if and only if $\hat{u}^c, \tilde{u}^s$ defined through (\ref{UNK.n}) satisfy the following equations: 
\begin{align} \label{main.eqn.1}
&\Big( \partial_t - \lambda(\xi) \Big) \hat{u}^c = \hat{N}^c(\hat{u}^c, \tilde{u}^s) := p^c_{mf}(\xi) i \beta \xi \Big(\hat{u}^c \ast \hat{u}^c\Big) + \hat{N}(\hat{u}^c, \tilde{u}^s), \\ \label{prove.1}
& \hspace{10 mm} \int_{\mathbb{R}} u^c(1,\cdot) = \phi_d, \\ \label{prove.2}
& \hspace{10 mm} ||\hat{u}^c(1,\xi)||_{L^1_\xi(1)} + ||\tilde{u}^s(1,\nu, \xi)||_{B_1(1,0,1)} \lesssim \delta, \\ \label{prove.3}
& \hspace{10 mm} ||u^c(1,\cdot) - \frac{1}{\sqrt{T_0}} f_A^\ast\Big( \frac{\cdot}{\sqrt{T_0}} \Big)||_{H^2(2)} \lesssim \rho_\ast. 
\end{align}

The equation for $\tilde{u}^s$ is given in Bloch representation:
\begin{align} \label{main.eqn.2}
\partial_t \tilde{u}^{n,s} - \Lambda \tilde{u}^{s} &= \tilde{N}^{(s)}(\hat{u}^c, \tilde{u}^s).
\end{align}

The critical and stable nonlinearities, $N$, and $\tilde{N}^{s}$ are related in the following manner: 
\begin{align} \label{eqn.N.1}
&\hat{N}^c(\hat{u}^c, \tilde{u}^s) = (\partial_\nu + i\frac{\xi}{k}) p_{mf}^c(\xi) \tilde{\mathcal{N}}(\hat{u}^c, \tilde{u}^s), \\
&\tilde{N}^s(\hat{u}^c, \tilde{u}^s) = p_{mf}^s(\xi) \tilde{\mathcal{N}}(\hat{u}^c, \tilde{u}^s), 
\end{align}

where $\tilde{\mathcal{N}}$ is a smooth nonlinear map from $H^{m+2} \times H^{m+2} \rightarrow H^2$. More specifically, the nonlinearity, $\hat{N}$, excludes the Burgers terms from $\hat{N}^c$, and is therefore comprised of three components. These terms are ``irrelevant" to the asymptotics, and are given in detail here: 
\begin{equation}
\hat{N}(\hat{u}^c, \tilde{u}^s) = \hat{N}_1 + \hat{N}_2 + \hat{N}_3,
\end{equation}

where (in Fourier/ Bloch):
\begin{align} \label{nl.1}
\hat{N}_1(\hat{u}^c, \tilde{u}^s) &= p^c_{mf}(\xi) h_1(\xi) \Big( \hat{u^c} \ast \hat{u^c} \Big), \hspace{3 mm} h_1(\xi) = o(\xi^2) \\
\hat{N}_2(\hat{u}^c, \tilde{u}^s) &= p^c_{mf}(\xi) h_2(\xi) \Big( \hat{u^c} \ast (\tilde{u}^{s}_\nu + \tilde{u}^s_{\nu \nu} ) \Big), \hspace{3 mm} h_2(\xi) = o(\xi) \\ \label{nl.4}
\hat{N}_3(\hat{u}^c, \tilde{u}^s) &= p^c_{mf}(\xi) h_3(\xi) p(\hat{u}^c, \tilde{u}^s), \hspace{3 mm} h_3(\xi) = o(\xi). 
\end{align}

$p(\cdot, \cdot)$ above is a polynomial, containing only terms of cubic or higher degree in spatial coordinates, so in Fourier space involves three or more convolutions. The linear operator $\Lambda$ has the property:
\begin{align} \label{stable.spec}
\text{spectrum}(\Lambda) \subset \{ Re(z) < -\sigma_0 \}.
\end{align}

Moreover, $\Lambda$ is second order, and is a relatively bounded perturbation of $\p_{xx}$. Furthermore, the unknowns satisfy the following conditions on their Fourier supports: 
\begin{align}
&\supp ( \hat{u}^c) \subset \{ 0 \le |\xi| \le \frac{l_1}{4} \}, \\
& \supp( \tilde{u}^s) \subset \{ |\xi| \ge \frac{l_1}{8} \} \times \mathbb{T}.
\end{align}

\end{lemma}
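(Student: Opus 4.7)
The strategy is to start from equation (\ref{sys.res}) in $(t,\theta)$ coordinates, which is already an exact rewriting of (\ref{eqn.rxn.diff.}) under the ansatz (\ref{A.1}), and then extract two scalar evolution equations by projecting onto spectral subspaces. Concretely, I would solve (\ref{sys.res}) for $\partial_t w$, isolating the highest-order linear term $k^2 D \partial_\theta^2 w + \omega \partial_\theta w + f'(u_0) w$ on the right, and collecting the remainder as an analytic nonlinear functional $\mathcal N(\partial_\theta \phi, w, \partial_t\phi, \partial_\theta \partial_t\phi, \dots)$. Since $\partial_\theta \phi$ is assumed uniformly small, all rational factors like $(1-\partial_\theta \phi)^{-1}$ are expanded as convergent Neumann series, so $\mathcal N$ becomes a convergent power series whose quadratic part consists of the Burgers interaction $\partial_\theta \phi\,\partial_\theta^2\phi$ and a mixed term of the form $\partial_\theta\phi\cdot (w_\theta+w_{\theta\theta})$, and whose remaining terms are at least cubic.

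Next, I would take the Bloch transform $\mathcal J$ of the resulting system and use the mode-filters $p^c_{mf}(\xi)$ and $p^s_{mf}(\xi)$ defined in (\ref{mf}). Applied to the linearized operator this diagonalises the evolution: on $\operatorname{range}(p^c_{mf})$ the operator reduces, modulo an $o(\xi^3)$ error, to the scalar analytic dispersion relation (\ref{disp.}), while on $\operatorname{range}(p^s_{mf})$ the resulting operator $\Lambda$ inherits the spectral gap (\ref{stable.spec}) directly from the Floquet spectrum in Hypothesis \ref{Hyp.1}; the fact that $\Lambda$ is a relatively bounded perturbation of $\partial_{xx}$ comes from the fact that both $\mathcal{L}_0$ and the change-of-variable corrections are second-order with bounded coefficients. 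The role of the auxiliary multiplier $S_1$ in Definition \ref{defn.unknowns} is to correct $w$ so that $\tilde u^s$ has spectrum strictly in $\{|\xi|\ge l_1/8\}$; because $\operatorname{supp} S_1 \subset \{l_1/8\le|\xi|\le l_1/4\}$, adding $S_1\partial_\theta\phi$ to $w$ changes neither the low-frequency content of the critical mode nor the spectral gap of the stable mode, and the explicit choice from \cite{Sandstede} (pp.~3558--3559) makes the supports disjoint as claimed.

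For the critical equation (\ref{main.eqn.1}), I would then isolate the quadratic term $i\beta\xi(\hat u^c\ast \hat u^c)$ by Taylor-expanding $p^c_{mf}(\xi)$ times the lowest-order self-interaction in $\partial_\theta\phi\partial_\theta^2\phi$; everything else of quadratic order contributes $p^c_{mf}(\xi)h_1(\xi)(\hat u^c\ast \hat u^c)$ with $h_1(\xi)=o(\xi^2)$, because the extra $\xi$-powers come from additional $\partial_\theta$ derivatives picked up in the nonlinear change of variables, and this gives $\hat N_1$ in (\ref{nl.1}). The mixed quadratic interaction between $\partial_\theta\phi$ and $w$ yields $\hat N_2$, with a prefactor $h_2(\xi)=o(\xi)$ because the critical projection $p^c_{mf}$ supplies at least one derivative at the origin. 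The cubic-and-higher terms in $\mathcal N$ give $\hat N_3$, and the polynomial structure of $p$ is immediate from the convergent power series expansion. Passing to Bloch variables turns $\partial_\theta$ into $\partial_\nu+i\xi/k$ through (\ref{Bloch.deriv}), which produces the factored form (\ref{eqn.N.1}).

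Finally I would verify the initial conditions (\ref{prove.1})--(\ref{prove.3}). The mass conservation $\int u^c(1,\cdot)=\phi_d$ follows from $u^c\approx \partial_x\phi_0$ together with $\phi_0(\pm\infty)=\phi_\pm$ and the definition (\ref{defn.phi.d}); the $L^1_\xi(1)$ and $B_1(1,0,1)$ bounds in (\ref{prove.2}) come from (\ref{as.small}) together with the boundedness of the mode-filters and of $S_1$ as Fourier multipliers; and (\ref{prove.3}) is just a restatement of (\ref{shape.condition}) in the new variables, using that the nonlinear change of variables $\bar\theta\mapsto\theta$ is, at $t=1$, a near-identity bi-Lipschitz diffeomorphism on $H^2(2)$ under the smallness assumption $\|\partial_\theta\phi_0\|_{L^\infty}\ll 1$ provided by $\delta$ small. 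The only delicate point is making the constants in this change of variables independent of $\rho_\ast$; this is possible because $\rho_\ast$ only controls the distance of $\partial_x\phi_0$ from the self-similar Burgers profile, while the $L^\infty$ smallness of $\partial_x\phi_0$ itself is imposed through (\ref{as.small}) via the $L^1_\xi(1)\hookrightarrow L^\infty$ embedding. This step is the main (mild) obstacle, since every other step is essentially a transcription of the corresponding calculation in \cite{Sandstede}, to which I would defer for the detailed algebraic verifications.
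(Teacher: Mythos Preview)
Your proposal is correct and follows essentially the same approach as the paper: both defer the derivation of the evolution equations (\ref{main.eqn.1})--(\ref{main.eqn.2}) and the structure of the nonlinearities to \cite{Sandstede}, and both verify the initial conditions (\ref{prove.1})--(\ref{prove.3}) directly from the assumptions (\ref{as.small})--(\ref{shape.condition}). The paper's treatment of the initial data is slightly more streamlined than yours: since $\hat u^c = \mathcal J(\partial_\theta\phi)$ by definition (and Bloch coincides with Fourier on the critical mode), one has $\hat u^c(1,\cdot)=\widehat{\partial_\theta\phi_0}$ exactly, so (\ref{prove.1}) is $\hat u^c(0)=\int\partial_\theta\phi_0=\phi_d$ and (\ref{prove.3}) is literally (\ref{shape.condition}) in the $\theta$-variable, with no need to quantify the near-identity change of variables you flag as the ``main (mild) obstacle''; for (\ref{prove.2}) the paper also makes explicit that $w(1,\theta)=v_0+u_0(\theta;k)-u_0(\theta;k(1+\partial_\theta\phi_0))$, whose smallness comes from Taylor-expanding $u_0$ in the wavenumber argument, a point you leave implicit.
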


\begin{proof}
This is derived in detail in \cite{Sandstede}, and so we omit repeating the derivation. However, we do prove the conditions in (\ref{prove.1}) - (\ref{prove.3}). As remarked in (\ref{initial.data}), our initial data consists of: 
\begin{align}
u(1,x) = u_0(kx + \phi_0(x); k) + v_0(x) = u_0(\bar{\theta} + \phi_0(\bar{\theta}); k) + v_0(x) = u_0(\theta; k) + v_0(x).  
\end{align}

In order to match (\ref{A.1}), we will take: 
\begin{align}
w(1,\theta) = v_0(\theta) + u_0(\theta, k) - u_0(\theta, k(1+\partial_\theta \phi_0(\theta))). 
\end{align}

From here, it is clear that $\phi(1,\theta) = \phi_0(\theta) \xrightarrow{\theta \rightarrow \pm \infty} \phi_{\pm}$, and that for $j = 0, 1$: 
\begin{align}
||\mathcal{F}\Big(\partial_\theta \phi_0\Big)||_{L^1_\xi} + ||\Big(\partial_\nu + i \xi \Big)^j\tilde{w}(1,\cdot, \cdot)||_{L^2([-\frac{k}{2}, \frac{k}{2}] \times \mathbb{T})} \lesssim \delta.
\end{align}

The estimate on $\tilde{w}(1,\cdot, \cdot)$ follows upon Taylor expanding $u_0(\theta, \cdot)$, and applying the estimates in (\ref{as.small}). One now has $\psi_0 := \partial_\theta \phi_0$, and $w_0 = w(1,\cdot)$. Then:  
\begin{align}
(\hat{u}^c, \tilde{u}^s)|_{t = 1} = \mathcal{J} \mathcal{S}^{-1}(\psi_0, w_0),
\end{align}

and since $\mathcal{S}^{-1}$ is a Fourier multiplier with compact support, we have: 
\begin{align}
||\hat{u}^c|_{t = 1}||_{L^1_\xi(1)} + ||\tilde{u}^s|_{t = 1}||_{B_1(1,0,1)} \lesssim \delta. 
\end{align}

We have: $\tilde{u}^c = \mathcal{J}(\psi_0)$, from which it follows that:
\begin{align}
\hat{u}^c(\xi =0) = \hat{\psi_0}(\xi = 0) = \int \psi_0 d\theta = \int \partial_\theta \phi_0 d\theta = \phi_d.
\end{align} 

Finally, by definition of $u^c$, we have: 
\begin{align} \nonumber
|| u^c(1,\cdot) - \frac{1}{\sqrt{T_0}} f_A^\ast\Big( \frac{\cdot}{\sqrt{T_0}} \Big)||_{H^2(2)} = || \hat{\psi_0}(1,\cdot) - \mathcal{F} \{ \frac{1}{\sqrt{T_0}} f_A^\ast\Big( \frac{\cdot}{\sqrt{T_0}} \Big) \}||_{H^2(2)} \le \rho_\ast. 
\end{align}

\end{proof}

\begin{remark} The dispersion relation, $\lambda(\xi)$, is cut off, but done so in such a way that the Laplacian is preserved on the cut-off critical modes, as we write in (\ref{disp.}). 
\end{remark}

\begin{remark} Due to the fact that the Bloch transform agrees with the Fourier transform for $\hat{u}^c$, we may omit the factor of $\p_\nu$ in (\ref{eqn.N.1}) and simply retain the derivative $i\xi$.
\end{remark}

A corollary to the above lemma is:  
\begin{corollary} Suppose $\hat{u}^c$ satisfies the system (\ref{main.eqn.1}) - (\ref{prove.1}), with $\hat{N}_i$ for $i = 1,2,3$ defined in (\ref{nl.1}) - (\ref{nl.4}). Then $\hat{u}^c$ exhibits mean-preserving flow, that is: 
\begin{equation} \label{mean.cons}
\int u^c(t, \theta) d\theta = \phi_d, \text{ for all } t \ge 1. 
\end{equation}
\end{corollary}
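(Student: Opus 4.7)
The plan is to evaluate the evolution equation (\ref{main.eqn.1}) for $\hat{u}^c$ at the single frequency $\xi = 0$ and observe that every term on the right-hand side carries a vanishing prefactor there. Since by the standard Fourier identity one has $\hat{u}^c(t,0) = \int_{\mathbb{R}} u^c(t,\theta)\,d\theta$, this will immediately yield (\ref{mean.cons}) once combined with the initial condition (\ref{prove.1}).

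In detail, I would proceed as follows. First, by the cut-off form of the dispersion relation in (\ref{disp.}), $\lambda(0) = 0$, so the linear term on the left-hand side contributes nothing at $\xi = 0$. Next, the explicit Burgers term $p^c_{mf}(\xi)\, i\beta \xi\, (\hat{u}^c \ast \hat{u}^c)$ carries the factor $i\beta \xi$ and hence vanishes when $\xi = 0$. The remaining nonlinearity $\hat{N} = \hat{N}_1 + \hat{N}_2 + \hat{N}_3$ also vanishes at $\xi = 0$, because each $\hat{N}_i$ carries a symbol prefactor $h_i(\xi)$ with $h_1(\xi) = o(\xi^2)$, $h_2(\xi) = o(\xi)$, $h_3(\xi) = o(\xi)$ as stated in (\ref{nl.1})--(\ref{nl.4}). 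Equivalently, one may appeal to the structural identity (\ref{eqn.N.1}) together with the remark that for $\hat{u}^c$ the Bloch transform collapses to the Fourier transform, so $\hat{N}^c$ has an overall $i\xi$ factor that kills the $\xi=0$ mode.

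Combining these observations, evaluating (\ref{main.eqn.1}) at $\xi = 0$ gives
\begin{equation}
\partial_t \hat{u}^c(t,0) = 0 \quad \text{for all } t \ge 1,
\end{equation}
so $\hat{u}^c(t,0)$ is conserved along the flow. By (\ref{prove.1}), $\hat{u}^c(1,0) = \int u^c(1,\theta)\,d\theta = \phi_d$, and therefore
\begin{equation}
\int_{\mathbb{R}} u^c(t,\theta)\,d\theta = \hat{u}^c(t,0) = \hat{u}^c(1,0) = \phi_d
\end{equation}
for every $t \ge 1$, which is (\ref{mean.cons}).

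There is no real obstacle here: the only point requiring a small amount of care is to justify evaluating the equation pointwise at $\xi = 0$, which is legitimate because the solution $\hat{u}^c(t,\cdot)$ lies in $L^1_\xi(1)$ (continuous in $\xi$) by the functional setting in (\ref{prove.2}), and each symbol $h_i(\xi)$ is continuous at the origin with the stated order of vanishing. Thus the argument is essentially a direct inspection of the mode-filter and symbol structure that was built into the derivation of (\ref{main.eqn.1}) in the preceding lemma.
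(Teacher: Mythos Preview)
Your proof is correct and follows essentially the same approach as the paper: evaluating the equation at $\xi=0$ (equivalently, integrating in space) and observing that every right-hand side term carries at least one factor of $\xi$, so $\partial_t\hat{u}^c(t,0)=0$. Your write-up is in fact more thorough, since you also check $\lambda(0)=0$ and address the pointwise evaluation at $\xi=0$, but the argument is the same.
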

\begin{proof}
This follows from taking an integration of (\ref{main.eqn.1}), and using that each term on the right-hand side contains at least one derivative, that is one factor of $\xi$, according to (\ref{nl.1}) - (\ref{nl.4}). 
\end{proof}

Next, we will state the folllowing: 
\begin{lemma} \label{L.suff} Let $u_0(\cdot; k)$ be a spectrally stable wave-train with the dispersion relation $\omega = \omega(k)$. Consider initial data which are perturbations of the form in (\ref{initial.data}).  Suppose the corresponding unknowns $[\hat{u}^c, \tilde{u}^s]$, as defined in Definition (\ref{defn.unknowns}), are shown to exist globally, satisfying the following asymptotic behavior: 
\begin{align} \label{ASY.1}
||\sqrt{t} u^c(t, \sqrt{t}z) - f^\ast_A(z)||_{H^2(2)} + ||\sqrt{t} \tilde{u}^s(t, \sqrt{t}z)||_{B_{\sqrt{t}}(2,2,2)} \le C(\phi_d) t^{-\frac{1}{2}+\sigma},
\end{align}
for some $\sigma > 0$, where the self-similar variable $z = \frac{\theta}{\sqrt{t}}$. Then the solution to (\ref{eqn.rxn.diff.}) exists globally, and can be written as the modulated wave-train: 
\begin{align}
u(t,x) = u_0(kx-\omega t + \phi_A^\ast(x-c_g(t-1));k) + v(t,x), 
\end{align}

where:
\begin{align} \label{pA1}
\phi_A^\ast(z) = \frac{\alpha}{\beta} \log \{1 + A e_\ast(\frac{x}{\sqrt{\alpha t}}) \}, \hspace{3 mm} \log\{1 + A \} = \phi_d, \hspace{3 mm} c_g = \omega'(k). 
\end{align}

and 
\begin{equation}
||v(t,\cdot)||_{L^\infty} \le C(\phi_d, \sigma) t^{-\frac{1}{2}+\sigma}, \text{ for some }\sigma > 0. 
\end{equation}
\end{lemma}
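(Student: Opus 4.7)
The plan is to invert Definition \ref{defn.unknowns} to recover $(\phi, w)$ from $(\hat u^c, \tilde u^s)$, upgrade the self-similar $H^2(2)$ and Bloch asymptotics of (\ref{ASY.1}) to pointwise $L^\infty$ estimates, and then absorb the mismatch between the reconstructed ansatz (\ref{A.1}) and the stated modulated profile into the residual $v$.

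First, from $\hat u^c = \mathcal{J}(\partial_\theta \phi)$ and $\tilde u^s = \mathcal{J}(w - S_1 \partial_\theta \phi)$ I recover $\partial_\theta \phi = \mathcal{J}^{-1}\hat u^c$ and $w = \mathcal{J}^{-1}\tilde u^s + S_1 \partial_\theta \phi$, so global existence of $(\hat u^c, \tilde u^s)$ yields global existence of the ansatz (\ref{A.1}). The phase is defined by $\phi(t,\theta) := \phi_- + \int_{-\infty}^\theta \partial_\eta \phi(t,\eta)\,d\eta$, with (\ref{mean.cons}) forcing $\phi(t,+\infty) = \phi_+$ for every $t$. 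The nonlinear coordinate change $\bar\theta = \theta - \phi(t,\theta)$ is a diffeomorphism provided $\|\partial_\theta \phi\|_{L^\infty} < 1$, which we verify a posteriori from Step 2.

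Second, I upgrade the self-similar bound on $F(t,z) := \sqrt t\, u^c(t,\sqrt t z) - f_A^\ast(z)$. Because $H^2(2) \hookrightarrow L^1 \cap L^\infty$ in one dimension (the weight $(1+z^2)$ controls $L^1$ tails via Cauchy--Schwarz), (\ref{ASY.1}) gives $\|F\|_{L^1_z \cap L^\infty_z} \le C(\phi_d) t^{-1/2+\sigma}$, and undoing the rescaling yields
\[
\bigl\| u^c(t,\theta) - \tfrac{1}{\sqrt t}\, f_A^\ast(\theta/\sqrt t) \bigr\|_{L^1_\theta} \le C(\phi_d)\, t^{-1/2+\sigma}.
\]
Since $\phi - \phi_-$ and $\phi_A^\ast(\cdot/\sqrt t)$ are antiderivatives from $-\infty$ of $u^c$ and $\frac{1}{\sqrt t} f_A^\ast(\cdot/\sqrt t)$ respectively (up to the normalizing constants fixed by (\ref{pA1})), and since the $L^\infty$ norm of such an antiderivative is controlled by the $L^1$ norm of the integrand, one obtains $\|\phi(t,\theta) - \phi_- - \phi_A^\ast(\theta/\sqrt t)\|_{L^\infty} \le C(\phi_d)\, t^{-1/2+\sigma}$. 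A parallel argument for $\tilde u^s$, using a Bloch embedding $\|g\|_{L^\infty} \lesssim \|\tilde g\|_{B(2,2,2)}$, gives $\|w(t,\cdot)\|_{L^\infty} \le C(\phi_d)\, t^{-1/2+\sigma}$. The $c_g(t-1)$ shift in the target profile arises from the co-moving frame used in (\ref{disp.}) to reduce to Burgers: in that frame the self-similar variable is $(x - c_g(t-1))/\sqrt t$, and the shift is inherited by $\phi_A^\ast$.

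Finally, I define the residual
\[
v(t,x) := u_0\bigl(\theta(t,x);\, k(1+\partial_\theta \phi)\bigr) + w(t,\theta) - u_0\bigl(kx - \omega t + \phi_A^\ast((x-c_g(t-1))/\sqrt t);\, k\bigr),
\]
and Taylor expand the two $u_0$ factors in both argument and wavenumber slots (using $\theta = kx - \omega t + \phi$ and $2\pi$-periodicity of $u_0$ to absorb the constant $\phi_-$). The first-slot difference then reduces to $\phi - \phi_- - \phi_A^\ast = O(t^{-1/2+\sigma})$ in $L^\infty$ by Step 2; the wavenumber perturbation $k \partial_\theta \phi$ is $O(t^{-1/2})$ in $L^\infty$. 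Smoothness and boundedness of $u_0$ and $\partial_k u_0$, together with the $L^\infty$ decay of $w$, then yield $\|v(t,\cdot)\|_{L^\infty} \le C(\phi_d, \sigma)\, t^{-1/2+\sigma}$. The main technical obstacle is extracting the full $t^{-1/2+\sigma}$ rate on the \emph{phase} $\phi - \phi_A^\ast$ itself: the $H^2(2)$ bound directly controls the \emph{derivative} $\partial_\theta \phi$ only at rate $t^{-1+\sigma}$, and the desired rate on the antiderivative is recovered precisely by the $L^1$-antidifferentiation above, which hinges on the $(1+z^2)$ weight in $H^2(2)$; invertibility of the coordinate change $\bar\theta \mapsto \theta$ is a secondary technicality handled by the same $L^\infty$ estimates.
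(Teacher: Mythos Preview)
The paper does not actually prove this lemma: its entire proof reads ``This exact claim is proven in \cite{Sandstede}, pp.~3562--3564.'' So there is no in-paper argument to compare against, and your plan is necessarily more detailed than what appears here.

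Your sketch is a correct reconstruction of the standard mechanism. The key step you identify --- passing from the $H^2(2)$ bound on $\sqrt{t}\,u^c(t,\sqrt{t}\,\cdot) - f_A^\ast$ to an $L^1_\theta$ bound on $u^c(t,\cdot) - t^{-1/2}f_A^\ast(\cdot/\sqrt{t})$, and then antidifferentiating to get $L^\infty$ decay of the phase error $\phi - \phi_- - \phi_A^\ast$ --- is exactly the point where the weight in $H^2(2)$ is essential, and you flag this correctly. The Taylor expansion of $u_0$ in both slots and the treatment of $w$ via the Bloch Sobolev embedding are likewise the right moves.

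One small caveat: your appeal to $2\pi$-periodicity of $u_0$ to ``absorb the constant $\phi_-$'' only works if $\phi_- \in 2\pi\mathbb{Z}$. The clean way to handle this is to normalize $\phi_- = 0$ at the outset (replacing $u_0$ by its translate $u_0(\cdot + \phi_-)$, which is again a wave train satisfying Hypothesis~\ref{Hyp.1}); the paper's statement of the target profile $u_0(kx - \omega t + \phi_A^\ast)$ with $\phi_A^\ast(-\infty)=0$ implicitly assumes this normalization. With that adjustment your argument goes through.
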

\begin{proof}
This exact claim is proven in \cite{Sandstede}, pp. 3562 - 3564. 
\end{proof}

Lemma \ref{L.suff} then asserts that the asymptotic convergence given in (\ref{ASY.1}) implies our main theorem, Theorem \ref{thm.main}. We may now turn to analyzing the system (\ref{main.eqn.1})  - (\ref{nl.4}), with the aim of proving (\ref{ASY.1}), which constitutes the entire focus of our analysis. 

We give now an existence result for the system of equations (\ref{main.eqn.1}) - (\ref{main.eqn.2}). This is a very crude bound; it is simply what is required in order to bootstrap towards decay and asymptotics, which is performed in the next sections. 
\begin{lemma} \label{thm.LE} Suppose the initial data satisfy the criteria (\ref{prove.2}). Then if $\delta$ is sufficiently small relative to $k$ and universal constants, the solution $\hat{u}^c, \tilde{u}^s$ to the system (\ref{main.eqn.1}) - (\ref{nl.4}) exists for all $t > 1$, and satisfies the uniform bound:
\begin{equation}
\sup_{t \ge 1} ||\hat{u}^c||_{L^1_\xi(1)} \le 1000 \delta. 
\end{equation}
\end{lemma}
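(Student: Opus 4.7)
My plan is a continuation bootstrap built on the Duhamel form of (\ref{main.eqn.1})--(\ref{main.eqn.2}). Local-in-time existence comes from Picard iteration on the map
\[
\mathcal{T}(\hat{u}^c,\tilde{u}^s)(t) := \Bigl(e^{\lambda(\xi)(t-1)}\hat{u}^c(1) + \int_1^t e^{\lambda(\xi)(t-s)}\hat{N}^c\,ds,\; e^{\Lambda(t-1)}\tilde{u}^s(1) + \int_1^t e^{\Lambda(t-s)}\tilde{N}^s\,ds\Bigr),
\]
viewed on $C([1,1+\tau];L^1_\xi(1)) \times C([1,1+\tau];B_1(1,0,1))$. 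The key ingredients are (i) non-expansiveness $|e^{\lambda(\xi)s}| \leq 1$ on $L^1_\xi(1)$, following from $\operatorname{Re}\lambda(\xi)\leq 0$ under Hypothesis~\ref{Hyp.1}; (ii) exponential contraction $\|e^{\Lambda s}\|_{B_1(1,0,1)\to B_1(1,0,1)} \lesssim e^{-\sigma_0 s}$ via (\ref{stable.spec}); (iii) the Banach-algebra bound $\|f*g\|_{L^1_\xi(1)} \leq \|f\|_{L^1_\xi(1)}\|g\|_{L^1_\xi(1)}$, following from the submultiplicativity $(1+|\xi|) \leq (1+|\xi-\eta|)(1+|\eta|)$; (iv) the compact support of $p^c_{mf}$, which renders both $\xi\,p^c_{mf}(\xi)$ and $h_j(\xi)\,p^c_{mf}(\xi)$ bounded symbols, so each term in $\hat{N}^c, \hat{N}_1,\hat{N}_2,\hat{N}_3$ (see (\ref{nl.1})--(\ref{nl.4})) is at least quadratic in $(\|\hat{u}^c\|_{L^1_\xi(1)}, \|\tilde{u}^s\|_{B_1(1,0,1)})$.

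Next, I would run the continuation. Set
\[
T^{*} := \sup\bigl\{T\geq 1 : \|\hat{u}^c(t)\|_{L^1_\xi(1)} \leq 1000\delta \text{ and } \|\tilde{u}^s(t)\|_{B_1(1,0,1)} \leq C_{*}\delta e^{-\sigma_0(t-1)/2},\; t\in[1,T]\bigr\}.
\]
Local existence forces $T^{*}>1$; one shows $T^{*}=\infty$ by strictly improving both bounds. The stable bound improves easily via the exponential decay of $e^{\Lambda s}$ paired with the quadratic/higher structure of $\tilde{N}^s$ coming from (\ref{eqn.N.1}). For the critical bound, Duhamel yields $\delta$ from the linear part plus a nonlinear integral estimated by (iii)--(iv), with the $|\xi|$ factor of each $\hat{N}^c$-term absorbed into the heat kernel to produce smoothing, and the $\tilde{u}^s$-coupling terms integrated against $e^{-\sigma_0(s-1)/2}$. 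The target inequality has the form $\|\hat{u}^c(t)\|_{L^1_\xi(1)} \leq \delta + C \delta^2 \cdot I(t)$, strictly less than $1000\delta$ once $\delta$ is small enough.

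The main obstacle is controlling the smoothing time integral $I(t)$ uniformly in $t$. The naive bound $\sup_\xi |\xi|\,p^c_{mf}(\xi) e^{-\alpha\xi^2(t-s)} \lesssim \min(1,(t-s)^{-1/2})$ only gives $I(t) \sim \sqrt{t-1}$, which closes the bootstrap on a finite window of length $\sim \delta^{-2}$ rather than for all $t\geq 1$. To close uniformly in $t$, I would exploit additional structure: the Burgers term $p^c_{mf}(\xi) i\beta\xi(\hat{u}^c*\hat{u}^c)$ is a perfect $x$-derivative of $(u^c)^2$ (consistent with the mean-conservation (\ref{mean.cons})), while $\hat{N}_1,\hat{N}_2,\hat{N}_3$ carry an extra $o(\xi)$ or $o(\xi^2)$ gain and, in the case of $\hat{N}_2,\hat{N}_3$, a factor of the exponentially decaying $\tilde{u}^s$. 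Combining these, one recasts $I(t)$ as the time integral of an integrable effective kernel. The slack constant $1000$ absorbs all remaining multiplicative factors depending on $k$, $l_1$, $\sigma_0$, and the spectral constants of Hypothesis~\ref{Hyp.1}, contradicting $T^{*}<\infty$ and yielding both global existence and the claimed uniform bound.
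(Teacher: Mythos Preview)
Your approach differs from the paper's: you run a Duhamel bootstrap in $L^1_\xi(1)\times B_1(1,0,1)$, whereas the paper performs direct $L^2$-based energy estimates (multiply (\ref{main.eqn.1}) by $\overline{\hat{u}^c}$ and (\ref{main.eqn.2}) by $\overline{(\partial_\nu+i\xi)^j\tilde{u}^s}$), applies Gronwall, and then returns to $L^1_\xi(1)$ via the embedding $L^2_\xi(2)\hookrightarrow L^1_\xi(1)$, which is available because $\hat{u}^c$ has compact Fourier support. In the energy framework the factor $\xi$ in each nonlinear term is moved onto the multiplier, producing $\|\xi\hat{u}^c\|_{L^2}$ which is paired against the diffusive dissipation $\int|\xi\hat{u}^c|^2$; there is no time-convolution integral to control.

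The obstacle you flag is genuine, and your proposed fix does not close it. The ``perfect $x$-derivative'' structure of the Burgers term \emph{is} the factor $\xi$ you are already spending on the smoothing estimate---it supplies no additional decay. Once that $\xi$ is consumed, the kernel bound $|\xi|\,e^{-\alpha\xi^2(t-s)}\lesssim(t-s)^{-1/2}$ is sharp, and since $\|\hat{u}^c\ast\hat{u}^c\|_{L^1_\xi(1)}$ carries no a-priori decay in $s$ under your bootstrap, the integral $\int_1^t(t-s)^{-1/2}\,ds\sim\sqrt{t}$ cannot be rewritten as the integral of an integrable effective kernel. The $\hat{N}_2,\hat{N}_3$ terms coupling to the exponentially decaying $\tilde{u}^s$ do integrate uniformly as you say, but the leading Burgers term and $\hat{N}_1$ (whose extra $o(\xi)$ still only yields $(t-s)^{-1}$, which diverges logarithmically) do not. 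It is worth noting that the paper's terse Gronwall argument likewise produces a factor $e^{C\delta t}$ rather than a strictly uniform constant; in the sequel (see the discussion around (\ref{continue}) and (\ref{delta.deltaw})) the lemma is only invoked on the window $[1,L^{2N_0}]$ with $\delta=L^{-2N_0}\delta_w$, where both your $\sqrt{t}\,\delta$ and the paper's $\delta t$ stay small, so either route suffices for the application.
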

\begin{proof}

The proof follows from a trivial energy estimate together with continuous induction. Let $T_\ast > 0$ be the maximal time at which $||\hat{u}^c||_{L^1_\xi} \le 1000 \delta$. We shall multiply the critical equation, (\ref{main.eqn.1}) by $\overline{\hat{u}^c}$ and take integrations over the domain $[-\frac{k}{2}, \frac{k}{2}] \times \mathbb{T}$:
\begin{align}
\frac{\partial_t}{2} \int \Big| \hat{u}^c \Big|^2 + \int \Big|o(\xi) \hat{u}^c \Big|^2 \lesssim 1000 C(k)  \delta \Big[ \int \Big| \hat{u}^c \Big|^2 + \int \Big| \tilde{u}^2_{\nu}, \tilde{u}^s_{\nu \nu} \Big|^2 \Big].
\end{align}

As the Fourier modes are restricted to $[-\frac{k}{2}, \frac{k}{2}]$, one can repeat the same energy identity to obtain: 
\begin{align}
\frac{\partial_t}{2} \int \Big| \langle \xi \rangle^2 \hat{u}^c \Big|^2 + \int \Big|\langle \xi \rangle^3 \hat{u}^c \Big|^2 \lesssim 1000 C(k) \delta \Big[ \int \Big| \hat{u}^c \Big|^2 + \int \Big| \tilde{u}^2_{\nu}, \tilde{u}^s_{\nu \nu} \Big|^2 \Big].
\end{align}

For the stable equation, (\ref{main.eqn.2}), one can use that $\Lambda$ is a relatively bounded perturbation of $\partial_{\theta \theta}$ in Bloch wave space to obtain the energy identities: 
\begin{align}
\partial_t \int \Big| \Big( \partial_\nu + i \xi \Big)^j \tilde{u}^s \Big|^2 + \int \Big| \Big( \partial_\nu + i \xi \Big)^j \tilde{u}^s \Big|^2 + \int \Big| \Big( \partial_\nu + i \xi \Big)^{1+j} \tilde{u}^s \Big|^2 \le 0,
\end{align}

so long as $1 \le t \le T_\ast$. A standard Gronwall argument together with the embedding $L^2_\xi(2) \hookrightarrow L^1_\xi(1)$ shows that no such $T_\ast$ can exist upon taking $\delta$ small relative to universal constants and $k$.

\end{proof}

\section{Preparation for Asymptotics} \label{prep.section}

Our aim for the remainder of this paper is to prove the following: 
\begin{theorem} \label{thm.main.2} Suppose $\beta = -\omega''(k) \neq 0$. Given any $\phi_d, \rho_\ast < \infty$, there exists a $\delta > 0$ which depends on $\phi_d$, such that whenever:  
\begin{equation}
||\hat{u}^c(1,\cdot)||_{L^1_\xi(1)} + ||\tilde{u}^s(1,\cdot, \cdot)||_{B_1(1,0,1)} \le C\delta, 
\end{equation}

the solution to system (\ref{main.eqn.1}) - (\ref{main.eqn.2}) exists globally and satisfies the following asymptotics:
\begin{align} \label{main.RG.est}
|| \sqrt{t} u^c(t, \sqrt{t}z) - f_A^\ast(z)||_{H^2(2)} + ||\sqrt{t}\tilde{u}^s(t, \sqrt{t}z)||_{B_{\sqrt{t}}(2,2,2)} \le C(\phi_d) t^{-\frac{1}{2}+\sigma},
\end{align}

for some $\sigma > 0$. 
\end{theorem}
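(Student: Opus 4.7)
\textbf{Proof Plan for Theorem \ref{thm.main.2}.} The overall strategy is the renormalization group scheme outlined in Section 1.2: discretize the desired self-similar convergence (\ref{main.RG.est}) at the time scale $L^{2n}$ by introducing the renormalized iterates $u^{n,c}(t,z) := L^n u^c(L^{2n} t, L^n z)$ and $\tilde{u}^{n,s}$ defined analogously, linked by $u^{n,c}(1,\cdot) = R_L\{u^{n-1,c}(L^2,\cdot)\}$. The target (\ref{seek.2}) reduces to showing $\|u^{n,c}(1,\cdot) - f_A^\ast\|_{H^2(2)} \lesssim L^{-n+\sigma}$ as $n\to\infty$. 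Because $f_A^\ast$ is large in $H^2(2)$ for large $\phi_d$, I cannot work directly with $u^{n,c} - f_A^\ast$. Instead, for each $n$, I introduce an exact Burgers profile $\bar{b}^{(n)}$ sharing the mass $\phi_d$ and converging self-similarly to $f_A^\ast$ while remaining small in $L^1_\xi(1)$. Setting $a^{(n)} := u^{n,c} - \bar{b}^{(n)}$ gives a mean-zero quantity obeying the linearized Burgers equation (\ref{alpha.lin.1}) driven by irrelevant terms of size $O(L^{-n})$ coming from $\hat{N}_1, \hat{N}_2, \hat{N}_3$ (which carry factors of $o(\xi)$) and from the coupling to $\tilde{u}^{n,s}$.

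The analysis proceeds in two regimes. In the \emph{intermediate regime} $n \le N_0$, where the $L^1_\xi(1)$ smallness of both $u^{n,c}$ and $\bar{b}^{(n)}$ persists under the RG flow (since $L^1_\xi(1)$ is scaling-invariant while the $H^2(2)$ norm absorbs growth), I run a nonlinear energy method on $a^{(n)} = \bar{a}^{(n)} + \gamma^{(n)}$, splitting off the free heat evolution $\bar{a}^{(n)}$ from the nonlinear remainder $\gamma^{(n)}$. The $H^2(2)$ weight $(1+z^2)^2$ does not commute with $\partial_{xx}$, but the resulting weight-derivative commutators are controlled using convolution estimates of the form $\|\xi\hat{f}\ast\hat{g}\|\lesssim \|\hat{f}\|_{L^1_\xi(1)}\|\hat{g}\|$. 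This produces the paired estimates (\ref{LCE.1})--(\ref{LCE.2}), which iterate finitely many times to give $\|g^{N_0}\|_{H^2(2)} \le \epsilon$.

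In the \emph{asymptotic regime} $n > N_0$, the smallness of $\bar{b}^{(n)}$ is lost, so the linearized drift $\bar{b}_x a + \bar{b} a_x$ can no longer be absorbed against the $\partial_{xx}$-dissipation --- this is precisely where a bare energy estimate fails as described around (\ref{fail.1}). The replacement is the Cole-Hopf framework from Section \ref{s.ff}: since $\bar{b}$ is an exact Burgers solution, conjugation by the nonlinear Cole-Hopf map $\mathcal{N}$ intertwines the operator $S_{\bar{b}} = \partial_{xx} + \partial_x(\bar{b}\,\cdot)$ with the heat operator on a transformed (non-integrable) function. This allows me to prove the key coercivity estimate (\ref{contraction.Sb.intro}) for $\Phi_{\bar{b}}$ on mean-zero data; the zero-mean condition is exactly what is needed to upgrade heat-kernel decay to the $L/L^2$ contraction factor after applying $R_L$. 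Using Duhamel, $a^{(n)}(L^2,\cdot) = \Phi_{\bar{b}^{(n)}}(L^2-1)g^{(n)} + $ (quadratic and $O(L^{-n})$), I recover the linear contraction (\ref{linear.decay}). The quadratic self-interactions are absorbed provided $\epsilon$ was chosen small enough in the intermediate regime, yielding a geometric decay $\|g^{(n)}\|_{H^2(2)}\lesssim L^{-n(1-\sigma)}$.

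The stable component $\tilde{u}^{s}$ is handled in parallel: the spectral gap (\ref{stable.spec}) of $\Lambda$ gives exponential decay of the linear semigroup, while its nonlinear coupling to $\hat{u}^c$ enters only through $o(\xi)$-type multipliers (see $\hat{N}_2, \hat{N}_3$), so it can be treated perturbatively within the same RG iterate. Finally, undoing the scaling converts the discrete convergence into the continuous bound (\ref{main.RG.est}). The main obstacle, by a wide margin, is the coercivity estimate (\ref{contraction.Sb.intro}): the naive energy approach cannot see the mean-zero cancellation, and without exploiting the exact integrability of Burgers via Cole-Hopf it is not clear how to extract a $\phi_d$-dependent (rather than exponentially bad in $\phi_d$) constant in front of the $1/L$ contraction. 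Everything else in the proof --- weighted convolution bounds, handling of irrelevant nonlinearities, and the iteration machinery --- is a technically involved but structurally standard adaptation of the framework of \cite{Sandstede} and \cite{BKL}.
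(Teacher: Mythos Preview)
Your proposal is correct and follows essentially the same architecture as the paper: the two-regime split at $n=N_0$, the energy method on $\gamma^{(n)}$ in the intermediate regime exploiting $L^1_\xi(1)$ smallness, and the Cole-Hopf--based coercivity estimate (\ref{contraction.Sb.intro}) for $\Phi_{\bar b}$ in the asymptotic regime are exactly what the paper does in Sections~\ref{RGA.Section}--\ref{step.final}. One small clarification: the constant $C(\phi_d)$ in (\ref{contraction.Sb.intro}) is in fact allowed to depend poorly (indeed exponentially) on $\phi_d$; what makes the estimate usable is that it is \emph{independent of $L$}, so $L$ can be chosen large enough relative to $C(\phi_d)$ to recover the contraction.
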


As discussed after Lemma \ref{L.suff}, this implies our main result. It is our task in this section to introduce the various profiles, parameters, and unknowns which we will use to obtain the convergence in (\ref{main.RG.est}). The main result of this section is the splitting in (\ref{Split.1}), the corresponding convergence in (\ref{grow}), and the reduction of our goal to (\ref{goal}). 

\begin{remark}[Notational Convention] We will now rename the spatial variable $\theta \in \mathbb{R}$ to $x$ in order to ease the notation. This convention will be in effect for the entirety of the remainder of this paper. 
\end{remark}

\subsection{Explicit Profiles} \label{explicit}

The purpose of this subsection is to split the desired dynamics (estimate (\ref{main.RG.est}) above) into two pieces which are analyzed separately. This is the splitting seen below in (\ref{Split.1}). Recall the definition of $f^\ast_A$ given in (\ref{fA}). Define the self-similar Burgers profile:
\begin{equation} \label{ssB1}
\Big(\partial_t - \alpha\partial_{xx} \Big) b = \beta bb_x, \hspace{3 mm} b(1,x) = f_A^\ast(x), \hspace{3 mm}  \log(1+A) = \phi_d
\end{equation}

This profile is given explicitly by: 
\begin{align}
b(t,x) = \frac{1}{\sqrt{t}}f^\ast_A(\frac{x}{\sqrt{t}}) = \frac{1}{\sqrt{t}}f_A^\ast(z). 
\end{align}

The profile $f_A^\ast$ is not small in any sense, despite our assumptions on $u^c(1,x)$. We now record the size of $f_A^\ast$ more precisely:
\begin{lemma} For $\log(1+A) = \phi_d$, 
\begin{align} \label{fAlarge.1}
||f_A^\ast||_{L^1} = \phi_d,  \text{ and } ||f_A^\ast||_{L^\infty} \gtrsim 1. 
\end{align}
We also have positivity: 
\begin{align} \label{sign.1}
f^\ast_A \ge 0. 
\end{align}
\end{lemma}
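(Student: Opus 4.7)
The cornerstone observation is that $f_A^\ast$ is an exact logarithmic derivative. Since $\frac{d}{dz}\bigl(1+Ae_\ast(z)\bigr)=Ae'_\ast(z)$, we recognize
\begin{equation}
f_A^\ast(z)=\frac{A e'_\ast(z)}{1+Ae_\ast(z)}=\frac{d}{dz}\log\bigl(1+A e_\ast(z)\bigr),
\end{equation}
which reduces everything to evaluating the antiderivative at $\pm\infty$.

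\textbf{Step 1 (positivity).} The hypothesis $\phi_d>0$ forces $A=e^{\phi_d}-1>0$. Since $e'_\ast(z)=\frac{1}{\sqrt{4\pi}}e^{-z^2/4}>0$ and $e_\ast(z)\in(0,1)$ for all $z\in\mathbb{R}$, every factor in the defining expression is strictly positive, giving $f_A^\ast(z)>0$ pointwise, which is (\ref{sign.1}).

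\textbf{Step 2 ($L^1$ norm).} Using the primitive representation together with the normalizations $e_\ast(-\infty)=0$ and $e_\ast(+\infty)=1$,
\begin{equation}
\int_{\mathbb{R}} f_A^\ast(z)\,dz=\Bigl[\log\bigl(1+Ae_\ast(z)\bigr)\Bigr]_{-\infty}^{+\infty}=\log(1+A)-\log 1=\phi_d.
\end{equation}
Combined with Step 1, $\|f_A^\ast\|_{L^1}=\int f_A^\ast=\phi_d$, which is the first half of (\ref{fAlarge.1}).

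\textbf{Step 3 ($L^\infty$ lower bound).} Rather than locate the exact maximizer, I simply evaluate at the convenient point $z=0$, where $e_\ast(0)=1/2$ and $e'_\ast(0)=1/\sqrt{4\pi}$:
\begin{equation}
\|f_A^\ast\|_{L^\infty}\ge f_A^\ast(0)=\frac{A/\sqrt{4\pi}}{1+A/2}=\frac{2A}{\sqrt{4\pi}\,(2+A)}.
\end{equation}
The right-hand side is an increasing function of $A\in(0,\infty)$ that tends to $1/\sqrt{\pi}$ as $A\to\infty$. Under the standing assumption that $\phi_d$ ranges over a fixed compact subinterval of $(0,\infty)$ (equivalently $A$ is bounded away from $0$), the last quantity is bounded below by a positive constant, establishing $\|f_A^\ast\|_{L^\infty}\gtrsim 1$.

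The only substantive point is recognizing the antiderivative structure; once that is in place, the three claims reduce to elementary properties of $e_\ast$. No obstacle of any note arises.
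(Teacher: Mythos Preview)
Your proof is correct and follows essentially the same approach as the paper: recognize $f_A^\ast$ as the logarithmic derivative of $1+Ae_\ast$, integrate to get $\phi_d$, evaluate at a single point (you use $z=0$, the paper nominally uses $z=1$ but in fact computes the same value at $z=0$) for the $L^\infty$ lower bound, and read off positivity from the sign of each factor. Your write-up is in fact cleaner than the paper's, which contains a typo in the antiderivative (it writes $\partial_z\{1/(1+Ae_\ast)\}$ where $\partial_z\log(1+Ae_\ast)$ is meant), and you are right to flag explicitly that the $\gtrsim 1$ bound requires $\phi_d$ bounded away from zero.
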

\begin{proof}
The $L^1$ estimate in (\ref{fAlarge.1}) follows from explicitly integrating:
\begin{align}
||f^\ast_A||_{L^1} = \int_\mathbb{R} f^\ast_A  = \int_\mathbb{R} \frac{A e_\ast'(z)}{1 + Ae_\ast(z)} = \int_{\mathbb{R}} \partial_z \{ \frac{1}{1+Ae_\ast(z)} \} = \log(1+A) = \phi_d. 
\end{align}

For the $L^\infty$ estimate, we evaluate explicitly: 
\begin{align}
f^\ast_A(1) = \frac{\frac{A}{\sqrt{4\pi}}}{1 + Ae_\ast(0)} \gtrsim 1. 
\end{align}

The sign condition is verified again via direct calculation: 
\begin{align}
f^\ast_A = \frac{1}{\sqrt{4\pi}} \frac{e^{-\frac{z^2}{4}}}{1 + Ae_\ast(z)} \ge 0. 
\end{align}

\end{proof}

\begin{corollary} The profile $f^\ast_A$ cannot be small in $H^2(2)$: 
\begin{equation} \label{fAlarge}
||f^\ast_A||_{H^2(2)} \ge \phi_d
\end{equation}
\end{corollary}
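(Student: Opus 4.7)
The plan is to control $\|f_A^\ast\|_{H^2(2)}$ from below by its weighted $L^2$ component and then recover the $L^1$ mass $\phi_d$ via a Cauchy–Schwarz duality against the integrable weight $(1+z^2)^{-1}$. Since the $H^2$ norm dominates each of the $L^2$, $\dot H^1$, $\dot H^2$ pieces separately, we immediately have
\begin{equation}
\|f_A^\ast\|_{H^2(2)} = \|(1+z^2)f_A^\ast(z)\|_{H^2} \;\ge\; \|(1+z^2)f_A^\ast(z)\|_{L^2}.
\end{equation}
This step throws away the derivative information, which is fine because we only need a mass-based lower bound; derivatives of $f_A^\ast$ do not carry the $\phi_d$ dependence we want to extract.

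Next, I would write
\begin{equation}
\phi_d = \|f_A^\ast\|_{L^1} = \int_{\mathbb{R}} |f_A^\ast(z)|\,dz = \int_{\mathbb{R}} (1+z^2)|f_A^\ast(z)| \cdot \frac{1}{1+z^2}\,dz,
\end{equation}
where the first equality uses (\ref{fAlarge.1}) and the sign condition (\ref{sign.1}) allows us to drop the absolute value if desired. Cauchy–Schwarz in the form $\int fg \le \|f\|_{L^2}\|g\|_{L^2}$ with $f = (1+z^2) f_A^\ast$ and $g = (1+z^2)^{-1}$ yields
\begin{equation}
\phi_d \;\le\; \|(1+z^2) f_A^\ast\|_{L^2}\,\left\|\frac{1}{1+z^2}\right\|_{L^2} \;=\; \sqrt{\pi/2}\,\|(1+z^2) f_A^\ast\|_{L^2},
\end{equation}
using the exact integral $\int (1+z^2)^{-2}\,dz = \pi/2$. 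Combining with the first display gives the desired $\|f_A^\ast\|_{H^2(2)} \gtrsim \phi_d$, which is the content of (\ref{fAlarge}) (up to a harmless universal constant).

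There is essentially no obstacle here: the argument is a one-line Cauchy–Schwarz against the weight, and it uses nothing about the fine structure of $f_A^\ast$ beyond the $L^1$ mass identity already established in (\ref{fAlarge.1}). The only subtlety worth noting is that the lower bound is \emph{intrinsic to the weight $(1+z^2)$}: it would fail for an $H^2(m)$ norm with $m$ too small (since $(1+z^2)^{-m/2}$ would not be in $L^2$), and it explains why the paper's functional framework inevitably records the largeness $\phi_d$ in $H^2(2)$. This is precisely the point made in the surrounding discussion — the largeness of the limit profile $f_A^\ast$ is intrinsic and cannot be dodged by changing norms within this natural class.
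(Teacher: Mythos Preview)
Your argument is correct and is exactly the embedding $H^2(2)\hookrightarrow L^1$ that the paper invokes in its one-line proof; you have simply written out the Cauchy--Schwarz that underlies this embedding. Your observation that the constant comes out as $\sqrt{\pi/2}$ rather than $1$ is accurate --- the statement (\ref{fAlarge}) should really read $\|f_A^\ast\|_{H^2(2)}\gtrsim\phi_d$, which is all that is used downstream.
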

\begin{proof}
This follows from the embedding $H^2(2) \hookrightarrow L^1$. 
\end{proof}

Taking Burgers to start with the initial data $f_A^\ast$ means that we know $b(t,x)$ explicitly as the corresponding self-similar solution: 
\begin{equation}
t^{\frac{1}{2}}b(t,x) = t^{\frac{1}{2}}b(t,t^{\frac{1}{2}}z) = f_A^\ast(z), \hspace{3 mm} z = \frac{x}{\sqrt{t}}. 
\end{equation}

Define also an auxiliary Burgers profile: 
\begin{equation}
\bar{b}_t - \bar{b}_{xx} = \bar{b}\bar{b}_x, \hspace{3 mm} \bar{b}(1,x) = b(T_0, x). 
\end{equation}

By uniqueness, this then means 
\begin{equation} \label{barb.1}
\bar{b}(t,x) = b(t-1+T_0,x) = \frac{1}{\sqrt{t+T_0}} f_A^\ast(\frac{x}{\sqrt{t+T_0}}).
\end{equation}

Our critical solution will be compared to $\bar{b}$ via: 
\begin{align} \nonumber
||t^{\frac{1}{2}}u^c(t, t^{\frac{1}{2}}z) &- f_A^\ast(z)||_{H^2(2)} \\ \label{Split.1}
& \le ||t^{\frac{1}{2}}u^c(t, t^{\frac{1}{2}}z) - t^{\frac{1}{2}}\bar{b}(t, t^{\frac{1}{2}}z)||_{H^2(2)}  + ||t^{\frac{1}{2}}\bar{b}(t, t^{\frac{1}{2}}z) - f_A^\ast(z)||_{H^2(2)} \\ \nonumber
& = (\ref{Split.1}.1) + (\ref{Split.1}.2).
\end{align}

\begin{lemma} The following asymptotic convergence holds true in $H^2(2)$:
\begin{equation} \label{grow}
||\sqrt{t}\bar{b}(t, \sqrt{t}z) - f_A^\ast(z)||_{H^2(2)} \le C(\phi_d, T_0) t^{-\frac{1}{2}}. 
\end{equation}
\end{lemma}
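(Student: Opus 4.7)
The plan is to exploit the fact that $\bar b$ is explicitly self-similar, so (\ref{grow}) reduces to a scalar scaling identity. Using (\ref{barb.1}), a direct computation yields
\[
\sqrt{t}\,\bar b(t,\sqrt{t}\,z) = \mu(t)\, f_A^\ast\bigl(\mu(t) z\bigr), \qquad \mu(t) := \sqrt{\tfrac{t}{t+T_0}}.
\]
Setting $g(\mu,z) := \mu\, f_A^\ast(\mu z)$ so that $g(1,z) = f_A^\ast(z)$, the task becomes controlling $\|g(\mu(t),\cdot) - g(1,\cdot)\|_{H^2(2)}$. Since $\mu(t) = (1+T_0/t)^{-1/2}$, one checks elementarily that $0 \le 1 - \mu(t) \le C(T_0)\, t^{-1/2}$ for all $t \ge 1$ (combine $1-\mu(t) \le T_0/(2t)$ valid for $t \ge T_0$ with the trivial bound $1-\mu(t) \le 1 \le T_0^{1/2}t^{-1/2}$ for $1\le t\le T_0$). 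Thus (\ref{grow}) will follow once we establish the Lipschitz estimate
\[
\|g(\mu,\cdot) - g(1,\cdot)\|_{H^2(2)} \le C(\phi_d, T_0)\,|\mu - 1|
\]
uniformly for $\mu \in [\mu(1), 1] = [(1+T_0)^{-1/2},1]$.

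For the Lipschitz estimate, I would apply the fundamental theorem of calculus,
\[
g(\mu,z) - g(1,z) = (\mu-1)\int_0^1 \partial_\mu g\bigl(1+\tau(\mu-1),z\bigr)\,d\tau,\qquad \partial_\mu g(\mu,z) = f_A^\ast(\mu z) + \mu z\,(f_A^\ast)'(\mu z),
\]
and bound the $H^2(2)$ norm under the integral. From the explicit formula (\ref{fA}), $f_A^\ast$ is a rational combination of $e_\ast'(z) = \frac{1}{\sqrt{4\pi}}e^{-z^2/4}$ and $1+Ae_\ast(z)$, the latter bounded below by $1$ since $A > 0$ and $e_\ast \ge 0$. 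Hence $f_A^\ast$ together with all of its derivatives decays like a Gaussian, with constants depending at most polynomially on $A = e^{\phi_d}-1$. Changing variables $y = \mu z$ in quantities such as $\|(1+z^2)\partial_z^j[\mu z (f_A^\ast)'(\mu z)]\|_{L^2_z}$ picks up only finitely many powers of $\mu^{-1}$, and these are bounded by $(1+T_0)^{1/2}$ since $\mu \ge (1+T_0)^{-1/2}$ on the interval of interest. This furnishes a uniform bound $\sup_{\mu \in [\mu(1),1]}\|\partial_\mu g(\mu,\cdot)\|_{H^2(2)} \le C(\phi_d,T_0)$, which is exactly the Lipschitz constant required.

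The only delicate bookkeeping is to check that the extra factor $z$ appearing in $\mu z (f_A^\ast)'(\mu z)$, combined with the $H^2(2)$-weight $(1+z^2)$, does not spoil the estimate; this is harmless because the Gaussian decay of $(f_A^\ast)'$ absorbs any polynomial weight in $z$. Combining the Lipschitz estimate with the bound $|\mu(t)-1| \le C(T_0)t^{-1/2}$ yields (\ref{grow}), with the constant $C(\phi_d, T_0)$ absorbing the $(1+T_0)^{O(1)}$ factors from the change of variables and the $\phi_d$-dependent Gaussian tail constants. I do not anticipate any serious difficulty here: the main point is merely that convergence of $\bar b(t,\cdot)$ to the attractor $f_A^\ast$ in self-similar coordinates is fast (in fact $t^{-1}$, so the stated rate $t^{-1/2}$ is not even sharp) because $\bar b(1,\cdot)$ already lies on the stable self-similar manifold.
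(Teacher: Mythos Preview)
Your proof is correct and takes a cleaner route than the paper's. The paper also begins from the explicit self-similar form in (\ref{barb.1}), but instead of your scaling parameter $\mu(t)$ and the fundamental theorem of calculus, it splits pointwise as
\[
\Big|\tfrac{\sqrt{t}}{\sqrt{t+T_0}}f_A^\ast\big(\tfrac{x}{\sqrt{t+T_0}}\big)-f_A^\ast\big(\tfrac{x}{\sqrt{t+T_0}}\big)\Big|
+\Big|f_A^\ast\big(\tfrac{x}{\sqrt{t+T_0}}\big)-f_A^\ast\big(\tfrac{x}{\sqrt{t}}\big)\Big|,
\]
and then estimates the second term by plugging in the explicit rational-Gaussian formula for $f_A^\ast$, expanding $e^{x^2/t-x^2/(t+T_0)}-1$ as a power series in $z^2 T_0/(t+T_0)$, and carefully controlling the tail of that series. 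Your argument bypasses all of this: by writing $g(\mu,z)=\mu f_A^\ast(\mu z)$ and differentiating in $\mu$, you reduce everything to a single uniform $H^2(2)$ bound on $\partial_\mu g$ over a compact $\mu$-interval, which follows immediately from the Gaussian decay of $f_A^\ast$ and a change of variable. The gain is that you never have to unpack the internal structure of $f_A^\ast$ beyond ``smooth with Gaussian tails,'' and you make transparent that the true rate is $t^{-1}$ (since $1-\mu(t)\sim T_0/(2t)$), which the paper's hands-on computation partly obscures.
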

\begin{proof}

We can clearly restrict ourselves to studying $t \ge 10$, because for $t \le 10$ the desired estimate is clear. 
\begin{align} \nonumber
\Big|\sqrt{t}\bar{b}(t, \sqrt{t}z) - f_A^\ast(z)\Big| &\le \Big|\frac{\sqrt{t}}{\sqrt{t+T_0}}f_A^\ast(\frac{x}{\sqrt{t+T_0}}) - f_A^\ast(\frac{x}{\sqrt{t+T_0}})\Big| + \Big|f_A^\ast(\frac{x}{\sqrt{t+T_0}}) - f_A^\ast(\frac{x}{\sqrt{t}})\Big| \\
&\le \Big|1 - \frac{\sqrt{t}}{\sqrt{t+T_0}} \Big| \Big|f_A^\ast(\frac{x}{\sqrt{t+T_0}})\Big| + \Big|f_A^\ast(\frac{x}{\sqrt{t+T_0}}) - f_A^\ast(\frac{x}{\sqrt{t}})\Big|.
\end{align}

For the first inequality, we directly estimate: 
\begin{align}
\Big|\frac{\sqrt{t+T_0} - \sqrt{t}}{\sqrt{t+T_0}} \Big| \le \frac{C(T_0)}{\sqrt{t}}. 
\end{align}

For the second inequality, we first save notation by labeling the denominators:
\begin{equation}
D_1 := 1+e_\ast(\frac{x}{\sqrt{t+T_0}}), \hspace{3 mm} D_2 := 1+e_\ast(\frac{x}{\sqrt{t}}).
\end{equation}

Then, 
\begin{align}
\Big|f_A^\ast(\frac{x}{\sqrt{t+T_0}}) - f_A^\ast(\frac{x}{\sqrt{t}})\Big| &\le \Big| \frac{e^{-\frac{x^2}{t+T_0}}}{D_1} - \frac{e^{-\frac{x^2}{t}}}{D_1} \Big| + \Big|\frac{e^{-\frac{x^2}{t}}}{D_1} -  \frac{e^{-\frac{x^2}{t}}}{D_2}\Big|.
\end{align}

For the first term, let $N$ be a large number to be selected. Then:
\begin{align} \nonumber
\Big| \frac{e^{-\frac{x^2}{t+T_0}}}{D_1} &- \frac{e^{-\frac{x^2}{t}}}{D_1} \Big| \le e^{-z^2} \Big| e^{\frac{x^2}{t} - \frac{x^2}{t+T_0}} - 1\Big| = e^{-z^2}\Big[\sum_{n \ge 1} \frac{1}{n!} \Big( z^2 \frac{T_0}{t+T_0} \Big)^n  \Big] \\ \nonumber
& = e^{-z^2} \Big[ -\frac{1}{\sqrt{t}} + \frac{1}{\sqrt{t}} \sum_{n \ge 0} \frac{1}{n!} \Big( z^2 \frac{T_0}{t+T_0} t^{\frac{1}{2n}} \Big)^n  \Big] \\ \nonumber
& = e^{-z^2} \Big[ -\frac{1}{\sqrt{t}} + \frac{1}{\sqrt{t}} \sum_{n=0}^{N} \frac{1}{n!} \Big( z^2 \frac{T_0}{t+T_0} t^{\frac{1}{2n}} \Big)^n + \frac{1}{\sqrt{t}}\sum_{n = N+1}^\infty \frac{1}{n!}\Big( z^2 \frac{T_0}{t+T_0} t^{\frac{1}{2n}} \Big)^n  \Big].
\end{align}

The first two terms above are easily controlled by $\frac{N}{\sqrt{t}} z^{2N}e^{-z^2}$, and so we must control the tail of the sum. For this, we note that: 
\begin{align}
\varphi(t) := \frac{T_0}{t+T_0} t^{\frac{1}{2n}}, \hspace{3 mm} n \ge 1,
\end{align}

has derivative: 
\begin{equation}
\varphi'(t) = \frac{1}{2n} t^{\frac{1}{2n}-1} \frac{T_0}{t+T_0} - t^{\frac{1}{2n}} \frac{T_0}{(t+T_0)^2},
\end{equation}

and so obtains a maximum at:
\begin{equation}
t_\ast = \Big( \frac{\frac{1}{2n}}{1 - \frac{1}{2n}} \Big)T_0, \hspace{3 mm} \varphi(t_\ast) = \frac{T_0}{T_0 + \frac{T_0}{2n}(1-\frac{1}{2n}^{-1})} \Big( \frac{T_0}{2n}(1-\frac{1}{2n})^{-1} \Big)^{\frac{1}{2n}}. 
\end{equation}

Taking $N$ large relative to $T_0$, it is clear that $\sup_{t \ge 10} \phi(t) < \rho < 1$ independent of $n$ because $t_\ast < 1$ for $N$ large. The maximum then obtains at the endpoint, $t = 10$:
\begin{equation}
\varphi(10) = \frac{T_0}{10 + T_0} 10^{\frac{1}{2n}} \le \rho < 1, \hspace{3 mm} \text{$n$ selected sufficiently large relative to $T_0$.}
\end{equation}
 
Thus, this sum can be controlled by: 
\begin{align}
\frac{e^{-z^2}}{\sqrt{t}} \Big[ \sum_{n \ge N+1} \frac{1}{n!} \Big( z^2 \frac{T_0}{t+T_0} t^{\frac{1}{2n}} \Big)^{n} \Big] \le \frac{e^{-z^2}}{\sqrt{t}} \Big[ e^{\rho z^2} - \sum_{n =0}^{N+1} \frac{1}{n!} \Big( z^2 \rho \Big)^n \Big].
\end{align}

Summarizing the first term, we have: 
\begin{align}
\Big| \frac{e^{-\frac{x^2}{t+T_0}}}{D_1} &- \frac{e^{-\frac{x^2}{t}}}{D_1} \Big| \le C(T_0)z^N \frac{e^{-(1-\rho)z^2}}{\sqrt{t}}. 
\end{align}

For the second term, we combine the fraction:
\begin{align}
\Big|\frac{e^{-\frac{x^2}{t}}}{D_1} -  \frac{e^{-\frac{x^2}{t}}}{D_2}\Big| = \frac{1}{D_1 D_2} \Big| \Big( D_2 - D_1 \Big) e^{-\frac{x^2}{t}}\Big|. 
\end{align}

To estimate the difference between $D_i$, we use the integral definition: 
\begin{align}
\Big| D_1 - D_2 \Big| \le \int_{\frac{x}{\sqrt{t+T_0}}}^{\frac{x}{\sqrt{t}}} e^{-z^2} dz \le \Big| \frac{x}{\sqrt{t}} - \frac{x}{\sqrt{t+T_0}}\Big| \le \frac{x}{\sqrt{t}}\Big| 1 - \frac{\sqrt{t}}{\sqrt{t+T_0}} \Big| \le z\frac{T_0}{\sqrt{t}}.
\end{align}

Therefore, combining all inequalities: 
\begin{equation}
\Big|f_A^\ast(\frac{x}{\sqrt{t+T_0}}) - f_A^\ast(\frac{x}{\sqrt{t}})\Big| \le \Big| \le  \frac{C(T_0)}{\sqrt{t}}z^Ne^{-z(1-\rho)^2},
\end{equation}

where $N$ is a constant depending on $T_0$. This then proves the desired claim. 

\end{proof}

According to (\ref{Split.1}), it remains to analyze (\ref{Split.1}.1). Let us discuss the profile $\bar{b}(t,x)$, for which the first task is to select $T_0$. 

\begin{equation} \label{T0.sel}
T_0 = T_0(\phi_d, \delta) = \Big( \frac{\phi_d}{\delta} \Big)^2.
\end{equation}

\begin{lemma} Suppose $\delta$ is selected sufficiently small relative to $\phi_d$. Then, the profile $\bar{b}$ obeys the following estimates: 
\begin{align} \label{barb.small.ck}
 &||\bar{b}(1,x)||_{\mathcal{C}^k} + ||\hat{\bar{b}}(1,\cdot)||_{L^1_\xi(k)} \lesssim \delta \text{ for } k \ge 0, \\ \label{barbint}
 &\int \bar{b}(t, z) dz = \phi_d. 
\end{align}
\end{lemma}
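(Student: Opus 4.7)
The plan is to reduce everything to the explicit self-similar formula $\bar b(1,x) = (1+T_0)^{-1/2} f_A^\ast(x/\sqrt{1+T_0})$ from (\ref{barb.1}) and to exploit that $T_0 = (\phi_d/\delta)^2$ makes the scale $(1+T_0)^{-1/2} \le \delta/\phi_d$ small. Each spatial derivative or Fourier weight on $\bar b(1,\cdot)$ will pull out one extra factor of this small scale, whereas the unscaled norms of the fixed profile $f_A^\ast$ grow only polynomially in $\phi_d$; the smallness of $\delta$ relative to $\phi_d$ will then absorb the polynomial.

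The integral identity (\ref{barbint}) is immediate: the substitution $u = z/\sqrt{t+T_0}$ collapses $\int \bar b(t,z)\, dz$ to $\int f_A^\ast(u)\, du = \phi_d$ by (\ref{fAlarge.1}). For the $\mathcal{C}^k$ bound, chain-rule differentiation gives
$$\|\partial_x^k \bar b(1,\cdot)\|_{L^\infty} = (1+T_0)^{-(k+1)/2}\,\|(f_A^\ast)^{(k)}\|_{L^\infty} \le (\delta/\phi_d)^{k+1}\,\|(f_A^\ast)^{(k)}\|_{L^\infty},$$
and I would then establish a polynomial bound $\|(f_A^\ast)^{(k)}\|_{L^\infty} \lesssim P_k(\phi_d)$ inductively from the Riccati identity $(f_A^\ast)' = -\tfrac{z}{2} f_A^\ast - (f_A^\ast)^2$ (which follows from $e_\ast'' = -\tfrac{z}{2} e_\ast'$), together with the pointwise bound $\|f_A^\ast\|_{L^\infty} \lesssim \sqrt{\phi_d}+\phi_d$ obtained by splitting near the level set $\{A e_\ast(z)=1\}$ and invoking the Gaussian tail asymptotics of $e_\ast$.

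For the weighted Fourier norm, the Fourier scaling $\hat{\bar b}(1,\xi) = \widehat{f_A^\ast}(\sqrt{1+T_0}\,\xi)$ and the substitution $\eta = \sqrt{1+T_0}\,\xi$ yield
$$\|\hat{\bar b}(1,\cdot)\|_{L^1_\xi(k)} \le (1+T_0)^{-1/2}\,\|\widehat{f_A^\ast}\|_{L^1_\xi(k)} \le (\delta/\phi_d)\,\|\widehat{f_A^\ast}\|_{L^1_\xi(k)}.$$
I would control the weighted $L^1$ norm on $\widehat{f_A^\ast}$ by the Sobolev norm $\|f_A^\ast\|_{H^{k+2}}$ via Cauchy--Schwarz against the integrable weight $(1+|\xi|)^{-2}$; this Sobolev norm is again polynomial in $\phi_d$ by iterating the same Riccati recursion, so choosing $\delta$ small relative to $\phi_d$ finishes the bound.

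The main obstacle is the apparent exponential growth of $A = e^{\phi_d}-1$ in $\phi_d$: crude estimates like $f_A^\ast \le A e_\ast'$ are useless for large $\phi_d$. The saving feature is the logarithmic-derivative structure $f_A^\ast = \partial_z \log(1+A e_\ast)$: the peak of $f_A^\ast$ sits near $|z|\sim\sqrt{\phi_d}$, where the denominator $1+A e_\ast$ precisely cancels the factor $A$ in the numerator, leaving only polynomial-in-$\phi_d$ growth that is comfortably absorbed by the factor $(\delta/\phi_d)^{k+1}$ coming from the scaling.
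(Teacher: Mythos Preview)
Your proposal is correct and follows essentially the same route as the paper: both exploit the scaling relation $\bar{b}(1,\cdot)=T_0^{-1/2}f_A^\ast(\cdot/\sqrt{T_0})$ (and its Fourier counterpart) to trade each derivative or weight for a factor of $\delta/\phi_d$, then absorb the fixed $\phi_d$-dependent norms of $f_A^\ast$ by the hypothesis that $\delta$ is small relative to $\phi_d$. The paper's proof is terser and leaves the bounds on $\|(f_A^\ast)^{(k)}\|_{L^\infty}$ and $\|\widehat{f_A^\ast}\|_{L^1_\xi(k)}$ implicit; your use of the Riccati identity $(f_A^\ast)'=-\tfrac{z}{2}f_A^\ast-(f_A^\ast)^2$ to get polynomial-in-$\phi_d$ control is a genuine addition that makes the ``$\lesssim\delta$'' claim rigorous, and your observation about the logarithmic-derivative structure neutralising the apparent exponential growth of $A$ is exactly the mechanism at work.
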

\begin{proof}
This follows from the definition (\ref{barb.1}):
\begin{align}
|\bar{b}(1,\cdot)| \le |\frac{1}{\sqrt{T_0}}| |f^\ast_A(\frac{x}{\sqrt{T_0}})| \le \frac{1}{\sqrt{T_0}} ||f^\ast_A||_{L^\infty} \le \delta \frac{\delta}{\phi_d^2} ||f^\ast_A||_{L^\infty} \lesssim \delta. 
\end{align}

Differentiating (\ref{barb.1}) enables us to extend to higher order spatial derivatives. The $L^1_\xi$ estimates in (\ref{barb.small.ck}) work similarly. By directly performing the Fourier transform, we have:
\begin{align}
\hat{\bar{b}}(1,\xi) = \hat{f^\ast_A}(\sqrt{T_0} \xi),
\end{align}

and so:
\begin{equation}
||\hat{\bar{b}}(1,\xi)||_{L^1_\xi(k)} = ||\hat{f^\ast_A}(\sqrt{T_0} \xi)||_{L^1_\xi(k)} = \frac{1}{(\sqrt{T_0})^{k+1}} ||f^\ast_A||_{L^1_\xi} \lesssim \delta. 
\end{equation}

\end{proof}

Fix any parameter $L > 0$. We will need to record the behavior of $\bar{b}$ under the following rescaling map: 
\begin{align} \label{explicit.bn}
\bar{b}^{(n)}(t,z) := L^n \bar{b} (L^{2n}t, L^n z) = \frac{L^n}{\sqrt{L^{2n}t + T_0}} f_A^\ast \Big( \frac{L^nz }{\sqrt{L^{2n}t + T_0}} \Big), \hspace{5 mm} t \in [1,L^2].
\end{align}

We now quantify the size of the $\bar{b}^n$:
\begin{lemma} The sequence of iterates, (\ref{explicit.bn}), satisfies the following bounds:  
\begin{align} \label{criteria.2}
\sup_{t \in [1, L^2]} ||z^{k+\frac{1}{2}} \partial_z^k \bar{b}^{(n)}||_{L^2} + ||z^{k+1} \partial_z^k \bar{b}^{(n)}||_{L^\infty} \le C(\phi_d). 
\end{align}
The majorizing quantity above is independent of $n, L$ and small $\delta$.
\end{lemma}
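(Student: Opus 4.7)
The plan is to exploit the fact that $\bar b^{(n)}$ is a pure rescaling of the fixed profile $f_A^\ast$, and the weights $z^{k+1/2}$ in $L^2$ and $z^{k+1}$ in $L^\infty$ are chosen to be precisely the critical scaling exponents, so that both norms are scale invariant.

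First I would recognize the structural form. Starting from the explicit formula in (\ref{explicit.bn}), define
\[
\mu_n := \mu_n(t) = \frac{L^n}{\sqrt{L^{2n}t+T_0}}, \qquad t\in[1,L^2],
\]
so that $\bar b^{(n)}(t,z) = \mu_n\, f_A^\ast(\mu_n z)$. Note that $\mu_n^2 = L^{2n}/(L^{2n}t+T_0)\le 1/t\le 1$, but the absolute size of $\mu_n$ will turn out to be irrelevant; only the scaling invariance matters. Differentiating gives $\partial_z^k \bar b^{(n)}(t,z) = \mu_n^{k+1}(f_A^\ast)^{(k)}(\mu_n z)$, so
\[
z^{k+1}\partial_z^k \bar b^{(n)}(t,z) = (\mu_n z)^{k+1}(f_A^\ast)^{(k)}(\mu_n z).
\]

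Next, for the $L^\infty$ estimate, taking suprema over $z$ (equivalently over $y=\mu_n z\in\mathbb{R}$) immediately yields
\[
\|z^{k+1}\partial_z^k \bar b^{(n)}\|_{L^\infty} = \|y^{k+1}(f_A^\ast)^{(k)}(y)\|_{L^\infty},
\]
which depends only on $f_A^\ast$, hence only on $\phi_d$ (via $\log(1+A)=\phi_d$), and is independent of $n,L,t,\delta$. For the $L^2$ estimate, the change of variables $y=\mu_n z$, $dz = dy/\mu_n$, produces
\[
\|z^{k+1/2}\partial_z^k \bar b^{(n)}\|_{L^2}^2 = \int \mu_n^{2k+2}\, z^{2k+1}|(f_A^\ast)^{(k)}(\mu_n z)|^2\,dz = \mu_n^{\,2k+2-(2k+1)-1}\int y^{2k+1}|(f_A^\ast)^{(k)}(y)|^2\,dy,
\]
and the exponent of $\mu_n$ is exactly zero. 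So this norm too reduces to $\|y^{k+1/2}(f_A^\ast)^{(k)}(y)\|_{L^2}$, independently of $n,L,t,\delta$.

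Finally I would verify that both target quantities are finite and bounded by some $C(\phi_d)$. From the closed form $f_A^\ast(y) = Ae'_\ast(y)/(1+Ae_\ast(y))$ and the fact that $e_\ast$ is the Gaussian error function with $e_\ast\in(0,1)$ and $e'_\ast(y)=(4\pi)^{-1/2}e^{-y^2/4}$, one sees that $1+Ae_\ast(y)$ is bounded below by a positive constant depending only on $A$ (and hence on $\phi_d$), while the numerator and all of its derivatives decay like a Gaussian. By the quotient/chain rule, every derivative $(f_A^\ast)^{(k)}$ is a finite combination of Gaussian-decaying functions divided by powers of $1+Ae_\ast$, so $(f_A^\ast)^{(k)}$ itself decays like $e^{-y^2/4}$ times a polynomial in $y$, with constants depending only on $A$ and $k$. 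Consequently $\|y^{k+1/2}(f_A^\ast)^{(k)}\|_{L^2}$ and $\|y^{k+1}(f_A^\ast)^{(k)}\|_{L^\infty}$ are both bounded by some $C(\phi_d)$, completing the proof.

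There is really no serious obstacle here: the only place where care is needed is to check that the exponents in the weights line up so as to cancel the $\mu_n$ factors exactly. One should emphasize this cancellation as the reason the same bound holds uniformly across all scales $L^n$, including in the delicate regime $\mu_n\to 0$ where $\bar b^{(n)}$ itself spreads out to cover arbitrarily large spatial regions.
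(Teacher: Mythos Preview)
Your proof is correct and follows exactly the approach indicated by the paper, which simply states that the result ``follows by direct computation using the explicit form in (\ref{explicit.bn}).'' You have supplied the details of that computation: the key observation is precisely the scale invariance you identify, namely that the weights $|z|^{k+1/2}$ in $L^2$ and $|z|^{k+1}$ in $L^\infty$ are critical for the dilation $f\mapsto \mu f(\mu\cdot)$, so both norms reduce to fixed norms of $f_A^\ast$ independent of $\mu_n$.
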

\begin{proof}
This follows by direct computation using the explicit form in (\ref{explicit.bn}).
\end{proof}

\begin{remark} The quantity $\phi_d$ now enters our analysis through $\bar{b}^{(n)}$ in the above lemma. 
\end{remark}

\begin{remark}[Transience of Smallness] \label{remk.tr} Let us turn to the sequence of iterates $\bar{b}^{(n)}(1,z)$ defined by (\ref{explicit.bn}). For $n = 0$, $\bar{b}^{(n)}(1,z) = \bar{b}(1,z)$, which satisfies estimate (\ref{barb.small.ck}). We may now ask, does each iterate in $n$ satisfy the same smallness in $\mathcal{C}^1$ and $L^1_\xi$ based norms? The answer to this is no: one argues as follows, using the $H^2(2) \hookrightarrow L^\infty$ embedding:
\begin{align} \label{transient}
||\bar{b}^{(n)}(1,z) - f^\ast_A(z)||_{L^\infty} \lesssim ||\bar{b}^{(n)}(1,z) - f^\ast_A(z)||_{H^2(2)} \xrightarrow{n \rightarrow \infty} 0, 
\end{align}

according to the convergence in (\ref{grow}). Combined with the $L^\infty$ lower bound in (\ref{fAlarge.1}), one deduces for $n$ sufficiently large, we can no longer say that $||\bar{b}^{(n)}(1,z)||_{L^\infty}$ is small. 
\end{remark}

We will need to cutoff the Fourier-modes of $\bar{b}$, which upon referring to the definitions in (\ref{mf}), we do in the following way: 
\begin{align}
\hat{\bar{b}}^c = p_{mf}^c(\xi) \hat{\bar{b}}, \hspace{5 mm} \hat{\bar{b}}^s = p_{mf}^s(\xi) \hat{\bar{b}}, \hspace{5 mm} \bar{b}^c + \bar{b}^s = \bar{b}. 
\end{align}

We then have a splitting of the system:
\begin{align}
\Big(\partial_t - \lambda_1(\xi) \Big) \hat{\bar{b}}^c &=  p_{mf}^c(\xi) i \beta \xi \Big( \hat{\bar{b}} \ast \hat{\bar{b}} \Big)= p_{mf}^c i \beta \xi \Big( \hat{\bar{b}}^c \ast \hat{\bar{b}}^c \Big) + p_{mf}^c(\xi)i\beta \xi N_{b}^c(\bar{b}^c, \bar{b}^s), \\
\Big( \partial_t - \lambda_1(\xi) \Big) \hat{\bar{b}}^s &=  p_{mf}^s(\xi) i \beta \xi \Big( \bar{b} \ast \bar{b} \Big) = p_{mf}^s(\xi)i\beta \xi N_{b}^s(\bar{b}^c, \bar{b}^s).
\end{align}

Here, 
\begin{align} \label{defn.nbc}
N^c_b := \hat{\bar{b}}^s \ast \hat{\bar{b}}^c, \hspace{3 mm} N^s_b := \Big( \hat{\bar{b}}^s \ast \hat{\bar{b}}^s \Big) + \Big( \hat{\bar{b}}^s \ast \hat{\bar{b}}^c \Big). 
\end{align} 

The stable component, $\bar{b}^s$, decays to zero exponentially. Note that there is no quadratic self-interaction of $\bar{b}^s$ in the critical equation for $\bar{b}^c$. 

\subsection{The Renormalization Group}

Let us briefly discuss the idea of the Renormalization Group (RG), which was brought to the study of nonlinear evolution equations by \cite{BKL}. We refer the reader to \cite{BKL} and \cite{Sandstede} for more thorough treatments than the one we give here. The idea of the (RG) is to discretize the dynamics described by (\ref{Split.1}.1). The reader should keep in mind our goal, which is to obtain asymptotics of the type (\ref{Split.1}.1). Given this, let us define our unknown via: 
\begin{align} \label{defn.alpha}
a(t,x) = u^c(t,x) - \bar{b}^c(t,x). 
\end{align}

One fixes a time-scale, denoted by $L$. Then, by setting $t = L^n$, and as usual $z = \frac{x}{\sqrt{t}}$, the estimate in (\ref{Split.1}.1) reads: 
\begin{align} \nonumber
||t^{\frac{1}{2}} u^c(t, t^{\frac{1}{2}}z) - t^{\frac{1}{2}} \bar{b}(t, t^{\frac{1}{2}}z)||_{H^2(2)} &= ||t^{\frac{1}{2}} \alpha(t, t^{\frac{1}{2}}z)||_{H^2(2)} \\ \label{RGN}
& = ||L^n \alpha(L^{2n}, L^n z)||_{H^2(2)}. 
\end{align}

Motivated by (\ref{RGN}), one defines a new unknown: 
\begin{align}
a^{(n)}(t,x) := L^n a(L^{2n}t, L^n z). 
\end{align}

The quantities of interest then become the sequence of initial data, $a^{(n)}(1,z)$, of the unknowns $a^{(n)}$. By direct computation, one checks the iterative relation: 
\begin{align} \label{it.RG.R}
a^{(n)}(1, z) = L a^{(n-1)}(L^2, Lz). 
\end{align}

Motivated by (\ref{it.RG.R}), one defines the renormalization map: 
\begin{align} \label{defm.RGM.1}
R_Lf(z) := Lf(Lz),
\end{align}

so that $a^{(n)}(1,z) = R_L a^{(n-1)}(L^2, z)$. From here, the group operation is clear: it is the composition of the flow-forward map for a time scale of $L^2$ with the map $R_L$. $R_L$ has the Fourier representation: 
\begin{align} \label{defn.RGM}
\hat{R}_Lf(\xi) = \hat{f}(\frac{\xi}{L}).  
\end{align}

Due to the (\ref{defn.RGM}), we will set the following abuse of notation, which will allow for ease of several formulas: 
\begin{align}
\hat{R}_{L^{-1}} \hat{f}(\xi) = \hat{f}(\frac{\xi}{L}) = \mathcal{F} \Big[ R_L f \Big](\xi) =\mathcal{F} \Big[ L f(L \cdot) \Big](\xi).
\end{align}

From here, we have: 
\begin{lemma} Suppose $\int_\mathbb{R} f = 0$, equivalently, $\hat{f}(0) = 0$. Consider the heat semigroup $u = e^{\Delta (t-1)}f$. Then for any $0 < L < \infty$, there exists some constant $0 < C < \infty$, independent of $L$, such that:
\begin{align} \label{contr.1.1}
||R_L u(L^2, \cdot)||_{H^2(2)} \le \frac{C}{L} ||f||_{H^2(2)}.
\end{align}
\end{lemma}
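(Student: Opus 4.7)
The plan is to pass to the Fourier side via Plancherel, which identifies $\|g\|_{H^2(2)}$ with the weighted Sobolev norm $\|(1+|\xi|^2)\hat g(\xi)\|_{H^2_\xi}$. There, the heat semigroup is multiplication by $e^{-\xi^2(t-1)}$, and the renormalization map satisfies $\widehat{R_L g}(\xi)=\hat g(\xi/L)$, so composing gives
\[
\widehat{R_L u(L^2,\cdot)}(\xi) \;=\; \phi_L(\xi)\,\hat f(\xi/L), \qquad \phi_L(\xi):=e^{-\xi^2(1-L^{-2})}.
\]
Restricting to $L\ge 2$ (the range $L\le 2$ is trivial since $C/L$ is bounded below), one has $\phi_L(\xi)\le e^{-\xi^2/2}$ and each $\xi$-derivative of $\phi_L$ is a polynomial times $\phi_L$, hence is dominated by a fixed Gaussian uniformly in $L$.

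The essential input is the mean-zero hypothesis $\hat f(0)=0$, which lets me write $\hat f(\eta)=\eta\,\tilde f(\eta)$ with $\tilde f(\eta):=\int_0^1\hat f'(s\eta)\,ds$, so that $\hat f(\xi/L)=(\xi/L)\,\tilde f(\xi/L)$. This pointwise factorization is where the gain of $L^{-1}$ originates. I would then expand $\|(1+|\xi|^2)\phi_L(\xi)\hat f(\xi/L)\|_{H^2_\xi}$ by Leibniz, grouping terms by the number $k\in\{0,1,2\}$ of $\xi$-derivatives that fall on $\hat f(\xi/L)$ rather than on the envelope. For $k\ge 1$ each such derivative carries an explicit $L^{-1}$ from the chain rule; for $k=0$ the factorization above supplies the missing $L^{-1}$. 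In every case what remains is a uniformly Gaussian-localized function of $\xi$ multiplied by $\tilde f(\xi/L)$ or $\hat f^{(k)}(\xi/L)$, which after the change of variables $\eta=\xi/L$ integrates against a measure concentrated on $|\eta|\lesssim 1$. The remaining factors are estimated by $\|\tilde f\|_{L^\infty}$, $\|\hat f'\|_{L^\infty}$, and $\|\hat f''\|_{L^2}$, each bounded by $\|f\|_{H^2(2)}$ via Plancherel and the one-dimensional embedding $H^1_\xi\hookrightarrow L^\infty$.

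The only piece of bookkeeping worth flagging is the $k=2$ term, where $\hat f''$ is only $L^2$: the change of variables contributes a $\sqrt L$ from the Jacobian, so its size is $L^{-2}\cdot\sqrt L\,\|\hat f''\|_{L^2}=L^{-3/2}\|f\|_{H^2(2)}$, still absorbed into $(C/L)\|f\|_{H^2(2)}$. Summing the finitely many Leibniz terms yields the claim. The conceptual heart of the lemma — and the reason one actually obtains contractivity $L^{-1}$ rather than the merely bounded $L^{-1/2}$ that a naive dilation argument would suggest — is precisely the mean-zero hypothesis applied in the $k=0$ branch; without $\hat f(0)=0$ the zero-frequency mode would survive the renormalization and destroy the gain. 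This is why the decomposition $a^{(n)}=u^{n,c}-\bar b^{(n)}$ is engineered in the introduction to have zero mean before the contractive estimate is invoked.
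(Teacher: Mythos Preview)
Your argument is correct. The paper does not actually prove this lemma: its entire proof reads ``This is estimate (4.5) on page 3561 in \cite{Sandstede},'' so you are supplying what the paper outsources. Your Fourier-side approach --- Gaussian localization via $\phi_L$, the factorization $\hat f(\xi/L)=(\xi/L)\tilde f(\xi/L)$ from the mean-zero hypothesis to extract the $L^{-1}$ in the $k=0$ Leibniz branch, chain-rule factors of $L^{-1}$ for $k\ge 1$, and the $L^{-3/2}$ bookkeeping for the $k=2$ term --- is precisely the standard mechanism behind such contraction estimates and is almost certainly what the cited reference does. One small remark: the statement as written allows $0<L<1$, where $e^{\Delta(L^2-1)}$ would be the backward heat flow; in practice the lemma is only invoked for $L$ large, and your reduction to $L\ge 2$ (with $1\le L\le 2$ handled by the crude bound $\|R_L\|\le L^{5/2}$ together with boundedness of the forward heat semigroup on $H^2(2)$) is the right way to read it.
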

\begin{proof}
This is estimate (4.5) on page 3561 in \cite{Sandstede}. 
\end{proof}

We will also record here basic facts which follow by direct calculation from (\ref{defn.RGM}), which will be recalled later: 
\begin{lemma}[Basic Properties of $R_L$] Fix any $0 < L < \infty$. Then:
\begin{align} \label{defn.b.b}
&||Rf||_{H^2(2)} \le L^\frac{5}{2} ||f||_{H^2(2)}, \hspace{20 mm} \text{(Boundedness)}\\ \label{commb1}
&\partial_x R_L = L R_L \partial_x  \hspace{38 mm} \text{(Commutativity)}.
\end{align}
\end{lemma}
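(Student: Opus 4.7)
The plan is to prove both properties by direct calculation from the two definitions of $R_L$: the spatial form $R_L f(z) = L f(Lz)$ and its Fourier image $\widehat{R_L f}(\xi) = \hat f(\xi/L)$ recorded in (\ref{defn.RGM}). Neither claim requires any machinery beyond the chain rule and a change of variables, so the real content is a bookkeeping of powers of $L$.

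For the commutativity relation (\ref{commb1}), I would simply differentiate: $\partial_x (R_L f)(x) = \partial_x[L f(Lx)] = L^2 f'(Lx) = L \cdot (R_L \partial_x f)(x)$. This is one line; no approximation, no cutoff, no hypothesis on $L$.

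For the boundedness bound (\ref{defn.b.b}), the cleanest approach is to work in frequency variables using the identity $\|g\|_{H^2(2)} = \|(1+|\xi|^2)\hat g(\xi)\|_{H^2_\xi}$ from the definition in the introduction. Applied to $g = R_L f$, this reduces the problem to estimating $\|(1+|\xi|^2)\hat f(\xi/L)\|_{H^2_\xi}$. I would then make the change of variables $\eta = \xi/L$, which contributes a factor $L^{1/2}$ per $L^2$ norm (from $d\xi = L\,d\eta$), replaces the weight by $(1+L^2\eta^2)$, and rewrites each $\xi$-derivative either as $L^{-1}\partial_\eta$ (when it falls on $\hat f(\xi/L)$) or as a factor coming from differentiating the polynomial weight. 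Tracking the worst term across the three components of $\|\cdot\|_{H^2_\xi}$, one finds that the undifferentiated piece $(1+L^2\eta^2)\hat f(\eta)$ gives the dominant $L^2$ amplification of the weight, and the Jacobian contributes another $L^{1/2}$, for the claimed $L^{5/2}$ scaling (the $\partial_\xi$ and $\partial_\xi^2$ terms contribute $L^{3/2}$ and $L^{1/2}$ respectively after the derivatives on the scaled argument cancel powers of $L$).

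I do not expect any genuine obstacle here. The only mild subtlety is that the bound as stated is useful in the regime $L \geq 1$ which is the one employed in the RG iteration throughout the paper; for $L < 1$ the same calculation yields a better bound (dominated by $L^{1/2}$), so the inequality $\|R_L f\|_{H^2(2)} \le C L^{5/2}\|f\|_{H^2(2)}$ with the uniform constant absorbed as in the paper's $\lesssim$ convention is still valid for $L$ bounded away from $0$, which is all that is ever used.
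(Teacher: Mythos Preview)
Your argument is correct and is exactly the direct computation from the definitions that the paper indicates (the paper gives no further details beyond ``direct calculation from (\ref{defn.RGM})''). One minor correction to your closing remark: for $L<1$ the norm is \emph{not} dominated by $L^{1/2}$, since the weighted piece $\|z^2 R_L f\|_{L^2}=L^{-3/2}\|z^2 f\|_{L^2}$ actually blows up as $L\to 0$; your conclusion that the bound holds for $L$ bounded away from zero (and in particular for the large $L$ used throughout the RG iteration) is nevertheless correct.
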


Within the RG framework, the desired convergence (\ref{Split.1}.1) becomes equivalent to the convergence: 
\begin{align} \label{goal}
||a^{(n)}(1,z)||_{H^2(2)} \lesssim L^{-n(1-\sigma)}, \text{ for some } \sigma > 0. 
\end{align}

This will be the convergence that we prove in the forthcoming analysis. 

\subsection{Presentation of Unknowns}

Our goal for the rest of this paper is to extract the convergence in (\ref{Split.1}.1). First, let us record that the definition (\ref{defn.alpha}) preserves the Fourier support property: 
\begin{equation}
\text{supp } \hat{a} \subset (-\frac{l_1}{4}, 0].
\end{equation}

\begin{lemma}  $a$ satisfies the following properties:
\begin{align}  \label{as.w}
\int a(1,\cdot) =  0, \hspace{3 mm}  ||\hat{a}(1,\cdot)||_{L^1_\xi(1)} \lesssim \delta. 
\end{align}
\end{lemma}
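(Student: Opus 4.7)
Both assertions follow quickly from the definition $a := u^c - \bar{b}^c$ together with identities already established, so I expect no real obstacle; the lemma is the bookkeeping step that records the properties of $a(1,\cdot)$ needed to start the renormalization iteration.

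For the mean-zero property, I would pass to Fourier space and use the fact that $\int f\,dx = \hat{f}(0)$. Writing
\[
\hat{a}(1,0) = \hat{u}^c(1,0) - \hat{\bar{b}}^c(1,0),
\]
the first term equals $\phi_d$ by the initial condition (\ref{prove.1}). For the second, I would observe that at $\xi = 0$ the cutoff satisfies $\chi(0) = 1$ and the spectral projection $Q^c(0)$ acts as the identity on the scalar critical mode, so $p^c_{mf}(0) = 1$. Consequently
\[
\hat{\bar{b}}^c(1,0) = p^c_{mf}(0)\hat{\bar{b}}(1,0) = \hat{\bar{b}}(1,0) = \int \bar{b}(1,z)\,dz = \phi_d
\]
by (\ref{barbint}), so subtracting gives $\hat{a}(1,0) = 0$.

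For the $L^1_\xi(1)$ estimate, I would apply the triangle inequality
\[
\|\hat{a}(1,\cdot)\|_{L^1_\xi(1)} \le \|\hat{u}^c(1,\cdot)\|_{L^1_\xi(1)} + \|\hat{\bar{b}}^c(1,\cdot)\|_{L^1_\xi(1)}.
\]
The first term is $\lesssim \delta$ directly by (\ref{prove.2}). For the second, since the mode filter $p^c_{mf}$ is smooth, bounded, and compactly supported in $\xi$, multiplication by $p^c_{mf}(\xi)(1+|\xi|)/(1+|\xi|)$ is a bounded operator on $L^1_\xi(1)$, so
\[
\|\hat{\bar{b}}^c(1,\cdot)\|_{L^1_\xi(1)} \lesssim \|\hat{\bar{b}}(1,\cdot)\|_{L^1_\xi(1)} \lesssim \delta
\]
by the second bound in (\ref{barb.small.ck}). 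Combining the two estimates yields the claim.

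The only point requiring any care is the normalization $p^c_{mf}(0) = 1$, which is implicit in the construction of the critical projection (\ref{mf}); everything else is an immediate consequence of (\ref{prove.1}), (\ref{prove.2}), (\ref{barb.small.ck}), and (\ref{barbint}).
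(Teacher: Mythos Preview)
Your proof is correct and follows essentially the same route as the paper's: both reduce the mean-zero claim to $\int u^c(1,\cdot)=\int \bar{b}(1,\cdot)=\phi_d$ via (\ref{prove.1}) and (\ref{barbint}), and both obtain the $L^1_\xi(1)$ bound by the triangle inequality together with (\ref{prove.2}) and (\ref{barb.small.ck}). You are in fact slightly more careful than the paper in making explicit why $\hat{\bar{b}}^c(1,0)=\hat{\bar{b}}(1,0)$ and why $\|\hat{\bar{b}}^c\|_{L^1_\xi(1)}\lesssim\|\hat{\bar{b}}\|_{L^1_\xi(1)}$, since the definition of $a$ involves $\bar{b}^c$ rather than $\bar{b}$; the paper tacitly identifies these.
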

\begin{proof}
First, according to (\ref{prove.1}) and (\ref{barb.small.ck}),
\begin{align}
\int a(1,\cdot)  = \int u^c(1,\cdot) - \int \bar{b}(1,\cdot) = \phi_d - \phi_d = 0.
\end{align}

The second estimate in (\ref{as.w}) follows by (\ref{prove.2}) and the first estimate in (\ref{barb.small.ck}):
\begin{equation}
||\hat{a}(1,\cdot)||_{L^1_\xi(1)} \le ||\hat{u}^c(1,\cdot)||_{L^1_\xi} + ||\bar{b}(1,\cdot)||_{L^1_\xi} \lesssim \delta.
\end{equation} 
\end{proof}

As suggested by (\ref{goal}) for the critical equation, for any $p > 0$, define the renormalized unknowns:
\begin{align} \label{RNO.1}
&\hat{a}^{(n)} = \hat{R}_{L^{-n}} \hat{a}(L^{2n}t, \xi), \\ \label{RNO.2}
&\tilde{u}^{(n,s)}(t,z) = L^{n(1-p)} \tilde{u}^s(L^{2n}t, \frac{\xi}{L^n}, \nu) = L^{n(1-p)} \hat{R}_{L^{-n}} \tilde{u}^s(L^{2n}t, \xi, \nu). 
\end{align}

The factor of $L^{-np}$ is a blow-up factor for the stable unknown, which helps in controlling the nonlinearities in (\ref{BUS.1}). Written in spatial scale, we have:
\begin{align} \label{scaling}
&a^{(n)}(t,z) = R_{L^n} a (L^{2n}t, \cdot) = L^n a(L^{2n}t, L^nz) = u^{n,c} - \bar{b}^{(n)}.
\end{align}

The unknowns in (\ref{RNO.1}) - (\ref{RNO.2}), the renormalized system on the time interval $[1, L^2]$: 
\begin{align} \nonumber
 &\partial_t\hat{a}^{(n)} -  \lambda_1^{(n)} \hat{a}^{(n)} =p^c_{mf}(\xi)i\beta \xi \Big( \Big( \hat{a}^{(n)} \ast \hat{a}^{(n)} \Big) + 2\Big( \hat{a}^{(n)} \ast \bar{b}^{c,n} \Big) \Big) + \hat{N}^{(n)}(u^{n,c}, u^{s, n}) \\ \label{eqn.c.alpha}
 & \hspace{30 mm} +  L^{-n} o(\xi^3) \hat{\bar{b}}^{(n)} + p_{mf}^c i \beta \xi N_b^c \\  \label{eqn.stable}
&\partial_t \tilde{u}^{n,s} - \Lambda^{(n)} \tilde{u}^{n,s} = L^{n(3-p)}\tilde{N}^{(n,s)},  \\ \label{eqn.stable.1}
&g^{(n)}(z) := a^{(n)}(1,z) = La^{(n-1)}(L^2, Lz), \hspace{3 mm} \tilde{u}^{n,s}(1,\xi, \nu) = \tilde{g}^{(n,s)}
\end{align}

We have denoted $\lambda^{(n)}, N^{(n)}$ above to be the rescaled versions, that is: 
\begin{align}
&\lambda^{(n)}(\xi) := L^{2n} \lambda\Big( \frac{\xi}{L^{2n}} \Big) =  \chi\Big(\frac{\xi l_1}{4L^n}\Big)\{ |\xi|^2 +  L^{-n} o(\xi^3) \}, \\
&N^{(n)} := \sum_{i = 1}^3 N^{(n)}_i, \\ \label{nl.resc}
&N^{(n)}_i := L^{3n} N_i \Big( \hat{R}_{L^{n}}\hat{u}^{n,c}(L^{-2n}t, \cdot), L^{-n(1-p)}\hat{R}_{L^{n}} \tilde{u}^{(n,s)}(L^{-2n}t, \cdot, \nu) \Big) (L^{2n}t, L^nz), \\
&\tilde{N}^{(n,s)} = \hat{R}_{L^{-n}}\tilde{p}^s_{mf} \tilde{N}^s(\hat{R}_{L^n} \hat{u}^{n,c}, L^{-n(1-p)} \hat{R}_{L^{n}} \tilde{u}^{n,s}).
\end{align}

The expression in (\ref{nl.resc}) appears to be fairly bulky; we write it more explicitly given the forms of (\ref{nl.1}) - (\ref{nl.4}):
\begin{align} \label{drn.1}
&\hat{N}^{(n)}_1(\hat{u}^{n,c}, \tilde{u}^{(n,s)})(t, \xi) = L^{-n}  o(\xi^2) \Big( \hat{u}^{n,c} \ast \hat{u}^{n,c} \Big), \\ \label{drn.2}
&\hat{N}^{(n)}_2(\hat{u}^{n,c}, \tilde{u}^{(n,s)})(t, \xi, \nu) = L^{-n(1-p)} o(\xi) \hat{u}^{n,c} \ast \Big( \tilde{u}^{n,s}_\nu + \tilde{u}^{n,s}_{\nu \nu} \Big),\\ \label{drn.3}
&\hat{N}^{(n)}_3(\hat{u}^{n,c}, \tilde{u}^{(n,s)})(t,\xi, \nu) = L^{-n}o(\xi)  p(\hat{u}^{n,c}, \tilde{u}^{(n,s)}).
\end{align}

Via the mean-zero feature of $a$ in (\ref{as.w}), the linear flow becomes a contraction on the space $H^2(2)$, according to (\ref{contr.1.1}). Motivated by this, the linear and nonlinear propagations are separated out: 
\begin{equation} \label{separ.1}
a^{(n)} = \bar{a}^{(n)} + \gamma^{(n)},
\end{equation}

where,
\begin{equation} \label{eqn.bar.a}
\partial_t \hat{\bar{a}}^{(n)} - \lambda^{(n)}(\xi) \hat{\bar{a}}^{(n)} = 0, \hspace{5 mm} \bar{a}^{(n)}(1,z) = a^{(n)}(1,z) = g^{(n)}(z). 
\end{equation}

The equation (\ref{eqn.bar.a}) implies:
\begin{equation} \label{almost.heat}
\text{supp } \hat{\gamma}^{(n)} \subset \{ \xi: |\xi| \le -L^{n}\frac{l_1}{4} \}.
\end{equation}

The linear part, $\lambda^{(n)}$, is $o(\xi^2)$ for $\xi \sim 0$, and so (\ref{almost.heat}) behaves like the heat-equation on the critical modes, and differs from the heat equation on exponentially decaying modes. We may thus inherit the following heat equation bounds for $\bar{a}$: 
\begin{align}
&\sup_{t \in [1,L^2]} ||\bar{a}^{(n)}||_{L^2} + \int_1^{L^2} ||\bar{a}^{(n)}_{z}||_{L^2} ds \le ||g^{(n)}||_{L^2}, \\
& ||\hat{\bar{a}}^{(n)}||_{L^1_\xi} \le ||\hat{g}^{(n)}||_{L^1_\xi}. 
\end{align}

Analogous bounds hold true for higher derivatives. We record now the following linearized system for $\gamma^{(n)}$, which follows from subtracting (\ref{eqn.bar.a}) from (\ref{eqn.c.alpha}):
\begin{align} \nonumber
\Big(\partial_t -  \lambda^{(n)}(\xi) \Big) \hat{\gamma}^{(n)} &= \hat{\mathcal{M}}^{(n)}(\hat{\bar{a}}^{(n)}, \hat{\gamma}^{(n)}, \hat{\bar{b}}^{c,n})+ \hat{N}^{(n)}(\hat{\bar{a}}^{(n)} + \hat{\gamma}^{(n)} + \hat{\bar{b}}^{(n)}, \tilde{u}^{s,n})  \\ \label{sys.1}
&  + \hat{\mathcal{B}}^{(n)}, \hspace{5 mm} \gamma^{n}(1,z) = 0. 
\end{align}

We have introduced the following notation: 
\begin{align} \label{DEF.M}
\hat{\mathcal{M}}^{(n)} &:= p^c_{mf}(\xi)i\beta \xi \Big((\hat{\bar{a}}^{(n)} + \hat{\gamma}^{(n)} ) \ast ( \hat{\bar{a}}^{(n)} + \hat{\gamma}^{(n)} ) + 2(\hat{\bar{a}}^{(n)} + \hat{\gamma}^{(n)} ) \ast \bar{b}^{c,n} \Big), \\ \label{DEF.B}
\hat{\mathcal{B}}^{(n)} &:= L^{-n} o(\xi^3) \hat{\bar{b}}^{(n)} + p_{mf}^c i \beta \xi N_b^c.
\end{align}

The terms $N^c_b$ are defined in (\ref{defn.nbc}). The estimate we demand from $\gamma^{(n)}$ is stated in Theorem \ref{thm.1} below. To understand the statement of this estimate, we must first introduce the various parameters that we will be using, and to this we now turn:

\subsection{Relevant Parameters}

Due to the complexity of the analysis which follows (several parameters are needed), we start by displaying a schematic of the various parameters that arise, and their interdependencies. The reader should refer back to this discussion when reading the lemmata which follow.

\vspace{3 mm}

\hspace{35 mm} \begin{tikzpicture} 
  \node (A) {$\phi_d$};
  \node (B) [right of=A] {$L$};
   \node (C) [below of=B] {$\epsilon$};
  \node (D) [right of=B] {$\delta_w$};
  \node (E) [below of = D] {$N_0$}; 
  \node (F) [right of = E] {$\delta, T_0$};
  \node (G) [below of = E] {$\rho_\ast$};
  \draw[->] (A) to node {} (B);
  \draw[->] (B) to node {} (C);
  \draw[->] (A) to node {} (C);
  \draw[->] (B) to node  {} (D);
   \draw[->] (C) to node [swap] { } (E);
   \draw[->] (E) to node {} (F); 
   \draw[->] (D) to node [swap] {} (F); 
   \draw[->] (G) to node {} (E);
\end{tikzpicture}
    
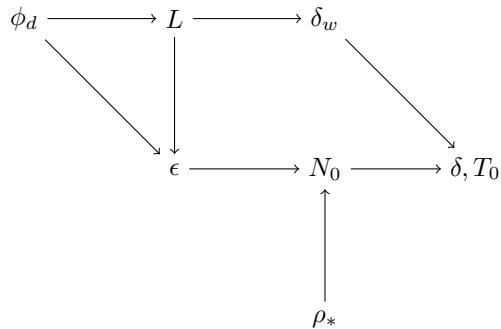
\captionof{figure}{Parameter Dependencies}
   \label{PDpic}
   
 \vspace{2 mm}

Let us now explain these dependences on a heuristic level. The precise selections of these parameters are made in Proposition \ref{Prop.sel}.

\vspace{3 mm}

(1) The parameter $\phi_d$ is given, and is arbitrarily large. It is the mean of the profile $f^\ast_A$ (see (\ref{fAlarge.1})), and the mean of the wavenumber unknown, $u^c$, (see (\ref{prove.1})). The parameter $\rho_\ast$ is also prescribed, and can be large.

\vspace{3 mm}

(2) The parameter $L$ determines the appropriate spatial scale in the Renormalization Group analysis that we will perform. It is selected only based on $\phi_d$, in such a way that: 
\begin{equation} \label{P2}
\frac{C(\phi_d)}{L} \ll 1,
\end{equation}

where $C(\phi_d)$ is some function of $\phi_d$ that can be made explicit, which is universal and independent of the other parameters in our analysis (like $C(\phi_d) = e^{\phi_d}$ or $C(\phi_d) = e^{e^{\phi_d}}$, for instance). 

\vspace{3 mm}

(3) The parameter $\epsilon$ is selected based on $\phi_d$ and $L$. This is done in such a way that: 
\begin{align} \label{P3}
\epsilon J(L) C(\phi_d) \ll 1,
\end{align}

for functions $C(\phi_d)$ and $J(L)$ which again are universal and can be computed explicitly.

\vspace{3 mm}

(4) The parameter $N_0$ is subsequently selected based on $\phi_d, \rho_\ast L, \epsilon$. The rule for this selection is essentially of the form: 
\begin{equation} \label{P4}
\frac{C(\phi_d) J(L)}{L^{N_0}} \le \epsilon,
\end{equation}

where $C(\phi_d), J(L)$ are again some functions which can be computed explicitly, based only on universal constants and functions. 

\vspace{3 mm}

(5) $\delta_w$ is selected as follows: given $\phi_d < \infty$, and an $L$ which is selected according to (\ref{P2}), we select $\delta_w$ according to:
\begin{align} \label{P5}
J(L) C(\phi_d) \delta_w \ll 1,
\end{align}

Again, $J(L), C(\phi_d)$ are some universal, explicitly determined functions.

\vspace{3 mm}

\begin{remark} $N_0$ and $\delta_w$ are selected independently. That is, the selection of $N_0$ does not affect the selection of $\delta_w$, and vice-versa. 
\end{remark}

(6) The parameter $\delta$ will be selected in estimate (\ref{delta.deltaw}), based on $\delta_w$ and $N_0$ which are determined by the previous steps. Specifically, the selection is made via: 
\begin{align} \label{P6}
\delta = L^{-2N_0} \delta_w. 
\end{align} 

Making a selection in this manner serves the following purpose: $\delta_w$ represents the smallness that is required in order for our energy estimates to close (Steps 1 through 4). $N_0$ represents the number of times that these energy estimates are iterated. By making the selection (\ref{P6}), since $\delta$ represents the size of the datum (in $L^1_\xi(1)$) at the zeroth iterate, we ensure that all $N_0$ iterates remain less than $\delta_w$. Finally, once this has been done, we can set: $T_0 = T_0(\phi_d, \delta) = \Big( \frac{\phi_d}{\delta} \Big)^2$.

\subsection{Assumptions on Iterative Initial Data}

For the energy estimates in Section \ref{RGA.Section} to be valid, we need to assume the following bounds on the uniform norm of the initial data and on the Burger's flow: 
\begin{equation} \label{assumption.1}
||\hat{g}^{(n)}||_{L^1_\xi(1)} \le \delta_w, \hspace{5 mm} \sup_{t \in [1, L^2]} ||\hat{\bar{b}}^{(n)}(t)||_{L^1_\xi(1)} \le \delta_w, \text{ for } n \le N_0.
\end{equation}

For now, we refrain from assigning a size to $\delta_w$ relative to other parameters. This is done in the statement of each lemma below. Once $N_0$ and $\delta$ have been fixed in Proposition \ref{Prop.sel}, we will apply Lemma \ref{thm.LE}, in order to conclude that $|| \hat{u}^{c}||_{L^1_\xi(1)} \lesssim \delta$ on the time interval $[1, L^{2N_0}]$. From this, we have: 
\begin{equation} \label{continue}
||\hat{u}^{n,c}||_{L^1_\xi(1)} \le L^{N_0} || \hat{u}^{c}||_{L^1_\xi(1)} \lesssim L^{N_0} \delta = \delta_w \text{ for } n \le N_0. 
\end{equation}

By using analogous estimates for the Burger's flow, we have that (\ref{barb.small.ck}) implies (\ref{assumption.1}). Let us now state: 
\begin{lemma} For each $n \ge 0$, the mean-zero feature is preserved:
\begin{equation}
\int g^{(n)}(z) dz = \int \alpha^{(n)}(1,z) dz = 0. 
\end{equation}
\end{lemma}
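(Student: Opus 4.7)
The claim $\int g^{(n)}(z)\,dz = 0$ is, by Fourier inversion at $\xi = 0$, equivalent to $\hat{a}^{(n)}(1,0) = 0$ for every $n \ge 0$. I would prove this by induction on $n$. The base case $n = 0$ is exactly (\ref{as.w}): since $a^{(0)}(1,\cdot) = a(1,\cdot)$, we have $\hat{a}(1,0) = \int a(1,\cdot)\,dz = \phi_d - \phi_d = 0$.

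For the inductive step, the key observation is the scaling identity (\ref{it.RG.R}), which in Fourier reads $\hat{a}^{(n+1)}(1,\xi) = \hat{a}^{(n)}(L^2, \xi/L)$. Evaluating at $\xi = 0$ reduces the claim for $n+1$ to showing $\hat{a}^{(n)}(L^2, 0) = 0$. Since the inductive hypothesis gives $\hat{a}^{(n)}(1,0) = 0$, it suffices to prove that $t \mapsto \hat{a}^{(n)}(t,0)$ is constant on $[1, L^2]$, that is, that the evolution equation (\ref{eqn.c.alpha}) preserves the zero Fourier mode identically. This is a direct consequence of the structural factorizations already listed in the paper: the linear symbol satisfies $\lambda^{(n)}(0) = 0$; the quadratic Burgers-type terms carry the prefactor $p^c_{mf}(\xi) i\beta \xi$, which vanishes at $\xi = 0$; the irrelevant nonlinear pieces $\hat{N}^{(n)}_1, \hat{N}^{(n)}_2, \hat{N}^{(n)}_3$ carry factors $o(\xi^2), o(\xi), o(\xi)$ respectively, per (\ref{drn.1})--(\ref{drn.3}); the source $L^{-n} o(\xi^3) \hat{\bar{b}}^{(n)}$ obviously vanishes at $\xi = 0$; and the remaining Burgers source $p^c_{mf} i\beta \xi N^c_b$ carries the same $\xi$ prefactor. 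Evaluating (\ref{eqn.c.alpha}) at $\xi = 0$ therefore reduces to $\partial_t \hat{a}^{(n)}(t, 0) = 0$, closing the induction.

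There is no real obstacle here beyond careful bookkeeping: the statement is a pure conservation law for the RG-rescaled iterates, inherited from the mean-preservation (\ref{mean.cons}) for $u^c$ together with the fact that $\int \bar{b}^{(n)} = \phi_d$ for all $n$ (which follows from (\ref{barbint}) and the scaling (\ref{explicit.bn})). No analytic estimate on the iterates is used; the argument is entirely algebraic, relying only on the derivative structure of every nonlinear and source term in the renormalized critical equation.
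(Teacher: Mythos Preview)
Your proof is correct, but it takes a different route from the paper. The paper argues directly, without induction: it writes $\int a^{(n)}(1,z)\,dz = \int u^{n,c}(1,z)\,dz - \int \bar{b}^{(n)}(1,z)\,dz$, performs the change of variables $y = L^n z$ to reduce both integrals to $\int u^c(L^{2n},y)\,dy$ and $\int \bar{b}(L^{2n},y)\,dy$, and then invokes the mean-conservation (\ref{mean.cons}) for $u^c$ and the analogous conservation for the Burgers flow $\bar{b}$ to get $\phi_d - \phi_d = 0$. In other words, the paper never touches the renormalized equation (\ref{eqn.c.alpha}); it works entirely at the level of the original, unrescaled flows. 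Your inductive argument instead verifies mean-preservation for each renormalized iterate by checking that every term on the right-hand side of (\ref{eqn.c.alpha}) carries a factor of $\xi$. Both are valid; the paper's approach is shorter and avoids the term-by-term bookkeeping, while yours has the virtue of being self-contained at the renormalized level. You actually sketch the paper's argument in your final paragraph, so you clearly see both routes.
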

\begin{proof}
We have: 
\begin{align} \nonumber
\int \alpha^{(n)}(1,z) \ud z &\stackrel{(\ref{scaling})}{=} \int u^{n,c}(1,z) dz - \bar{b}^{(n)}(1,z) dz \\ \nonumber
& \stackrel{(\ref{explicit.bn})}{=} \int L^nu^c(L^{2n}, L^n z) \ud z- \int L^n \bar{b}(L^{2n}, L^n z) \ud z  \\ \nonumber
& = \int u^c(L^{2n}, y) \ud y - \int \bar{b}(L^{2n}, y) \ud y \\ \nonumber
& \stackrel{(\ref{mean.cons})}{=} \int u^c(1, y) \ud y - \int \bar{b}(1, y) \ud y \\ 
& = \phi_d - \phi_d = 0. 
\end{align}
\end{proof}

\begin{remark}[Notation] As $\delta < \delta_w$, we feel free in Steps 1 - 4 below to replace $\delta$ with $\delta_w$. Also, should a quantity appear on the right-hand side of an inequality which is clearly dominated by the same quantity on the left-hand side, we feel free to omit it. 
\end{remark}

The goal is to show the estimate in (\ref{goal}) for $\alpha^{(n)}$, which we will do by controlling $\gamma^{(n)}$ in the manner indicated below in (\ref{present.1}). We now turn to this. 

\section{Energy Estimates for $\gamma^{(n)}$, $0 \le n \le N_0$} \label{RGA.Section}

The goal of this section is to prove:

\begin{theorem} \label{thm.1} Fix any $\phi_d < \infty$. Let $0 < p < 1$ sufficiently small relative to universal constants. Then there exists a universal functions $J_1(L), C_1(\phi_d)$, such that if: 
\begin{align} \label{crit.0}
&\delta_w J_1(L) C_1(\phi_d) \ll 1, \text{ and } \\ \label{crit.-1}
&||\hat{g}^{(n)}, \tilde{g}^{(n,s)}||_{L^1_\xi(1)} +\sup_{t \in [1, L^2]} ||\hat{\bar{b}}^{(n)}(t)||_{L^1_\xi(1)} \le \delta_w, 
\end{align}
then, we have the following bounds for $\gamma^{(n)}$, for each $L \ge 1$:
\begin{equation} \label{present.1}
||\gamma^{(n)}(L^2, z)||_{H^2(2)} \lesssim J_E(L) \Big( \delta_w ||g^{(n)}||_{H^2(2)} + L^{-2n}C_E(\phi_d) + L^{-n(1-p)} \delta_w ||\tilde{g}^{(n,s)}||_{B_{L^n}(2,2,2)} \Big).
\end{equation}

Here $J_E(L)$ and $C_E(\phi_d)$ are universal functions of their arguments, which can be found explicitly. 
\end{theorem}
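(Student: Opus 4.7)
The plan is to derive a weighted energy inequality for $\|\gamma^{(n)}\|_{H^2(2)}^2$ on the time interval $[1, L^2]$ directly in Fourier coordinates, exploiting the Plancherel isomorphism $H^2(2) \cong H^2_\xi(2)$. Since $\gamma^{(n)}(1,\cdot)=0$ by definition in (\ref{sys.1}), the endpoint bound in (\ref{present.1}) must come entirely from time-integrated source terms driving (\ref{sys.1}). The dissipation on the left-hand side is supplied by $\operatorname{Re}\lambda^{(n)}(\xi) \lesssim -|\xi|^2$ on the small-$\xi$ support of $\hat{\gamma}^{(n)}$, by Hypothesis \ref{Hyp.1} and (\ref{disp.}), with exponential damping of modes far from $\xi=0$. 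Testing (\ref{sys.1}) against $\partial_\xi^j[(1+|\xi|^2)\overline{\hat\gamma^{(n)}}]$ for $j=0,1,2$ and summing gives a differential inequality of the form $\partial_t E_n + D_n \lesssim \langle \text{RHS of (\ref{sys.1})}, \text{weighted } \bar\gamma^{(n)} \rangle$ that will then be closed by Gronwall.

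For the quadratic terms $\hat{\mathcal{M}}^{(n)}$ in (\ref{DEF.M}), I would exploit the common prefactor $i\beta\xi$ to pair each contribution with half a dissipation factor and apply Young's convolution inequality, distributing one factor in $L^1_\xi(1)$ and the other in $H^2_\xi(2)$ (with the standard $|\xi|\le |\eta|+|\xi-\eta|$ splitting to move weights across the convolution). This produces three types of contributions. Terms $\bar a\ast\bar a$, $\bar a\ast\bar b^{c,n}$, and $\bar b^{c,n}\ast\bar b^{c,n}$ are genuine sources; heat-equation bounds for $\bar a^{(n)}$ together with (\ref{crit.-1}) place their contribution at size $\lesssim \delta_w \|g^{(n)}\|_{H^2(2)}$. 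Terms linear in $\gamma^{(n)}$ with an $L^1_\xi(1)$-small prefactor from $\bar a$ or $\bar b^{c,n}$ are absorbable into $D_n$ using (\ref{crit.0}). The genuinely quadratic $\gamma\ast\gamma$ contribution closes by bootstrap using the same smallness. The irrelevant nonlinearities $\hat N^{(n)}_i$ in (\ref{drn.1})--(\ref{drn.3}) are handled identically, but their explicit $L^{-n}$ or $L^{-n(1-p)}$ prefactors, combined with $\|\hat u^{n,c}\|_{L^1_\xi(1)}\lesssim \delta_w$ from Lemma \ref{thm.LE} and (\ref{continue}), produce small contributions; in particular the $L^{-n(1-p)}$ factor on $\hat N^{(n)}_2$ yields precisely the third term on the RHS of (\ref{present.1}) after absorbing one copy of the stable Bloch norm.

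The source $\hat{\mathcal B}^{(n)}$ in (\ref{DEF.B}) is what produces the $L^{-2n}C_E(\phi_d)$ term: the $L^{-n} o(\xi^3)\hat{\bar b}^{(n)}$ piece has an explicit $L^{-n}$ factor, and a second $L^{-n}$ is extracted by pairing the cubic $\xi$ against the weighted dissipation of the heat-like semigroup after time integration (this is where the large but $n$-independent constant $C_E(\phi_d)$ enters, via the bounds (\ref{criteria.2}) on $\bar b^{(n)}$). The remaining $p_{mf}^c i\beta\xi N_b^c$ piece decays exponentially because $\bar b^s$ has Fourier support bounded away from $\xi=0$ and is governed by the stable spectrum (\ref{stable.spec}). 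Combining all estimates and applying Gronwall on $[1, L^2]$, starting from $\gamma^{(n)}(1,\cdot)=0$, produces (\ref{present.1}) with $J_E(L)$ absorbing the $L^2$-factors from time integration (and the $p_{mf}^c$ cut-off scale) and $C_E(\phi_d)$ absorbing the $n$-uniform $\bar b^{(n)}$ bounds.

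The principal obstacle is reconciling the large $H^2(2)$ norm of $\bar b^{c,n}$ (which is inherent: $\|f_A^\ast\|_{H^2(2)}\ge \phi_d$ by (\ref{fAlarge})) with the requirement that the linearized interaction $\bar b^{c,n}\ast \gamma^{(n)}$ be absorbable into dissipation. The resolution is to never place $\bar b^{c,n}$ in $H^2_\xi(2)$: in every instance the $H^2_\xi(2)$-weight must be routed onto $\hat\gamma^{(n)}$ (or its convolution partner from the source side) so that $\bar b^{c,n}$ only appears in the $L^1_\xi(1)$ slot, where the transient smallness (\ref{crit.-1}) — valid exactly for $n\le N_0$ by Remark \ref{remk.tr} — kicks in. The bookkeeping is most delicate when two $\xi$-derivatives from the $H^2(2)$ norm land together with the $(1+|\xi|^2)$ weight on a single convolution factor; one must verify that for every distribution of derivatives there is at least one factor carrying $\delta_w$-smallness in $L^1_\xi(1)$, which is what makes (\ref{crit.0}) the correct closure condition.
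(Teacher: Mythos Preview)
Your proposal is essentially correct and follows the paper's own approach: energy estimates carried out in Fourier variables via Plancherel, with the $i\beta\xi$ prefactor paired against one dissipation factor, Young's convolution inequality routing the $L^1_\xi(1)$-small factor onto $\hat{\bar a}^{(n)}$, $\hat{\bar b}^{c,n}$, or $\hat\gamma^{(n)}$, and Gronwall on $[1,L^2]$ from zero initial data. Your identification of the core obstacle --- never letting $\bar b^{c,n}$ land in $H^2_\xi(2)$, only in $L^1_\xi(1)$ --- is exactly the mechanism the paper isolates in Step~1b and Step~4.

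Two small corrections. First, there is no $\bar b^{c,n}\ast\bar b^{c,n}$ contribution inside $\hat{\mathcal M}^{(n)}$: by (\ref{DEF.M}) the marginal nonlinearity contains only $(\bar a+\gamma)\ast(\bar a+\gamma)$ and $(\bar a+\gamma)\ast\bar b^{c,n}$, the pure Burgers self-interaction having been subtracted off when forming the $a$-equation. A $\bar b^{c,n}\ast\bar b^{c,n}$ term does appear, but only inside the irrelevant piece $\hat N_1^{(n)}$ in (\ref{drn.1}) with its $L^{-n}$ prefactor; there it feeds the $L^{-2n}C_E(\phi_d)$ term (via (\ref{criteria.2})), not the $\delta_w\|g^{(n)}\|_{H^2(2)}$ term as you wrote. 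Second, the extra $L^{-n}$ in the $\mathcal B^{(n)}$ estimate does not come from time integration against dissipation; it comes from Young's inequality for products, $L^{-n}AB \le L^{-2n}A^2 + \tfrac{1}{100}B^2$, with $B = \|\xi\hat\gamma^{(n)}\|_{L^2}$ absorbed into the left-hand side. Neither point changes your scheme, but both affect the bookkeeping of which source produces which of the three terms on the right of (\ref{present.1}).
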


\begin{remark}[Notational Conventions] Our labeling convention for stating the upcoming results will be as follows: $C_i(\phi_d), J_i(L)$ for $i \in \mathbb{N}$ will denote the universal functions appearing as assumptions or criteria that must be met (in the present case, estimate (\ref{crit.0})). $C_{E,i}(\phi_d), J_{E,i}(\phi_d)$ will denote the ``output" universal function that is obtained as a result of the result being stated (in the present case, estimate (\ref{present.1})). An unspecified $C(\phi_d)$ or $J(L)$ simply stands for a universal function of $\phi_d, L$ respectively, which has come up during a calculation, which may change definitions during the course of that particular calculation, and is not significant enough at that moment to warrant a particular subscript. In other words, it is a temporary placeholder. 

Regarding integrations, when unspecified, we mean: 
\begin{align}
\int f = \int_{[-\frac{k}{2}, \frac{k}{2}] \times \mathbb{T}} f(\xi, \nu) \ud \xi \ud \nu.
\end{align}

Furthermore, we remark that all critical profiles are independent of $\hat{u}^c$, and so the $\mathbb{T}$ integration will typically simply evaluate to $2\pi$. Nevertheless, we retain the integration over $\mathbb{T}$ in order to remain technically correct when treating the critical-stable nonlinear interaction. 
\end{remark}

The proof proceeds in the following steps: 

\begin{itemize}
\item[(Step 1)] Estimate of $||\gamma^{(n)}||_{L^2}:$ This is achieved by applying the multiplier $\gamma^{(n)}$ to the system (\ref{sys.1}) and integrating by parts. 

\item[(Step 2)] Estimate of $||\gamma_z^{(n)}||_{L^2}:$ This is achieved by applying the multiplier $\gamma^{(n)}_z$ to the differentiated system $\partial_z$ (\ref{sys.1}) and integrating by parts. 

\item[(Step 3)] Estimate of $||\gamma_{zz}^{(n)}||_{L^2}:$ This is achieved by applying the multiplier $\gamma^{(n)}_{zz}$ to the twice differentiated system $\partial_z^2$ (\ref{sys.1}) and integrating by parts.

\item[(Step 4)] Estimate of $||\{\gamma_{z}^{(n)}, \gamma_{zz}^{(n)} \} z^2 ||_{L^2}:$ This is achieved by applying weighted multipliers, $z^m \partial_z^j \gamma^{(n)}$ for $m = 1,2$, $j = 0,1,2$ and repeating the previous steps.  
\end{itemize}

Upon proving Theorem \ref{thm.1}, we perform two additional steps which concludes the analysis for $0 \le n \le N_0$: 
\begin{itemize}
\item[(Step 5)] Stable Estimates for $\tilde{u}^{n,s}$: This is achieved directly by using the exponential decay of the linear operator, $\Lambda^{(n)}$ which follows from (\ref{stable.spec}).

\item[(Step 6)] Iteration for $0 \le n \le N_0$: The above steps are iterated up to $n = N_0$, at which point the smallness conditions (\ref{crit.-1}) are violated, and we must switch to a new set of estimates which are non-perturbative. 
\end{itemize}

\subsection*{Step 1: Estimate of $\gamma^{(n)}$:}

We now prove the steps detailed above. The reader should refer to the definition of the Bloch norm given in (\ref{norm.B}). We feel free to omit the parameters $\alpha, \beta$ from the system for the sake of presentation. First, multiply (\ref{sys.1}) by $\gamma^{(n)}$ on the spatial side, which we then immediately transfer to frequency side via Parseval. On the left-hand side, we have the positive quantities: 
\begin{equation} \label{m.pos.1}
\int \Big( \partial_t - \lambda^{(n)}(\xi) \Big) \hat{\gamma}^{(n)} \cdot \overline{\hat{\gamma}^{(n)}} \ge  \partial_t \int |\hat{\gamma}^{(n)}|^2 + \int \chi\Big(\frac{l_1}{4}\xi\Big) |\xi \hat{\gamma}^{(n)}|^2 \ge \partial_t \int |\hat{\gamma}^{(n)}|^2 + \int  |\xi \hat{\gamma}^{(n)}|^2
\end{equation}

where we have used that $\chi(\frac{l_1}{4}\xi) = 1$ on the support of $\hat{\gamma}^{(n)}$. On the right-hand side, we treat the terms from (\ref{DEF.M}), starting with:
\begin{align}\nonumber
&\Big|\int p^c_{mf}(\xi) i \beta \xi \Big( \hat{\bar{a}}^{(n)} \ast \hat{\bar{a}}^{(n)} \Big) \cdot \overline{\hat{\gamma}}^{(n)}\Big| = \Big|- \int p^{c}_{mf}(\xi) \beta \Big( \hat{\bar{a}}^{(n)} \ast \hat{\bar{a}}^{(n)}\Big) \overline{i\xi\hat{\gamma}^{(n)}} \Big| \\ \label{m.1.0}
&\hspace{20 mm} \le ||\hat{\bar{a}}^{(n)} \ast \hat{\bar{a}}^{(n)}||_{L^2}||p_{mf}^c i \xi \hat{\gamma}^{(n)}||_{L^2}  \\ \label{m.1}
&\hspace{20 mm} \le ||\hat{\bar{a}}^{(n)}||_{L^1_\xi} ||\hat{\bar{a}}^{(n)}||_{L^2}||p_{mf}^c i \xi \hat{\gamma}^{(n)}||_{L^2} \lesssim \delta_w^2 ||\hat{\bar{a}}^{(n)}||_{L^2}^2.
\end{align}

For (\ref{m.1.0}), we have used Young's inequality for convolution. In (\ref{m.1}), we have used the Young's Inequality for products to absorb the $||p^c_{mf} i \xi \hat{\gamma}^{(n)}||_{L^2}^2$ term into (\ref{m.pos.1}), and we have then used that $||\hat{\bar{a}}^{(n)}||_{L^1_\xi} \le ||\hat{g}^{(n)}||_{L^1_\xi}$, which is assumed to be of size $\delta_w$. Similarly:
\begin{align}\nonumber
& \Big| \int p^c_{mf}i \beta \xi \Big( \hat{\bar{a}}^{(n)} \ast \hat{\gamma}^{(n)} \Big) \cdot \overline{\hat{\gamma}^{(n)}} \Big| = \Big| \int p_{mf}^c \beta \Big( \hat{\bar{a}}^{(n)} \ast \hat{\gamma}^{(n)}\Big) \cdot \overline{i\xi \hat{\gamma}}^{(n)} \Big| \\
& \hspace{20 mm} \le ||\hat{\bar{a}}^{(n)}||_{L^1_\xi}||\hat{\gamma}^{(n)}||_{L^2}||p_{mf}^c i\xi \hat{\gamma}^{(n)}||_{L^2} \lesssim \delta_w^2 ||\hat{\gamma}^{(n)}||_{L^2}^2 \\ \nonumber
& \Big| \int p^c_{mf}i \beta \xi \Big( \hat{\gamma}^{(n)} \ast \hat{\gamma}^{(n)} \Big) \cdot \overline{\hat{\gamma}^{(n)}} \Big| \le ||\hat{\gamma}^{(n)}||_{L^2} ||p_{mf} i \beta \xi \hat{\gamma}^{(n)} \ast \hat{\gamma}^{(n)}||_{L^2} \\ 
& \hspace{20 mm} \le ||\hat{\gamma}^{(n)}||_{L^2} ||\hat{\gamma}^{(n)}||_{L^1_\xi} ||p_{mf}^c \xi \hat{\gamma}^{(n)}||_{L^2} \le \delta_w ||\hat{\gamma}^{(n)}||_{L^2}^2.
\end{align}

Next, we have the linearizations around $\bar{b}$, where we recall (\ref{crit.-1}):
\begin{align} \nonumber
\Big| \int p_{mf}^c i \beta \xi \Big( (\hat{\bar{a}}^{(n)} + \hat{\gamma}^{(n)}) \ast \hat{\bar{b}}^{c,n} \Big) \cdot \hat{\gamma}^{(n)} \Big| &\le ||\hat{\bar{b}}^{n,c}||_{L^1_\xi} ||\hat{\bar{a}}^{(n)} + \hat{\gamma}^{(n)}||_{L^2} ||\hat{\gamma}^{(n)}||_{L^2} \\
&\le \delta_w ||\hat{\bar{a}}^{(n)}||_{L^2}^2 + \delta_w ||\hat{\gamma}^{(n)}||_{L^2}^2. 
\end{align}

Finally, we have the terms in the irrelevant components of the nonlinearity. Referring to (\ref{drn.1}), the term $N_1^{(n)}$ is estimated by noting that $h_1(\xi) = o(|\xi|^2)$, and so contributes an additional $L^{-n}$: 
\begin{align} \nonumber
\Big| \int N_1^{(n)} \cdot \gamma^{(n)} \Big| &= L^{-n} \Big| \int p_{mf}^c(\xi) o(|\xi|^2) \Big( \hat{u}^{n,c} \ast \hat{u}^{n,c}\Big) \cdot \overline{\hat{\gamma}}^{(n)}  \Big| \\ \nonumber 
&= L^{-n} \Big| \int p_{mf}^c(\xi) o(\xi) \Big( \hat{u}^{n,c} \ast \hat{u}^{n,c}\Big) \overline{o(\xi) \hat{\gamma}^{(n)}} \Big| \\ \nonumber
&\le L^{-n} ||o(\xi) \Big( \hat{u}^{n,c} \ast \hat{u}^{n,c} \Big)||_{L^2} ||o(\xi) \hat{\gamma}^{(n)}||_{L^2} = L^{-n} ||\hat{u}^{n,c}||_{L^1_\xi}||\xi \hat{u}^{n,c}||_{L^2} ||\xi \hat{\gamma}^{(n)}||_{L^2} \\
&\le \delta_w L^{-n} ||\xi \Big(\hat{\gamma}^{(n)} + \hat{\bar{a}}^{(n)} + \hat{\bar{b}}^{(n)} \Big)||_{L^2} ||\xi \hat{\gamma}^{(n)}||_{L^2}
\end{align}

Referring to (\ref{drn.2}), to control $N_2^{(n)}$, we have $h_2(\xi) = o(\xi)$, which does not contribute an additional $L^{-n}$. For this reason we use the blowup scaling of $u^{n,s}$ defined in (\ref{RNO.2}) (see (\ref{drn.2}), which shows an extra factor of $L^{-n(1-p)}$). We treat only the $\tilde{u}^{n,s}_\nu$ term from (\ref{drn.2}), with the $\tilde{u}^{n,s}_{\nu \nu}$ term being treated analogously: 
\begin{align} \nonumber
&\Big| \int N_2^{(n)} \cdot \gamma^{(n)} \Big| = L^{-n(1-p)}\Big| \int o(\xi) \Big( \hat{u}^{n,c} \ast \tilde{u}^{n,s}_\nu \Big) \cdot \overline{\hat{\gamma}^{(n)}}  \Big| \\ \nonumber
& \hspace{25 mm} =  L^{-n(1-p)} \Big| \int \Big( \hat{u}^{n,c} \ast \tilde{u}^{n,s}_\nu \Big) \cdot \overline{\overline{o(\xi)}\hat{\gamma}^{(n)}} \Big| \\ \nonumber
& \hspace{25 mm} \le  L^{-n(1-p)} ||\hat{u}^{n,c} \ast \tilde{u}^{n,s}_\nu||_{L^2} ||o(\xi)\hat{\gamma}^{(n)}||_{L^2}  \\ \nonumber
& \hspace{25 mm} \le  L^{-n(1-p)}  ||\hat{u}^{n,c}||_{L^1_\xi} ||\tilde{u}^{n,s}||_{B_{L^n}(0,0,1)}||\xi \hat{\gamma}^{(n)}||_{L^2} \\ \label{BUS.1}
& \hspace{25 mm} \le L^{-2n(1-p)} \delta_w^2 ||\tilde{u}^{n,s}||_{B_{L^n}(0,0,0)}^2 + \frac{1}{100}||\xi \hat{\gamma}^{(n)}||_{L^2}^2. 
\end{align}

Note that the mode filter $p_{mf}^c(\xi)$ which is implicitly in the $o(\xi)$ multiplier above allows us to put the Bloch norm on the stable unknown. This argument proceeds as follows (we simply show the $n = 1$ case):
\begin{align} \nonumber
p_{mf}^c(\xi) \hat{u}^c \ast \tilde{u}^{s} &= \int \hat{u}^c(\xi-\eta) \Big[ p_{mf}^c(\eta) +  p_{mf}^c(\xi) - p_{mf}^c(\eta) \Big] \tilde{u}^s(\eta) d\eta \\
&= \int \hat{u}^c(\xi - \eta) \Big[p_{mf}^c(\xi) - p_{mf}^c(\eta) \Big] \tilde{u}^s(\eta) d\eta + \hat{u}^c \ast \Big( p_{mf}^c \tilde{u}^{s} \Big). 
\end{align}

The latter term above can be estimated by the Bloch norm, as the modes of $\tilde{u}^s$ are directly restricted by the multiplier $p_{mf}^c$, and so we must only consider the first integral. By the Fourier support of $\hat{u}^c$, we have $-\frac{l_1}{4} \le \xi - \eta \le 0$, and by the Fourier support of $\tilde{u}^s$, we have $-\infty \le \eta \le -\frac{l_1}{8}$. From here it is obvious that taking $L$ sufficiently large relative to $\frac{l_1}{4}$, we can insert an indicator function for free: 
\begin{align}
\int \hat{u}^c(\xi-\eta) \Big[p_{mf}^c(\xi) - p_{mf}^c(\eta) \Big] 1_{\{\eta \ge -Lk\}} \tilde{u}^s(\eta) d\eta. 
\end{align}

This can now be estimated via the $B_{L}(0,0,0)$ norm of $\tilde{u}^s$. Referring to (\ref{drn.3}), we note that the final nonlinearity, $N_3^{(n)}$ contains the cubic and higher nonlinear terms, and so we may control in the same manner as $N_1^{(n)}, N_2^{(n)}$. We must now estimate the irrelevant linear term from $\mathcal{B}^n$: 
\begin{align} \nonumber
 L^{-n} \int o(\xi^3) \hat{\bar{b}}^{(n)} \overline{\hat{\gamma}^{(n)}} & \lesssim L^{-2n}||\xi^2 \hat{\bar{b}}^{(n)}||_{L^2}^2 + \frac{1}{100} ||\xi \hat{\gamma}^{(n)}||_{L^2}^2. 
\end{align}

Finally, we must control the nonlinearity arising from Burgers self-interaction (see the definition in (\ref{defn.nbc})), for which the rapid decay of $\bar{b}^{n,s}$ dominates:
\begin{align}
\Big| \int p_{mf}^c i \beta \xi N_b^c \cdot \overline{\hat{\gamma}^{(n)}}  \Big| = \Big| \int p_{mf}^c i \beta \xi \Big( \bar{b}^{n,c} \ast \bar{b}^{n,s} \Big) \cdot  \overline{\hat{\gamma}^{(n)}} \Big| \le L^{-n} \delta_w \Big( ||\hat{\bar{b}}^{n,c}||_{L^2}^2 + ||\xi \hat{\gamma}^{(n)}||_{L^2}^2 \Big). 
\end{align}

Putting everything together then gives: 
\begin{align} \nonumber
&\partial_t \int |\hat{\gamma}^{(n)}|^2 \ud z + \int |\xi \gamma^{(n)}_z|^2 \ud z \\
& \hspace{5 mm} \lesssim \delta_w^2 ||\hat{\gamma}^{(n)}||_{L^2}^2 + \delta_w^2 ||\bar{a}^{(n)}||_{H^1}^2 + L^{-2n} ||\bar{b}^{(n)}||_{H^2}^2 + \delta_w^2 L^{-2n(1-p)}||\tilde{u}^{n,s}||_{B_{L^n}(0,0,0)}^2. 
\end{align}

An application of Gronwall's Lemma then yields: 
\begin{align} \nonumber
&\sup_{t \in [1,L^2]} \int |\hat{\gamma}^{(n)}|^2 \ud z + \int_{1}^{L^2} \int |\xi \hat{\gamma}^{(n)}|^2 \ud z \ud t \\
\label{z.1} & \hspace{5 mm} \lesssim \delta_w^2 L^2 ||g^{(n)}||_{L^2}^2 +  L^{-2n} \phi_d \delta_w + \delta_w^2 L^{-2n(1-p)} \int_1^{L^2} ||\tilde{u}^{n,s}(t)||_{B_{L^n}(0,0,0)}^2 \ud t. 
\end{align}

We have used the ability to select $\delta_w$ much smaller than $L^2$, by selecting $J_1(L)$ appropriately in (\ref{crit.0}) thereby making the exponential an order-1 constant: 
\begin{equation}
\exp\{L^2\delta_w^2\} \lesssim 1, \text{ for } \delta_w \text{ sufficiently small. }
\end{equation}

\subsection*{Step 1b: Weighted $L^2(2)$ Estimate of $\gamma^{(n)}$}

The corresponding weighted estimate for $\gamma^{(n)}$ in $L^2(2)$ can be given by successively applying the multipliers $\gamma^{(n)}z, \gamma^{(n)}z^2$. We omit these details as they are largely identical to the calculation just performed. The delicate matter in applying these multipliers is to absorb only those weights given in (\ref{criteria.2}) into $\bar{b}^{(n)}$ profiles. As such, we display the delicate terms below. First, the Burgers contribution from $\mathcal{M}$, as shown in (\ref{DEF.M}):
\begin{align} \nonumber
\int i\xi \Big( \hat{\bar{a}}^{(n)} + \hat{\gamma}^{(n)} \Big) \ast \hat{\bar{b}}^{c,n} \cdot \overline{\mathcal{F}\{ z^4 \gamma \}} &\le ||\hat{\bar{b}}^{c,n}||_{L^1_\xi(1)} ||\mathcal{F}\{z^2 ( \bar{a}^{(n)} + \gamma^{(n)}) \}||_{L^2} ||\mathcal{F}\{z^2 \gamma^{(n)} \}||_{L^2} \\
& + l.o.t(z).
\end{align}

The terms contained in $l.ot.(z)$ are lower-order in $z$ than $z^4$. The essential point (which will be in use without further mention) is that we can avoid placing weights on linearized $\bar{b}^{c,n}$ by using the nonlinearity. Next, we approach the slightly more delicate $\mathcal{N}_1^{(n)}$:
\begin{align} \nonumber
L^{-n} \Big|\int &p_{mf}^c o(\xi^2) \Big( \hat{u}^{n,c} \ast \hat{u}^{n,c} \Big) \cdot \overline{\mathcal{F}\{ z^4 \gamma^{(n)}\}} \Big|= L^{-n}\Big| \int o(\xi) \Big(\hat{u}^{n,c} \ast \hat{u}^{n,c} \Big) \cdot \overline{z^4 \gamma^{(n)}_z}  \Big| + l.o.t(m) \\
& = L^{-n} \Big| \int o(\xi) \Big( \hat{a}^{n,c} + \hat{\bar{b}}^{n,c} \Big) \ast \Big( \hat{a}^{n,c} + \hat{\bar{b}}^{n,c} \Big) \cdot \overline{\mathcal{F}\{z^4 \gamma^{(n)}_z\}}  \Big| + l.o.t(m).
\end{align}

As $\hat{a}^{n,c} = \gamma^{(n)} + \hat{\bar{a}}^{(n)}$ can accept the weights of $z^2$, the most delicate term is the Burgers quadratic interaction: 
\begin{align}
 L^{-n} \Big| \int o(\xi) \Big( \hat{\bar{b}}^{n,c} \ast \hat{\bar{b}}^{n,c} \Big)  \cdot \overline{\mathcal{F}\{z^4 \gamma^{(n)}_z\}}  \Big| \le L^{-n} ||z^{\frac{1}{2}} \bar{b}^{n,c}||_{L^\infty} ||z^{\frac{3}{2}} \bar{b}^{n,c}||_{L^2} ||z^2 \gamma^{(n)}_z||_{L^2}.
\end{align}

A comparison to (\ref{criteria.2}) then shows that $||z^{\frac{1}{2}} \bar{b}^{n,c}||_{L^\infty} ||z^{\frac{3}{2}} \bar{b}^{n,c}||_{L^2} \le C(\phi_d)$. The next Burgers involvement occurs through $\mathcal{B}^{n}$: 
\begin{align}
L^{-n}\int o(\xi^3) \hat{\bar{b}}^{n,c} \overline{\mathcal{F}\{\gamma^{(n)} z^4 \} } \le L^{-n}||z^2 \bar{b}^{n,c}_{zz}||_{L^2}||z^2 \gamma^{(n)}_z||_{L^2} + l.o.t(m). 
\end{align}

For the second term in $\mathcal{B}^{(n)}$ we must interpolate:
\begin{align} \nonumber
\int p_{mf}^c i\xi &\Big(\hat{\bar{b}}^{n,c} \ast \hat{\bar{b}}^{n,s} \Big) \cdot \overline{\mathcal{F}\{ z^4 \gamma^{(n)} \}} \le ||z^{\frac{3}{2}} \bar{b}^{n,c}_z  ||_{L^2} ||z^{\frac{1}{2}} \bar{b}^{n,s}||_{L^\infty} || z^2 \gamma^{(n)}||_{L^2} \\ \nonumber
& + ||z^{\frac{3}{2}} \bar{b}^{n,s}_z||_{L^2}||z^{\frac{1}{2}}\bar{b}^{n,c}||_{L^\infty} ||z^2 \gamma^{(n)}||_{L^2} + l.o.t(m) \\
& \hspace{40 mm} \le \rho(\delta_w) C(\phi_d) ||z^2 \gamma^{(n)}||_{L^2}. 
\end{align}

Above, we have used that the weight of $z^{\frac{1}{2}}$ is sub-critical for the $L^\infty$ norm (it can accept a full $z$, see (\ref{criteria.2})), and therefore we retain some smallness in $\delta$. Thus, $\rho(\delta_w)$ is a function which can be made small by making $\delta_w$ small. 

\subsection*{Step 2: Estimate of $\gamma^{(n)}_z$:}

The next step is to differentiate the equation in $z$ on the spatial side, or multiply equation (\ref{sys.1}) by a factor of $i \xi$. We then apply the multiplier $i \xi \hat{\gamma}^{(n)}$  to this equation. Note that we refrain from giving details regarding the Fourier-support of each term, as this has been done in the previous step and the arguments are identical. First, the positive terms: 
\begin{equation} \label{pos.2.2}
\int \Big(\partial_t - \lambda(\xi) \Big) i\xi \hat{\gamma}^{(n)} \cdot \overline{i\xi \hat{\gamma}^{(n)}} \ge \partial_t \int |\xi \gamma^{(n)}|^2 + \int |\xi^2 \hat{\gamma}^{(n)}|^2
\end{equation}

For the marginal nonlinearity, performing an identical calculation to the first step yields: 
\begin{align} \label{abs}
\Big| \int \mathcal{M}^{(n)} (\hat{a}^{(n)}, \hat{\gamma}^{(n)}, \bar{b}^{c,n}) i \xi \cdot \overline{i \xi \hat{\gamma}^{(n)}} \Big| \lesssim \delta_w^2 ||\bar{a}^{(n)}||_{H^1}^2 + \delta_w^2 ||\gamma^{(n)}||_{H^1}^2 + \frac{1}{100} |||\xi|^2 \hat{\gamma}^{(n)}||_{L^2}^2. 
\end{align}

The latter term in (\ref{abs}) can be absorbed into (\ref{pos.2.2}). Now, we address the irrelevant components of the nonlinearity, after integrating by parts once: 
\begin{align} \nonumber
\Big| \int \hat{N}_1^{(n)} \cdot \overline{|\xi|^2 \hat{\gamma}^{(n)}} \Big| &= L^{-n} \Big| \int o(|\xi|^2) \Big( \hat{u}^{n,c} \ast \hat{u}^{n,c} \Big) \cdot \overline{|\xi|^2 \hat{\gamma}}^{(n)}  \Big| = L^{-n} \Big| \int o(\xi^2) \Big( \hat{u}^{n,c} \ast \hat{u}^{n,c} \Big) \overline{|\xi|^2 \hat{\gamma}^{(n)}} \Big| \\ \nonumber
&\le L^{-n} ||\hat{u}^{n,c}||_{L^1_\xi(1)} ||u^{n,c}||_{H^2} |||\xi|^2\hat{\gamma}^{(n)}||_{L^2}
\end{align}

To control $N_2$, we have $h_2(\xi) = o(\xi)$, which does not contribute an additional $L^{-n}$. For this reason we use the blowup scaling of $u^{n,s}$ defined in (\ref{scaling}). Again, we treat the $\tilde{u}^{n,s}_\nu$ term, with the $u^{n,s}_{\nu \nu}$ term being treated analogously:
\begin{align} \nonumber
\Big| \int \hat{N}_2^{(n)} \cdot \overline{|\xi|^2 \hat{\gamma}^{(n)}} \Big| &= L^{-n(1-p)} \Big| \int o(\xi) \Big( \hat{u}^{n,c} \ast \tilde{u}^{n,s}_\nu \Big) \cdot \overline{|\xi|^2\hat{\gamma}^{(n)}}  \Big| \\ 
& \le L^{-n(1-p)} ||\tilde{u}^{n,s}||_{B_{L^n}(1,0,1)}||\hat{u}^{n,c}||_{L^1_\xi}|||\xi|^2 \hat{\gamma}^{(n)}||_{L^2}. 
\end{align}

For the final nonlinearity, $N_3$, which contains the cubic and higher nonlinear terms, we may control in the same manner as $N_2$. Finally, we must estimate the irrelevant linear term: 
\begin{align} \nonumber
 L^{-n} \int o(\xi^3) \hat{\bar{b}}^{(n)} \overline{\hat{\gamma^{(n)}_{zz}}} &\le L^{-n} |||\xi|^2\hat{\bar{b}}^{(n)}||_{L^2} |||\xi|^2 \hat{\gamma}^{(n)}||_{L^2} \\
&\lesssim L^{-2n}||(i\xi)^3 \hat{\bar{b}}^{(n)}||_{L^2}^2 + \frac{1}{100} |||\xi|^2 \hat{\gamma}^{(n)}||_{L^2}^2. 
\end{align}

The estimate of the Burgers self-interaction term, $N^c_b$ is given via: 
\begin{equation}
\Big| \int p_{mf}^c \beta \Big(-|\xi|^2 \Big) N^c_b \overline{i\xi \hat{\gamma}^{(n)}} \Big| \lesssim \delta_w \Big[ ||\xi \hat{\bar{b}}^{n,c}||_{L^2}^2 + |||\xi|^2 \hat{\gamma}^{(n)}||_{L^2}^2 \Big].
\end{equation}

Putting everything together then gives: 
\begin{align} \nonumber
\partial_t \int |\xi \hat{\gamma}^{(n)}|^2 + \int |\xi^2 \hat{\gamma}^{(n)}|^2 &\lesssim \delta_w^2 ||\xi \hat{\gamma}^{(n)}||_{L^2}^2 + \delta_w^2 ||\hat{\gamma}^{(n)}||_{L^2}^2 + \delta_w^2 ||\bar{a}^{(n)}||_{H^2}^2 + L^{-2n} ||\bar{b}^{(n, c)}||_{H^3}^2 \\ &+ \delta_w^2 L^{-2n(1-p)}||u^{n,s}||_{H^1}^2. 
\end{align}

In the same manner as in (\ref{z.1}), an application of Gronwall's Lemma then yields: 
\begin{align} \nonumber
\sup_{t \in [1,L^2]} \int |\xi \hat{\gamma}^{(n)}|^2 + \int_1^{L^2} \int |\xi^2 \hat{\gamma}^{(n)}|^2 \ud z \ud t &\lesssim \delta_w^2 L^2 ||g^{(n)}||_{H^1}^2 +  L^{-2n} \phi_d \delta \\  \label{dz.1}
&  + \delta_w^2 L^{-2n(1-p)} \int_1^{L^2} ||\tilde{u}^{n,s}(t)||_{B_{L^n}(1,0,0)}^2 \ud t. 
\end{align} 

\subsection*{Step 3: Estimate of $\gamma^{(n)}_{zz}$:}

We now take a further derivative spatial derivative, $\partial_z$, of our system, which corresponds to applying the Fourier multiplier $-|\xi|^2$ to (\ref{sys.1}). Once this is done, we multiply by $\gamma^{(n)}_{zz}$ and integrate by parts. This gives the positive terms: 
\begin{align}
\int \Big( \partial_t - \lambda(\xi) \Big) \Big(-|\xi|^2 \hat{\gamma}^{(n)} \Big) \cdot  \overline{-|\xi|^2 \hat{\gamma}^{(n)} } = \partial_t \int |\xi^2 \hat{\gamma}^{(n)}|^2 + \int |\xi^3 \hat{\gamma}^{(n)}|^2
\end{align}

The marginal nonlinearity is now treated, as usual, via an integration by parts: 
\begin{align}
\Big|\int \mathcal{M}^{(n)} \Big( -|\xi|^2 \Big) \cdot \Big(-|\xi|^2 \hat{\gamma}^{(n)} \Big) \Big| \le \delta_w^2 ||\xi^2 \{\hat{\bar{a}}^{(n)} + \hat{\gamma}^{(n)} \} ||_{L^2}^2 + \frac{1}{100} ||\xi^3 \hat{\gamma}^{(n)}||_{L^2}^2. 
\end{align}

Now, the first irrelevant nonlinearity: 
\begin{align} \nonumber
\Big| \int \xi^2 \hat{N}_1^{(n)} \cdot \overline{\xi^2 \hat{\gamma}^{(n)}} \Big| &=  L^{-n}\Big| \int o(\xi^3) \Big( \hat{u}^{n,c} \ast \hat{u}^{n,c} \Big) \cdot \overline{\hat{\gamma^{(n)}}_{zzz}}   \Big| \\ \nonumber
&\le L^{-n} ||\hat{u}^{n,c}||_{L^1_\xi(1)} ||u^{n,c}||_{H^3} ||\xi^3 \hat{\gamma}^{(n)}||_{L^2} \\
&\le L^{-n} \delta_w ||(i\xi)^3 \{\hat{\bar{b}}^{(n)} + \hat{\bar{a}}^{(n)} + \hat{\gamma}^{(n)}\}||_{L^2}||\xi^3 \hat{\gamma}^{(n)}||_{L^2}. 
\end{align}

Next, we have: 
\begin{align} \nonumber
\Big| \int \xi^2 \hat{N}^{(n)}_2 \cdot \overline{\xi^2 \hat{\gamma}^{(n)}} \Big| &= L^{-n(1-p)} \Big| \int o(\xi^2) \Big( \hat{u}^{n,c} \ast \tilde{u}^{n,s}_\nu \Big) \cdot \overline{\mathcal{F}\{\gamma^{(n)}_{zzz}\}}  \Big| \\ \nonumber
&\le L^{-n(1-p)} ||\hat{u}^{n,c}||_{L^1_\xi(1)} ||\tilde{u}^{n,s}||_{B_{L^n}(2,0,1)} ||\xi^3 \hat{\gamma}^{(n)}||_{L^2}.
\end{align}

The third nonlinearity $N_3$ is controlled in similar ways to $N_1$, $N_2$.  Last, we must estimate the irrelevant linear contribution: 
\begin{align}
\int (i \xi) L^{-n} o(\xi^3) \hat{\bar{b}}^{(n)} \} \cdot \overline{(i\xi)^3 \hat{\gamma}^{(n)}} \lesssim \frac{1}{100} ||\xi^3 \hat{\gamma}^{(n)}||_{L^2}^2 + \phi_d \delta.
\end{align}

Putting everything together then gives: 
\begin{align} \nonumber
\partial_t \int |\xi^2 \hat{\gamma}^{(n)}|^2 + \int |(i\xi)^3\hat{\gamma}^{(n)}|^2 &\lesssim \delta_w^2 \Big( |||\xi|^2 \hat{\gamma}^{(n)}||_{L^2}^2  + ||\gamma^{(n)}||_{H^1}^2 \Big) + \delta_w^2 ||\bar{a}^{(n)}||_{H^3}^2 + L^{-2n} ||\bar{b}^{(n)}||_{H^4}^2 \\ &+ \delta_w^2 L^{-2n}||\tilde{u}^{n,s}||_{B_{L^n}(2,0,0)}^2. 
\end{align}

Via an application of Gronwall's Lemma and applying the previously established bounds in (\ref{z.1}) and (\ref{dz.1}), we obtain: 
\begin{align} \label{dzz.1}
\sup_{t \in [1,L^2]} \int |\xi^2 \hat{\gamma}^{(n)}|^2 &+ \int_1^{L^2} \int |(i\xi)^3 \hat{\gamma}^{(n)}|^2 \ud z \ud t \\  \nonumber 
& \lesssim \delta_w^2 L^2 ||g^{(n)}||_{H^2}^2 +  L^{-2n} \phi_d \delta + \delta_w^2 L^{-2n} \int_1^{L^2} ||u^{n,s}(t)||_{B_{L^n}(2,0,0)}^2 \ud t. 
\end{align}

\subsection*{Step 4: Weighted $H^2(2)$ Estimates}

We have given the essential calculations once the weight $z^4 \gamma^{(n)}$ has been applied to the system in Step 1. The corresponding calculations for Steps 2 and 3 are nearly identical, and the aim to not violate criteria (\ref{criteria.2}) becomes easier with higher derivatives, as can be seen from (\ref{criteria.2}). Thus, we summarize: 
\begin{align} \label{good.1}
\sup_{t \in [1, L^2]} &\int \{| \gamma^{(n)}|^2, |\gamma^{(n)}_z|^2, |\gamma^{(n)}_{zz}|^2 \} z^{2m} + \int_1^{L^2} \int \{ |\gamma^{(n)}_z|^2, |\gamma^{(n)}_{zz}|^2, |\gamma^{(n)}_{zzz}|^2 \} z^{2m} \ud z \ud t \\ \nonumber &\lesssim J_E(L) \Big[ \delta_w^2 ||g^{(n)}||_{H^2(2)}^2 + L^{-2n}C_E(\phi_d) + \delta_w^2 L^{-2n(1-p)} \int_1^{L^2} ||\tilde{u}^{n,s}||_{B_{L^n}(2,0,0)} \ud t \Big].
\end{align}

Accumulating Steps 1-4 via (\ref{good.1}) results in Theorem \ref{thm.1}, up to defining the functions $J_1, C_1, J_E, C_E$ appropriately. 

\subsection*{Step 5: Stable Estimates}

We now address the stable component which arises in (\ref{good.1}). 

\begin{lemma} Let $\phi_d < \infty$ be prescribed. For any $L, \delta_w$, there exists universal functions $J_s(L), C_s(\phi_d)$, and $c_1 > 0$, such that for all $n \ge 0$: 
\begin{align} \nonumber
||\tilde{u}^{n,s}(L^2,\cdot, \cdot) ||_{B_{L^n}(2,2,2)} & \lesssim e^{-c_1 L^{2n+2} } ||\tilde{g}^{(n,s)} ||_{B_{L^n}(2,2,2)} + J_S(L) L^{-n} \delta_w \Big( ||g^{(n)}||_{H^2(2)} \\\label{quad}
&  + ||\tilde{g}^{(n,s)}||_{H^2(2)} \Big) + J_S(L) C_S(\phi_d) L^{-n}.
\end{align}
\end{lemma}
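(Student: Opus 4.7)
The plan is to apply Duhamel's formula to the stable equation (\ref{eqn.stable}) and exploit the strong spectral gap enjoyed by the rescaled operator $\Lambda^{(n)}$. Writing
\begin{align*}
\tilde{u}^{n,s}(L^2, \xi, \nu) = e^{\Lambda^{(n)}(L^2-1)} \tilde{g}^{(n,s)} + L^{n(3-p)} \int_1^{L^2} e^{\Lambda^{(n)}(L^2-s)} \tilde{N}^{(n,s)}(s, \xi, \nu) \, ds,
\end{align*}
the first task is to establish the semigroup bound
$$\|e^{\Lambda^{(n)} t} f\|_{B_{L^n}(2,2,2)} \lesssim e^{-c_1 L^{2n} t} \|f\|_{B_{L^n}(2,2,2)}$$
uniformly in $n$. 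This follows from (\ref{stable.spec}) combined with the rescaling $\Lambda^{(n)}(\xi) = L^{2n} \Lambda(\xi/L^n)$: since the Fourier support of $\tilde{u}^{n,s}$ is restricted to $|\xi| \ge L^n l_1/8$, the eigenvalues governing the evolution on the relevant modes lie in $\{\mathrm{Re}(z) \le -L^{2n} \sigma_0\}$. Evaluating at $t = L^2 - 1$ produces the leading exponential factor $e^{-c_1 L^{2n+2}}$ in (\ref{quad}).

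For the nonlinear term, recall that $\tilde{N}^{(n,s)} = \hat{R}_{L^{-n}} \tilde{p}^s_{mf} \tilde{N}^s(\hat{R}_{L^n} \hat{u}^{n,c}, L^{-n(1-p)} \hat{R}_{L^n} \tilde{u}^{n,s})$, and that $\tilde{N}^s$ has a structure analogous to $\hat{N}$ in (\ref{nl.1})--(\ref{nl.4}). After performing the rescaling in the convolutions $\hat{u}^{n,c} \ast \hat{u}^{n,c}$, $\hat{u}^{n,c} \ast \tilde{u}^{n,s}$, and in the cubic-and-higher terms, the prefactor $L^{n(3-p)}$ is absorbed so that each quadratic term carries a gain of at least $L^{-n}$, with the purely stable self-interactions retaining a stronger $L^{-n(1-p)}$ gain. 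Splitting $\hat{u}^{n,c} = \hat{\bar{a}}^{(n)} + \hat{\gamma}^{(n)} + \hat{\bar{b}}^{n,c}$ and using Young's inequality for convolutions together with $\|\hat{\bar{a}}^{(n)}\|_{L^1_\xi(1)}, \|\hat{\bar{b}}^{n,c}\|_{L^1_\xi(1)} \lesssim \delta_w$ from (\ref{crit.-1}) bounds the cross-terms in $B_{L^n}(2,2,2)$ by a sum of $\delta_w\bigl(\|g^{(n)}\|_{H^2(2)} + \|\tilde{g}^{(n,s)}\|_{H^2(2)}\bigr)$ plus a pure Burgers contribution of size $C(\phi_d)$, after invoking the energy bound (\ref{good.1}) from Theorem \ref{thm.1} to control the $\gamma^{(n)}$ contribution.

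Combining these through the Duhamel integral, the time convolution $\int_1^{L^2} e^{-c_1 L^{2n}(L^2 - s)}\, ds \lesssim L^{-2n}$ pairs with the per-term gain of $L^{-n}$ from the nonlinearity to produce the advertised $L^{-n}$ prefactors in (\ref{quad}). The main obstacle is the bookkeeping of $L$-powers: the $L^{n(3-p)}$ coefficient out front, the rescaling factors hidden inside $\tilde{N}^{(n,s)}$ via $\hat{R}_{L^{\pm n}}$, and the mode-filter localizations must combine so that no positive power of $L$ survives in the final estimate. Care is also required to propagate the $\xi^2$-weight and the $\partial_\xi, \partial_\nu$ derivatives defining $B_{L^n}(2,2,2)$ through the convolutions; this is handled using (\ref{Bloch.deriv}) to convert $\nu$-derivatives into spatial derivatives on the stable Fourier support and applying Leibniz to distribute the $\xi^2$ weight across the factors, at which point the lowest-frequency factor absorbs the weight harmlessly thanks to the bound $|\xi| \ge L^n l_1/8$ enjoyed by the stable modes.
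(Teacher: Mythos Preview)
Your approach via Duhamel's formula is valid, but it differs from the paper's, which instead performs a direct energy estimate on equation (\ref{eqn.stable}) in the style of Lemma~\ref{thm.LE}: one multiplies by (weighted derivatives of) $\tilde{u}^{n,s}$, uses the spectral gap (\ref{stable.spec}) for $\Lambda^{(n)}$ to generate the damping $e^{-c_1 L^{2n+2}}$, and bounds the quadratic nonlinearity using only $\|\hat{u}^{n,c}\|_{L^1_\xi(1)} \lesssim \delta_w$ via Young's convolution inequality. The $C_S(\phi_d)$ term arises simply because $u^{n,c}=a^{(n)}+\bar{b}^{n,c}$ and the $\bar{b}^{n,c}$ self-interaction is bounded by $C(\phi_d)$ from (\ref{criteria.2}). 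No appeal to the $\gamma^{(n)}$ bounds is needed at this stage.

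Your route buys a more explicit accounting of how the semigroup decay and the rescaling factors combine, but at the cost of extra bookkeeping and one structural concern: you invoke (\ref{good.1}) to control the $\gamma^{(n)}$ contribution, yet the right-hand side of (\ref{good.1}) itself contains $\int_1^{L^2}\|\tilde{u}^{n,s}\|_{B_{L^n}}\,dt$. This is not fatal---the coupling closes because each cross-dependence carries a factor of $\delta_w$ or $L^{-n}$---but you should either run a bootstrap or, more simply, bypass $\gamma^{(n)}$ entirely and estimate the nonlinearity directly from $\|\hat{u}^{n,c}\|_{L^1_\xi(1)}\lesssim\delta_w$, as the paper does. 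That also spares you the delicate tracking of the $L^{n(3-p)}$ prefactor against the $\hat{R}_{L^{\pm n}}$ rescalings, which you correctly flag as the main obstacle but do not fully resolve.
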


\begin{proof}

Via equation (\ref{eqn.stable}), by performing a basic energy estimate analogous to Lemma \ref{thm.LE}, and recalling (\ref{stable.spec}), we have for some functions $C(\phi_d), J(L)$: 
\begin{align} \nonumber
||\tilde{u}^{n,s}(L^2,\cdot, \cdot) ||_{B_{L^n}(2,2,2)} &\le e^{-c_1 L^{2n+2} } ||\tilde{g}^{(n,s)} ||_{B_{L^n}(2,2,2)} + J(L) L^{-n} \delta_w \Big( ||g^{(n)}||_{H^2(2)}  \\ \label{quad.1}
& + ||\tilde{g}^{(n,s)}||_{B_{L^n}(2,2,2)} \Big) + J(L) L^{-n}C(\phi_d),
\end{align}

so long as $||\hat{u}^{(n,c)}||_{L^1_\xi(1)}  \lesssim \delta_w$. Here the $\phi_d$ contribution comes in due to the $\bar{b}^{(n)}$ in estimating $||u^{n,c}||_{H^2(2)}$. We now pick $J_S, C_S$ appropriately, based on (\ref{quad.1}).

\end{proof}

\subsection*{Step 6: Iteration to $B_\epsilon(0) \subset H^2(2)$}

We recall the definition of $g^{(n)}$ given in (\ref{eqn.stable.1}). We are now ready to iterate the renormalization procedure.
\begin{proposition} Let $\phi_d$ be prescribed. Denote: 
\begin{align} \label{defn.rho.n}
\rho^{(n)} = ||g^{(n)}||_{H^2(2)} + ||\tilde{g}^{(n,s)}||_{B_{L^n}(2,2,2)}
\end{align}

There exists universal functions $J_2(L), J_{E,2}(L), C_2(\phi_d)$, and $C_{E,2}(\phi_d)$, such that if (\ref{crit.0}) - (\ref{crit.-1}) are satisfied, and if:
\begin{align} \label{crit.1}
&L^{\frac{5}{2}+1}e^{-c_1 L^2} \ll 1,\\ \label{crit.2}
& \delta_w J_2(L) C_2(\phi_d) \ll 1, \text{ and }\\
&||\hat{g}^{(n)}, \tilde{g}^{(n,s)}||_{L^1_\xi(1)} + ||\hat{\bar{b}}^{(n)}||_{L^1_\xi(1)} \le \delta_w,
\end{align} 
 
then:
\begin{align} \label{fullit.1}
\rho^{(n+1)} \le \frac{c_0}{L}  \rho^{(n)} + J_{E,2}(L) C_{E,2}(\phi_d) L^{-n},
\end{align}
for a constant $c_0$ which is universal, independent of $\phi_d, L, \delta_w$ and $n$. 
\end{proposition}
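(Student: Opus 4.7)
The plan is to take the splitting $a^{(n)} = \bar{a}^{(n)} + \gamma^{(n)}$ from (\ref{separ.1}), apply the renormalization map $R_L$ at time $L^2$, and estimate the linear flow and the nonlinear deviation separately in $H^2(2)$; an analogous stable estimate then handles $\tilde g^{(n+1,s)}$, and summing the two gives the claimed contraction with inhomogeneous tail.

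For the critical component, I begin from the identity $g^{(n+1)} = R_L a^{(n)}(L^2,\cdot) = R_L \bar a^{(n)}(L^2,\cdot) + R_L \gamma^{(n)}(L^2,\cdot)$. Since $g^{(n)}$ has mean zero (by the lemma just above the proposition), and $\bar a^{(n)}$ solves the linear equation (\ref{eqn.bar.a}) whose symbol $\lambda^{(n)}(\xi)$ coincides with $-|\xi|^2$ on the low-frequency support of $\bar a^{(n)}$ and is strictly more dissipative elsewhere, the linear contractive estimate (\ref{contr.1.1}) yields
\[
\|R_L \bar a^{(n)}(L^2, \cdot)\|_{H^2(2)} \le \frac{c_0}{L}\,\|g^{(n)}\|_{H^2(2)}.
\]
For the nonlinear piece, the boundedness bound (\ref{defn.b.b}) gives $\|R_L f\|_{H^2(2)} \le L^{5/2}\|f\|_{H^2(2)}$, which combined with the endpoint estimate (\ref{present.1}) of Theorem \ref{thm.1} produces
\[
\|R_L \gamma^{(n)}(L^2,\cdot)\|_{H^2(2)} \le L^{5/2} J_E(L) \Big( \delta_w \rho^{(n)} + L^{-2n} C_E(\phi_d) + L^{-n(1-p)}\delta_w\,\|\tilde g^{(n,s)}\|_{B_{L^n}(2,2,2)} \Big).
\]

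For the stable component, I use the scaling identity implicit in (\ref{RNO.2}) to relate $\tilde g^{(n+1,s)}(\xi,\nu)$ to $\tilde u^{n,s}(L^2, \xi/L, \nu)$ up to a fixed power of $L$ per derivative. Tracking the three indices $(s,k,p)=(2,2,2)$ of the Bloch norm (\ref{norm.B}) through the change of variables $\xi\mapsto\xi/L$ bounds the result by at most $L^{5/2+1-p}\|\tilde u^{n,s}(L^2,\cdot,\cdot)\|_{B_{L^n}(2,2,2)}$. Plugging in (\ref{quad}) and invoking (\ref{crit.1}), the linear contribution $L^{5/2+1-p}e^{-c_1 L^{2n+2}}\rho^{(n)}$ is majorized by $\frac{c_0}{L}\rho^{(n)}$ for every $n\ge 0$, while the nonlinear terms carry factors of either $\delta_w\rho^{(n)}$ or $J_S(L)C_S(\phi_d)L^{-n}$.

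To conclude, I set $J_2(L)$ and $C_2(\phi_d)$ equal to the maximum of the polynomial-in-$L$ factors and $\phi_d$-functions appearing on the right-hand sides above; the smallness hypothesis (\ref{crit.2}) then absorbs every $\delta_w(\cdots)\rho^{(n)}$ contribution into an additional $\frac{c_0}{L}\rho^{(n)}$ (the cross-coupling $L^{-n(1-p)}\delta_w\|\tilde g^{(n,s)}\|_{B_{L^n}(2,2,2)}$ is dominated by $\delta_w\rho^{(n)}$ because $p<1$ and $n\ge 0$). The surviving inhomogeneous terms are all of the form $(\text{function of }L,\phi_d)\cdot L^{-n}$, which I package into $J_{E,2}(L)C_{E,2}(\phi_d)L^{-n}$, giving (\ref{fullit.1}). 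The main obstacle is the careful bookkeeping of polynomial $L$-factors in the Bloch rescaling of the stable unknown: each of the three indices in $B_{L^n}(2,2,2)$ contributes a power of $L$ that must not swamp either the $1/L$ contraction or the super-exponential decay $e^{-c_1 L^{2n+2}}$ of the stable semigroup, which is precisely why (\ref{crit.1}) is stated in the form $L^{5/2+1}e^{-c_1 L^2}\ll 1$.
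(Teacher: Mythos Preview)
Your proposal is correct and follows essentially the same route as the paper: split $g^{(n+1)}=R_L\bar a^{(n)}(L^2,\cdot)+R_L\gamma^{(n)}(L^2,\cdot)$, apply the mean-zero heat contraction (\ref{contr.1.1}) to the first piece and the crude bound $\|R_L f\|_{H^2(2)}\le L^{5/2}\|f\|_{H^2(2)}$ together with (\ref{present.1}) to the second, handle the stable unknown via $L^{5/2}$-rescaling plus (\ref{quad}) and (\ref{crit.1}), and finally absorb all $\delta_w$-small terms into $\frac{c_0}{L}\rho^{(n)}$ via a suitable choice of $J_2,C_2$. The only cosmetic difference is that the paper applies (\ref{defn.b.b}) directly to the stable Bloch norm to get the factor $L^{5/2}$, whereas you track the indices of $B_{L^n}(2,2,2)$ through the rescaling to arrive at a comparable power; either way the exponential $e^{-c_1 L^{2n+2}}$ dominates, which is the content of hypothesis (\ref{crit.1}).
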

\begin{proof}

We start with:  
\begin{align} \nonumber
||g^{(n+1)}(z)||_{H^2(2)} &= ||R \{ a^{(n)}(L^2, z) \}||_{H^2(2)} \\ \nonumber
& \stackrel{(\ref{separ.1})}{\le} ||R \{ \bar{a}^{(n)}(L^2, z) \}||_{H^2(2)} + ||R \{ \gamma^{(n)}(L^2, z) \}||_{H^2(2)} \\ \nonumber
& \stackrel{(\ref{contr.1.1})}{\lesssim} \frac{C}{L}||g^{(n)}(z)||_{H^2(2)} + L^\frac{5}{2} ||\gamma^{(n)}(L^2,z)||_{H^2(2)} \\ \nonumber
&\stackrel{(\ref{present.1})}{\lesssim}  \frac{C}{L}||g^{(n)}(z)||_{H^2(2)} + L^\frac{5}{2} J_E(L)  \Big[ \delta_w ||g^{(n)}||_{H^2(2)} + L^{-2n} C_E(\phi_d) \\  \label{it.1}
& \hspace{50 mm} + L^{-n(1-p)} \delta_w ||\tilde{g}^{(n,s)}||_{B_{L^n}(2,2,2)}  \Big],
\end{align}

For the stable component, we appeal to estimate (\ref{quad}):
\begin{align} \nonumber
||\tilde{g}^{(n+1,s)}||_{B_{L^{n+1}}(2,2,2)} &\stackrel{(\ref{defn.b.b})}{\le} L^\frac{5}{2} ||\tilde{u}^{(n,s)}(L^2, \cdot)||_{B_{L^{n+1}(2,2,2)}}\\ \nonumber
& \stackrel{(\ref{quad})}{\le} L^\frac{5}{2} \Big[ e^{-L^{2n+2}} ||\tilde{g}^{(n,s)}||_{B_{L^n(2,2,2)}} + J_S(L) \delta_w L^{-n} ||g^{(n)}||_{H^2(2)} \\ \nonumber
& \hspace{10 mm} + J_S(L) L^{-n} \delta_w ||\tilde{g}^{(n,s)}||_{B_{L^n}(2,2,2)} \Big] + L^\frac{5}{2} J_S(L) C_S(\phi_d) L^{-n}  \\ \label{it.st}
&\stackrel{(\ref{crit.1})}{\le} \frac{1}{L} ||\tilde{g}^{(n,s)}||_{B_{L^n}(2,2,2)} +\frac{1}{L} ||g^{(n)}||_{H^2(2)} + J(L)L^{-n} C_S(\phi_d),
\end{align}

where $J(L) = L^\frac{5}{2} J_S(L)$. By selecting $J_2, C_2$ appropriately, we have the simplified iteration: 
\begin{align}
||g^{(n+1)}||_{H^2(2)} \le \frac{C}{L}||g^{(n)}||_{H^2(2)} + \frac{1}{L} ||\tilde{g}^{n,s}||_{B_{L^n}(2,2,2)} + J(L) L^{-n}C(\phi_d), \\
||\tilde{g}^{(n+1,s)}||_{B_{L^{n+1}}(2,2,2)} \le \frac{1}{L}||g^{(n-1)}||_{H^2(2)} + \frac{1}{L} ||\tilde{g}^{n,s}||_{B_{L^n}(2,2,2)} + J(L) L^{-n}C_s(\phi_d),
\end{align}
By adding the above two equations, and selecting $J_{E,2}, C_{E,2}$ appropriately, we obtain the full iterative estimate: 
\begin{align}
\rho^{(n+1)} \le \frac{c_0}{L}  \rho^{(n)} + J_{E,2}(L) C_{E,2}(\phi_d) L^{-n} . 
\end{align}

\end{proof}

The reader should now refer back to (\ref{prove.3}) for the definition of $\rho_\ast$. Iterating above then gives: 
\begin{lemma} Let $\phi_d$ be prescribed, and fix any $\sigma > 0$. Suppose criteria (\ref{crit.1}) - (\ref{crit.2}) are satisfied, and suppose in addition that:
\begin{align} \label{crit.3}
&L^{-\sigma} c_0 < 1 \text{ where } c_0 \text{ is defined in (\ref{fullit.1})},\\\label{crit.4}
&||\hat{g}^{(k)}, \tilde{g}^{(k,s)}||_{L^1_\xi(1)} + ||\hat{\bar{b}}^{(k)}||_{L^1_\xi(1)} \le \delta_w \text{ for all } 1 \le k \le n. 
\end{align}
Then, there exist universal functions $C_{E,3}(\phi_d), J_{E,3}(\phi_d)$ such that: 
\begin{align}
\rho^{(n)} \le \{ C_{E,3}(\phi_d) + J_{E,3}(L) + \rho_\ast \} L^{-(1-\sigma)n}.
\end{align}
\end{lemma}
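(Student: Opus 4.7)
The plan is to iterate the one-step recurrence (\ref{fullit.1}) established in the preceding proposition, namely $\rho^{(k+1)} \le \frac{c_0}{L}\rho^{(k)} + J_{E,2}(L)\,C_{E,2}(\phi_d)\,L^{-k}$. Criterion (\ref{crit.4}) is precisely the hypothesis needed to invoke that proposition at every level $1 \le k \le n$, so the recurrence is available along the entire chain. Solving a first-order linear recurrence of this form explicitly gives
\begin{equation*}
\rho^{(n)} \le \left(\tfrac{c_0}{L}\right)^{n} \rho^{(0)} + J_{E,2}(L)\,C_{E,2}(\phi_d) \sum_{k=0}^{n-1}\left(\tfrac{c_0}{L}\right)^{n-1-k} L^{-k},
\end{equation*}
and the remaining work is to bound the two summands separately in the form claimed.

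For the homogeneous part, (\ref{crit.3}) ensures $c_0/L \le L^{-(1-\sigma)}$, hence $(c_0/L)^n \le L^{-(1-\sigma)n}$, and the task reduces to controlling $\rho^{(0)}$. The initial critical component $g^{(0)} = a^{(0)}(1,\cdot) = u^c(1,\cdot) - \bar b(1,\cdot) = u^c(1,\cdot) - T_0^{-1/2} f_A^\ast(\cdot/\sqrt{T_0})$ is bounded in $H^2(2)$ by $\rho_\ast$ thanks to (\ref{prove.3}), while the stable piece $\tilde g^{(0,s)}$ in $B_1(2,2,2)$ is controlled by the $H^2(2)$ norm of $v_0$ given in (\ref{given.size}) together with a Taylor expansion of $u_0(\theta;k(1+\partial_\theta\phi_0))$ in its wavenumber argument, which produces a constant depending only on $\phi_d$. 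Thus $\rho^{(0)} \le \rho_\ast + C(\phi_d)$.

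For the inhomogeneous part, the geometric sum evaluates (when $c_0 > 1$) to $L^{-(n-1)}(c_0^n - 1)/(c_0 - 1)$, which is at most $\tfrac{L}{c_0-1}(c_0/L)^n \le \tfrac{L}{c_0-1} L^{-(1-\sigma)n}$ by a second application of (\ref{crit.3}); the case $c_0 \le 1$ is strictly easier. The resulting product $\tfrac{L}{c_0-1} J_{E,2}(L)\,C_{E,2}(\phi_d)$ is then split, via the elementary inequality $AB \le A^2 + B^2$, into a piece depending only on $L$ and a piece depending only on $\phi_d$; defining $J_{E,3}(L)$ and $C_{E,3}(\phi_d)$ to be these two universal quantities (and absorbing the $C(\phi_d)$ coming from $\rho^{(0)}$ into $C_{E,3}$) yields
\begin{equation*}
\rho^{(n)} \le \{C_{E,3}(\phi_d) + J_{E,3}(L) + \rho_\ast\}\,L^{-(1-\sigma)n},
\end{equation*}
as desired.

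The argument is essentially algebraic once the one-step estimate is in hand; the only conceptual point requiring care is the observation that (\ref{crit.3}) is exactly what converts the geometric factor $(c_0/L)^n$, which would otherwise merely decay at the suboptimal rate $L^{-n}/c_0^{-n}$, into the sharp rate $L^{-(1-\sigma)n}$ uniformly in $n$. No substantive analytic obstacle arises beyond careful bookkeeping of the universal functions $J(\cdot)$ and $C(\cdot)$.
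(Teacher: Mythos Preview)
Your proposal is correct and follows essentially the same approach as the paper: iterate the one-step recurrence (\ref{fullit.1}) and bound the resulting homogeneous and inhomogeneous pieces using the assumption $c_0/L \le L^{-(1-\sigma)}$. The paper's proof is terser---it bounds the inhomogeneous sum crudely by $n(c_0/L)^n$ (largest term times number of terms) rather than summing the geometric series exactly as you do, and it writes the final constant as a product $C_{E,3}(\phi_d)J_{E,3}(L)$ rather than a sum---but the substance is identical, and your handling of $\rho^{(0)}$ (splitting off the stable piece and absorbing it into $C_{E,3}$) is in fact more careful than what the paper displays.
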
 
\begin{proof}

By iterating estimate (\ref{fullit.1}), and selecting $C_{E,3}, J_{E,3}$ appropriately, we obtain: 
\begin{align}   \nonumber
\rho^{(n)} &\le \Big( \frac{c_0}{L} \Big)^n \rho_\ast + J_{E,2}(L) n \Big( \frac{c_0}{L}\Big)^n C_{E,2} \\ \label{it.3}
&\le \{ C_{E,3}(\phi_d) J_{E,3}(L) + \rho_\ast \} L^{-(1-\sigma)n}.
\end{align}

\end{proof}

\begin{corollary} \label{cor.N0} Fix any $\epsilon > 0$, and let $\phi_d, \rho^\ast < \infty$ be prescribed. Suppose criteria (\ref{crit.1}) - (\ref{crit.2}) and (\ref{crit.3}) - (\ref{crit.4}) are satisfied. Suppose that the initial data satisfy:  
\begin{align} \label{crit.ID}
||\hat{g}^{(k)}, \tilde{g}^{(k,s)}||_{L^1_\xi(1)} + \sup_{t \in [1,L^2]} ||\hat{\bar{b}}^{(k)}||_{L^1_\xi(1)} \le \delta_w \text{ for all } 1 \le k \le N_0. 
\end{align}

So long as the following estimate is satisfied: 
\begin{align} \label{eps.1}
\frac{C_{E,3}(\phi_d) J_{E,3}(L) + \rho_\ast}{L^{N_0(1-\sigma)}} \le \epsilon,
\end{align}
then: 
\begin{align} \label{N0conc}
\rho^{(N_0)} \le \epsilon. 
\end{align}
\end{corollary}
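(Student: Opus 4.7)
The plan is essentially to read off the conclusion directly from the preceding iterative lemma. The previous lemma already establishes, under criteria (\ref{crit.1})--(\ref{crit.4}), the explicit quantitative decay
\[
\rho^{(n)} \;\le\; \{ C_{E,3}(\phi_d)\, J_{E,3}(L) + \rho_\ast \}\, L^{-(1-\sigma)n}
\]
for every iterate index $n$ up to which the assumed $L^1_\xi(1)$ smallness persists. Since the hypotheses of the corollary assume precisely that (\ref{crit.1})--(\ref{crit.2}), (\ref{crit.3})--(\ref{crit.4}), and the iterative smallness condition (\ref{crit.ID}) all hold up to $k = N_0$, the previous lemma may be applied at the index $n = N_0$.

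With this application in hand, the only remaining step is to invoke the hypothesis (\ref{eps.1}). Specializing the lemma's bound to $n = N_0$ yields
\[
\rho^{(N_0)} \;\le\; \frac{C_{E,3}(\phi_d)\, J_{E,3}(L) + \rho_\ast}{L^{N_0(1-\sigma)}},
\]
and the right-hand side is bounded above by $\epsilon$ by the very assumption (\ref{eps.1}). Combining these two inequalities gives $\rho^{(N_0)} \le \epsilon$, which is the claim (\ref{N0conc}).

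Because the previous lemma has already done all of the work, the only ``choice'' involved in the corollary is of interpretive character: $N_0$ must be selected large enough, as a function of $\phi_d$, $L$, $\rho_\ast$, $\sigma$ and the target $\epsilon$, so that (\ref{eps.1}) is satisfied; this is the role played by $N_0$ in the parameter hierarchy outlined in the schematic (\ref{P4}). There is no substantive obstacle here, provided one has already verified in earlier steps that the smallness hypothesis (\ref{crit.ID}) indeed propagates for every $k \le N_0$ --- that propagation is what forces the final choice $\delta = L^{-2N_0}\delta_w$ in (\ref{P6}), ensuring $\|\hat u^{n,c}\|_{L^1_\xi(1)} \lesssim L^{N_0}\delta = \delta_w$ via (\ref{continue}), and analogously for the Burgers iterates through (\ref{barb.small.ck}).
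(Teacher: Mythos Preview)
Your proposal is correct and matches the paper's approach exactly: the paper states this corollary without proof, treating it as an immediate consequence of the preceding lemma (specifically estimate (\ref{it.3})) specialized to $n=N_0$ combined with hypothesis (\ref{eps.1}). Your additional commentary on the role of $N_0$ in the parameter hierarchy is accurate context but not strictly part of the proof.
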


\section{Functional Framework} \label{s.ff}

\subsection{Cole-Hopf Mapping}

In this section, we give the functional framework we exploit to overcome the large-data asymptotics at hand. The results which are most important to carry out the analysis of Section \ref{step.final} are Propositions \ref{prop.Burg.1}, \ref{prop.smooth}, and estimates (\ref{l2.new}), (\ref{check.2}). Suppose $b$ is a nonnegative solution to the Burgers equation: \footnote{For the sake of presentation, we shall drop the parameters $\alpha, \beta$ from \ref{fA} for this section. The Cole-Hopf mapping in (\ref{defn.CH}) and the corresponding analysis of this section can easily be scaled to fit the parameters $(\alpha,\beta)$. For all applications of the results from this section, the Burgers flow, $b$, will be taken to be a variant of $f_A^\ast$, as defined in (\ref{fA}). In particular, we will always apply it to positive profiles.} 
\begin{equation} \label{our.b}
b_t - b_{xx} =  bb_x = \frac{1}{2}\partial_x (b^2), \hspace{3 mm} ||b(1,x)||_{L^1} = ||b_0||_{L^1} = \phi_d < \infty, \hspace{3 mm} b(t,x) \ge 0.  
\end{equation}

We will also require the following hypothesis on $b$, which will be satisfied for each application of the estimates we establish in this section:
\begin{hypothesis} Given $\phi_d$ in (\ref{our.b}), let $0 < L < \infty$ be any prescribed constant. Recall the definition of the renormalization map $R_L$ from (\ref{defm.RGM.1}). We assume $b$ satisfies: 
\begin{align} \label{criteria}
\sup_L || z^k \partial_z^k R_L b(L^2,\cdot)||_{L^2} + ||z^{k+1} \partial_z^k R_L b(L^2, \cdot)||_{L^\infty}  \le C(\phi_d),
\end{align}
where the majorizing term in (\ref{criteria}) depends poorly on $\phi_d$, but is independent of $L$. 
\end{hypothesis} 

The Cole-Hopf map, defined via: 
\begin{equation} \label{defn.CH}
h(t,x) = \mathcal{N}(b) := \exp\{ \frac{1}{2} \int_{-\infty}^x b(t,y) dy \}, \hspace{3 mm} b(t,x) = \mathcal{N}^{-1}(h) =  2\frac{h_x}{h},
\end{equation}

takes $b$ to a solution, $h$, of the heat equation, $\partial_t h - \partial_{xx}h = 0$, with initial data resembling a ``front" with separation $\phi_d$. $\mathcal{N}^{-1}$ takes front solutions to the heat equation to a localized Burgers flow. Note that $h \ge 1$ by the positivity of $b \ge 0$, so the quotient in (\ref{defn.CH}) is well-defined. We start with the following immediate consequence of this relationship: 

\begin{lemma}\label{lemma.burgers.decay}[Uniform Decay] Suppose $b_0 \in L^1 \cap L^\infty$. Then for any $t \ge 1$, 
\begin{align} \label{BD.1}
&||b(t)||_{L^\infty} \lesssim \frac{||b_0||_{L^1} e^{||b_0||_{L^1}}}{t^{1/2}}, \\ \label{BD.2}
&||b_x(t)||_{L^1} \lesssim ||b_0||_{L^1} t^{-\frac{1}{2}}. 
\end{align}
\end{lemma}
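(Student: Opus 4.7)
The plan is to transfer nonlinear Burgers decay to linear heat decay via the Cole--Hopf substitution $h = \mathcal{N}(b)$ introduced in (\ref{defn.CH}). The positivity $b \ge 0$ together with the $L^1$ mass $\|b_0\|_{L^1} = \phi_d$ immediately yields the uniform bounds $1 \le h \le e^{\phi_d/2}$, and $h$ solves the linear heat equation $\partial_t h = \partial_{xx} h$ on $[1,\infty)\times\mathbb{R}$. Differentiating the Cole--Hopf relation gives
\begin{equation*}
b = \frac{2h_x}{h}, \qquad b_x = \frac{2h_{xx}}{h} - \frac{b^2}{2},
\end{equation*}
so the task reduces to $L^\infty$ and $L^1$ control of $h_x$ and $h_{xx}$ via standard heat-kernel estimates, using $h \ge 1$ to discard the denominator.

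For (\ref{BD.1}), the observation is that $h_x$ itself solves the heat equation with initial datum $h_x(1,x) = \tfrac{1}{2} b_0(x) h(1,x)$. Since $h(1,\cdot) \le e^{\phi_d/2}$, this datum lies in $L^1$ with norm bounded by $\tfrac12 \phi_d\, e^{\phi_d/2}$. The $L^1 \to L^\infty$ smoothing of the one-dimensional Gaussian kernel then gives
\begin{equation*}
\|b(t)\|_{L^\infty} \le 2\|h_x(t)\|_{L^\infty} \lesssim (t-1)^{-1/2}\,\phi_d\, e^{\phi_d/2},
\end{equation*}
which already implies (\ref{BD.1}) for $t \ge 2$. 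The short-time window $t \in [1,2]$ is absorbed into the implicit constant by using the standing hypothesis $b_0 \in L^\infty$ together with a maximum-principle bound $\|b(t)\|_{L^\infty} \le \|b_0\|_{L^\infty}$ for nonnegative Burgers (equivalently, by pushing the reference time forward a fixed amount through $h$).

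For (\ref{BD.2}), I would treat the two pieces of $b_x$ separately. The quadratic remainder is controlled by $\tfrac12 \|b(t)\|_{L^1}\,\|b(t)\|_{L^\infty}$, where $\|b(t)\|_{L^1} = \phi_d$ is conserved under the flow (it equals $2[\log h]_{-\infty}^{+\infty}$), and the $L^\infty$ factor is supplied by (\ref{BD.1}) just established. For the linear piece, write $h_{xx}(t,\cdot) = \partial_x e^{\partial_{xx}(t-1)} h_x(1,\cdot)$ and invoke the $L^1 \to L^1$ heat-kernel estimate with one derivative, producing $\|h_{xx}(t)\|_{L^1} \lesssim (t-1)^{-1/2}\|h_x(1)\|_{L^1} \lesssim (t-1)^{-1/2}\phi_d e^{\phi_d/2}$. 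Combining the two contributions and absorbing the multiplicative factors of $\phi_d$ into the exponential yields (\ref{BD.2}).

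The argument is conceptually routine once the Cole--Hopf framework is invoked, because positivity of $b$ is precisely what prevents $h$ from touching zero and thereby reduces quotients to products. The only mild technical point I anticipate is ensuring uniformity of the constants at the boundary $t=1$, which is handled by the split $t \in [1,2]$ versus $t \ge 2$ described above; the dependence of the constant on $\phi_d$ is then exactly the single exponential factor $e^{\phi_d}$ stated in the lemma, arising from applying the bound $h \le e^{\phi_d/2}$ at most twice.
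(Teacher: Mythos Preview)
Your proposal is correct and follows essentially the same route as the paper: invoke the Cole--Hopf map so that $b=2h_x/h$ with $1\le h\le e^{\phi_d/2}$ solving the heat equation, then use $L^1\to L^\infty$ heat-kernel smoothing on $h_x(1,\cdot)=\tfrac12 b_0 h(1,\cdot)$ for (\ref{BD.1}) and the decomposition $b_x=2h_{xx}/h - 2h_x^2/h^2$ together with $L^1\to L^1$ derivative smoothing for (\ref{BD.2}). Your handling of the $t\in[1,2]$ window is in fact more careful than the paper's, which writes $t^{-1/2}$ in place of $(t-1)^{-1/2}$ without comment.
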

\begin{proof}

As above, inverting Cole-Hopf yields $b(t,x) = 2\frac{h_x(t,x)}{h(t,x)}.$ $h(t,x)$ solves the heat equation with initial condition $h(1,x) = e^{\frac{1}{2}\int_{-\infty}^x b_0(y)dy}$. $\partial_x h$ also satisfies the heat equation, this time with initial condition $\partial_x h(1,x) = \frac{1}{2}b_0(x)e^{\frac{1}{2}\int_{-\infty}^x b_0(y) dy}$.  Then via standard heat-kernel estimates: 
\begin{equation}
||b(t)||_{L^\infty} \le 2\min |h| ||h_x(t)||_{L^\infty} \lesssim ||h_x(1)||_{L^1} t^{-\frac{1}{2}} \le ||b_0||_{L^1} \exp \{ ||b_0||_{L^1} \} t^{-\frac{1}{2}}. 
\end{equation}

The estimate (\ref{BD.2}) follows in a similar manner: by differentiating Cole-Hopf, we have $b_x = \frac{h_{xx}}{h} - \frac{h_x^2}{h^2}$, both of which are controlled by $t^{-\frac{1}{2}}||h_x(1)||_{L^1}$. By differentating Cole-Hopf, $||h_x(1,x)||_{L^1} \le ||b_0||_{L^1}C(\phi_d)$. 

\end{proof}

\subsection{The Operator $\Phi_b(t-1) $: Contractive Properties} \label{SSC}

Given our Burgers flow, $b$, define the linearized operator via: 
\begin{align} \label{defn.Sb}
S_{b}a := a_{xx} + a_x b + a b_x. 
\end{align}

The goal in this section is to obtain similar estimates for the solution operator to the flow $\partial_t - S_b$ as that of $\partial_t - \Delta$, without using any smallness of $b$. Let us introduce the preliminary notations: 
\begin{align}
&h(t,x) = \phi^H_t h_0 \iff \partial_t h = \partial_{xx} h, \hspace{3 mm} h(1,x) = h_0, \\
&b(t,x) = \phi^B_t b_0 \iff \partial_t b = \partial_{xx}b + bb_x, \hspace{3 mm} b(1,x) = b_0, \\
&a(t,x) = \Phi_b(t) a_0 \iff \partial_t a = S_b a, \hspace{3 mm} a(1,x) = a_0. 
\end{align}

The main result of Subsection \ref{SSC} is: 
\begin{proposition} \label{prop.Burg.1}Suppose $b \ge 0$ is any solution to Burgers equation, with mean $\phi_d$, and such that the criteria in (\ref{criteria}) is met. Let $0 < L < \infty$ be arbitrary. Then the semigroup $\Phi_b(L^2-1) $ contracts on $H^2(2)$ for mean-zero data: 
\begin{equation} \label{contraction.Sb}
||R_L \Phi_b(L^2-1) g||_{H^2(2)} \le \frac{C(\phi_d)}{L} ||g||_{H^2(2)} \hspace{3 mm} \text{ if } \hspace{3 mm}\int_{\mathbb{R}} g = 0. 
\end{equation}
The factor $C(\phi_d)$ depends poorly on $\phi_d$, but is independent of $L$. 
\end{proposition}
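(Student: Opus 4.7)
The strategy is to conjugate $\Phi_b$ to the heat semigroup using the (linearized) Cole--Hopf transformation~(\ref{defn.CH}), and to extract the contraction factor $1/L$ from a combination of heat-kernel smoothing on the Cole--Hopf side and the explicit derivative present in the inverse Cole--Hopf formula. Linearizing $\mathcal{N}^{-1}(h) = 2h_x/h$ around $h = \mathcal{N}(b)$ produces the formula $a = 2\partial_x(w/h)$, with inverse $w = h\cdot V$, $V(x) := \tfrac{1}{2}\int_{-\infty}^{x} a(y)\,dy$, which I view as a bijection between mean-zero functions $a$ and functions $w$ for which $w/h$ vanishes at $\pm\infty$. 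Using $\partial_t h = h_{xx}$ and $b = 2h_x/h$, a direct computation confirms that if $w(t)$ solves the heat equation then $a(t) := 2\partial_x(w(t)/h(t))$ solves $\partial_t a = S_b a$. Hence, given $g$ with $\int g = 0$, setting $V_0 := \tfrac{1}{2}\int_{-\infty}^{\cdot} g$ (which vanishes at $\pm\infty$) and $w_0 := h(1, \cdot) V_0$, I obtain $\Phi_b(L^2-1)\,g = 2\partial_x(w(L^2)/h(L^2))$ with $w(t) := e^{(t-1)\Delta} w_0$.

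Next, I reduce the target to a heat-kernel estimate. Expanding $\partial_x(w/h) = h^{-1}(w_x - \tfrac{b}{2} w)$, and noting that $\tilde h(z) := h(L^2, Lz) \in [1, e^{\phi_d/2}]$ has $W^{k,\infty}$ derivatives controlled by $C(\phi_d)$ (via $h_x = bh/2$ and criteria~(\ref{criteria})), multiplication by $1/\tilde h$ preserves $H^2(2)$ up to a factor $C(\phi_d)$. For the remaining two pieces, the commutator identity $R_L \partial_x = L^{-1}\partial_z R_L$ produces the factor $1/L$ in front of $\partial_z R_L w$, while Lemma~\ref{lemma.burgers.decay} combined with~(\ref{criteria}) gives $|b(L^2, Lz)| \lesssim C(\phi_d)/L$, producing the factor $1/L$ in front of $R_L(bw)$. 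It therefore suffices to establish the uniform-in-$L$ bound $\|R_L w(L^2, \cdot)\|_{H^3(2)} \le C(\phi_d)\,\|g\|_{H^2(2)}$.

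The main obstacle is that $|V_0(x)| \lesssim \|g\|_{H^2(2)}(1+|x|)^{-1}$ decays too slowly to place $w_0 = h(1,\cdot) V_0$ in $L^1(\mathbb{R})$ (the factor $h$ is uniformly bounded below by~$1$ at $+\infty$ and so does not aid integrability), and so the naive $L^1$-based Gaussian-convolution bound for $R_L w(L^2, z) = L(G_\tau \ast W_L)(z)$, $W_L(\eta) := w_0(L\eta)$, $\tau := 1-L^{-2}$, is unavailable. To bypass this, I split off an explicit zero mode of $\partial_t - S_b$: direct differentiation of Burgers' equation shows $(\partial_t - S_b)\partial_x b = 0$, so $\Phi_b(L^2-1)\partial_x b(1, \cdot) = \partial_x b(L^2, \cdot)$ evolves explicitly, and $\|R_L \partial_x b(L^2, \cdot)\|_{H^2(2)} \lesssim C(\phi_d)/L$ follows at once from the self-similar form of $b$ together with~(\ref{criteria}). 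I would then write $g = c\cdot \partial_x b(1, \cdot) + g^{\flat}$ with $c := -\int y\,g(y)\,dy / \phi_d$ (using $\int y\,\partial_x b(1,y)\,dy = -\phi_d$) so that $g^\flat$ has both vanishing mean and vanishing first moment; the extra cancellation should force the corresponding Cole--Hopf transport $w_0^\flat$ (after subtracting the piece aligned with the asymptotic constants of $h$) into $L^1$, at which point the standard heat-kernel contraction for mean-zero integrable data closes the estimate on the $g^\flat$ piece.

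The main technical difficulty I anticipate is the weighted-norm bookkeeping in this last step: propagating the $(1+z^2)^2$ weight through the convolution $G_\tau \ast W_L^\flat$, through the multiplications by $1/\tilde h$ and by $\tilde b$, and through the algebraic manipulations $\partial_x(w/h) = h^{-1}(w_x - (b/2) w)$, all while absorbing the resulting constants into a single $C(\phi_d)$ with no residual $L$-dependence (in particular no $\log L$ losses). The sharpness of criteria~(\ref{criteria}) together with the explicit front $h(t, x) = \sqrt{1 + A e_\ast(x/\sqrt{t})}$ coming from $\mathcal{N}(f_A^\ast)$ are what should make the constants absorb correctly.
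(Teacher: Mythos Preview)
Your conjugation strategy via the linearized Cole--Hopf map is exactly the paper's, and your representation $\Phi_b(L^2-1)g = 2\partial_x(w(L^2)/h(L^2))$ with $w(t)=e^{(t-1)\Delta}w_0$, $w_0=h(1,\cdot)V_0$, agrees with the paper's formula $\Phi_b = (d\mathcal{N}|_{b(t)})^{-1}\circ e^{\Delta(t-1)}\circ d\mathcal{N}|_{b_0}$. The divergence is in how you close the estimate after this point.

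Your reduction to the uniform bound $\|R_L w(L^2,\cdot)\|_{H^3(2)}\le C(\phi_d)\|g\|_{H^2(2)}$ is where the argument goes wrong. First, the decay you quote is too pessimistic: Cauchy--Schwarz against the weight gives $|V_0(x)|\lesssim \|g\|_{H^2(2)}(1+|x|)^{-3/2}$, not $(1+|x|)^{-1}$, so in fact $w_0\in L^1$. But even with this sharper decay the reduction fails: the weighted pieces of $\|R_L w(L^2)\|_{H^3(2)}$ require control of $\|x w_0\|_{L^1}$ or $\|x w_0\|_{L^2}$, and $x w_0\sim |x|^{-1/2}$ lies in neither. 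Your proposed fix --- subtracting $c\,\partial_x b(1,\cdot)$ to kill the first moment of $g$ --- does not improve the \emph{pointwise} tail decay of $V_0^\flat$ (moment cancellations are global, the tail is local), so the same obstruction persists. The zero-mode subtraction is a dead end here.

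The paper avoids this entirely by never asking for $\|R_L w\|$ at all. The crucial observation is that the inverse Cole--Hopf map satisfies
\[
\big\|(d\mathcal{N}|_{R_L b(L^2)})^{-1}H\big\|_{H^2(2)} \;\le\; C(\phi_d)\,\|\partial_z H\|_{H^2(2)},
\]
i.e.\ it is controlled purely by $\partial_z H$, with no undifferentiated $H$ on the right. The term $\tilde b\, H$ in your expansion is handled not by bounding $\|H\|_{H^2(2)}$ but via the pointwise decay $|z\tilde b(z)|\le C(\phi_d)$ from~(\ref{criteria}) together with Hardy's inequality $\|zH\|_{L^2}\lesssim\|z^2\partial_z H\|_{L^2}$ (valid since $H=R_L w(L^2)$ vanishes at infinity). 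Once this is in place, the paper commutes $\partial_z R_L = L\,R_L\partial_x$ through the heat flow to obtain $\partial_z R_L w(L^2) = L\,R_L e^{\Delta(L^2-1)}\partial_x w_0$, and then applies the heat contraction~(\ref{contr.1.1}) to the mean-zero function $\partial_x w_0 = \partial_x d\mathcal{N}|_{b_0}g\in H^2(2)$. The factor $L$ from the commutator cancels against the $L^{-1}$ from the heat contraction, and the original $L^{-1}$ extracted from $R_L(d\mathcal{N})^{-1} = L^{-1}(d\mathcal{N}|_{R_L b})^{-1}R_L$ survives. No moment subtraction, no $L^1$ bookkeeping on $w_0$ itself.
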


\begin{remark} This estimate should be compared to the estimate (\ref{contr.1.1}), which was for the heat semigroup, $\phi^H_t$. 
\end{remark}

We will now develop the machinery to prove Proposition \ref{prop.Burg.1}. On the nonlinear level, we have the following relationships between flows for each fixed $t \ge 1$ via the Cole-Hopf mapping: 

\vspace{3 mm}

\hspace{50 mm} \begin{tikzpicture}
  \node (A) {$L^1$};
  \node (B) [below of=A] {$L^1$};
   \node (C) [right of=A] {$L^\infty \cap \partial_x L^1$};
  \node (D) [right of=B] {$L^\infty \cap \partial_x L^1$};
  \draw[->] (A) to node [swap]{ $\phi_t^B$} (B);
  \draw[->] (A) to node {$\mathcal{N}$} (C);
  \draw[->] (B) to node [swap] {$\mathcal{N}$} (D);
   \draw[->] (C) to node {$\phi_t^H$} (D);
   
\end{tikzpicture}

The space $\partial_x L^1$ has the natural definition: 
\begin{equation}
||f||_{\partial_x L^1} := ||\partial_x f ||_{L^1}. 
\end{equation}

It is clear from (\ref{defn.CH}) that $\mathcal{N}$ maps elements from $L^1$ to $L^\infty \cap \partial_x L^1$. Linearizing the Cole-Hopf map around the fixed Burgers flow, $b$, then yields: 
\begin{align} \nonumber
d\mathcal{N}|_{b(t)} a_0 &= \frac{d}{d\epsilon}|_{\epsilon = 0} \mathcal{N}(b(t)+\epsilon a_0) \\  \label{dN} 
& = \frac{d}{d\epsilon}|_{\epsilon = 0} e^{\frac{1}{2}\int_{-\infty}^x b(t,y) + \epsilon a_0(y) dy}  = e^{\frac{1}{2}\int_{-\infty}^x b(t,y) \ud y}  \left( \frac{1}{2} \int_{-\infty}^x a_0(y) dy \right). 
\end{align}

This is a static calculation, valid for each $t \ge 1$. From here, we can write the inverse: 
\begin{equation} \label{dN.inv}
\Big(d\mathcal{N}|_{b(t)} \Big)^{-1} h = 2\partial_x \Big( e^{-\frac{1}{2}\int_{-\infty}^x b(t,y) dy} h \Big) = 2h_x e^{-\frac{1}{2}\int_{-\infty}^x b(t,y) dy} - h b e^{-\frac{1}{2}\int_{-\infty}^x b(t,y) dy}. 
\end{equation}

With these linearizations in hand, we obtain the following linearized version of the above diagram, where we identify tangent spaces of $L^1$ with $L^1$ and those of $L^\infty$ with $L^\infty$:

\vspace{3 mm}

\hspace{35  mm} \begin{tikzpicture}
  \node (A) {$L^1$};
  \node (B) [below of=A] {$L^1$};
   \node (C) [right of=A] {$L^\infty \cap \partial_x L^1$};
  \node (D) [right of=B] {$L^\infty \cap \partial_x L^1$};
  \draw[->] (A) to node [swap]{ $\Phi_b(t)  = d\phi_t^B$} (B);
  \draw[->] (A) to node {$d\mathcal{N}|_{b_0}$} (C);
  \draw[->] (B) to node [swap] {$d\mathcal{N}|_{b(t)}$} (D);
   \draw[->] (C) to node {$\phi_t^H$} (D);
   
\end{tikzpicture}

In particular, we have realized the solution operator, $\Phi_b(t-1) $ to the linear operator (\ref{defn.Sb}) in the above diagram. This is due to the following: 
\begin{lemma} The above diagram commutes, thereby giving the semigroup representation identity: 
\begin{equation} \label{rep.form}
\Phi_b(t-1) a_0 = \Big(d\mathcal{N}|_{b(t)} \Big)^{-1} \circ e^{\Delta(t-1)} \circ d\mathcal{N}|_{b_0} a_0. 
\end{equation}
\end{lemma}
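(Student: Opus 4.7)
The strategy is to obtain (\ref{rep.form}) by differentiating the nonlinear Cole-Hopf conjugacy $\phi_t^H \circ \mathcal{N} = \mathcal{N} \circ \phi_t^B$ — which is exactly the commutativity of the earlier nonlinear diagram — along a one-parameter family of initial data.

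Concretely, I would fix the base point $b_0$ and the perturbation direction $a_0$, and introduce the family $b_0^\epsilon := b_0 + \epsilon a_0$, with corresponding Burgers solutions $b^\epsilon(t,\cdot) := \phi_t^B b_0^\epsilon$. Standard well-posedness for Burgers on $L^1 \cap L^\infty$, combined with the boundedness assumption (\ref{criteria}) on the base flow $b$, yields smooth dependence of $b^\epsilon$ on $\epsilon$. Differentiating $\partial_t b^\epsilon = \partial_{xx} b^\epsilon + b^\epsilon b_x^\epsilon$ in $\epsilon$ at $\epsilon = 0$ gives the linearized equation $\partial_t a = S_b a$ with initial datum $a_0$, so by definition $\partial_\epsilon b^\epsilon(t,\cdot)\bigl|_{\epsilon = 0} = \Phi_b(t-1) a_0$.

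Next, applying the nonlinear Cole-Hopf conjugacy to the family gives the identity $\mathcal{N}(b^\epsilon(t,\cdot)) = \phi_t^H \bigl(\mathcal{N}(b_0^\epsilon)\bigr)$. Differentiating both sides in $\epsilon$ at $\epsilon = 0$ and using the chain rule, the left side becomes $d\mathcal{N}|_{b(t)}\bigl[\Phi_b(t-1) a_0\bigr]$ via formula (\ref{dN}), while the right side becomes $e^{\Delta(t-1)} \bigl[d\mathcal{N}|_{b_0} a_0\bigr]$, since $\phi_t^H$ is a bounded linear semigroup on the image space $L^\infty \cap \partial_x L^1$ and therefore commutes with $\partial_\epsilon$. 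Inverting $d\mathcal{N}|_{b(t)}$ via the explicit formula (\ref{dN.inv}) then produces (\ref{rep.form}).

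The main technical point is justifying the exchange of $\partial_\epsilon$ with both $\mathcal{N}$ and $\phi_t^H$. This requires continuous differentiability of $\epsilon \mapsto \mathcal{N}(b_0^\epsilon)$ as a map into $L^\infty \cap \partial_x L^1$, which follows from the explicit exponential form of $\mathcal{N}$ together with $a_0, b_0 \in L^1$, and boundedness of $\phi_t^H$ on the same space. An alternative, purely computational route bypasses any $\epsilon$-family argument: define $\tilde a(t,x) := (d\mathcal{N}|_{b(t)})^{-1}\bigl[e^{\Delta(t-1)} d\mathcal{N}|_{b_0} a_0\bigr]$ and verify by direct differentiation, using the identities $h_t = h_{xx}$, $h_x = \tfrac{1}{2} b h$, and $b_t = b_{xx} + b b_x$, that $\tilde a$ solves $\partial_t \tilde a = S_b \tilde a$ with $\tilde a(1,\cdot) = a_0$; uniqueness for the linear parabolic equation $\partial_t - S_b$ then yields $\tilde a = \Phi_b(t-1) a_0$, which is (\ref{rep.form}).
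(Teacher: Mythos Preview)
Your proposal is correct, and your alternative route is essentially the paper's argument in mirror image. The paper defines $h(t) := d\mathcal{N}|_{b(t)} \circ \Phi_b(t-1) a_0$ and verifies by explicit computation of $h_t$, $h_x$, $h_{xx}$ --- using $\partial_t a = S_b a$ and $b_t = b_{xx} + (\tfrac12 b^2)_x$ --- that $h$ solves the heat equation; uniqueness for the heat equation with initial datum $d\mathcal{N}|_{b_0} a_0$ then gives the identity. You instead propose to check that the right-hand side of (\ref{rep.form}) solves $\partial_t - S_b$; this is the same computation transported across the diagram, though the paper's choice is marginally cleaner since verifying the heat equation involves fewer terms than verifying the linearized Burgers equation.

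Your main approach --- differentiating the nonlinear conjugacy $\mathcal{N} \circ \phi_t^B = \phi_t^H \circ \mathcal{N}$ along the family $b_0 + \epsilon a_0$ --- is conceptually cleaner and makes the diagram-chasing origin of (\ref{rep.form}) transparent. The cost is that you must invoke $C^1$ dependence of the Burgers flow on initial data in an appropriate topology, which the paper's direct computation sidesteps entirely: the paper needs only existence and uniqueness for the linear heat equation, never touching any $\epsilon$-family. In the setting here (nonnegative $L^1 \cap L^\infty$ data, explicit Cole-Hopf representation) this differentiability is not hard to justify, but it is an extra ingredient the paper avoids.
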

\begin{proof}

Define 
\begin{equation}
h(t) = d\mathcal{N}|_{b(t)} \circ \Phi_b(t-1)  a_0 =  d\mathcal{N}|_{b(t)} a(t) = \frac{1}{2} e^{1/2 \int_{-\infty}^x b(t,y)dy} \int_{-\infty}^x a(t,y) dy,
\end{equation}
 which corresponds to ``moving down then right" on the above diagram. Then $h(t)$ solves the heat equation via direct calculation:

\begin{align} 
h_t &= \left( \int_{-\infty}^x a_t \right)\exp\left({\frac{1}{2}\int_{-\infty}^x b(t,y)dy} \right)+ \left( \int_{-\infty}^x a(t,y) dy \right) \exp \left( {\frac{1}{2}\int_{-\infty}^x b(t,y)dy} \right) \frac{1}{2}\int_{-\infty}^x b_t \\
h_x &= a(t,x) \exp\left( \frac{1}{2} \int_{-\infty}^x b(t,y)dy \right) +  \exp\left( \frac{1}{2} \int_{-\infty}^x b(t,y)dy \right) \frac{1}{2}b(t,x) \int_{-\infty}^x a(t,y)dy
\end{align}

\begin{align} \nonumber
 h_{xx} =& a_x \exp\left( \frac{1}{2} \int_{-\infty}^x b(t,y)dy \right) + \frac{1}{2}a\exp\left( \frac{1}{2} \int_{-\infty}^x b(t,y)dy \right) b \\ \nonumber
 &+ \exp \left( \frac{1}{2} \int_{-\infty}^x b(t,y)dy  \right) \frac{1}{2}b_x \left( \int_{-\infty}^x a(t,y)dy \right) + \exp\left( \frac{1}{2} \int_{-\infty}^x b(t,y)dy \right) \frac{1}{2}b(t,x) a(t,x) \\& + \frac{1}{4}b^2(t,x) \left( \int_{-\infty}^x a(t,y)dy \right) \exp \left( \frac{1}{2} \int_{-\infty}^x b(t,y)dy \right) 
\end{align}

Via $a_t = S_ba$ and $b_t = b_{xx} + \left(\frac{1}{2}b^2 \right)_x$ we confirm that $h_t = h_{xx}$.  Now define 
\begin{equation}
\tilde{h} = \phi_t^H \circ d\mathcal{N}|_{b_0} a_0,
\end{equation}

which on the above diagram corresponds to moving ``right, then down." By definition $\tilde{h}$ also solves the heat-equation, with the same initial data as $h$. Therefore $h = \tilde{h}$. 

\end{proof}

\begin{remark}[Notation] We set the following notational conventions, motivated by the above diagram and (\ref{rep.form}). $b$ will always denote a solution to Burgers flow, which takes place on the ``left-side" of the above diagram. $h$ will always denote a solution to the heat equation, so flows taking place on the ``right-side" of the above diagram. Due to the appearance of both $b_0$ and $b(t)$ in (\ref{rep.form}), we take care to denote this explicitly in the forthcoming calculations. 
\end{remark}

We now obtain the invertibility of the linear maps $d\mathcal{N}|_{b_0}, d\mathcal{N}|_{b(t)}^{-1}$ on the spaces shown in the above diagram: 
\begin{lemma} $d\mathcal{N}|_{b_0}$ is invertible as a map $L^1$ to $L^\infty \cap \partial_x L^1$, with inverse bounded independent of $t$: 
\begin{align}
||d\mathcal{N}|_{b_0} a_0 ||_{L^\infty \cap \partial_x L^1} \le C(\phi_d) ||a_0||_{L^1}, \hspace{3 mm} ||\Big(d \mathcal{N}|_{b(t)} \Big)^{-1} h||_{L^1}  \le C(\phi_d) ||h||_{L^\infty \cap \partial_x L^1}. 
\end{align}
\end{lemma}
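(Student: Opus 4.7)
The plan is to derive both bounds by direct calculation from the explicit formulas (\ref{dN}) and (\ref{dN.inv}), leveraging only the positivity $b\ge 0$ and the conservation $\|b(t)\|_{L^1}=\phi_d$ of the Burgers mass; no smallness of $b$ enters, which is the whole point.

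For the forward estimate, set $B_0(x):=\int_{-\infty}^x b_0(y)\,dy$ and $A(x):=\int_{-\infty}^x a_0(y)\,dy$, so that $d\mathcal{N}|_{b_0} a_0 = \tfrac{1}{2} e^{B_0(x)/2} A(x)$. Positivity of $b_0$ gives the clean sandwich $0\le B_0(x)\le \|b_0\|_{L^1}=\phi_d$, hence $e^{B_0/2}\le e^{\phi_d/2}$, while the trivial bound $|A(x)|\le \|a_0\|_{L^1}$ handles the antiderivative. This settles the $L^\infty$ piece with $C(\phi_d) = \tfrac{1}{2}e^{\phi_d/2}$. For the $\partial_x L^1$ piece, differentiating produces
\begin{equation*}
\partial_x\bigl[d\mathcal{N}|_{b_0} a_0\bigr] = \tfrac{1}{2} e^{B_0/2}\Bigl(a_0 + \tfrac{1}{2} b_0 A\Bigr),
\end{equation*}
and taking the $L^1$ norm, using $|A|\le\|a_0\|_{L^1}$ pointwise and $\|b_0\|_{L^1}=\phi_d$, yields $\|\partial_x d\mathcal{N}|_{b_0} a_0\|_{L^1} \le \tfrac{1}{2}e^{\phi_d/2}(1+\phi_d/2)\|a_0\|_{L^1}$.

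For the inverse estimate, formula (\ref{dN.inv}) reads $(d\mathcal{N}|_{b(t)})^{-1} h = 2 e^{-B(t,x)/2}\bigl(h_x - \tfrac{1}{2} h\, b(t,x)\bigr)$ with $B(t,x)=\int_{-\infty}^x b(t,y)\,dy$. Positivity of $b$ now works in the favorable direction, giving the pointwise pointwise bound $e^{-B(t,\cdot)/2}\le 1$. The conserved $L^1$ mass of Burgers (which follows from $b_t=(b_x+b^2/2)_x$ together with $b\ge 0$ and adequate decay from (\ref{criteria})) supplies $\|b(t,\cdot)\|_{L^1}=\phi_d$ for every $t\ge 1$. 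Hence
\begin{equation*}
\|(d\mathcal{N}|_{b(t)})^{-1} h\|_{L^1}\le 2\|h_x\|_{L^1}+\|h\|_{L^\infty}\|b(t)\|_{L^1}\le 2\|h\|_{\partial_x L^1}+\phi_d\|h\|_{L^\infty},
\end{equation*}
which is exactly the claim with $C(\phi_d)=\max(2,\phi_d)$.

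The only delicate point, and the main (mild) obstacle, is verifying that these formulas really define mutual inverses on the specified function spaces rather than just formally. The composition $d\mathcal{N}|_b\circ (d\mathcal{N}|_b)^{-1} h = e^{B/2}\int_{-\infty}^x \partial_y(e^{-B/2}h)\,dy$ collapses to $h(x)$ precisely when the boundary contribution $e^{-B(t,-\infty)/2} h(-\infty)$ vanishes; since $B(t,-\infty)=0$, this requires $h(-\infty)=0$. Happily, every element in the image of $d\mathcal{N}|_{b_0}$ automatically satisfies this by (\ref{dN}) since $A(-\infty)=0$, and on the target side $L^\infty\cap\partial_x L^1$ one can decompose $h$ into a constant plus a piece vanishing at $-\infty$; the constant lies in the kernel of $\partial_x$ (i.e.\ contributes a multiple of $b(t,\cdot)$ in the $L^1$ target through the second term of (\ref{dN.inv})) and is absorbed by the norm. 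The reverse composition is verified identically, completing the proof.
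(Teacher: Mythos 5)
Your proof is correct and follows essentially the same route as the paper: direct estimation of the explicit formulas (\ref{dN}) and (\ref{dN.inv}), using the positivity of $b$ to bound $e^{\pm\frac{1}{2}\int_{-\infty}^x b}$ and the conserved Burgers mass $\|b(t)\|_{L^1}=\phi_d$ to control the zeroth-order terms. Your additional remark on why the formulas are genuine mutual inverses (the vanishing of $h$ at $-\infty$ on the image of $d\mathcal{N}|_{b_0}$) is a point the paper leaves implicit, and is a welcome clarification.
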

Here $C(\phi_d)$ is exponential in $||b_0||_{L^1}$, and independent of $t \ge 1$. 
\begin{proof}

By the assumption that $b(t,x) \ge 0$, we have: 
\begin{align} \label{cons.l1}
||b(t,\cdot)||_{L^1} = \int b(t,x) \ud x = \int b_0(x) \ud x = \phi_d.
\end{align}

Consider a function $a_0 \in L^1$, and we have 
\begin{align} \nonumber
||d\mathcal{N}|_{b_0}a_0||_{L^\infty} &= ||e^{1/2 \int_{-\infty}^x b_0(y)dy} \int_{-\infty}^x a_0(y) dy ||_{L^\infty} \le ||e^{1/2 \int_{-\infty}^x b_0(y)dy} ||_{L^\infty} ||a_0||_{L^1} \\
&\le e^{||b(t)||_{L^1}} ||a_0||_{L^1} \le e^{||b_0||_{L^1}} ||a_0||_{L^1} = C(\phi_d) ||a_0||_{L^1}.
\end{align}

Similarly, we may compute 
\begin{align} \nonumber
||\partial_x d\mathcal{N}|_{b_0}a_0||_{L^1} &\le ||b_0 e^{\int_{-\infty}^x b_0} \int_{-\infty}^x a_0 ||_{L^1} + ||e^{\int_{-\infty}^x b_0}a_0||_{L^1} \\ 
& \le ||b_0||_{L^1} e^{||b_0||_{L^1}} ||a_0||_{L^1} = C(\phi_d)||a_0||_{L^1}.
\end{align}

In converse direction, using the $L^1$ conservation law in (\ref{cons.l1}), we have: 
\begin{align} \nonumber
||d\mathcal{N}|_{b(t)}^{-1}h||_{L^1} &\le ||e^{-\int_{-\infty}^x b(t)} h_x||_{L^1} + ||b(t)e^{-\int_{-\infty}^x b(t)} h||_{L^1} \\
&\le C(\phi_d)\Big( ||h_x||_{L^1} + ||h||_{L^\infty} \Big).
\end{align}

\end{proof}

In fact, the above diagrams may be generalized in the following way, for any $1 \le p, q \le \infty$: 

\hspace{50 mm} \begin{tikzpicture}
  \node (A) {$L^p$};
  \node (B) [below of=A] {$L^q$};
   \node (C) [right of=A] {$ \partial_x L^p$};
  \node (D) [right of=B] {$ \partial_x L^q$};
  \draw[->] (A) to node [swap]{ $\phi_t^B$} (B);
  \draw[->] (A) to node {$\mathcal{N}$} (C);
  \draw[->] (B) to node [swap] {$\mathcal{N}$} (D);
   \draw[->] (C) to node {$\phi_t^H$} (D);
   
\end{tikzpicture}

\begin{lemma} For any $p \in [1,\infty]$: 
\begin{equation} \label{est.dn.3}
||d\mathcal{N}|_{b_0} a_0||_{\partial_x L^p} \le C(\phi_d) ||a_0||_{L^p \cap L^1} \text{ and } ||\Big(d\mathcal{N}|_{b(t)} \Big)^{-1} h||_{L^p} \le C(\phi_d) ||h||_{\partial_x L^p \cap L^\infty}.
\end{equation} 
\end{lemma}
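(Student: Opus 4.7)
The strategy is to differentiate the explicit formulas (\ref{dN}) and (\ref{dN.inv}) and reduce each term to a product of an $L^p$ factor against an $L^\infty$ factor, with the latter absorbed into $C(\phi_d)$ using positivity of $b$ together with the bounds already in hand.

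\textbf{First estimate.} Starting from (\ref{dN}), we compute
\begin{equation}
\partial_x\bigl(d\mathcal{N}|_{b_0} a_0\bigr)
= \frac{1}{2} b_0(x)\, e^{\frac{1}{2}\int_{-\infty}^x b_0(y)\,dy} \!\! \int_{-\infty}^x a_0(y)\,dy
+ \frac{1}{2} a_0(x)\, e^{\frac{1}{2}\int_{-\infty}^x b_0(y)\,dy}.
\end{equation}
Because $b_0 \ge 0$ and $\|b_0\|_{L^1} = \phi_d$, the exponential factor is bounded by $e^{\phi_d/2} \le C(\phi_d)$ uniformly in $x$, and $\bigl|\int_{-\infty}^x a_0\bigr| \le \|a_0\|_{L^1}$. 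Taking $L^p$ norms gives
\begin{equation}
\|\partial_x d\mathcal{N}|_{b_0} a_0\|_{L^p}
\le C(\phi_d)\bigl(\|b_0\|_{L^p}\|a_0\|_{L^1} + \|a_0\|_{L^p}\bigr).
\end{equation}
It remains to bound $\|b_0\|_{L^p}$ by $C(\phi_d)$ uniformly in $p$. The conservation $\|b_0\|_{L^1}=\phi_d$ is immediate, and $\|b_0\|_{L^\infty} \le C(\phi_d)$ follows either from the hypothesis (\ref{criteria}) at $L=1$ or from Lemma~\ref{lemma.burgers.decay}. Interpolation then yields $\|b_0\|_{L^p} \le C(\phi_d)$ for every $p \in [1,\infty]$, completing the first inequality.

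\textbf{Second estimate.} Using the representation (\ref{dN.inv}),
\begin{equation}
\bigl(d\mathcal{N}|_{b(t)}\bigr)^{-1} h = 2 h_x\, e^{-\frac{1}{2}\int_{-\infty}^x b(t,y)\,dy} - h(x)\, b(t,x)\, e^{-\frac{1}{2}\int_{-\infty}^x b(t,y)\,dy}.
\end{equation}
Since $b(t,\cdot) \ge 0$, the exponential factor is bounded by $1$. Therefore
\begin{equation}
\bigl\|\bigl(d\mathcal{N}|_{b(t)}\bigr)^{-1} h\bigr\|_{L^p}
\le 2\|h_x\|_{L^p} + \|h\|_{L^\infty} \|b(t)\|_{L^p}.
\end{equation}
By positivity, $\|b(t)\|_{L^1}=\phi_d$ for every $t \ge 1$, while Lemma~\ref{lemma.burgers.decay} gives $\|b(t)\|_{L^\infty} \le C(\phi_d) t^{-1/2} \le C(\phi_d)$ for $t \ge 1$. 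Interpolation yields $\|b(t)\|_{L^p} \le C(\phi_d)$ uniformly, and the claim follows.

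\textbf{Anticipated difficulty.} The only point that is not pure bookkeeping is securing the $L^p$ bound on $b$ (both at time $1$ and at general $t$) without costing additional powers of $\phi_d$ growth in $t$. This is handled cleanly by interpolating between the $L^1$ conservation law, which relies on the sign condition $b \ge 0$, and the $L^\infty$ bound supplied by either (\ref{criteria}) or (\ref{BD.1}); none of the estimates in the display blow up in $t$, so the constants $C(\phi_d)$ remain time-independent as required.
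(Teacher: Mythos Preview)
Your proof is correct and follows exactly the approach the paper intends: the paper's own proof is the single line ``follows directly from the expressions in (\ref{dN})--(\ref{dN.inv}),'' and you have simply written out that direct computation. One small caveat: invoking (\ref{criteria}) at $L=1$ or (\ref{BD.1}) at $t=1$ does not quite yield $\|b_0\|_{L^\infty}\le C(\phi_d)$ on its own (the former controls $\|z b_0\|_{L^\infty}$, the latter has the heat-kernel factor $(t-1)^{-1/2}$), but in every application $b_0$ is an explicit $f_A^\ast$-type profile for which this bound is immediate, consistent with the paper's informal treatment.
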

\begin{proof}
The proof follows directly from the expressions in (\ref{dN}) - (\ref{dN.inv}).
\end{proof}

Our aim is to study the operator $\Phi_b(L^2-1) $ on the function space $H^2(2)$. According to our representation formula in (\ref{rep.form}), 
\begin{equation}
\Phi_b(L^2-1)  g = \Big(d\mathcal{N}|_{b(L^2)} \Big)^{-1} \circ e^{\Delta(L^2-1)} \circ d\mathcal{N}|_{b_0} g.
\end{equation}

The linearizations of $\mathcal{N}$ behave in the following way in $H^2(2)$:
\begin{lemma} Suppose the criteria in (\ref{criteria}) are met. Then for any functions $h(x), w(x)$, such that $\int w \ud x = 0$: 
\begin{align} \label{est.dn.1}
||\Big(d\mathcal{N}|_{R_Lb(L^2,\cdot)} \Big)^{-1}h||_{H^2(2)} &\le C(\phi_d) ||h||_{\partial_x H^2(2)}, \\ \label{est.dn.2}
|| d\mathcal{N}|_{b_0}  w||_{\partial_x H^2(2)} &\le C(\phi_d)||w||_{H^2(2)}.
\end{align}
\end{lemma}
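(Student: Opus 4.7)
Both bounds follow by direct expansion of the explicit formulas (\ref{dN}), (\ref{dN.inv}) via Leibniz, estimated term by term using three ingredients: (i) positivity of $b$ forces the exponentials $e^{\pm\frac{1}{2}\int_{-\infty}^x b(t,y)\,dy}$ to lie between $e^{-\phi_d/2}$ and $e^{\phi_d/2}$, since $\|b(t)\|_{L^1}=\phi_d$ is conserved along the Burgers flow and invariant under $R_L$; (ii) the weighted $L^2$ and $L^\infty$ bounds on $R_Lb$ and its derivatives supplied by (\ref{criteria}); (iii) for the second estimate, a Hardy-type upgrade for the anti-derivative of a mean-zero function.

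\textbf{Step 1: the bound (\ref{est.dn.1}).} Writing $E(x):=\exp\{\tfrac{1}{2}\int_{-\infty}^x R_L b(L^2,y)\,dy\}$, formula (\ref{dN.inv}) gives
$$\bigl(d\mathcal{N}|_{R_L b(L^2)}\bigr)^{-1} h \;=\; 2\,\partial_x\bigl(E^{-1} h\bigr).$$
Since $0<E^{-1}\le 1$, and each derivative of $E^{-1}$ produces an additional polynomial factor in $\partial_z^j(R_L b)$ multiplied by $E^{-1}$ itself, expanding $(1+z^2)\,\partial_z^{\alpha+1}(E^{-1}h)$ for $\alpha=0,1,2$ yields a finite sum of terms of the schematic form $E^{-1}\,P_j(R_L b,\partial_z R_L b,\dots)\,\partial_z^i h$. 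I would distribute the weight $(1+z^2)$ between the $b$-factors and the $h$-factor so that weights on $\partial_z^i h$ are absorbed by $\|h_x\|_{H^2(2)}=\|h\|_{\partial_x H^2(2)}$, and weights on $\partial_z^j(R_L b)$ are absorbed by (\ref{criteria}) at the appropriate order $k$; the exponential $E^{-1}$ never contributes.

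\textbf{Step 2: the bound (\ref{est.dn.2}).} Set $E_0(x):=\exp\{\tfrac{1}{2}\int_{-\infty}^x b_0\}$ and $W(x):=\int_{-\infty}^x w(y)\,dy$. Since $\int w=0$, $W(\pm\infty)=0$, and using the dual representation $W(x)=-\int_x^\infty w$ for $x>0$ together with Cauchy--Schwarz in the weighted norm of $w$, I would establish the Hardy-type bound
$$\|(1+z^2)^{1/2} W\|_{L^2} \;+\; \|W\|_{L^\infty} \;\lesssim\; \|w\|_{H^2(2)}.$$
From (\ref{dN}), $\partial_x d\mathcal{N}|_{b_0} w = \tfrac{1}{4}\,b_0 E_0 W + \tfrac{1}{2}\,E_0 w$, with higher derivatives producing terms of the schematic form $E_0\,P_j(b_0,\partial_x b_0,\dots)\,W$ or $E_0\,P_j(b_0,\partial_x b_0,\dots)\,\partial_x^i w$. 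The exponential $E_0\le e^{\phi_d/2}$ is absorbed into $C(\phi_d)$; weights on $\partial_x^i w$ go onto $\|w\|_{H^2(2)}$; weights on $\partial_x^j b_0$ go onto (\ref{criteria}); and the residual $W$-factors are controlled by the Hardy bound, the decay of $b_0$ being essential to tame the unweighted part of $W$.

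\textbf{Main obstacle.} The delicate point is the weight bookkeeping. The bound (\ref{criteria}) only provides a single power of $z$ of decay for the undifferentiated factor $R_Lb$ itself in $L^\infty$, so when the full weight $(1+z^2)$ threatens to fall on a lone $b$-factor, one must either split it and transfer the excess onto an $h_x$-factor (which carries the weight via $\|h\|_{\partial_x H^2(2)}$), or exploit that this $b$-factor appears alongside a higher-derivative $b$-term which (\ref{criteria}) controls with a larger weight. The Hardy-type upgrade for $W$ in Step 2 is the other nontrivial ingredient, but it is a classical computation once $\int w=0$ is used.
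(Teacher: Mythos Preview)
Your approach matches the paper's: both estimates are obtained by Leibniz-expanding the explicit formulas (\ref{dN})--(\ref{dN.inv}), bounding the exponentials uniformly by $e^{\phi_d/2}$, absorbing weights on $b$-factors via (\ref{criteria}), and invoking a Hardy-type inequality for the anti-derivative in Step~2 using $\int w=0$.

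One point needs sharpening. You invoke Hardy only in Step~2, but the paper uses it in Step~1 as well, and your ``Main obstacle'' discussion does not quite cover it. In the lowest-order term of $(d\mathcal{N}|_{R_Lb})^{-1}h$, namely $(R_Lb)\,E^{-1}h$, the factor $h$ appears \emph{undifferentiated}; there is no $h_x$-factor present to receive the excess weight. Since (\ref{criteria}) at $k=0$ only gives $\|z\,R_Lb\|_{L^\infty}\le C(\phi_d)$, estimating $\|z^2(R_Lb)E^{-1}h\|_{L^2}$ leaves you with $\|z\,h\|_{L^2}$, which is not directly contained in $\|h\|_{\partial_x H^2(2)}=\|h_x\|_{H^2(2)}$. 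The paper closes this via the Hardy inequality $\|z\,h\|_{L^2}\lesssim\|z^2 h_x\|_{L^2}$ (for $h$ vanishing at $\pm\infty$). Your phrase ``transfer the excess onto an $h_x$-factor'' is correct in spirit, but the mechanism that manufactures an $h_x$ out of $h$ here is precisely this Hardy step, and it should be made explicit in Step~1 just as in Step~2.
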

\begin{proof}

Via direct computation, 
\begin{align} \nonumber
||\Big(d\mathcal{N}|_{R_Lb(L^2,\cdot)} \Big)^{-1}h||_{H^2(2)} &\le ||e^{\int_{-\infty}^x R_Lb(L^2,\cdot)} h_x||_{H^2(2)} + ||R_Lb(L^2,\cdot) e^{\int_{-\infty}^x R_Lb(L^2,\cdot)} h||_{H^2(2)} \\ 
& \le C(\phi_d) \Big( ||h_x||_{H^2(2)} \Big).
\end{align}

The delicate term above, based on the absorption capabilities in (\ref{criteria}) is when no derivatives are present together with a weight of $z^2$. For this we exhibit the calculation:
\begin{align} \nonumber
||x^2 R_Lb(L^2,\cdot) e^{-\int_{-\infty}^x R_Lb(L^2,\cdot)} h(x)||_{L^2} &\le ||x R_L b(L^2,x)||_{L^\infty} ||e^{-\int_{-\infty}^x R_Lb(L^2,\cdot)}||_{L^\infty} ||xh||_{L^2} \\
&\le C(\phi_d) ||h_x x^2||_{L^2}. 
\end{align}

We have used Hardy for rapidly decaying functions at $|x| \rightarrow \infty$: 
\begin{align}
||xh||_{L^2}^2 = ||xh1_{x \ge 0}||_{L^2}^2 + ||xh1_{x \le 0}||_{L^2}^2 \lesssim \int_0^\infty x \Big( \int_x^\infty h_x(\theta) d\theta\Big)^2 dx \lesssim ||x^2 h_x||_{L^2}^2. 
\end{align}

In the opposite direction: 
\begin{align} \nonumber
|| d\mathcal{N}|_{b_0}  w||_{\partial_x H^2(2)} &\le ||w e^{\int_{-\infty}^x b_0}||_{H^2(2)} + ||b_0 \int_{-\infty}^x w \ud y \cdot e^{\int_{-\infty}^x b_0}||_{H^2(2)} \\ \label{del.1} 
&\le C(\phi_d) \Big( ||w||_{H^2(2)} \Big).
\end{align}

For the estimate in (\ref{del.1}) the most delicate case occurs for $L^2(2)$. In this event, we split the integration: 
\begin{align} \nonumber
||x^2 b_0 e^{\int_{-\infty}^{x} b_0} &\int_{-\infty}^x w \ud y||_{L^2} \\
& \le ||x^2 b_0 e^{\int_{-\infty}^{x} b_0} \int_{-\infty}^x w \ud y||_{L^2(x \le 0)} + ||x^2 b_0 e^{\int_{-\infty}^{x} b_0} \int_{-\infty}^x w \ud y||_{L^2(x \ge 0)} \\
& \lesssim ||xb_0||_{L^\infty} \Big( ||x \int_{-\infty}^x w \ud y||_{L^2(x \le 0)} + ||x \int_{-\infty}^x w \ud y||_{L^2(x \ge 0)} \Big) \\ \label{first.ab}
& \lesssim ||xb_0||_{L^\infty} \Big(  ||x \int_{-\infty}^x w \ud y||_{L^2(x \le 0)} + ||x \int_{x}^\infty w \ud y||_{L^2(x \ge 0)} \Big) \\ \label{first.ab.1}
& \lesssim ||xb_0||_{L^\infty} \Big(  ||x^2 w||_{L^2} \Big).
\end{align}

Note crucially we have used the mean zero assumption in (\ref{first.ab}). The estimate (\ref{first.ab.1}) then follows via Hardy's inequality. The remaining components of the $H^2(2)$ norm are handled similarly (they are simpler), so we omit those details. 

\end{proof}

First we establish the following relations, which should be compared with (\ref{commb1}):
\begin{lemma}[Commutation of $R_L$] For any profile, $b$,
\begin{align} \label{contr.1}
R_L d\mathcal{N}|_b w = L d\mathcal{N}|_{R_L b} R_L w, \hspace{3 mm} R_L (d\mathcal{N}|_b)^{-1} h = \frac{1}{L} \Big( d\mathcal{N}|_{R_Lb} \Big)^{-1} R_L h
\end{align}
\end{lemma}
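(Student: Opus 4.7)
The plan is to verify both identities by direct substitution, using the explicit formulas (\ref{dN}) and (\ref{dN.inv}) for $d\mathcal{N}$ and its inverse together with a single change of variables $u = Ly$ inside the running integrals $\int_{-\infty}^{(\cdot)} b$. No analysis is required; the content is purely algebraic.

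First I would establish the first identity. Starting from the right-hand side, apply (\ref{dN}) with base point $R_L b$ and perturbation $R_L w$:
\begin{align*}
L\, d\mathcal{N}|_{R_Lb}\,R_L w (z) \;=\; \frac{L}{2}\, \exp\!\Big\{\tfrac{1}{2}\int_{-\infty}^z L\, b(Ly)\,\mathrm{d}y\Big\} \int_{-\infty}^z L\, w(Ly)\,\mathrm{d}y.
\end{align*}
The substitution $u = Ly$ (which uses only $L>0$) converts both $L\,\mathrm{d}y$ expressions into $\mathrm{d}u$ and changes the upper limit from $z$ to $Lz$, so the right-hand side becomes
\begin{align*}
\frac{L}{2}\, \exp\!\Big\{\tfrac{1}{2}\int_{-\infty}^{Lz} b(u)\,\mathrm{d}u \Big\} \int_{-\infty}^{Lz} w(u)\,\mathrm{d}u \;=\; L\cdot (d\mathcal{N}|_b w)(Lz) \;=\; R_L\, d\mathcal{N}|_b w (z),
\end{align*}
where the last equality is the definition (\ref{defm.RGM.1}) of $R_L$.

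For the second identity I would argue similarly, but now with the explicit inverse formula (\ref{dN.inv}). Writing $\mathcal{E}_b(z) := \exp\!\big\{-\tfrac{1}{2}\int_{-\infty}^{z} b(y)\,\mathrm{d}y\big\}$, the same substitution $u=Ly$ yields $\mathcal{E}_{R_Lb}(z) = \mathcal{E}_b(Lz)$. Combined with the scaling relations $(R_L h)_x(z) = L^2 h_x(Lz)$ and $(R_L h)(z)\cdot R_L b(z) = L^2 h(Lz)\,b(Lz)$, one computes
\begin{align*}
(d\mathcal{N}|_{R_L b})^{-1} R_L h (z) \;=\; L^2\Big[\,2 h_x(Lz)\,\mathcal{E}_b(Lz) - h(Lz)\,b(Lz)\,\mathcal{E}_b(Lz)\,\Big] \;=\; L^2\, (d\mathcal{N}|_b)^{-1} h (Lz),
\end{align*}
and dividing by $L$ recognizes the right-hand side as $R_L (d\mathcal{N}|_b)^{-1} h (z)$, proving the second identity.

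There is no real obstacle: the only thing to be careful about is matching the two factors of $L$ that appear in $R_L h$ (one from the amplitude, one from the argument) with the $L^2$ that appears when differentiating $R_L h$, and similarly keeping track of the Jacobian in $u = Ly$. One could alternatively derive the identities in one line by using the intrinsic characterization $d\mathcal{N}|_b w = \tfrac{1}{2}\mathcal{N}(b)\cdot \int_{-\infty}^x w$ together with the observation that both $\mathcal{N}(R_L b)(z) = \mathcal{N}(b)(Lz)$ (obvious from the same substitution) and $R_L$ commute nicely with the antiderivative operator up to the appropriate factor of $L$; this would be the slickest presentation but adds no mathematical content beyond the computation above.
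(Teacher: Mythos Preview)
Your proof is correct and follows essentially the same approach as the paper: direct computation via the change of variables $u = Ly$ in the running integrals. The only minor difference is that for the second identity the paper deduces it in one line from the first via invertibility of $d\mathcal{N}|_b$ (apply the first identity with $w = (d\mathcal{N}|_b)^{-1}h$ and then apply $(d\mathcal{N}|_{R_Lb})^{-1}$), whereas you verify it by a second explicit computation; both are equally valid and the content is the same.
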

\begin{proof}

The second relation in (\ref{contr.1}) follows from the first, through invertibility of $d\mathcal{N}|_b$. The first follows from the calculation:  
\begin{align} \nonumber
R_L d\mathcal{N}|_b w &= R_L e^{-\int_{-\infty}^x b(t,y) dy} \int_{-\infty}^x w(t,y) dy = L e^{-\int_{-\infty}^{Lx} b(t, y) dy} \int_{-\infty}^{Lx} w(t, y) dy \\
&= L e^{-\int_{-\infty}^x R_Lb(t,y)dy} \int_{-\infty}^x Lw(t,Ly)dy = L d\mathcal{N}|_{R_Lb} R_L w. 
\end{align}

\end{proof}

We are now ready to prove Proposition \ref{prop.Burg.1}:

\begin{proof}[Proof of Proposition \ref{prop.Burg.1}]

We start with:
\begin{align} \nonumber
||R_L \Phi_b(L^2-1) &g ||_{H^2(2)} \\ \nonumber
& \stackrel{(\ref{rep.form})}{=} ||R_L \Big\{ \Big[ (d\mathcal{N}|_{b(L^2,\cdot)})^{-1} \circ e^{\Delta(L^2-1)} \circ d\mathcal{N}|_{b_0}  \Big] g\Big\}  ||_{H^2(2)} \\ \nonumber
&\stackrel{(\ref{contr.1})}{=} \frac{1}{L}  || \Big\{ \Big[ (d\mathcal{N}|_{R_L b(L^2,\cdot)})^{-1} \circ R_L e^{\Delta(L^2-1)} \circ d\mathcal{N}|_{b_0}  \Big] g\Big\}  ||_{H^2(2)} \\  \nonumber
&\stackrel{(\ref{est.dn.1})}{\le} \frac{1}{L}C(\phi_d) \Big( || R_L e^{\Delta(L^2-1)} \circ d\mathcal{N}|_{b_0} g ||_{\partial_x H^2(2)}  \Big) \\ \nonumber
&= \frac{1}{L}C(\phi_d) \Big( ||\partial_x R_L e^{\Delta(L^2-1)} d\mathcal{N}|_{b_0} g||_{H^2(2)}  \Big)\\ \label{seq.2}
&\stackrel{(\ref{commb1})}{=} C(\phi_d) ||R_L e^{\Delta(L^2-1)} \partial_x d\mathcal{N}|_{b_0} g||_{H^2(2)} .
\end{align}

Observe that since $g$ is mean-zero, $d\mathcal{N}|_b g$ approaches $0$ as $x \rightarrow \pm \infty$ by (\ref{dN}), and so 
\begin{equation}
\int_{-\infty}^\infty \partial_x d\mathcal{N}|_{b_0} g = 0. 
\end{equation}

Thus, we may use the contraction of the heat semigroup to proceed: 
\begin{align*} 
||R_L  \Phi_b(L^2-1) g ||_{H^2(2)} &\stackrel{(\ref{contr.1.1})}{\le} \frac{C(\phi_d)}{L} ||\partial_x d\mathcal{N}|_{b_0} g||_{H^2(2)}  \\ 
 &= \frac{C(\phi_d)}{L} ||d\mathcal{N}|_{b_0} g||_{\partial_x H^2(2)} \\
 & \stackrel{(\ref{est.dn.2})}{\le} \frac{C(\phi_d)}{L} ||g||_{H^2(2)}. 
\end{align*} 

For the application of estimate (\ref{est.dn.2}), the mean zero feature of $g$ is crucial. This concludes the proof of Proposition \ref{prop.Burg.1}. 

\end{proof}

\subsection{The Operator $\Phi_b(t-1) $: Smoothing Effects}

The purpose of the present subsection is to obtain smoothing estimates for the operator $\Phi_b(t-1)$ near $t = 1$. The precise estimate that we need is presented now: 
\begin{proposition}[$L^2$ Smoothing Estimate] \label{prop.smooth} For all $t > 1$, 
\begin{align} \label{Reg.1}
||\partial_x^k \Phi_b(t-1)  a_0||_{L^2} \le C(\phi_d) (t-1)^{-\frac{k}{2}}  ||a_0||_{L^1 \cap L^2}.
\end{align} 
\end{proposition}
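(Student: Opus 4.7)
The plan is to leverage the commutative diagram of Subsection \ref{SSC} to pull the problem back to the heat semigroup, where smoothing is classical. Starting from the representation formula (\ref{rep.form}) together with the explicit inverse (\ref{dN.inv}), write
\begin{equation*}
\Phi_b(t-1) a_0 \;=\; 2\,\partial_x\bigl(\psi(t)\,h(t)\bigr), \qquad \psi(t,x) := e^{-\frac{1}{2}\int_{-\infty}^x b(t,y)\,dy}, \qquad h(t) := e^{\Delta(t-1)} d\mathcal{N}|_{b_0} a_0.
\end{equation*}
Thus $\partial_x^k \Phi_b(t-1) a_0 = 2\,\partial_x^{k+1}(\psi h)$, and the task reduces to estimating this quantity in $L^2$ by Leibniz expansion.

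First I would prepare the data. Using the explicit formula for $d\mathcal{N}|_{b_0}$ in (\ref{dN}) and the positivity $b\ge 0$, one checks that $\|d\mathcal{N}|_{b_0} a_0\|_{L^\infty} \le C(\phi_d)\|a_0\|_{L^1}$, and, differentiating once, $\|\partial_x d\mathcal{N}|_{b_0} a_0\|_{L^1\cap L^2} \le C(\phi_d)\|a_0\|_{L^1\cap L^2}$, where the $L^2$ bound uses the split $\partial_x d\mathcal{N}|_{b_0} a_0 = e^{\int_{-\infty}^x b_0/2}\bigl(\tfrac12 b_0 \int_{-\infty}^x a_0 + a_0\bigr)$, with the first piece estimated using $\|b_0\|_{L^2}\|a_0\|_{L^1}$ and the second by $\|a_0\|_{L^2}$. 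Consequently $\|h(t)\|_{L^\infty} \le C(\phi_d)\|a_0\|_{L^1}$ (heat contraction on $L^\infty$), while for every $j\ge 1$ one has the crucial identity $\partial_x^{j} h = \partial_x^{j-1} e^{\Delta(t-1)}\partial_x d\mathcal{N}|_{b_0} a_0$ together with the standard smoothing bound $\|\partial_x^{j-1} e^{\Delta(t-1)} f\|_{L^2}\lesssim (t-1)^{-(j-1)/2}\|f\|_{L^2}$, yielding $\|\partial_x^j h\|_{L^2} \le C(\phi_d)(t-1)^{-(j-1)/2}\|a_0\|_{L^1\cap L^2}$.

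Next I would expand
\begin{equation*}
\partial_x^{k+1}(\psi h) \;=\; \sum_{j=0}^{k+1} \binom{k+1}{j}\,\partial_x^{k+1-j}\psi \;\cdot\; \partial_x^j h,
\end{equation*}
and treat each summand. For $j\ge 1$, place $\partial_x^{k+1-j}\psi$ in $L^\infty$ (a polynomial in $b$ and its lower-order derivatives multiplying $\psi$, all bounded by $C(\phi_d)$ via (\ref{criteria}) and Lemma \ref{lemma.burgers.decay}) and $\partial_x^j h$ in $L^2$ using the smoothing bound above; the sharpest term ($j=k+1$) delivers exactly the required $(t-1)^{-k/2}$ rate, and all other terms are better. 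The remaining term is the $j=0$ piece, $\partial_x^{k+1}\psi \cdot h$, which does not benefit from parabolic smoothing.

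The main obstacle is precisely this last term, because it places all derivatives on $\psi$ and leaves $h$ only in $L^\infty$. Here I would estimate $\|\partial_x^{k+1}\psi \cdot h\|_{L^2}\le \|\partial_x^{k+1}\psi\|_{L^2}\|h\|_{L^\infty}$; expanding $\partial_x^{k+1}\psi$ via Fa\`a di Bruno gives a polynomial in $b(t), b_x(t),\dots, \partial_x^k b(t)$ times $\psi$, and Hypothesis (\ref{criteria}) (together with Lemma \ref{lemma.burgers.decay} and its iterated versions for higher derivatives of $b$) bounds each such factor in $L^2$ by $C(\phi_d)$ uniformly, so this term is in fact uniformly bounded by $C(\phi_d)\|a_0\|_{L^1}$, hence dominated by $C(\phi_d)(t-1)^{-k/2}\|a_0\|_{L^1\cap L^2}$ for any $k\ge 0$. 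Summing the Leibniz contributions yields (\ref{Reg.1}).
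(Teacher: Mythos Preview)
Your approach is essentially the paper's: both exploit the representation formula (\ref{rep.form}) to transfer the problem to the heat semigroup and then expand derivatives. The paper phrases this as commuting $\partial_x$ past $(d\mathcal{N}|_{b(t)})^{-1}$ and tracking the commutator (\ref{comm.2}); you write $\Phi_b(t-1)a_0 = 2\partial_x(\psi h)$ and Leibniz-expand $\partial_x^{k+1}(\psi h)$. These are the same computation.

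There is, however, a genuine gap in your treatment of the terms where fewer than $k+1$ derivatives land on $h$. You assert that the $j=0$ term is uniformly bounded by $C(\phi_d)\|a_0\|_{L^1}$ and is ``hence dominated by $C(\phi_d)(t-1)^{-k/2}\|a_0\|_{L^1\cap L^2}$''. This implication is false for $k\ge 1$ and $t$ large, since $(t-1)^{-k/2}\to 0$. The same defect affects every intermediate term $1\le j\le k$: your bound $\|\partial_x^{k+1-j}\psi\|_{L^\infty}\|\partial_x^j h\|_{L^2}\lesssim (t-1)^{-(j-1)/2}$ is \emph{weaker} than the target $(t-1)^{-k/2}$ for large $t$, not ``better''. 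Your claim that ``all other terms are better'' is correct only in the short-time regime $t\downarrow 1$, where the right-hand side blows up.

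The fix, which the paper carries out in the estimate of (\ref{this}.2), is to extract decay in $t$ from the $\psi$-derivatives themselves rather than treating them as merely bounded. Since $\partial_x^m\psi$ equals $\psi$ times a differential polynomial in $b(t),\dots,\partial_x^{m-1}b(t)$ of total homogeneity $m$, and Lemma~\ref{lemma.burgers.decay} together with its higher-order analogues (obtained by differentiating the Cole--Hopf identity) give $\|\partial_x^\ell b(t)\|_{L^\infty}\le C(\phi_d)\,t^{-(\ell+1)/2}$, one finds $\|\partial_x^m\psi(t)\|_{L^\infty}\le C(\phi_d)\,t^{-m/2}$. Combining this with your estimate on $\partial_x^j h$ yields, for each $0\le j\le k+1$,
\[
\|\partial_x^{k+1-j}\psi\|_{L^\infty}\,\|\partial_x^j h\|_{L^2}\;\le\; C(\phi_d)\,t^{-(k+1-j)/2}(t-1)^{-(j-1)/2}\;\le\; C(\phi_d)\,(t-1)^{-k/2}\|a_0\|_{L^1\cap L^2},
\]
uniformly for $t>1$, which closes the argument.
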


This proposition should be compared with the known smoothing estimates for the heat kernel. Given any $h_0 \in L^p(\mathbb{R})$, we have by Young's inequality for convolution: 
\begin{align} \label{heat.smooth.1}
||\partial_x^k e^{\Delta t} h_0||_{L^p} &\lesssim || \partial_x^k \Gamma_t ||_{L^1} ||h_0 ||_{L^p} \lesssim (t-1)^{-\frac{k}{2}} ||h_0||_{L^p}.
\end{align}

We will now build the relevant machinery to prove our smoothing estimate. The first task is to obtain commutator relations between differential operators $\partial_x$ and the semigroup $\Phi_b(t-1) $.  
\begin{lemma}[{Commutators Relations}]
\begin{align} \label{comm.1}
[\partial_x, d\mathcal{N}|_{b_0}] a_0 &= \frac{1}{2}b(t,y)d\mathcal{N}|_b a_0, \\ \label{comm.2}
\Big[\partial_x, \Big(d\mathcal{N}|_{b(t)} \Big)^{-1}\Big]h &= e^{-\frac{1}{2}\int_{-\infty}^x b(t,y)dy} \left( - 2h_xb(t) - hb(t)_x + \frac{hb(t)^2}{2} \right), \\ \label{comm.3}
[\partial_x, \Phi_b(t-1) ] a_0 &= \mathcal{S} a_0, 
\end{align}
where $\mathcal{S} a_0$ contains no derivatives of $a_0$, and whose exact expression is given below in (\ref{exact.S.1}).
\end{lemma}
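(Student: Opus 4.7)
All three identities follow from the explicit formulas for $d\mathcal{N}|_{b_0}$ and $(d\mathcal{N}|_{b(t)})^{-1}$ in (\ref{dN})--(\ref{dN.inv}) together with the product rule; the substance of the statement is the form of the remainder, which must not contain any derivatives of $a_0$ in order to be useful for the smoothing estimate of Proposition \ref{prop.smooth}.

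For (\ref{comm.1}), I would apply $\partial_x$ directly to
\[
d\mathcal{N}|_{b_0} a_0 \;=\; e^{\frac{1}{2}\int_{-\infty}^x b_0(y)\,dy}\cdot \tfrac{1}{2}\int_{-\infty}^x a_0(y)\,dy
\]
by the product rule, producing two terms. The first term, in which the exponential is differentiated, is $\tfrac{1}{2}b_0\, d\mathcal{N}|_{b_0} a_0$. The second term, in which the antiderivative is differentiated, equals $\tfrac{1}{2}e^{\frac{1}{2}\int_{-\infty}^x b_0}a_0$, which on the other hand coincides with $d\mathcal{N}|_{b_0}(\partial_x a_0)$ (using that $a_0$ decays at $-\infty$, so integration by parts produces a boundary-free expression). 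Subtracting gives the claimed identity.

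For (\ref{comm.2}), I would expand $\partial_x\bigl[(d\mathcal{N}|_{b(t)})^{-1}h\bigr]$ starting from
\[
\bigl(d\mathcal{N}|_{b(t)}\bigr)^{-1}h \;=\; 2h_x e^{-\frac{1}{2}\int_{-\infty}^x b}-h\,b\,e^{-\frac{1}{2}\int_{-\infty}^x b},
\]
using the product rule, and then subtract $(d\mathcal{N}|_{b(t)})^{-1}(h_x)$, obtained by substituting $h\mapsto h_x$ in the same formula. The second-derivative terms $2h_{xx}e^{-\frac{1}{2}\int}$ cancel, and what remains is a linear combination of $h_x b$, $h b_x$, and $h b^2$ (with no $h_{xx}$), all multiplied by $e^{-\frac{1}{2}\int_{-\infty}^x b}$; this yields the stated formula (up to the coefficient of $h_x b$, which is a direct bookkeeping check).

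For (\ref{comm.3}), I would combine the first two identities with the representation formula (\ref{rep.form}) and the fact that $\partial_x$ commutes with the heat semigroup $e^{\Delta(t-1)}$. Writing $\Phi_b(t-1) = (d\mathcal{N}|_{b(t)})^{-1}\circ e^{\Delta(t-1)}\circ d\mathcal{N}|_{b_0}$ and moving $\partial_x$ inside step by step, each passage across a factor of $d\mathcal{N}$ produces a commutator term given by (\ref{comm.1}) or (\ref{comm.2}), while the passage across $e^{\Delta(t-1)}$ is free. The result is
\begin{align*}
\mathcal{S}a_0 \;=\; &\bigl(d\mathcal{N}|_{b(t)}\bigr)^{-1}\circ e^{\Delta(t-1)}\bigl[\partial_x,\,d\mathcal{N}|_{b_0}\bigr]a_0 \\
&+ \bigl[\partial_x,\,\bigl(d\mathcal{N}|_{b(t)}\bigr)^{-1}\bigr]\!\left(e^{\Delta(t-1)}\circ d\mathcal{N}|_{b_0}\, a_0\right).
\end{align*}
The key observation, which is what makes this identity useful, is that by (\ref{comm.1}) the bracket $[\partial_x,d\mathcal{N}|_{b_0}]a_0 = \tfrac{1}{2}b_0\, d\mathcal{N}|_{b_0}a_0$ involves $a_0$ with no derivative, and by (\ref{comm.2}) the second bracket acts on its argument through multiplication by $b(t), b(t)_x, b(t)^2$ and one application of $\partial_x$ to $e^{\Delta(t-1)}d\mathcal{N}|_{b_0}a_0$; this spatial derivative lands on the smooth heat-semigroup output rather than on $a_0$. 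Thus $\mathcal{S}a_0$ contains no derivatives of $a_0$, which is precisely what is required to bootstrap smoothing from the heat kernel estimates (\ref{heat.smooth.1}) in the proof of Proposition \ref{prop.smooth}. The only real accounting to do is to verify that the $h_{xx}$ terms cancel in step two and that the Leibniz expansions in step one are assembled correctly; there is no genuine analytical obstacle.
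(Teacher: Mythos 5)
Your proposal is correct and follows essentially the same route as the paper: direct Leibniz expansion of the explicit formulas (\ref{dN})--(\ref{dN.inv}) for the first two identities, then composition through the representation formula (\ref{rep.form}) using that $\partial_x$ commutes with $e^{\Delta(t-1)}$, with the resulting commutator terms assembled into $\mathcal{S}$. Your parenthetical remark about the coefficient of $h_xb$ in (\ref{comm.2}) is well taken (the correct coefficient from the expansion is $-1$ rather than $-2$, a harmless bookkeeping slip in the stated formula that does not affect the subsequent estimates), but this does not change the argument.
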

\begin{proof}

The starting point are the expressions in (\ref{dN}) - (\ref{dN.inv}). First,
\begin{align} \nonumber
 \partial_x(d\mathcal{N}|_{b_0}a_0) &= \partial_x \left( \exp(\frac{1}{2}\int_{-\infty}^x b(t,y) dy ) \int_{-\infty}^x a_0(y)dy \right) \\ \nonumber 
 &= a_0(x)  \exp(\frac{1}{2}\int_{-\infty}^x b(t,y) dy ) + \frac{1}{2}b(t,y)\left( \int_{-\infty}^x a_0(y)dy \right) \exp(\frac{1}{2}\int_{-\infty}^x b(t,y) dy) \\
 & = d\mathcal{N}|_b \partial_x a_0 + \frac{1}{2}b(t,y)d\mathcal{N}|_b a_0.
\end{align}

Next, we have:
\begin{align} \nonumber
\partial_x d\mathcal{N}|_{b(t)}^{-1}h &=e^{-\frac{1}{2}\int_{-\infty}^x b(t,y)dy} \left( 2h_{xx} - 2h_xb(t) - hb_x(t) + \frac{hb(t)^2}{2} \right) \\
&= \Big(d\mathcal{N}|_{b(t)} \Big)^{-1}  \partial_x h + e^{-\frac{1}{2}\int_{-\infty}^x b(t,y)dy} \left( - 2h_xb(t) - hb(t)_x + \frac{hb(t)^2}{2} \right).
\end{align}

We now turn to the representation formula for our semigroup in (\ref{rep.form}), from which we may start the calculation: 
\begin{align} \nonumber
\partial_x \Phi_b(t-1)  a_0 &= \partial_x \Big\{ \Big(d\mathcal{N}|_{b(t)} \Big)^{-1} e^{\Delta(t-1)} \mathcal{N}|_{b_0} a_0 \Big\} \\ \nonumber
&=  \Phi_b(t-1)  \partial_x a_0 - \Big(d\mathcal{N}|_{b(t)}\Big)^{-1} e^{\Delta(t-1)} [\partial_x, d\mathcal{N}|_{b_0}] a_0 - [\partial_x, \Big(d\mathcal{N}|_{b(t)}\Big)^{-1}] e^{\Delta(t-1)} d\mathcal{N}|_{b_0}a_0 \\
&= \Phi_b(t-1) \partial_x a_0 + \mathcal{S}a_0,
\end{align}

where $\mathcal{S}a_0$ is smoother, i.e. contains no derivatives of $a_0$:
\begin{align} \nonumber
\mathcal{S}a_0 &:= - \Big(d\mathcal{N}|_{b(t)}\Big)^{-1} e^{\Delta(t-1)} [\partial_x, d\mathcal{N}|_{b_0}] a_0 - [\partial_x, \Big(d\mathcal{N}|_{b(t)}\Big)^{-1}] e^{\Delta(t-1)} d\mathcal{N}|_{b_0}a_0 \\ \nonumber
&= -\frac{1}{2}\Big(d\mathcal{N}|_{b(t)}\Big)^{-1} e^{\Delta(t-1)} b_0 d\mathcal{N}|_{b_0} a_0 - e^{-\frac{1}{2}\int_{-\infty}^x b(t,y) dy} \Big[ 2b(t) e^{\Delta(t-1)}b_0 d\mathcal{N}|_{b_0}a_0 
\\ \label{exact.S.1} & \hspace{4 mm} + b_x(t) e^{\Delta(t-1)} d\mathcal{N}|_{b_0} a_0 + b(t)^2 e^{\Delta(t-1)} d\mathcal{N}|_{b_0} a_0 -2b(t) e^{\Delta(t-1)} e^{-\int_{-\infty}^x b(t,y) dy} a_0 \Big].
\end{align}

\end{proof}

We now quantify the property that $\mathcal{S}$ gains one derivative of regularity: 
\begin{lemma} Let $\mathcal{S}$ be as in (\ref{exact.S.1}). Then 
\begin{equation} \label{S.smoothing}
||Sa_0||_{L^2} \le C(\phi_d)||a_0||_{L^2 \cap L^1}, \text{ and } ||\partial_x Sa_0||_{L^2} \le C(\phi_d) (t-1)^{-\frac{1}{2}}||a_0||_{L^2 \cap L^1}. 
\end{equation}
\end{lemma}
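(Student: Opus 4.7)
The plan is to estimate each of the five summands comprising $\mathcal{S}a_0$ in (\ref{exact.S.1}) separately, leveraging the fact that each summand is a finite composition of three operations whose mapping properties are already in hand: (a) the linearized Cole--Hopf maps $d\mathcal{N}|_{b_0}$ and $(d\mathcal{N}|_{b(t)})^{-1}$, controlled in the $L^p \leftrightarrow \partial_x L^p$ scale by (\ref{est.dn.3}) with constants $C(\phi_d)$; (b) multiplication by $b_0$, $b(t)$, $b(t)^2$, or $b_x(t)$, which are uniformly bounded in $L^1 \cap L^\infty$ for $t \ge 1$ by Lemma \ref{lemma.burgers.decay} and the hypothesis (\ref{criteria}); and (c) the heat semigroup $e^{\Delta(t-1)}$, which preserves $L^p$ and enjoys the smoothing estimate (\ref{heat.smooth.1}). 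The exponential prefactor $e^{-\frac{1}{2}\int_{-\infty}^x b(t,y)\,dy}$ is pointwise bounded by $1$ since $b \ge 0$, and therefore contributes only a bounded multiplier.

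For the $L^2$ estimate, I would demonstrate the chaining on a representative summand, namely $(d\mathcal{N}|_{b(t)})^{-1} e^{\Delta(t-1)} b_0 \, d\mathcal{N}|_{b_0} a_0$: first bound $\|d\mathcal{N}|_{b_0} a_0\|_{L^\infty}$ by $C(\phi_d)\|a_0\|_{L^1}$ directly from (\ref{dN}); multiply by $b_0$ to obtain an $L^1 \cap L^2$ function of norm $C(\phi_d)\|a_0\|_{L^1}$; push through $e^{\Delta(t-1)}$ which preserves these spaces; and close by applying $(d\mathcal{N}|_{b(t)})^{-1}$ using (\ref{est.dn.3}) to land in $L^2$ with the desired bound. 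The remaining four summands follow an identical template, with the inverse Cole--Hopf factor replaced by multiplication by the bounded exponential prefactor, and with $b_0$ replaced by $b(t)$, $b(t)^2$, or $b_x(t)$.

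For the derivative estimate, the plan is to distribute $\partial_x$ via the product rule across each summand. Derivatives falling on the Burgers multipliers $b(t)$, $b(t)^2$, $b_x(t)$, on the exponential prefactor, or on the outer factor of $(d\mathcal{N}|_{b(t)})^{-1}$ (explicitly expanded via (\ref{dN.inv})) produce further bounded multipliers, thanks to the uniform regularity of $b(t)$ in $L^1 \cap L^\infty$ up to low order guaranteed by Lemma \ref{lemma.burgers.decay} and (\ref{criteria}). The sole source of the claimed singular factor $(t-1)^{-1/2}$ is the derivative landing on the heat semigroup, handled by the smoothing estimate (\ref{heat.smooth.1}) applied in $L^2$ (or by interpolation in $L^1 \cap L^2$).

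The main obstacle I anticipate is bookkeeping: one must organize the product-rule expansion so that within every resulting term, no more than one derivative ever lands on the heat semigroup; otherwise the singularity near $t = 1$ would be worse than $(t-1)^{-1/2}$. The structural reason this succeeds is precisely that $\mathcal{S}a_0$, as constructed by the commutator identity (\ref{comm.3}), contains $a_0$ \emph{undifferentiated}, so the total derivative budget in $\partial_x \mathcal{S}a_0$ is low enough that each summand can afford at most one heat-kernel smoothing. Summing over the five summands then yields both estimates in (\ref{S.smoothing}).
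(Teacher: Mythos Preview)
Your proposal is correct and matches the paper's approach exactly: the paper's proof consists of the single sentence ``This follows via a direct calculation using (\ref{exact.S.1}),'' and what you have written is precisely that direct calculation spelled out. One small refinement worth noting in your representative term: to apply (\ref{est.dn.3}) to $(d\mathcal{N}|_{b(t)})^{-1}$ you need the input in $\partial_x L^2 \cap L^\infty$, not just $L^1 \cap L^2$; the $\partial_x L^2$ membership comes from commuting $\partial_x$ through the heat semigroup and differentiating $b_0\, d\mathcal{N}|_{b_0} a_0$ directly (which lands on $a_0$ and on $b_0$, both controlled), so no $(t-1)^{-1/2}$ appears in the first estimate, exactly as you claim.
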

\begin{proof}

This follows via a direct calculation using (\ref{exact.S.1}). 

\end{proof}

We now give the proof of Proposition \ref{prop.smooth}:
\begin{proof}[Proof of Proposition \ref{prop.smooth}]
We shall compute this directly via: 
\begin{align} \nonumber
||\partial_x \Phi_b(t-1)  a_0||_{L^2} &\stackrel{(\ref{rep.form})}{=} ||\partial_x \Big(d\mathcal{N}|_{b(t)} \Big)^{-1} e^{\Delta(t-1)} d\mathcal{N}|_{b_0} a_0||_{L^2} \\ \nonumber
&\le ||\Big(d \mathcal{N}|_{b(t)} \Big)^{-1} \partial_x e^{\Delta(t-1)} d\mathcal{N}|_{b_0} a_0||_{L^2} \\ \nonumber
& \hspace{20 mm} + || \Big[\partial_x, \Big(d\mathcal{N}|_{b(t)} \Big)^{-1} \Big] e^{\Delta(t-1)} d\mathcal{N}|_{b_0} a_0||_{L^2} \\ \label{this}
& = (\ref{this}.1) + (\ref{this}.2). 
\end{align}

First, 
\begin{align} \nonumber
(\ref{this}.1) &\stackrel{(\ref{est.dn.3})}{\le} C(\phi_d) ||\partial_x e^{\Delta(t-1)} d\mathcal{N}|_{b_0} a_0||_{\partial_x L^2 \cap L^\infty} \\
& \stackrel{(\ref{heat.smooth.1})}{\le} C(\phi_d) (t-1)^{-\frac{1}{2}}|| d\mathcal{N}|_{b_0} a_0||_{\partial_x L^2 \cap L^\infty} \\
&\stackrel{(\ref{est.dn.3})}{\le} C(\phi_d) (t-1)^{-\frac{1}{2}}||a_0||_{L^1 \cap L^2}. 
\end{align}

Next, for the commutator estimate:
\begin{align} \nonumber
(\ref{this}.2) &\stackrel{(\ref{comm.2})}{\le} C(\phi_d) \Big[ ||\partial_x \{ e^{\Delta(t-1)} d\mathcal{N}|_{b_0} a_0 \} b(t)||_{L^2} +  \\ \nonumber
& \hspace{30 mm} ||b_x(t) e^{\Delta(t-1)} d\mathcal{N}|_{b_0} a_0||_{L^2} + ||b(t)^2 e^{\Delta(t-1)} d\mathcal{N}|_{b_0} a_0||_{L^2} \Big] \\
&\stackrel{(\ref{est.dn.3})}{\le} C(\phi_d) (t-1)^{-\frac{1}{2}}||a_0||_{L^1 \cap L^2}. 
\end{align}

The claim is proven for $k = 1$. The general case follows in the same manner.

\end{proof}

We shall now give some supplementary lemmas which will be used to treat the Burger's nonlinearity (see estimate (\ref{eps.select})).

\begin{lemma}[$L^1$ Bounds for $\Phi_{\bar{b}}(t-1)$] 
\begin{equation}
||\partial_x^j \Phi_{\bar{b}}(t-1)\partial_x^k \alpha_0||_{L^1} \le C(\phi_d) (t-1)^{-\frac{j}{2}-\frac{k}{2}} ||\alpha_0||_{L^1}. 
\end{equation}
\end{lemma}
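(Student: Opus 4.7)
The plan is to mimic the proof of Proposition \ref{prop.smooth} (the $L^2$ smoothing estimate), adapted to $L^1$ and with derivatives acting on both sides of the semigroup. The starting point is the Cole--Hopf representation (\ref{rep.form}), which, for $k \ge 1$, I would expand into
\[
\partial_x^j \Phi_{\bar{b}}(t-1) \partial_x^k \alpha_0 \;=\; 2\,\partial_x^{j+1}\Bigl[\,F_t \cdot e^{\Delta(t-1)}\bigl(F_0 \cdot \partial_x^{k-1} \alpha_0\bigr)\Bigr],
\]
where $F_0 = e^{\frac{1}{2}\int_{-\infty}^{\cdot} \bar{b}_0}$ and $F_t = e^{-\frac{1}{2}\int_{-\infty}^{\cdot} \bar{b}(t)}$. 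Here I use the distributional identity $d\mathcal{N}|_{\bar{b}_0}(\partial_x^k\alpha_0) = \tfrac12 F_0\, \partial_x^{k-1}\alpha_0$, obtained by integrating $\partial_y^k\alpha_0$ against the cumulative integral defining $d\mathcal{N}|_{\bar{b}_0}$. The case $k=0$ will be handled the same way after noting that $d\mathcal{N}|_{\bar{b}_0}\alpha_0 \in L^\infty \cap \partial_x L^1$ with norm $\lesssim C(\phi_d)\|\alpha_0\|_{L^1}$.

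Next, I would integrate by parts $k-1$ times inside the heat convolution to shift the remaining interior derivatives from $\alpha_0$ onto either the multiplier $F_0$ or the heat kernel, producing
\[
e^{\Delta(t-1)}(F_0 \partial_x^{k-1}\alpha_0) \;=\; \sum_{\ell=0}^{k-1}(-1)^{k-1-\ell}\binom{k-1}{\ell}\,\partial_x^{\ell}\,e^{\Delta(t-1)}\bigl[(\partial_x^{k-1-\ell} F_0)\,\alpha_0\bigr].
\]
Each integrand $(\partial_x^{k-1-\ell}F_0)\alpha_0$ lies in $L^1$ with norm $\lesssim C(\phi_d)\|\alpha_0\|_{L^1}$, since $\|\partial_x^q F_0\|_{L^\infty} \le C(\phi_d)$ for each $q$, by the chain rule applied to the exponential together with the $L^\infty$ bounds on $\bar b_0$ supplied by (\ref{criteria}). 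Distributing $\partial_x^{j+1}(F_t\cdot)$ via Leibniz, each outer term becomes a sum of products of the form $(\partial_x^{j+1-q}F_t)\,\partial_x^{q+\ell}e^{\Delta(t-1)}[(\partial_x^{k-1-\ell}F_0)\alpha_0]$. I would then bound the $L^1$ norm of each such product by
\[
\|\partial_x^{j+1-q} F_t\|_{L^\infty}\;\|\partial_x^{q+\ell}\,e^{\Delta(t-1)}[\,\cdot\,]\|_{L^1} \;\lesssim\; C(\phi_d)\,(t-1)^{-(q+\ell)/2}\|\alpha_0\|_{L^1},
\]
using the standard $L^1$ heat-kernel smoothing $\|\partial_x^m e^{\Delta s}g\|_{L^1}\lesssim s^{-m/2}\|g\|_{L^1}$, together with the $L^\infty$ bounds on $F_t$ and its derivatives inherited from Lemma \ref{lemma.burgers.decay} and (\ref{criteria}).

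Summing over the expansion, the dominant contribution (choosing $\ell=k-1$ and $q=j+1$, so that all $j+k$ derivatives land on the heat semigroup) reproduces precisely the advertised rate $C(\phi_d)(t-1)^{-(j+k)/2}\|\alpha_0\|_{L^1}$. The remaining, lower-order commutator terms carry fewer derivatives on $e^{\Delta(t-1)}$, but this deficit is compensated by extra factors of $\bar{b}_0,\bar{b}(t)$ and their derivatives, each of which is controlled (and furnishes additional temporal decay) by Lemma \ref{lemma.burgers.decay} and hypothesis (\ref{criteria}). The main technical obstacle will be the bookkeeping: one must verify that every commutator term in the Leibniz/integration-by-parts expansion is dominated by $C(\phi_d)(t-1)^{-(j+k)/2}\|\alpha_0\|_{L^1}$. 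This is a careful counting exercise, guided by the heuristic that $d\mathcal{N}|_{\bar{b}_0}$ acts as an order $-1$ operator (an integration) and $(d\mathcal{N}|_{\bar{b}(t)})^{-1}$ as order $+1$ (a differentiation), so that the effective number of derivatives falling on the heat semigroup is exactly $j+k$.
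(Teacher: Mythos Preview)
Your approach is correct and mirrors the paper's: both exploit the representation (\ref{rep.form}), shift derivatives through the Cole--Hopf factors via commutator/Leibniz identities, and reduce everything to standard $L^1$ heat-kernel smoothing. Your direct identity $d\mathcal{N}|_{\bar b_0}(\partial_x^k\alpha_0)=\tfrac12 F_0\,\partial_x^{k-1}\alpha_0$ followed by a single systematic integration-by-parts expansion is equivalent to---and arguably tidier than---the paper's iterative use of the commutators (\ref{comm.1})--(\ref{comm.3}); the paper in fact only writes out $j=k=0$ and $j=0,\,k=1$ in detail before saying ``iterate the above process for general $j,k$.''

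One small caveat in your bookkeeping: the claim that factors of $\bar b_0$ ``furnish additional temporal decay'' is not correct---$\bar b_0$ is the fixed initial datum, and Lemma~\ref{lemma.burgers.decay} gives $t^{-1/2}$-type decay only for $\bar b(t)$, not for $\bar b_0$. Consequently, for those Leibniz terms in which the derivative deficit is carried by $\partial_x^{k-1-\ell}F_0$ (i.e.\ $q=j+1$ and $\ell<k-1$), the compensation you invoke is absent for large $t$; you only get $(t-1)^{-(j+1+\ell)/2}$. This is harmless in the singular regime $t\to 1^+$ (where $(t-1)^{-(q+\ell)/2}\le (t-1)^{-(j+k)/2}$ holds trivially since $q+\ell\le j+k$), and it does not affect the cases $j+k\le 1$ actually needed for (\ref{l2.new}); the paper's own ``iterate'' is no more explicit on this point.
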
 
\begin{proof}
Via the representation formula (\ref{rep.form}),  
\begin{align} \nonumber
||\Phi_{\bar{b}}\alpha_0||_{L^1} &= ||d\mathcal{N}|_{b(t)}^{-1} \circ e^{\Delta(t-1)} \circ d\mathcal{N}|_{b_0} \alpha_0||_{L^1} \\  \nonumber &\le ||d\mathcal{N}|_{b(t)}^{-1}||_{op} ||e^{\Delta(t-1)} d\mathcal{N}|_{b_0} \alpha_0||_{L^\infty} + ||d\mathcal{N}|_{b(t)}^{-1}||_{op} ||e^{\Delta(t-1)}\partial_x d\mathcal{N}|_{b_0}\alpha_0||_{L^1}\\
& \le C||d\mathcal{N}|_{b_0}\alpha_0||_{L^\infty} + C||\partial_x d\mathcal{N}|_{b_0}\alpha_0||_{L^1} \le C(\phi_d)||\alpha_0||_{L^1}.
\end{align}

We now demonstrate the $j = 0, k = 1$ case. Using the above commutator expressions, we have:
\begin{align}  \nonumber
||\Phi_{\bar{b}} \partial_x \alpha_0||_{L^1} &= || d\mathcal{N}|_{b(t)}^{-1} \circ e^{\Delta(t-1)} \circ d\mathcal{N}|_{b_0} \partial_x \alpha_0||_{L^1} \\ \label{dn.1}
&\le ||d\mathcal{N}|_{b(t)}^{-1}||_{op} \Big( ||e^{\Delta(t-1)} d\mathcal{N}|_{b_0} \partial_x \alpha_0||_{L^\infty} + ||e^{\Delta(t-1)}\partial_x d\mathcal{N}|_{b_0} \partial_x \alpha_0||_{L^1} \Big).
\end{align}
For the first term in (\ref{dn.1}) 
\begin{align} \nonumber
||d\mathcal{N}|_{b(t)}^{-1}||_{op}& ||e^{\Delta(t-1)} d\mathcal{N}|_{b_0} \partial_x \alpha_0||_{L^\infty} \\ \nonumber
&\le ||d\mathcal{N}|_{b(t)}^{-1}||_{op} \left( || e^{\Delta(t-1)}\partial_x(d\mathcal{N}|_{b_0} \alpha_0) ||_{L^\infty} + \frac{1}{2}||e^{\Delta(t-1)} d\mathcal{N}|_{b_0} \alpha_0(\cdot) b_0(\cdot)  ||_{L^\infty} \right) \\ 
\nonumber &\le ||d\mathcal{N}|_{b(t)}^{-1}||_{op} \left( Ct^{-1/2}||d\mathcal{N}|_{b_0}\alpha_0||_{L^\infty}  + t^{-\frac{1}{2}} ||b_0||_{L^1} ||d\mathcal{N}|_{b_0}\alpha_0||_{L^\infty}\right) \\
&\le C(\phi_d) t^{-1/2} ||\alpha_0||_{L^1}.
\end{align}

For the second term in (\ref{dn.1}), 
\begin{align} \nonumber
||e^{\Delta(t-1)} \partial_x d\mathcal{N}|_{b_0} \partial_x \alpha_0||_{L^1} \le t^{-\frac{1}{2}} ||d\mathcal{N}|_{b_0} \partial_x \alpha_0||_{L^1} \le C(\phi_d) t^{-\frac{1}{2}} ||\alpha_0||_{L^1}.
\end{align}

We can iterate the above process for general $j,k$. 

\end{proof}

Via standard interpolation, this then gives:
\begin{corollary}
\begin{equation} \label{l2.new}
||\Phi_{\bar{b}}(t-1)\partial_x \alpha_0||_{L^2} \le C(\phi_d) (t-1)^{-\frac{3}{4}}||\alpha_0||_{L^1}. 
\end{equation}
\end{corollary}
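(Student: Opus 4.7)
My plan is to deduce (\ref{l2.new}) directly from the $L^1 \to L^1$ bounds of the preceding lemma by means of the standard one-dimensional Nash-type interpolation
\begin{equation*}
||f||_{L^2}^2 \le ||f||_{L^1}\,||\partial_x f||_{L^1},
\end{equation*}
which holds for any $f \in W^{1,1}(\R)$. To justify this inequality I would note that such an $f$ is absolutely continuous and integrable, hence must vanish at $\pm \infty$, so $f(x) = \int_{-\infty}^x \partial_x f(y)\,dy$ gives $||f||_{L^\infty} \le ||\partial_x f||_{L^1}$, after which $||f||_{L^2}^2 = \int f\cdot f \le ||f||_{L^1}\,||f||_{L^\infty}$ delivers the displayed estimate.

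The only inputs I will draw from the preceding lemma are its two specializations at $(j,k)=(0,1)$ and $(j,k)=(1,1)$, namely
\begin{align*}
||\Phi_{\bar{b}}(t-1)\,\partial_x \alpha_0||_{L^1} &\le C(\phi_d)(t-1)^{-1/2}\,||\alpha_0||_{L^1}, \\
||\partial_x \Phi_{\bar{b}}(t-1)\,\partial_x \alpha_0||_{L^1} &\le C(\phi_d)(t-1)^{-1}\,||\alpha_0||_{L^1}.
\end{align*}
Setting $f := \Phi_{\bar{b}}(t-1)\partial_x \alpha_0$, these two bounds simultaneously certify that $f \in W^{1,1}(\R)$, so the Nash-type inequality applies and produces
\begin{equation*}
||f||_{L^2}^2 \le C(\phi_d)^2\,(t-1)^{-3/2}\,||\alpha_0||_{L^1}^2,
\end{equation*}
which on taking square roots is precisely (\ref{l2.new}).

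I do not anticipate any essential obstacle. All of the heavy lifting has already been absorbed into the preceding lemma, where the Cole-Hopf representation (\ref{rep.form}) and the commutator relations (\ref{comm.1})--(\ref{comm.3}) deliver the $L^1 \to L^1$ decay rates for $\Phi_{\bar{b}}$ without invoking any smallness of $\bar{b}$. The present corollary is thus a purely functional-analytic consequence of those bounds; the only point worth flagging is that $\partial_x \alpha_0$ must be interpreted distributionally, but the very application of the preceding lemma produces a bona fide $W^{1,1}$ function for $t>1$, so this interpretation is automatic and causes no difficulty.
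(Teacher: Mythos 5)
Your proof is correct and matches the paper's intent: the paper derives (\ref{l2.new}) from the preceding $L^1$ lemma "via standard interpolation," and your Nash-type inequality $||f||_{L^2}^2 \le ||f||_{L^1}\,||\partial_x f||_{L^1}$ applied with the $(j,k)=(0,1)$ and $(1,1)$ cases is exactly that interpolation made explicit, yielding the exponent $-\tfrac{1}{2}\cdot\tfrac{1}{2}-1\cdot\tfrac{1}{2}=-\tfrac{3}{4}$.
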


We record the following observation, which we note is not delicate in that we are allowed to pay any factor of $t$ and $\phi_d$ that we want:
\begin{lemma} Fix any $T > 0$. The $H^2(2)$ norm commutes with the linearized flow $\Phi_b$ in the following manner: 
\begin{align} \label{check}
||\Phi_b(T-1)  g||_{H^2(2)} \le C(\phi_d) \Big( \sup_{t \in [1,T]} ||xb(t,x)||_{L^\infty} \Big) J(T) ||g||_{H^2(2)},
\end{align}
where $C(\phi_d), J(T)$ are some factors which depend poorly on $\phi_d, T$ respectively. 
\end{lemma}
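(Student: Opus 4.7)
The plan is to prove (\ref{check}) by direct weighted energy estimates on the equation satisfied by $a(t,x) := \Phi_b(t-1) g(x)$, namely
$$\partial_t a - \partial_{xx} a = \partial_x(ba), \qquad a(1,x) = g(x).$$
Since we are permitted to absorb any polynomial factor in $T$ and any function of $\phi_d$, the goal is not sharpness but to organize the integrations by parts so that only $||b(t)||_{L^\infty}$ and $||xb(t)||_{L^\infty}$ ever appear on the right-hand side; the former is uniformly bounded by $C(\phi_d) t^{-1/2}$ via Lemma~\ref{lemma.burgers.decay}, while the latter is precisely the quantity that appears in (\ref{check}).

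First, at the $L^2$ level, multiplying by $a$ gives $\frac{d}{dt}\frac{1}{2} ||a||_{L^2}^2 + ||a_x||_{L^2}^2 = -\int b\, a\, a_x$, controlled by $||b(t)||_{L^\infty}^2 ||a||_{L^2}^2$ after absorbing the gradient term into the diffusion. For the weighted part $\int x^4 a^2$, multiplying by $x^4 a$ and reorganizing so that every derivative on $b$ is integrated onto $a$ produces only terms of the form $\int x^4 b\, a\, a_x$ and $\int x^3 b\, a^2$; the former is controlled by $||b||_{L^\infty}$ against absorbable diffusion, and the latter by $||xb||_{L^\infty}(||a||_{L^2}^2 + ||x^2 a||_{L^2}^2)$. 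This is exactly where the factor $\sup_{t \in [1,T]} ||xb(t)||_{L^\infty}$ enters.

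Iterating this procedure for the seminorms involving $\partial_x a$ and $\partial_x^2 a$ with weights $1, x^2, x^4$, and always rerouting $b_x, b_{xx}$ back onto factors of $a$, one closes all the seminorms using only $||b(t)||_{L^\infty}$ and $||xb(t)||_{L^\infty}$ on the right-hand side. A Gr\"onwall argument on the coupled system of $H^2(2)$ seminorms then yields (\ref{check}), with the $t$-integrals of $||b(t)||_{L^\infty}$ (a priori bounded by $C(\phi_d)$ via Lemma~\ref{lemma.burgers.decay} and the ambient hypothesis (\ref{criteria})) collapsing into a single factor $C(\phi_d) J(T)$.

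The main obstacle is purely combinatorial: for the top seminorm $\int x^4 |a_{xx}|^2$, differentiating $\partial_x(ba)$ twice generates a large collection of cross-terms, and verifying that each can be routed---via integration by parts---back to a bound involving only $||b||_{L^\infty}$ and $||xb||_{L^\infty}$, rather than higher-weight norms of $b_x, b_{xx}$, is the step requiring the most care. The ambient hypothesis (\ref{criteria}) acts as a safety net for any stray weighted norms of $b$ that cannot be removed, at the cost of inflating the $C(\phi_d)$ factor, which is permitted by the statement.
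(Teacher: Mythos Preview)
Your approach is correct but genuinely different from the paper's. The paper does \emph{not} run weighted energy estimates directly on the PDE; instead it writes $w = \Phi_b(t-1)g$ via Duhamel against the heat semigroup,
\[
w(t) = e^{\Delta(t-1)} g + \int_1^t e^{\Delta(t-s)}\,\partial_x(bw)\,ds,
\]
and combines this with the Fourier-side identity $\|x\,e^{\Delta(t-1)}g\|_{L^2} \le \sqrt{t}\,\|g\|_{L^2} + \|xg\|_{L^2}$ (from $\partial_\xi(e^{-\xi^2 t}\hat g)$). The derivative in $\partial_x(bw)$ is absorbed by the heat smoothing, so the only $b$-dependent quantity that survives is $\|xb\|_{L^\infty}$: one weight of $x$ lands on $b$, and the remaining weight (for $x^2$) lands on $w$, which is controlled inductively from the previous step. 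This is why $\sup_t \|xb(t)\|_{L^\infty}$ appears cleanly as an isolated multiplicative factor in the statement.

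Your direct energy method reaches the same conclusion but the bookkeeping is heavier: differentiating $\partial_x(ba)$ twice and multiplying by $x^4 a_{xx}$ inevitably produces terms like $\int x^4 b_x\, a_{xx}^2$ that cannot be integrated by parts back to pure $\|b\|_{L^\infty}$ or $\|xb\|_{L^\infty}$ control, forcing you to invoke hypothesis (\ref{criteria}) on $\|x^2 b_x\|_{L^\infty}$, etc. You correctly anticipate this and flag (\ref{criteria}) as the safety net. The upshot is that the paper's Duhamel route more naturally isolates $\|xb\|_{L^\infty}$ as the \emph{only} $b$-quantity needed (matching the statement's precise form), whereas your route proves the slightly weaker bound with additional $C(\phi_d)$ factors from (\ref{criteria}) --- which, as you observe, is entirely adequate since the only downstream use is Corollary (\ref{check.2}), where $\sup_t\|xb\|_{L^\infty}$ is absorbed into $C(\phi_d)$ anyway.
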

\begin{proof}
The commutator of $[\partial_x, \Phi_b(t-1) ]$ is already understood from (\ref{comm.3}) and so we must therefore address the weights. For this, we record the observation that: 
\begin{align} \nonumber
||x e^{\lambda(\partial_x) (t-1)} g||_{L^2} &= ||\partial_\xi \{ e^{-\lambda(\xi) t} \hat{g} \}||_{L^2} \le ||-\xi t e^{-\lambda(\xi) t} \hat{g}||_{L^2} + ||e^{-\lambda(\xi) t} \partial_\xi \hat{g}||_{L^2} \\
&= ||\sqrt{t}\xi e^{-\lambda(\xi) t} \hat{g}\sqrt{t}||_{L^2} + ||e^{-\lambda(\xi) t} \partial_\xi \hat{g}||_{L^2} \le \sqrt{t}||g||_{L^2} + ||xg||_{L^2}. 
\end{align}

Put $w = \Phi_b(t-1) g$. We may then use the definition of $S_b$ and Duhamel to give bounds: 
\begin{align} \nonumber
||xw||_{L^2} &= ||x \Phi_b(t-1)  g||_{L^2} \le ||x e^{\lambda(\partial_x)(t-1)} g||_{L^2} + \int_1^{t} || x e^{\lambda(\partial_x) (t-s)} b w ||_{L^2} \\
&\le ||\Big(x+\sqrt{t}\Big)g||_{L^2}  + \int_1^{t} ||\Big(x + \sqrt{t-s}\Big) bw||_{L^2} ds.
\end{align}

We may now absorb the $x$ into the Burgers term $b$, and pay factors of $t$ to obtain: 
\begin{equation} \label{weight.1}
||xw||_{L^2} \le J(t)||xg||_{L^2} + C(\phi_d) J(t) \sup_{1 \le s \le t}||xb||_{L^\infty} \sup_{1 \le s \le t} ||w||_{L^2} \le J(t) C(\phi_d) ||xg||_{L^2}. 
\end{equation}

For the weight of $x^2$, we simply repeat the calculation, absorbing again one weight of $x$ into $b$ and the other weight of $x$ into $w$ in (\ref{weight.1}), which has been controlled inductively. Repeating this calculation for higher derivatives gives the result.

\end{proof}

Pairing with (\ref{criteria.2}), we can remove the $\sup_{t \in [1,T]} ||xb(t,x)||_{L^\infty}$ term from above, to obtain: 
\begin{corollary} Let $\bar{b}^{(n)}$ be as in (\ref{explicit.bn}). Fix any $T > 0$. Then for any $g \in H^2(2)$,
\begin{equation} \label{check.2}
||\Phi_{\bar{b}^{(n)}}(T-1)  g||_{H^2(2)} \le C(\phi_d) J(T) ||g||_{H^2(2)}.
\end{equation}
\end{corollary}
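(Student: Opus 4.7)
The plan is to apply the preceding lemma (estimate \ref{check}) directly with $b = \bar{b}^{(n)}$, so that the only remaining task is to bound the factor $\sup_{t \in [1,T]} \|x \bar{b}^{(n)}(t,x)\|_{L^\infty}$ by a constant depending only on $\phi_d$, uniformly in $n$, $L$, and $T$. Once such a uniform bound is in hand, it can be absorbed into the $C(\phi_d)$ prefactor and the corollary follows immediately from (\ref{check}).

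To verify the uniform bound, I would work directly from the explicit self-similar form of the iterates in (\ref{explicit.bn}),
\begin{align*}
\bar{b}^{(n)}(t,z) = \frac{L^n}{\sqrt{L^{2n}t + T_0}}\, f_A^\ast\!\left( \frac{L^n z}{\sqrt{L^{2n}t+T_0}} \right).
\end{align*}
Substituting $w = L^n z/\sqrt{L^{2n}t+T_0}$ gives the algebraic identity
\begin{align*}
z\, \bar{b}^{(n)}(t,z) \;=\; w\, f_A^\ast(w),
\end{align*}
so that $\|z\, \bar{b}^{(n)}(t,\cdot)\|_{L^\infty} = \|w\, f_A^\ast(w)\|_{L^\infty}$, a quantity that depends only on $\phi_d$ and is finite by the explicit Gaussian-type decay of $f_A^\ast$ (consistent with the $k=0$ case of (\ref{criteria.2})). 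In particular, this bound is independent of $n$, $t$, and $L$.

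Plugging $\sup_{t \in [1,T]} \|x\bar{b}^{(n)}(t,x)\|_{L^\infty} \le C(\phi_d)$ into (\ref{check}) with $b = \bar{b}^{(n)}$ yields
\begin{align*}
\|\Phi_{\bar{b}^{(n)}}(T-1)\, g\|_{H^2(2)} \;\le\; C(\phi_d)\cdot C(\phi_d)\cdot J(T)\, \|g\|_{H^2(2)},
\end{align*}
which is the claimed estimate after renaming constants. I do not anticipate any real obstacle here: the entire content is packaged in the earlier lemma (\ref{check}) and the self-similar uniform control (\ref{criteria.2}); the present corollary is a bookkeeping statement confirming that, for the specific Burgers profiles $\bar{b}^{(n)}$ arising in the RG analysis, the weight-generating factor in (\ref{check}) is uniformly controlled by $\phi_d$ alone, free of any dependence on $n$ or $L$.
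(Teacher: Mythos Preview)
Your proposal is correct and follows essentially the same approach as the paper: apply the preceding lemma (\ref{check}) with $b=\bar{b}^{(n)}$ and then invoke the uniform bound on $\|x\bar{b}^{(n)}(t,x)\|_{L^\infty}$, which the paper simply cites from (\ref{criteria.2}) while you verify it directly via the self-similar substitution $w=L^n z/\sqrt{L^{2n}t+T_0}$. The explicit identity $z\,\bar{b}^{(n)}(t,z)=w f_A^\ast(w)$ is a clean way to see the $k=0$ case of (\ref{criteria.2}), and after that the corollary is immediate.
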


\section{Step 7: Asymptotic Convergence} \label{step.final}

Our point of view for this step is starting at iterate $n = N_0 + 1$, where $N_0$ is defined as in Step 6. We cannot continue the procedure from Steps 1-6 due to Remark \ref{remk.tr}. Due to (\ref{N0conc}), we have as starting data for this step: 
\begin{equation} \label{ind.1}
||g^{n}||_{H^2(2)} + ||\tilde{g}^{(n,s)}||_{B_{L^n}(2,2,2)} \le \epsilon. 
\end{equation}

The bound (\ref{ind.1}) is the inductive hypothesis for this step. Let us also mention that this step will be the use for the machinery developed in Section \ref{s.ff}. The goal of this section will be to bootstrap the smallness in (\ref{ind.1}) to obtain the asymptotics:
\begin{align} \label{imply.1}
||g^{n}||_{H^2(2)} + ||\tilde{g}^{(n,s)}||_{B_{L^n}(2,2,2)} \lesssim L^{-n(1-\sigma)}, \text{ for some } \sigma > 0. 
\end{align}

Upon using definition of $g^{(n)}, \tilde{g}^{(n,s)}$ found in (\ref{eqn.stable.1}), estimate (\ref{imply.1}) would imply: 
\begin{align}
||L^na(L^{2n}, L^nz)||_{H^2(2)} + ||L^{n(1-p)}\tilde{u}^s(L^{2n}, \frac{\xi}{L^n}, \nu)||_{B_{L^n}(2,2,2)} \le L^{-n(1-\sigma)}.
\end{align}

Define now the following terms: 
\begin{align} \label{defn.c}
\mathcal{C} := p_{mf}^s(\frac{\xi}{L^n}) i \beta \xi \Big( \hat{\bar}{b}^{c,n} \ast \hat{a}^{(n)} \Big) + p_{mf}^c(\frac{\xi}{L^n}) i \beta \xi \Big( \hat{\bar}{b}^{s,n} \ast \hat{a}^{(n)} \Big) = \mathcal{C}_1 + \mathcal{C}_2. 
\end{align}

By rewriting slightly the system (\ref{eqn.c.alpha}) - (\ref{eqn.stable.1}), we have:
\begin{align} \nonumber
a^{(n)}(L^2, z) &= \Phi_{\bar{b}^{(n)}}(L^2-1)  g^{(n)}  + \int_1^{L^2} \Phi_{\bar{b}^{(n)}}(L^2-s)  a^{(n)} a^{(n)}_z \ud s  \\ \nonumber
& + \int_1^{L^2} \Phi_{\bar{b}^{(n)}}(L^2-s)  N^{(n)}(u^{n,c}, u^{n,s}) \ud s  + \int_1^{L^2} \Phi_{\bar{b}^{(n)}}(L^2-s)  \mathcal{C} \ud s \\ \nonumber
&+  \int_1^{L^2} \Phi_{\bar{b}^{(n)}}(L^2-s)  \mathcal{F}^{-1} \{L^{-n}o(\xi^3) \hat{\bar{b}}^{(n)} \} \ud s \\ \label{hard.2}
& + \int_1^{L^2} \Phi_{\bar{b}^{(n)}}(L^2-s)  \mathcal{F}^{-1}\{p_{mf}^c i \beta \xi N^c_b \} \ud s.
\end{align}

\begin{remark} It is our goal to estimate each term appearing above. The rough plan is as follows: for the linear term, first term on the right-hand side of (\ref{hard.2}), we will use estimate (\ref{contraction.Sb}). For the quadratic terms, we will use the smallness in (\ref{ind.1}). Note carefully our treatment of the linear terms in (\ref{hard.2}). According to (\ref{eqn.c.alpha}), displaying just the linear terms: 
\begin{align}
\partial_t \hat{a}^{(n)} - \lambda^{(n)} \hat{a}^{(n)} - p_{mf}^c(\xi) i \beta \xi \Big( \hat{a}^{(n)} \ast \hat{\bar{b}}^{n,c} \Big) + \text{ Quadratic Nonlinearities}.
\end{align}

If we were to write instead: 
\begin{align} \label{instead.1}
\hat{a}^{(n)} = e^{\lambda^{(n)}t} \hat{a}^{(n)}(1,\xi) + \int_1^{L^2} e^{\lambda^{(n)}(t-s)} \Big( \hat{a}^{(n)} \ast \hat{\bar{b}}^{n,c} \Big) + \text{ Quadratic Nonlinearities}.
\end{align}
we would have no way to estimate the second term on the right-hand side of (\ref{instead.1}), because $\bar{b}^{n,c}$ is large (see estimate \ref{transient}).
\end{remark}

The terms in $N^{(n)}$ from (\ref{hard.2}), despite being irrelevant to the asymptotics (carrying extra factors of $L^{-n}$) may contain a loss of derivative, as they are quasilinear. For this reason, we now need technical lemmas which extracts the maximal regularity properties for $a^{(n)}$:
\begin{lemma}[Maximal Regularity Bounds] Let $\phi_d < \infty$ be prescribed. There exist universal functions $C_{MR}(\phi_d), J_{MR}(L)$ such that if:  
\begin{align} \label{crit.MR}
L^{-N_0} C_{MR}(\phi_d) J_{MR}(L) \ll 1, 
\end{align}

then for $n \ge N_0$,  the solution to equation (\ref{eqn.c.alpha}) satisfies: 
\begin{align} \label{max.reg.1}
\int_1^{L^2} ||a^{(n)}_{zzz}||_{L^2(2)}^2 \lesssim J(L) \Big[ C(\phi_d) ||g^{(n)}||^2_{H^2(2)} + ||\tilde{g}^{(n,s)}||^2_{B_{L^n}(2,0,2)}+  L^{-n} C(\phi_d) \Big],
\end{align}
\end{lemma}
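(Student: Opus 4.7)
The plan is to run a weighted parabolic energy estimate directly on the critical equation (\ref{eqn.c.alpha}) for $a^{(n)}$, at the $H^2$ level, in order to extract the space--time $L^2$ control of $a^{(n)}_{zzz}$ from the heat-like dissipation $-\lambda^{(n)}(\xi) \gtrsim |\xi|^2$ supplied by $\lambda^{(n)}$ on $\supp \hat{a}^{(n)}$. The computation mirrors the Step 1--4 analysis of $\gamma^{(n)}$ in Section~\ref{RGA.Section}: I differentiate the equation twice in $z$ and test against $(1+z^{2m})\partial_z^2 a^{(n)}$ for $m=0,1,2$. The integration-by-parts on the left produces, in addition to the $\partial_t$-terms, the positive quantity $\int |\partial_z^3 a^{(n)}|^2 (1+z^{2m})$; integrating in time over $[1,L^2]$ and summing over $m$ yields the desired $\int_1^{L^2} \|a^{(n)}_{zzz}\|_{L^2(2)}^2 dt$.

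The right-hand side decomposes into four groups. (i) The linearization around the Burgers profile, $2 p_{mf}^c i\beta\xi (\hat{a}^{(n)} \ast \hat{\bar{b}}^{c,n})$, is \emph{not small}; however, after appropriate integration by parts every contribution takes the form $\int z^{2m}\partial_z^j \bar{b}^{c,n} \cdot \partial_z^k a^{(n)} \cdot \partial_z^3 a^{(n)}$ with $j+k \le 3$, and the explicit weight bounds (\ref{criteria.2}) on $\bar{b}^{(n)}$ let us distribute the $z^{2m}$ factor between $\bar{b}^{c,n}$ and $a^{(n)}$ so that each $\bar{b}^{c,n}$-factor lives in $L^\infty$ at cost $C(\phi_d)$. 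Young's inequality then absorbs a small fraction of $\|a^{(n)}_{zzz}\|_{L^2(2)}^2$ into the dissipation and leaves $C(\phi_d)\|a^{(n)}\|_{H^2(2)}^2$, which is fed into Gronwall. (ii) The Burgers self-interaction $(\hat{a}^{(n)} \ast \hat{a}^{(n)})$ is handled via Young's inequality for convolution, using $\|\hat{a}^{(n)}\|_{L^1_\xi} \lesssim \|a^{(n)}\|_{H^2(2)}$ together with the inductive $\epsilon$-smallness (\ref{ind.1}) (propagated on $[1,L^2]$ by a standard bootstrap). (iii) The irrelevant nonlinearities $N^{(n)}_1, N^{(n)}_2, N^{(n)}_3$ from (\ref{drn.1})--(\ref{drn.3}) carry explicit powers of $L^{-n}$; in $N^{(n)}_2$ the stable factor is integrated in time against the exponential semigroup decay (\ref{quad}), producing the $\|\tilde{g}^{(n,s)}\|_{B_{L^n}(2,0,2)}^2$ term in the final bound. (iv) The irrelevant linear contributions $L^{-n}o(\xi^3)\hat{\bar{b}}^{(n)}$ and $p_{mf}^c i\beta\xi N_b^c$ produce the $L^{-n} C(\phi_d)$ residual via direct Cauchy--Schwarz paired with (\ref{criteria.2}).

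Assembling all contributions and applying Gronwall's inequality yields the claimed maximal regularity estimate; the smallness hypothesis (\ref{crit.MR}) is exactly what is needed to keep the Gronwall exponential bounded by a function of $L$ alone, which is absorbed into the overall $J(L)$. The principal obstacle will be item (i): unlike in Section~\ref{RGA.Section}, where the $\delta_w$-smallness of $\bar{b}^{(n)}$ was available, here $\bar{b}^{(n)}$ is an $O(1)$ profile and we must rely on the delicate weight-distribution argument of Step~1b. In particular, the top-order term $\int \bar{b}^{c,n} a^{(n)}_{zz} a^{(n)}_{zzz}(1+z^{2m})$ cannot be absorbed by dissipation, but is Cauchy--Schwarz'd and fed into Gronwall on $\|a^{(n)}_{zz}\|_{L^2(2)}^2$; the same pattern, executed inductively on the derivative level, permits us to close the estimate without ever overburdening any single $\bar{b}^{(n)}$ factor beyond what (\ref{criteria.2}) permits.
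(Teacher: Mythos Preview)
Your overall energy-estimate strategy is correct and matches the paper's approach: differentiate twice, test against $\partial_z^2 a^{(n)}$ with weights, and extract $\int_1^{L^2}\|a^{(n)}_{zzz}\|_{L^2(2)}^2$ from the dissipation. However, you have misidentified the role of the hypothesis (\ref{crit.MR}), and in doing so you have glossed over the one genuinely dangerous term.

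The criterion (\ref{crit.MR}) reads $L^{-N_0} C_{MR}(\phi_d) J_{MR}(L) \ll 1$; note the factor $L^{-N_0}$. Your claim that this ``keeps the Gronwall exponential bounded by a function of $L$ alone'' cannot be right: the Gronwall exponential coming from your item~(i) is $\exp\{C(\phi_d)(L^2-1)\}$, which involves no $N_0$ and is therefore untouched by (\ref{crit.MR}). (In fact the paper does not close this term via Gronwall at all; it simply inserts the a~priori propagation bound $\sup_{[1,L^2]}\|a^{(n)}\|_{H^2(2)} \le C(\phi_d)J(L)\|g^{(n)}\|_{H^2(2)}$, available from (\ref{check.2}) plus a bootstrap, directly into the estimate for the marginal terms.)

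The actual role of (\ref{crit.MR}) lies in your item~(iii), which you have under-treated. The quasilinear term $N_1^{(n)} = L^{-n} o(\xi^2)(\hat{u}^{n,c}\ast\hat{u}^{n,c})$, after two differentiations and one integration by parts, yields on the right-hand side a contribution
\[
L^{-n}\int o(\xi^3)\big(\hat{u}^{n,c}\ast\hat{u}^{n,c}\big)\cdot\overline{o(\xi^3)\hat{a}^{(n)}}
\;\le\; L^{-n}\,\|\hat{u}^{n,c}\|_{L^1_\xi}\,\|a^{(n)}\|_{H^3}^2 \,+\, L^{-n}C(\phi_d),
\]
where the $\|a^{(n)}\|_{H^3}^2$ contains $\|a^{(n)}_{zzz}\|_{L^2}^2$, the very quantity you are trying to control. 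This cannot be handled ``by explicit powers of $L^{-n}$'' alone: you must absorb it into the dissipation, and for that the coefficient $L^{-n}\|\hat{u}^{n,c}\|_{L^1_\xi}$ must be $\ll 1$. Since $\|\hat{u}^{n,c}\|_{L^1_\xi}\le \|u^{n,c}\|_{H^2}\le C(\phi_d)$ and $n\ge N_0$, this coefficient is at most $L^{-N_0}C(\phi_d)$, and (\ref{crit.MR}) is precisely what makes this small. This is the sole place the hypothesis enters.
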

where $J(L), C(\phi_d)$ are universal functions of $L, \phi_d$. 
\begin{proof}

We differentiate equation (\ref{eqn.c.alpha}) twice in $z$, which corresponds to a multiplication of $-|\xi|^2$ in Fourier, yielding: 
\begin{align}
\Big( \partial_t - \lambda^{(n)} \Big) \Big(-|\xi|^2 \Big) \hat{a}^{(n)} =  \Big(-|\xi|^2 \Big) \Big[\mathcal{M}^{(n)} +  N_1^{(n)} + N_2^{(n)} + N_3^{(n)} + \mathcal{B}^{(n)} \Big].
\end{align}

We now apply the multiplier $\Big( -|\xi|^2 \Big)\hat{a}^{(n)}$ to both sides of the above equation and integrate by parts. This captures control over the required quantities: 
\begin{align} \label{hard.LHS}
\int \Big( \partial_t - \lambda^{(n)} \Big) \Big(-|\xi|^2 \Big) \hat{a}^{(n)} \cdot \Big(-|\xi|^2 \hat{a}^{(n)} \Big) \ge \partial_t \int \Big| a^{(n)} \Big|^2 + \int \Big| a^{(n)}_{zzz} \Big|^2.
\end{align}

Next, for the marginal nonlinearities, we first have via an integration by parts: 
\begin{align} \nonumber
\Big| \int o(\xi^3) \Big( \hat{a}^{(n)} \ast \hat{a}^{(n)} \Big) \cdot \overline{\hat{a}^{(n)} |\xi|^2} \Big| &\le || |\xi|^2 \Big(\hat{a}^{(n)} \ast \hat{a}^{(n)}\Big)||_{L^2} ||a^{(n)}_{zzz}||_{L^2} \\ \nonumber
&\lesssim ||a^{(n)}||_{H^2}^2 ||a^{(n)}||_{H^2}^2 + \frac{1}{100}||a^{(n)}_{zzz}||_{L^2}^2 \\ &\lesssim J(L)C(\phi_d) ||g^{(n)}||_{H^2}^2. 
\end{align}

We next treat the second component of the marginal contribution, for which we again integrate by parts:
\begin{align}
\Big|\int o(\xi^3) \Big( \hat{\bar{b}}^{(n,c)} \ast \hat{a}^{(n)} \Big) \cdot \overline{ |\xi|^2 \hat{a}^{(n)} } \Big| \lesssim ||\bar{b}^{n,c}||_{H^2}^2 ||a^{(n)}||_{H^2}^2 + \frac{1}{100} ||a^{(n)}_{zzz}||_{L^2}^2. 
\end{align}

Summarizing the marginal contributions: 
\begin{align}
\Big| \int \Big( - |\xi|^2 \Big) \mathcal{M}^{(n)} \cdot \overline{ \Big( - |\xi|^2  \Big) \hat{a}^{(n)} } \Big| \le C(\phi_d) J(L) ||g^{(n)}||_{H^2}^2 .  
\end{align}

Next, we control the higher-order quadratic nonlinearity. This term is the most delicate as there is a high-regularity term ($o(\xi^3)$) that we must contend with:
\begin{align} \nonumber
&\Big| \int L^{-n}o(\xi^4) \Big( \{ \hat{a}^{(n)} + \hat{\bar{b}}^{(n,c)} \} \ast \{ \hat{a}^{(n)} + \hat{\bar{b}}^{(n,c)} \} \Big) \cdot \overline{ |\xi|^2 \hat{a}^{(n)} } \Big| \\ \nonumber
&\hspace{10 mm} =\Big| \int L^{-n}o(\xi^3) \Big( \{ \hat{a}^{(n)} + \hat{\bar{b}}^{(n,c)} \} \ast \{ \hat{a}^{(n)} + \hat{\bar{b}}^{(n,c)} \} \Big) \cdot \overline{ o(\xi^3) \hat{a}^{(n)} } \Big| \\
& \hspace{10 mm} \le L^{-n} ||\hat{a}^{(n)}||_{L^1_\xi} ||a^{(n)}||_{H^3}^2 + L^{-n}C(\phi_d).
\end{align}

Now, upon making the observation that $||\hat{a}^{(n)}||_{L^1_\xi} \le ||a^{(n)}||_{H^2} \le C(\phi_d)$, and that $n \ge N_0$, coupled with (\ref{crit.MR}), we may absorb the highest (third) order derivative term from the above estimate to the left-hand side of (\ref{hard.LHS}). Next, for the stable-critical quadratic interaction: 
\begin{align} \nonumber
&L^{-n(1-p)}\Big|\int o(\xi^3) \Big(\tilde{u}^{n,s} \ast \hat{u}^{n,c}\Big) \cdot \overline{|\xi|^2 \hat{a}^{(n)}} \Big|  = L^{-n(1-p)}\Big|\int o(\xi^2) \Big( \tilde{u}^{n,s} \ast \hat{u}^{n,c}\Big) \cdot \overline{o(\xi^3)\hat{a}^{(n)}} \Big| \\ \nonumber
& \hspace{10 mm} \le L^{-n(1-p)} \Big[ ||u^{n,c}||_{H^2}||\tilde{u}^{n,s}||_{L^1_\xi(1)} ||a^{(n)}_{zzz}||_{L^2} + ||\hat{u}^{n,c}||_{L^1_\xi} ||\tilde{u}^{n,s}||_{B_{L^n}(2,0,0)} ||a^{(n)}_{zzz}||_{L^2} \Big] \\
& \hspace{10 mm} \lesssim \frac{1}{100} ||a^{(n)}_{zzz}||_{L^2}^2 +  L^{-2n(1-p)} \Big[ C(\phi_d) + ||\tilde{u}^{n,s}||_{B_{L^n}(2,0,0)}^2 + J(L)||g^{(n)}||_{H^2}^2 \Big]. 
\end{align}

The cubic nonlinearity in $N_3$ can be handled similar to above, by noting that each additional power in the nonlinearity produces a factor of $L^{-n}$, which exactly cancels the largeness of the profiles. The rapidly decaying Burgers terms in $\mathcal{B}^{(n)}$ are accompanied by factors of $L^{-n}$, and so their estimate is immediate. The weighted estimates follow similarly by applying weighted multipliers. 

\end{proof}

With maximal regularity in hand, our aim is to give estimates on the irrelevant nonlinearities. 
\begin{lemma}[Irrelevant Nonlinearity Bounds] \label{nl.n} Suppose the inductive hypothesis in (\ref{ind.1}), the criteria (\ref{crit.MR}) for $N_0$, and that $n \ge N_0$. Then, the irrelevant nonlinearities $N^{(n)}_i, i = 1,...4$ can be controlled as follows: 
\begin{align} \nonumber
||\int_1^{L^2} \Phi_{\bar{b}^{(n)}}(L^2-s)  N^{(n)}(u^{n,c}, u^{n,s}) ||_{H^2(2)} \lesssim & J(L) C(\phi_d) \epsilon ||g^{(n)}||_{H^2(2)} + J(L) L^{-n} C(\phi_d) \\ 
& + J(L) 2\phi_d L^{-n(1-p)} ||\tilde{g}^{(n,s)}||_{B_{L^n}(2,2,2)}, 
\end{align}
where $C(\phi_d)$ and $J(L)$ are universal functions of their arguments. 
\end{lemma}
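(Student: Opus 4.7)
The plan is to push the $H^2(2)$ norm inside the time integral, apply the boundedness of $\Phi_{\bar{b}^{(n)}}$ on $H^2(2)$ given by (\ref{check.2}) together with the smoothing gain (\ref{l2.new}), and then bound each $N_i^{(n)}$ term-by-term after decomposing $u^{n,c}=a^{(n)}+\bar{b}^{(n)}$. The three contributions in the statement arise naturally: quadratic $(a^{(n)})^2$ interactions, controlled via the smallness $\epsilon$ from (\ref{ind.1}), produce the $\epsilon\|g^{(n)}\|_{H^2(2)}$ term; pure $(\bar{b}^{(n)})^k$ interactions use only (\ref{criteria.2}) and keep the prefactor $L^{-n}$; the critical--stable interaction in $N_2^{(n)}$ harvests the weight $L^{-n(1-p)}$ times a Bloch norm of $\tilde{u}^{n,s}$.

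\textbf{Term-by-term bookkeeping.} For $N_1^{(n)}$, write $N_1^{(n)}=L^{-n}\partial_z M_1^{(n)}$ with $M_1^{(n)}$ having Fourier symbol $o(\xi)(\hat u^{n,c}\ast\hat u^{n,c})$ and expand $(u^{n,c})^2=(a^{(n)})^2+2a^{(n)}\bar b^{(n)}+(\bar b^{(n)})^2$. For the unweighted $L^2$ piece of $H^2(2)$ the smoothing estimate (\ref{l2.new}) gives
\begin{align*}
\Big\|\int_1^{L^2}\Phi_{\bar b^{(n)}}(L^2-s)\,\partial_z M_1^{(n)}(s)\,ds\Big\|_{L^2}\lesssim C(\phi_d)\int_1^{L^2}(L^2-s)^{-3/4}\|M_1^{(n)}(s)\|_{L^1}\,ds,
\end{align*}
with $\|M_1^{(n)}\|_{L^1}$ controlled by Cauchy--Schwarz using $\|u^{n,c}\|_{H^2(2)}$ and the inductive smallness (\ref{ind.1}). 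For the weighted and differentiated components of $H^2(2)$ one moves $\partial_z$ and the polynomial weights through $\Phi_{\bar b^{(n)}}$ at cost $C(\phi_d)J(L)$ via (\ref{check.2}) and reduces to bounding $\|N_1^{(n)}\|$ in suitably weighted spaces. The same expansion then yields $J(L)C(\phi_d)\epsilon\|g^{(n)}\|_{H^2(2)}$ from the $(a^{(n)})^2$ piece, an identical-type bound from the $a^{(n)}\bar b^{(n)}$ cross-term (with $\phi_d$ absorbed into $C(\phi_d)$), and $J(L)L^{-n}C(\phi_d)$ from the $(\bar b^{(n)})^2$ piece. The cubic-and-higher nonlinearity $N_3^{(n)}$ is handled identically, since each additional factor of $u^{n,c}$ brings an extra $L^{-n}$ from the rescaling (\ref{nl.resc}), more than enough to absorb any $C(\phi_d)$ cost coming from $\bar b^{(n)}$ factors. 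For $N_2^{(n)}$ we apply Young's inequality to the convolution, use $\|\hat u^{n,c}\|_{L^1_\xi}\lesssim\phi_d$ from (\ref{continue}) and (\ref{barb.small.ck}), and control the stable factor via the Bloch norm of $\tilde u^{n,s}$, producing the announced $J(L)\cdot 2\phi_d\,L^{-n(1-p)}\|\tilde g^{(n,s)}\|_{B_{L^n}(2,2,2)}$ contribution.

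\textbf{Main obstacle.} The delicate point is that the factor $o(\xi^2)$ inside $N_1^{(n)}$, combined with the two derivatives from $H^2$, formally demands four $z$-derivatives of $u^{n,c}$, which exceeds the a priori $H^2(2)$-regularity of $a^{(n)}$. This is precisely what the maximal regularity bound (\ref{max.reg.1}) is designed to overcome: after distributing derivatives by Leibniz, the worst piece produces one factor of $a^{(n)}_{zzz}$, and Cauchy--Schwarz in $s$ converts its contribution into $L^{1/2}\bigl(\int_1^{L^2}\|a^{(n)}_{zzz}\|_{L^2(2)}^2\,ds\bigr)^{1/2}$, which by (\ref{max.reg.1}) is bounded by a universal multiple of $J(L)\bigl(C(\phi_d)\|g^{(n)}\|_{H^2(2)}+L^{-n}C(\phi_d)+\|\tilde g^{(n,s)}\|_{B_{L^n}(2,0,2)}\bigr)$. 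The residual smallness on this top-derivative contribution is recovered from the $L^{-n}$ prefactor (which is smaller than $\epsilon$ for $n\ge N_0$ under (\ref{crit.MR})) times the $\|\hat a^{(n)}\|_{L^1_\xi}$ factor, mirroring the absorption argument used inside the proof of (\ref{max.reg.1}) itself. Assembling Steps 1--3 over the three $N_i^{(n)}$'s and defining the constants $C(\phi_d),J(L)$ suitably yields the claimed estimate.
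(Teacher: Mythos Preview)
Your overall architecture is right—split $u^{n,c}=a^{(n)}+\bar b^{(n)}$, treat the three pieces of $N_1^{(n)}$ separately, and appeal to maximal regularity for the top-order contribution—but there is a genuine derivative-counting gap in your handling of the highest $H^2$ piece.

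You propose to use (\ref{check.2}) to ``move $\partial_z$ and the polynomial weights through $\Phi_{\bar b^{(n)}}$,'' which reduces matters to $\|N_1^{(n)}\|_{H^2(2)}$. But $N_1^{(n)}$ carries a symbol $L^{-n}o(\xi^2)$, so $\|\partial_z^2 N_1^{(n)}\|_{L^2}$ costs four derivatives on the product $(u^{n,c})^2$; by Leibniz the top term in the cross piece is $\bar b^{(n)}\cdot a^{(n)}_{zzzz}$, and the maximal regularity bound (\ref{max.reg.1}) only supplies $L^2_tL^2_z$ control of $a^{(n)}_{zzz}$. Your claim that ``the worst piece produces one factor of $a^{(n)}_{zzz}$'' is therefore off by one derivative under the scheme you describe. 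Trading the full $L^{-n}$ for one factor of $\xi$ (as the paper does for the $a\ast a$ term) would fix the derivative count on the cross term, but then the resulting bound on $\bar b^{(n)}\ast a^{(n)}$ carries no smallness at all—you would obtain $J(L)C(\phi_d)\|g^{(n)}\|_{H^2(2)}$, which is too large for the iteration.

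The paper closes this gap with a sharper maneuver: on the cross term it trades only a fraction $p$ of the derivative, writing $L^{-n}o(\xi^2)=L^{-n(1-p)}o(\xi^{2-p})$, then commutes one full $\partial_z$ through $\Phi_{\bar b^{(n)}}$ via (\ref{comm.3}) so that exactly $o(\xi^3)$ lands on $\bar b^{(n)}a^{(n)}$ (three derivatives, within reach of (\ref{max.reg.1})), while the remaining $o(\xi^{1-p})$ outside $\Phi_{\bar b^{(n)}}$ is converted by interpolation and (\ref{Reg.1}) into an integrable singularity $(L^2-s)^{-\theta(p)/2}$ with $\theta(p)<1$. This simultaneously keeps the smallness $L^{-n(1-p)}$ and avoids a fourth derivative on $a^{(n)}$. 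Your sketch needs this refinement (or an equivalent one) to go through.
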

\begin{proof}

Recall the breakdown of the irrelevant nonlinearity into $N^{(n)} = N^{(n)}_1 + N^{(n)}_2 + N^{(n)}_3$ in (\ref{nl.1}) - (\ref{nl.4}). The estimate for $N_1^{(n)}$ is the most delicate as there is a potential loss of a derivative due to the quasilinear nature of the nonlinearity. In detail, 
\begin{align}
N_1^{(n)} = L^{-n} o(\xi^2) \Big[ \Big( \hat{a}^{(n)} \ast \hat{a}^{(n)} \Big) + \Big( \hat{a}^{(n)} \ast \hat{\bar{b}}^{(n,c)}\Big) + \Big(\hat{\bar{b}}^{(n,c)} \ast \hat{\bar{b}}^{(n,c)} \Big)  \Big]. 
\end{align}

For the purely nonlinear component, we may trade a factor of $L^{-n}$ for one derivative, $\xi$: 
\begin{align} \nonumber
&|| \int_1^{L^2} \Phi_{\bar{b}^{(n)}}(L^2-s)  L^{-n} o(\xi^2) \Big(\hat{a}^{(n)} \ast \hat{a}^{(n)}\Big) ||_{H^2(2)} \le J(L) C(\phi_d) \epsilon ||g^{(n)}||_{H^2(2)},
\end{align}

where we have applied the inductive hypothesis in (\ref{ind.1}).  For the purely Burgers contribution above, we may give the trivial estimate: 
\begin{align}
|| \int_1^{L^2} \Phi_{\bar{b}^{(n)}}(L^2-s)  L^{-n} o(\xi^2) \Big(\hat{\bar{b}}^{(c,n)} \ast \hat{\bar{b}}^{(c,n)}\Big) ||_{H^2(2)} \le J(L) C(\phi_d) L^{-n}. 
\end{align}

We must now control the cross-term, for which we require the maximal regularity lemma from above as well as the regularization estimate in (\ref{Reg.1}). It suffices to treat the highest-order (two derivatives coming from the norm) term,  which is the most delicate. Moreover, we can remove the weight of $z^2$ in this estimate, as the $z^2$ estimate follows similarly, after noting that the weight does not affect the regularization near $t = 0$, and we are free to pay factors of $J(L)$ and $C(\phi_d)$. We fix a $p > 0$ and now turn to the estimate: 
\begin{align} \nonumber
&|| \int_1^{L^2} \Phi_{\bar{b}^{(n)}}(L^2-s)  L^{-n}o(\xi^2) \Big(\hat{\bar{b}}^{(n,c)} \ast \hat{a}^{(n)}\Big) ds||_{H^2(2)} \\ & \hspace{10 mm} =  \int_1^{L^2} || o(\xi^2) \Phi_{\bar{b}^{(n)}}(L^2-s)  L^{-n} o(\xi^2) \Big( \hat{\bar{b}}^{(n,c)} \ast \hat{a}^{(n)}\Big) ||_{L^2} + \text{l.o.t.$(\xi)$} \\ \label{n1.1}
& \hspace{10  mm} \le L^{-n(1-p)} \int_1^{L^2} ||o(\xi^{2-p}) \Phi_{\bar{b}^{(n)}}(L^2-s)  o(\xi^2) \Big(\hat{\bar{b}}^{(n,c)} \ast \hat{a}^{(n)}\Big) ||_{L^2}  + \text{l.o.t($\xi$)} \\   \nonumber
& \hspace{10 mm} \le L^{-n(1-p)} \int_1^{L^2} ||o(\xi^{1-p}) \Phi_{\bar{b}^{(n)}}(L^2-s)  o(\xi^3)  \Big(\hat{\bar{b}}^{(n,c)} \ast \hat{a}^{(n)}\Big) ||_{L^2}  \\ \label{n1.2}
& \hspace{20 mm} + L^{-n(1-p)} \int_1^{L^2} ||o({\xi^{1-p}}) \mathcal{S} o(\xi^2) \Big(\hat{\bar{b}}^{(n,c)} \ast \hat{a}^{(n)}\Big)||_{L^2}+ \text{l.o.t($\xi$)}. 
\end{align}

In (\ref{n1.1}), we have traded a factor of $L^{np}\xi^{p}$ for an order one constant, providing necessary additional regularity. In (\ref{n1.2}), we have used the commutator expansion given in (\ref{comm.3}). For the first term in (\ref{n1.2}), we must interpolate via: 
\begin{align} \nonumber
||o(\xi^{1-p}) &\Phi_{\bar{b}^{(n)}}(L^2-s)  o(\xi^3) \Big(\hat{\bar{b}}^{(n,c)} \ast \hat{a}^{(n)}\Big) ||_{L^2} \\ \nonumber
&\le ||o(\xi) \Phi_{\bar{b}^{(n)}}(L^2-s)  o(\xi^3) \Big(\hat{\bar{b}}^{(n,c)} \ast \hat{a}^{(n)}\Big)||_{L^2}^{\theta(p)} || \Phi_{\bar{b}^{(n)}}(L^2-s)  o(\xi^3)\Big( \hat{\bar{b}}^{(n,c)} \ast \hat{a}^{(n)}\Big)||_{L^2}^{1-\theta(p)} \\ \label{maj.1}
&\le  (L^2-s)^{-\frac{\theta(p)}{2}} ||o(\xi^3) \Big(\hat{\bar{b}}^{(n,c)} \ast \hat{a}^{(n)}\Big)||_{L^2 \cap L^1} \le  (L^2-s)^{-\frac{\theta(p)}{2}} ||\bar{b}^{(n,c)} a^{(n)}||_{H^3}. 
\end{align} 

where $\theta(p) \rightarrow 1$ as $p \rightarrow 0$, but $\theta(p) < 1$ so long as $p >0$. We have used our regularization estimates from (\ref{Reg.1}): 
\begin{align}
 ||o(\xi) \Phi_{\bar{b}^{(n)}}(L^2-s) o(\xi^3) \Big(\hat{\bar{b}}^{(n,c)} \ast \hat{a}^{(n)}\Big)||_{L^2}^{\theta(p)} \le  (L^2-s)^{-\frac{\theta(p)}{2}} ||o(\xi^3)\Big( \hat{\bar{b}}^{(n,c)} \ast \hat{a}^{(n)}\Big)||_{L^2 \cap L^1}^{\theta(p)},
\end{align}

and also the Sobolev embedding $H^2 \hookrightarrow \dot{W}^{1,1}$. The smoothing term in (\ref{n1.2}) can be estimated in a similar manner, after noting the bound (\ref{S.smoothing}). Inserting (\ref{maj.1}) into (\ref{n1.2}), we arrive at: 
\begin{align} \nonumber
&|| \int_1^{L^2} \Phi_{\bar{b}^{(n)}}(L^2-s) L^{-n}o(\xi^2) \Big( \hat{\bar{b}}^{(n,c)} \ast \hat{a}^{(n)}\Big) ds||_{H^2(2)} \\  \nonumber & \hspace{10 mm} \le C(\phi_d) L^{-n(1-p)} \int_1^{L^2} (L^2-s)^{-\frac{\theta(p)}{2}} ||\bar{b}^{(n,c)} a^{(n)}||_{H^3} \\ \nonumber
&  \hspace{10 mm} \le C(\phi_d) L^{-n(1-p)} \int_1^{L^2} (L^2-s)^{-\theta(p)} + L^{-n(1-p)} \int_1^{L^2} ||a^{(n)}||_{H^3}^2 ds \\ \label{l.step}
& \hspace{10 mm} \le C(\phi_d) J(L) L^{-n(1-p)} + L^{-n(1-p)} J(L) C(\phi_d) \Big[ ||g^{(n)}||_{H^2(2)}^2 + ||\tilde{g}^{(n,s)}||_{B_{L^2}(2,0,2)}^2 \Big]. 
\end{align}

The interpolation using $L^{np}$ was necessary to force $\theta(p) < 1$, thereby obtaining a convergent integral above. In the last step in (\ref{l.step}), we have used the maximal regularity estimate provided in (\ref{max.reg.1}). Summarizing the $N_1^{(n)}$ estimate: 
\begin{align} \nonumber
\int_1^{L^2}|| &\Phi_{\bar{b}^{(n)}}(L^2-s) N^{(n)}_1(\hat{u}^{n,c}, \hat{u}^{n,c})||_{H^2(2)} \le J(L)L^{-n}C(\phi_d) + \epsilon J(L) C(\phi_d) ||g^{(n)}||_{H^2(2)}.
\end{align}

Next, 
\begin{align} \nonumber
\int_1^{L^2} ||\Phi_{\bar{b}^{(n)}}(L^2-s)  N^{(n)}_2(\hat{u}^{n,c}, \tilde{u}^{n,s})||_{H^2(2)} &\le J(L) L^{-n(1-p)}||\bar{b}^{(n)}(1,\cdot) + g^{(n)}||_{H^2(2)}||\tilde{g}^{(n,s)}||_{B_{L^n}(2,2,2)} \\
&\le J(L) 2\phi_d L^{-n(1-p)}||\tilde{g}^{(n,s)}||_{B_{L^n}(2,2,2)}.
\end{align}

The third irrelevant nonlinearity can be bounded similar to the previous three, by using the extra factor of $L^{-n}$ which arises for each additional power in the nonlinearity. 

\end{proof}

We need one more lemma: 
\begin{lemma} Recall the definition (\ref{defn.c}). Then we have exponential decay: 
\begin{equation}
\int_1^{L^2} ||\Phi_{\bar{b}^{(n)}}(L^2-s)  \mathcal{C} ds||_{H^2(2)} \le J(L) C(\phi_d) L^{-2n} \Big[ ||\tilde{g}^{n,s}||_{B_{L^n}(2,2,2)}  + \epsilon ||g^{(n)}||_{H^2(2)} + C(\phi_d) \Big].
\end{equation}
\end{lemma}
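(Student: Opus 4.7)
I will split $\mathcal{C} = \mathcal{C}_1 + \mathcal{C}_2$ as in (\ref{defn.c}) and bound each piece by a distinct mechanism: $\mathcal{C}_2$ via the exponential decay of the stable Burgers component $\bar{b}^{s,n}$, and $\mathcal{C}_1$ via the high-frequency Fourier support forced by the mismatch of the mode filter. In both cases the flow $\Phi_{\bar{b}^{(n)}}(L^2-s)$ is absorbed using (\ref{check.2}), at the cost of harmless factors $C(\phi_d) J(L)$.

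\textbf{Bounding $\mathcal{C}_2$.} By construction, $\hat{\bar{b}}^{s,n}(s,\xi)$ is supported where $|\xi/L^n|\ge l_1/8$, and on this region the linear part of the Burgers flow has symbol $\mathrm{Re}\,\lambda_1(\xi/L^n) \le -\sigma_1 < 0$. Since the unrescaled time elapsed is $L^{2n}s$ with $s\ge 1$, one obtains $\|\bar{b}^{s,n}(s,\cdot)\|_{H^k}\lesssim e^{-c L^{2n}}C(\phi_d)$ in any fixed Sobolev norm, easily dominating $L^{-2n}$. Pairing with $\|\hat{a}^{(n)}\|_{L^1_\xi(1)}\lesssim \|g^{(n)}\|_{H^2(2)}+C(\phi_d)$ (by $H^2(2)\hookrightarrow L^1_\xi$ and energy-type bounds on $\hat{a}^{(n)}$ over $[1,L^2]$), Young's inequality gives $\|\mathcal{C}_2(s)\|_{H^2(2)}\le e^{-c L^{2n}} C(\phi_d) J(L)\bigl[\epsilon\|g^{(n)}\|_{H^2(2)}+C(\phi_d)\bigr]$. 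Integrating $s$ over $[1,L^2]$ and applying (\ref{check.2}) yields the required $L^{-2n}$ bound with exponential slack.

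\textbf{Bounding $\mathcal{C}_1$.} The filter $p_{mf}^s(\xi/L^n)$ is supported on $|\xi|\ge L^n l_1/8$; hence on its support $1\le C L^{-2n}|\xi|^2$, and
\begin{equation*}
\|\mathcal{C}_1\|_{H^2(2)}\le C L^{-2n}\,\|\partial_z^3(\bar{b}^{c,n} a^{(n)})\|_{H^2(2)}.
\end{equation*}
I expand by Leibniz, placing any factor beyond two derivatives onto $\bar{b}^{c,n}$: using (\ref{criteria.2}) together with the explicit smoothness of $f_A^*$, every weighted $L^\infty$ norm $\|z^{j+1}\partial_z^j \bar{b}^{c,n}\|_{L^\infty}$ is bounded by $C(\phi_d)$ uniformly in $n$ and $L$, which absorbs the spatial weights $z^2$ appearing in $H^2(2)$. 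This reduces matters to controlling derivatives up to order three of $a^{(n)}$ in $L^2(2)$. After (\ref{check.2}) converts $\Phi_{\bar{b}^{(n)}}(L^2-s)$ to an innocuous $C(\phi_d) J(L)$, Cauchy--Schwarz in time and the maximal regularity estimate (\ref{max.reg.1}) give
\begin{equation*}
\int_1^{L^2}\!\|a^{(n)}\|_{H^3(2)}\,ds \le L\sqrt{\int_1^{L^2}\!\|a^{(n)}_{zzz}\|_{L^2(2)}^2\,ds}\le J(L)\bigl[C(\phi_d)\|g^{(n)}\|_{H^2(2)}+\|\tilde g^{(n,s)}\|_{B_{L^n}(2,2,2)}+L^{-n/2}C(\phi_d)\bigr].
\end{equation*}
To produce the factor $\epsilon$ advertised in front of $\|g^{(n)}\|_{H^2(2)}$, I write one copy of the quadratic bound as $\|g^{(n)}\|_{H^2(2)}^2\le \epsilon\|g^{(n)}\|_{H^2(2)}$ using the inductive hypothesis (\ref{ind.1}), so that after square-rooting only a single linear power of $\|g^{(n)}\|_{H^2(2)}$ (multiplied by $\epsilon$) remains. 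Summing the $\mathcal{C}_1$ and $\mathcal{C}_2$ contributions produces the stated inequality.

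\textbf{Main obstacle.} The delicate point is bounding $\mathcal{C}_1$ in the \emph{weighted} space $H^2(2)$: $\bar{b}^{c,n}$ is large in $H^2(2)$ (cf.\ Remark \ref{remk.tr}), so naively pulling derivatives onto it loses control. The resolution is that $\bar{b}^{c,n}$ is only large globally but is pointwise smooth and spatially localized according to (\ref{criteria.2}); the polynomial weights $z^k$ required by $H^2(2)$ are exactly what those weighted-$L^\infty$ bounds can absorb, while the $L^{-2n}$ gain from the Fourier mismatch compensates for the three derivatives. Without both ingredients working in tandem, the estimate would fail.
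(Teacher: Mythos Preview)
Your treatment of $\mathcal{C}_2$ is correct and agrees with the paper. The gap is in your $\mathcal{C}_1$ argument, specifically the derivative count after the Fourier trade.

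You write $\|\mathcal{C}_1\|_{H^2(2)}\le C L^{-2n}\|\partial_z^3(\bar{b}^{c,n}a^{(n)})\|_{H^2(2)}$ and then claim this ``reduces matters to controlling derivatives up to order three of $a^{(n)}$''. But the $H^2$ part of $H^2(2)$ forces you to estimate $\|\partial_z^5(\bar{b}^{c,n}a^{(n)})\|_{L^2}$, and Leibniz produces the term $\bar{b}^{c,n}\,\partial_z^5 a^{(n)}$ in which all five derivatives land on $a^{(n)}$; you cannot \emph{choose} to place the excess derivatives on $\bar{b}^{c,n}$. The maximal regularity estimate (\ref{max.reg.1}) supplies only three derivatives of $a^{(n)}$. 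If you try to close by invoking the compact Fourier support $|\xi|\le L^n l_1/4$ of $\hat a^{(n)}$ to bound $\|\partial_z^5 a^{(n)}\|_{L^2}\le C L^{2n}\|\partial_z^3 a^{(n)}\|_{L^2}$, the $L^{2n}$ you lose exactly cancels the $L^{-2n}$ you gained, and you end up with no decay in $n$ at all. This is not an accident: on the band $|\xi|\sim L^n$ where $p_{mf}^s(\xi/L^n)$ lives, multiplying by $\xi$ and dividing by $L^n$ are the same operation, so purely Fourier-multiplier trades on that band can never produce net $L^{-n}$ decay.

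The paper's mechanism for $\mathcal{C}_1$ is different and is the missing idea: after shifting $p_{mf}^s$ inside the convolution (as in the argument around (\ref{BUS.1})), the piece $\hat{\bar b}^{c,n}\ast\bigl(p_{mf}^s\hat a^{(n)}\bigr)$ is controlled by $\|\tilde u^{n,s}\|_{B_{L^n}(2,2,2)}$, because $p_{mf}^s\hat a^{(n)}$ is precisely the high-frequency (stable) content that is captured by the stable unknown. One then uses the Duhamel representation of $\tilde u^{n,s}$ with the exponentially decaying semigroup $e^{-c_0 L^{2n}(s-1)}$ from (\ref{stable.spec}); the factor $L^{-2n}$ arises from the \emph{time} integral $\int_1^{L^2}e^{-c_0 L^{2n}(s-1)}\,ds$, not from any spatial-frequency trade. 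The residual term where $p_{mf}^s$ lands on $\hat{\bar b}^{c,n}$ is handled exactly as your $\mathcal{C}_2$ estimate, via the rapid decay of $\bar b^{s,n}$.
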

\begin{proof}

Both terms in $\mathcal{C}$ are supported in $ (-\infty, -\frac{l_1}{8}L^{-n})$, and so are expected to decay exponentially. For the second term, we may directly use the exponential Burgers decay in $\bar{b}^{s,n}$ to estimate: 
\begin{align} \label{c2estimate}
\int_1^{L^2} ||\Phi_{\bar{b}^{(n)}}(L^2-s) \mathcal{C}_2 ||_{H^2(2)} \le \epsilon J(L)C(\phi_d) \int_1^{L^2} e^{-L^ns} ds \le \epsilon J(L) C(\phi_d) L^{-n}. 
\end{align}

For the first term, if the multiplier $p_{mf}^s$ falls on the $\hat{\bar{b}}^{c,n}$ term, we may simply repeat the above calculation. If $p_{mf}^s$ falls on the $\hat{a}^{(n)}$, this is then controlled by the stable propagation, $\tilde{u}^{n,s}$. To capture this, we must use an iterated integration: 
\begin{align} \nonumber
\int_1^{L^2}& ||\Phi_{\bar{b}^{(n)}}(L^2-s)  i \beta \xi \hat{\bar{b}}^{c,n} \ast \Big( p_{mf}^s \hat{a}^{(n)}\Big)||_{H^2(2)} \le J(L)C(\phi_d) \int_1^{L^2} ||p^s_{mf} \hat{a}^{(n)}||_{H^2(2)} ds \\ \nonumber
&\le J(L) C(\phi_d) \int_1^{L^2} ||\tilde{u}^{n,s}||_{B_{L^n}(2,2,2)} ds \\ \nonumber
& \le J(L) C(\phi_d) \int_1^{L^2} \Big\{e^{-c_0 L^{2n} (s-1)}||\tilde{g}^{n,s}||_{B_{L^n}(2,2,2)} + \int_1^{s+1} e^{-c_0 L^{2n}(s-1)} ||\tilde{N}^{n,s}||_{B_{L^n}(2,2,2)} \Big\}ds \\
& \le J(L) C(\phi_d) L^{-2n} \Big[ ||\tilde{g}^{n,s}||_{B_{L^n}(2,2,2)}  + ||g^{(n)}||_{H^2(2)}^2 + C(\phi_d) \Big] \\\
& \le J(L) C(\phi_d) L^{-2n} \Big[ ||\tilde{g}^{n,s}||_{B_{L^n}(2,2,2)}  + \epsilon ||g^{(n)}||_{H^2(2)} + C(\phi_d) \Big].
\end{align}

\end{proof}

We may now come to the renormalization iteration: 
\begin{proposition} \label{Prop.FIN} Let $\phi_d, \rho_\ast$ be prescribed. There exists universal functions $J_3(L), J_{E,4}(L), C_4(\phi_d), C_{E,4}(\phi_d), \tilde{C}(\phi_d, L)$ such that if:
\begin{align} \label{crit.2.1}
&||g^{(n)}||_{H^2(2)} + ||\tilde{g}^{(n,s)}||_{B_{L^n}(2,2,2)} \le \epsilon,\\ \label{crit.2.2}
&\epsilon J_3(L) C_3(\phi_d) \ll 1, \\ \label{crit.2.3}
&\frac{\tilde{C}(\phi_d, L)}{L^{N_0 - p - 1}} < \frac{\epsilon}{2}, \\ \label{crit.2.4}
&L^{-N_0} C_{MR}(\phi_d) J_{MR}(L) \ll 1, 
\end{align} 
then:
\begin{align} \label{add.1}
&||g^{(n+1)}||_{H^2(2)} \le \frac{C_{E,4}(\phi_d)}{L} ||g^{(n)}||_{H^2(2)} + \frac{1}{L} ||\tilde{g}^{(n,s)}||_{B_{L^n}(2,2,2)} + \frac{C_{E,4}(\phi_d)}{L^n}, \\ \label{add.2}
&||g^{(n+1,s)}||_{B_{L^n}(2,2,2)} \le \frac{C}{L}||\tilde{g}^{(n,s)}||_{B_{L^n}(2,2,2)} + \frac{1}{L}||g^{(n)}||_{H^2(2)} + J_{E,4}(L)C_{E,4}(\phi_d)L^{-n}.
\end{align}
\end{proposition}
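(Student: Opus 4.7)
The plan is to estimate each of the six summands appearing in the Duhamel representation (\ref{hard.2}) separately after applying the renormalization map $R_L$, and to use the Cole--Hopf based contraction (\ref{contraction.Sb}) to extract the $1/L$ factor from the linear piece while showing that every other contribution is either quadratically small (via the inductive hypothesis (\ref{crit.2.1})) or carries an explicit $L^{-n}$. Since $g^{(n)}$ is mean-zero (this property is preserved along the iteration by the argument following the definition of $\rho^{(n)}$), Proposition~\ref{prop.Burg.1} applies to the linear term and gives
\begin{equation*}
\|R_L \Phi_{\bar{b}^{(n)}}(L^2-1) g^{(n)}\|_{H^2(2)} \le \frac{C(\phi_d)}{L} \|g^{(n)}\|_{H^2(2)},
\end{equation*}
which supplies the leading $C_{E,4}(\phi_d)/L$ term in (\ref{add.1}). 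All remaining contributions must be shown to be lower order or to fit into the $\tfrac{1}{L}\|\tilde{g}^{(n,s)}\|$ and $L^{-n} C_{E,4}(\phi_d)$ slots.

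For the Burgers-type quadratic $a^{(n)} a^{(n)}_z = \tfrac{1}{2} \partial_z (a^{(n)})^2$, I would keep the outer derivative to invoke the corollary estimate (\ref{l2.new}), yielding
\begin{equation*}
\|\Phi_{\bar{b}^{(n)}}(L^2-s)\, \partial_z (a^{(n)})^2\|_{L^2} \le C(\phi_d)(L^2-s)^{-3/4} \|(a^{(n)})^2\|_{L^1},
\end{equation*}
and then $\|(a^{(n)})^2\|_{L^1} = \|a^{(n)}\|_{L^2}^2 \lesssim \epsilon\, \|g^{(n)}\|_{H^2(2)}$ from (\ref{crit.2.1}) combined with an energy bound on $[1,L^2]$ analogous to Lemma~\ref{thm.LE}. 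The weighted $L^2(2)$ components are reached by first commuting the polynomial weight $z^2$ through $\Phi_{\bar{b}^{(n)}}$ via (\ref{check.2}) at cost $J(L)C(\phi_d)$, and then applying the same smoothing integral; the final time integral of $(L^2-s)^{-3/4}$ converges to an $L$-dependent constant, and after paying the $L^{5/2}$ price of $R_L$ via (\ref{defn.b.b}), the smallness criterion (\ref{crit.2.2}) absorbs the entire contribution into $C_{E,4}(\phi_d)L^{-1}\|g^{(n)}\|_{H^2(2)}$. The irrelevant nonlinearity $N^{(n)}$ is controlled directly by Lemma~\ref{nl.n}, which delivers three pieces: $\epsilon J(L)C(\phi_d)\|g^{(n)}\|_{H^2(2)}$ (absorbable into the linear term by (\ref{crit.2.2})), $J(L)\cdot 2\phi_d L^{-n(1-p)} \|\tilde{g}^{(n,s)}\|_{B_{L^n}(2,2,2)}$ (this fits into $L^{-1}\|\tilde{g}^{(n,s)}\|$ after paying $L^{5/2}$ from $R_L$, provided $n(1-p) \ge 7/2$, which holds for $n \ge N_0$ under (\ref{crit.2.3})), and $J(L)L^{-n}C(\phi_d)$ (the $L^{-n}$ error). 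The cross term $\mathcal{C}$ is handled by the preceding lemma, giving the exponentially small $L^{-2n}$ contribution. Finally, the linear Burgers remainder $L^{-n} o(\xi^3)\hat{\bar{b}}^{(n)}$ and the Burgers self-interaction $N^c_b = \bar{b}^{n,c}\ast \bar{b}^{n,s}$ carry either an explicit $L^{-n}$ or an exponentially small factor in $L^n$; both are absorbed using (\ref{check.2}).

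For the stable estimate (\ref{add.2}), I would mirror Step~5 from Section~\ref{RGA.Section}: the $\Lambda^{(n)}$ semigroup enjoys the exponential decay $e^{-c_1 L^{2n+2}}$ by (\ref{stable.spec}), which after the $L^{5/2}$ cost of $R_L$ is negligible under (\ref{crit.1}); the nonlinear stable terms contribute $\tfrac{1}{L}\|g^{(n)}\|_{H^2(2)}$ and $\tfrac{1}{L}\|\tilde{g}^{(n,s)}\|_{B_{L^n}(2,2,2)}$ exactly as in the derivation of (\ref{quad.1}), along with the $J_{E,4}(L) C_{E,4}(\phi_d) L^{-n}$ error produced by the large Burgers background appearing inside $\tilde{N}^{(n,s)}$. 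The main obstacle throughout is the quasilinear cross term $\bar{b}^{n,c} \ast \hat{a}^{(n)}$ inside $N^{(n)}_1$, whose Burgers factor has size $\phi_d$: one cannot afford a clean $L^{-n}$ loss and must instead trade a factor of $L^{np}$ for extra frequency regularity, invoke the maximal regularity bound (\ref{max.reg.1}), and let the resulting interpolation with $\theta(p)<1$ produce a convergent time integral. This is precisely the mechanism that forces the exponent $1-p$ rather than $1$ in the bound on $\|\tilde{g}^{(n,s)}\|$-contributions, and is the reason (\ref{crit.2.3}) is needed. Collecting the linear contraction with all these subleading terms yields (\ref{add.1})--(\ref{add.2}) upon defining $J_{E,4}, C_{E,4}$ as the largest universal functions appearing in the estimates.
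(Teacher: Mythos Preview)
Your proposal is correct and follows essentially the same route as the paper: you apply $R_L$ to the Duhamel formula (\ref{hard.2}), use the Cole--Hopf contraction (\ref{contraction.Sb}) for the linear piece, the smoothing estimate (\ref{l2.new}) for the Burgers quadratic, Lemma~\ref{nl.n} for the irrelevant nonlinearities (together with maximal regularity for the quasilinear cross term), the preceding lemma for $\mathcal{C}$, and the Step~5 mechanism for the stable estimate. The only cosmetic difference is that the paper absorbs the $L^{5/2}$ cost of $R_L$ silently into the $J(L)$ factors and then converts $J(L)C(\phi_d)L^{-n(1-p)}\|\tilde g^{(n,s)}\|$ to $\tfrac{\epsilon}{2}\|\tilde g^{(n,s)}\|$ and then $\tfrac{1}{L}\|\tilde g^{(n,s)}\|$ via (\ref{crit.2.3}) and (\ref{crit.2.2}), rather than tracking an explicit threshold like $n(1-p)\ge 7/2$.
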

\begin{proof}

We begin again with (\ref{hard.2})
\begin{align} \nonumber
a^{(n)}(L^2, z) &= \Phi_{\bar{b}^{(n)}}(L^2-1)  g^{(n)}  + \int_1^{L^2} \Phi_{\bar{b}^{(n)}}(L^2-s)  a^{(n)} a^{(n)}_z   \\ \nonumber
& + \int_1^{L^2} \Phi_{\bar{b}^{(n)}}(L^2-s)  N^{(n)}(u^{n,c}, u^{n,s})  + \int_1^{L^2} \Phi_{\bar{b}^{(n)}}(L^2-s)  \mathcal{C} ds \\ \nonumber
&+  \int_1^{L^2} \Phi_{\bar{b}^{(n)}}(L^2-s)  \mathcal{F}^{-1} \{L^{-n}o(\xi^3) \hat{\bar{b}}^{(n)} \} \\ \label{hard.1}
& + \int_1^{L^2} \Phi_{\bar{b}^{(n)}}(L^2-s)  \mathcal{F}^{-1}\{p_{mf}^c i \beta \xi N^c_b \}.
\end{align}

Then, applying the map $R_L$ and taking the $H^2(2)$ norm yields first:
\begin{equation}
|| R_L \Phi_{\bar{b}^{(n)}}(L^2-1)  g^{(n)}||_{H^2(2)} \le \frac{C(\phi_d)}{L} ||g^{(n)}||_{H^2(2)},
\end{equation}

via the contractive semigroup estimate in (\ref{contraction.Sb}). The profiles $\bar{b}^{(n)}$ satisfy the criteria in (\ref{criteria}) via (\ref{criteria.2}). For the Burgers nonlinearity, we use (\ref{l2.new}) to give the bound: 
\begin{align} \label{eps.select} 
\int_1^{L^2} ||\Phi_{\bar{b}^{(n)}}(L^2-1)   a^{(n)} a^{(n)}_z ||_{H^2(2)}  \le C(\phi_d) \epsilon J(L) ||g^{(n)}||_{H^2(2)}, 
\end{align}

via the inductive hypothesis in (\ref{ind.1}). The $N^{(n)}$ term can be estimated via Lemma \ref{nl.n}, and the $\bar{b}^{(n)}$ from the final term in (\ref{hard.1}) contributes a factor of $L^{-n}C(\phi_d)$ in essentially the same manner as estimate (\ref{c2estimate}). This all gives:  
\begin{align} \nonumber
||g^{(n+1)}||_{H^2(2)} &\le \frac{C(\phi_d)}{L} ||g^{(n)}||_{H^2(2)} + 2C(\phi_d)J(L)\epsilon ||g^{(n)}||_{H^2(2)} + L^{-n} C(\phi_d) \\ \nonumber 
& \hspace{30 mm} + J(L) C(\phi_d) L^{-n(1-p)}||\tilde{g}^{(n,s)}||_{B_{L^n}(2,2,2)} \\ \nonumber
&\le \frac{C(\phi_d)}{L} ||g^{(n)}||_{H^2(2)} + \frac{C(\phi_d)}{L^n} +  J(L) C(\phi_d) L^{-N_0(1- p)} ||\tilde{g}^{(n,s)}||_{B_{L^n}(2,2,2)} \\ \nonumber
&\le \frac{C(\phi_d)}{L} ||g^{(n)}||_{H^2(2)} + \frac{\epsilon}{2} ||\tilde{g}^{(n,s)}||_{B_{L^n}(2,2,2)} + \frac{C(\phi_d)}{L^n} \\ \label{it.10} 
&\le \frac{C(\phi_d)}{L} ||g^{(n)}||_{H^2(2)} + \frac{1}{L} ||\tilde{g}^{(n,s)}||_{B_{L^n}(2,2,2)} + \frac{C(\phi_d)}{L^n}.
\end{align}

The next step is to treat the stable iteration:  
\begin{align} \nonumber
||g^{(n+1,s)}&||_{B_{L^n}(2,2,2)} \le \frac{C}{L}||\tilde{g}^{(n,s)}||_{B_{L^n}(2,2,2)} + J(L) L^{-n} \Big( ||g^{(n)}||^2_{H^2(2)} + ||\tilde{g}^{(n,s)}||^2_{B_{L^n}(2,2,2)} + C(\phi_d) \Big) \\ \nonumber
&\le  \frac{C}{L}||\tilde{g}^{(n,s)}||_{B_{L^n}(2,2,2)} + J(L) L^{-n} 8\phi_d \Big( ||g^{(n)}||_{H^2(2)} + ||\tilde{g}^{(n,s)}||_{B_{L^n}(2,2,2)}+ C(\phi_d) \Big) \\ \nonumber
& \le  \frac{C}{L}||\tilde{g}^{(n,s)}||_{B_{L^n}(2,2,2)} + J(L) L^{-N_0} 8\phi_d \Big( ||g^{(n)}||_{H^2(2)} + ||\tilde{g}^{(n,s)}||_{B_{L^n}(2,2,2)} \Big) \\ \nonumber
& \hspace{20 mm} + J(L) L^{-n}C(\phi_d)\\ \label{it.11}
&\le  \frac{C}{L}||\tilde{g}^{(n,s)}||_{B_{L^n}(2,2,2)} + \frac{1}{L}||g^{(n)}||_{H^2(2)} + J(L)L^{-n}C(\phi_d). 
\end{align}

The desired estimate follows by appropriately selecting the universal functions. 

\end{proof}

\begin{proposition} \label{Prop.sel} Given any $0 < \phi_d, \rho_\ast < \infty$ arbitrarily large, and $0 < \sigma < 1$, there exist choices of parameters $L, \epsilon, N_0, \delta_w, \delta$ which guarantee that:
\begin{equation}
||L^n a({L^{2n}, L^n z})||_{H^2(2)} \le L^{-n(1-\sigma)}  C(\phi_d, \rho_\ast) \text{ as } n \rightarrow \infty.
\end{equation}
\end{proposition}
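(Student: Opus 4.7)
The proof is essentially an assembly of the pieces built up in Sections 3--5. The strategy is: (i) select the parameters $L, \epsilon, N_0, \delta_w, \delta, T_0$ in the order dictated by the dependency diagram so that every criterion from Corollary \ref{cor.N0} and Proposition \ref{Prop.FIN} is simultaneously verified; (ii) apply Corollary \ref{cor.N0} to deduce $\rho^{(N_0)} \le \epsilon$; (iii) starting from iterate $N_0$, iterate Proposition \ref{Prop.FIN} to produce the geometric decay at rate $L^{-(1-\sigma)}$.

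\textbf{Step 1: Parameter selection.} Given $\phi_d$, $\rho_\ast$, and $\sigma \in (0,1)$, follow the schematic of Figure \ref{PDpic}. First pick $L$ large relative to the universal functions $C_i(\phi_d)$ appearing in (\ref{P2}), (\ref{crit.1}), and the threshold $C_{E,4}(\phi_d)/L < L^{-(1-\sigma)}$ from Proposition \ref{Prop.FIN}. Next pick $\epsilon$ small relative to $L$ and $\phi_d$ so that (\ref{P3}) and (\ref{crit.2.2}) hold. Then choose $N_0$ large enough that (\ref{eps.1}), (\ref{crit.2.3}), (\ref{crit.2.4}), and (\ref{crit.MR}) all hold. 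Finally select $\delta_w$ satisfying (\ref{P5}) and the smallness (\ref{crit.0}), set $\delta = L^{-2N_0}\delta_w$ as in (\ref{P6}), and let $T_0 = (\phi_d/\delta)^2$. With these choices, Lemma \ref{thm.LE} together with the scaling relation (\ref{continue}) guarantees that the assumption (\ref{assumption.1}) on iterative data is met for all $n \le N_0$, and (\ref{barb.small.ck}) provides the analogous bound on the Burgers iterates.

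\textbf{Step 2: Reaching the $\epsilon$-ball.} With the parameters fixed, Corollary \ref{cor.N0} applies and yields
\begin{equation*}
\rho^{(N_0)} = \|g^{(N_0)}\|_{H^2(2)} + \|\tilde g^{(N_0,s)}\|_{B_{L^{N_0}}(2,2,2)} \le \epsilon.
\end{equation*}
This is the inductive base for Step 3.

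\textbf{Step 3: Iteration via Proposition \ref{Prop.FIN}.} For $n \ge N_0$, adding the two estimates (\ref{add.1})--(\ref{add.2}) in Proposition \ref{Prop.FIN} and writing $\tilde\rho^{(n)} := \|g^{(n)}\|_{H^2(2)} + \|\tilde g^{(n,s)}\|_{B_{L^n}(2,2,2)}$ produces a closed scalar recursion of the form
\begin{equation*}
\tilde\rho^{(n+1)} \;\le\; \frac{\tilde C(\phi_d)}{L}\,\tilde\rho^{(n)} + \tilde D(\phi_d, L)\,L^{-n},
\end{equation*}
where $\tilde C, \tilde D$ collect the constants $C_{E,4}, J_{E,4}$. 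By our choice of $L$, the multiplier $\tilde C(\phi_d)/L$ is bounded above by $L^{-(1-\sigma)}$. A straightforward induction then shows both that $\tilde\rho^{(n)} \le \epsilon$ for all $n \ge N_0$ (so that the hypothesis (\ref{crit.2.1}) of Proposition \ref{Prop.FIN} propagates), and that the decay
\begin{equation*}
\tilde\rho^{(n)} \;\le\; L^{-(1-\sigma)(n-N_0)}\tilde\rho^{(N_0)} + \tilde D(\phi_d,L)\sum_{k=N_0}^{n-1} L^{-(1-\sigma)(n-1-k)}\,L^{-k}
\end{equation*}
holds. The geometric sum on the right equals $L^{-(1-\sigma)(n-1)}\sum_{k=N_0}^{n-1} L^{-\sigma k}$, which is a convergent geometric series bounded by a constant depending only on $\sigma, L, N_0$. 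Consequently $\tilde\rho^{(n)} \le C(\phi_d, \rho_\ast) L^{-n(1-\sigma)}$, and unwinding the definition of $g^{(n)}$ from (\ref{eqn.stable.1}) gives the claimed bound on $\|L^n a(L^{2n}, L^n z)\|_{H^2(2)}$.

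\textbf{Main obstacle.} The genuinely delicate point is maintaining $\tilde\rho^{(n)} \le \epsilon$ for all $n \ge N_0$, which is what licenses Proposition \ref{Prop.FIN} at every step. This is clean in our setup precisely because Proposition \ref{Prop.FIN} was designed so that the linear piece contracts with factor $\tilde C(\phi_d)/L$ while the forcing decays at the geometric rate $L^{-n}$; consequently the ball $B_\epsilon(0)\subset H^2(2)$ is an invariant region once entered at $n=N_0$, and no renegotiation of parameters is required. All of the heavy lifting --- the coercivity estimate (\ref{contraction.Sb}), the maximal regularity bound (\ref{max.reg.1}), the irrelevant-nonlinearity bounds, and the intermediate-regime energy estimates of Theorem \ref{thm.1} --- has been carried out in the preceding sections, so this final step reduces to a bookkeeping of constants and a linear recursion.
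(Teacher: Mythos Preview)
Your proposal is correct and follows essentially the same approach as the paper: select the parameters in the order dictated by the dependency diagram so that all criteria of Corollary \ref{cor.N0} and Proposition \ref{Prop.FIN} hold, use Corollary \ref{cor.N0} to reach $\rho^{(N_0)}\le\epsilon$, then add (\ref{add.1})--(\ref{add.2}) into a scalar recursion and iterate. The only cosmetic difference is that the paper fixes $\delta_w$ and $\epsilon$ together before $N_0$ (which is harmless since, as noted there, $N_0$ and $\delta_w$ are independent), and it checks the propagation $\rho^{(N_0+1)}\le\epsilon$ by a one-line explicit computation rather than folding it into the induction as you do.
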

\begin{proof}

We first define the universal function:
\begin{align}
C_\ast(\phi_d) = \max \{C_1(\phi_d), C_2(\phi_d), C_3(\phi_d), C_{E,4}(\phi_d), c_0\}, 
\end{align}

and then pick $L$ large enough based only on $\phi_d$, satisfying:
\begin{align}
\frac{C_\ast(\phi_d)}{L^\sigma} \ll 1, \\
L^{\frac{5}{2}+1}e^{-c_1L^2} \ll 1.
\end{align}

It is clear that, given a $\phi_d < \infty$, one can simultaneously satisfy the criteria above by taking $L$ sufficiently large. Next, define the universal function: 
\begin{align}
J_\ast(L) = \max \{J_1(L), J_2(L), J_3(L) \}. 
\end{align}

Select then $\delta_w, \epsilon$ via:
\begin{align}
\delta_w J_\ast(L) C_\ast(\phi_d) \ll 1, \\
\epsilon J_\ast(L) C_\ast(\phi_d) \ll 1. 
\end{align}

At this point, one checks that criteria (\ref{crit.0}), (\ref{crit.1}) - (\ref{crit.2}), and (\ref{crit.2.2}) are all satisfied. 

Select next $N_0$ to simultaneously satisfy the following criteria:  
\begin{align}
&\frac{C_{E,3}(\phi_d) J_{E,3}(L) + \rho_\ast}{L^{N_0(1-\sigma)}} \le \epsilon,  \\
&\frac{\tilde{C}(\phi_d, L)}{L^{N_0 - p - 1}} < \frac{\epsilon}{2}, \\ \label{sel.N0.3}
& \frac{C_{E_4}(\phi_d)}{L^{N_0}}J_{E,4}(L) < \frac{\epsilon}{2}, \\
& L^{-N_0} C_{MR}(\phi_d) J_{MR}(L) \ll 1.
\end{align}

This can easily be done by taking $N_0$ sufficiently large, since we have already fixed the remaining parameters in the relations above. This satisfies all criteria on $N_0$, namely (\ref{eps.1}), (\ref{crit.MR}), and (\ref{crit.2.3}). Finally, select $\delta$ according to: 
\begin{align} \label{delta.deltaw}
\delta = L^{-2N_0} \delta_w. 
\end{align}

According to calculation (\ref{continue}), an application of Theorem \ref{thm.LE} then ensures that for $0 \le k \le N_0$, we satisfy the criteria on the initial data, namely criteria (\ref{crit.ID}). We may apply Corollary \ref{cor.N0} to conclude estimate (\ref{N0conc}). For $n = N_0 + 1$, the inductive hypothesis in (\ref{ind.1}) is satisfied due to our choice of $N_0$: indeed, from (\ref{add.1}) - (\ref{add.2}), and (\ref{sel.N0.3}) we see:  
\begin{align}
||g^{(n+1)}||_{H^2(2)} &\le \frac{C_{E,4}(\phi_d)}{L} \epsilon + \frac{1}{L} \epsilon + \frac{C_{E,4}(\phi_d)}{L^{N_0}} \le \epsilon. 
\end{align}

Similarly, for the inductive hypothesis on $u^{n,s}$, we have from (\ref{it.11}):
\begin{align}
||\tilde{g}^{(n,s)}||_{B_{L^n}(2,2,2)} \le \frac{C}{L}\epsilon + \frac{1}{L}\epsilon + C_{E,4}(\phi_d)J_{E,4}(L) L^{-N_0} \le \epsilon. 
\end{align}

This then enables us to iterate Proposition \ref{Prop.FIN}, which upon adding together (\ref{add.1}) - (\ref{add.2}), and recalling the definition of $\rho^{(n)}$ from (\ref{defn.rho.n})
\begin{align}
\rho^{(n+1)} \le \frac{C_{E,4}(\phi_d)}{L} \rho^{(n)} + \frac{C_{E,4}(\phi_d) J_{E,4}(L)}{L^n}, \text{ for } n \ge N_0.
\end{align}

Iterating this then yields:
\begin{equation}
||L^n a({L^{2n}, L^n z})||_{H^2(2)} \le L^{-n(1-\sigma)}  C(\phi_d, \rho_\ast),
\end{equation}

\end{proof}

This immediately proves estimate (\ref{goal}) upon putting $t = L^n$, and consequently Theorem \ref{thm.main.2}, and subsequently our main result, Theorem \ref{thm.main}. 

\vspace{3 mm}

\section{Discussion and Outlook}

In this paper, we considered spectrally stable wave-train solutions of the form $u(t,x)=u_0(k_0x-\omega_0t)$ of systems of reaction-diffusion equations. We then considered initial data of the form
\begin{equation}\label{d1}
u(0,x) = u_0(k_0x+\phi_0(x))+v_0(x),
\end{equation}
which correspond to phase modulations of the underlying wave train. Our goal was to show that the resulting solution can be written as
\[
u(t,x) = u_0(k_0x+\phi(t,x))+v(t,x)
\]
and to extract the leading-order dynamics of the phase modulation $\phi(t,x)$. More precisely, if the phase modulation  $\phi_0(x)$ has limits $\phi_\pm$ as $x\to\pm\infty$ and if the derivative $\partial_x\phi_0(x)$ is sufficiently small in an appropriate sense, then we proved that $\phi(t,x)$ converges to an explicit front profile, obtained from a self-similar solution of the viscous Burgers equation, that travels with the linear group velocity of the wave train. In contrast to earlier work \cite{Sandstede,Zumbrun3,Zumbrun1,Zumbrun2}, we do not need to assume that the phase offset $\phi_d:=\phi_+-\phi_-$ is small: our stability result is therefore valid for arbitrary phase offsets as long as the phase modulation varies sufficiently slowly. Our proof relies on renormalization group techniques, estimates for the linearization about the front profile of the Burgers equation, and energy estimates.

The remaining open problem is whether these results can be extended to perturbations of the wavenumber. Wave trains generically come in one-parameter families and can therefore be written as $u(t,x)=u_0(kx-\omega(k)t;k)$ for a certain nonlinear function $\omega(k)t$ that relates the spatial wavenumber $k$ and the temporal frequency $\omega$; the profiles may depend on $k$, which is accounted for through the parameter $k$ in the expression $u_0(\cdot;k)$. Instead of selecting the initial condition as in (\ref{d1}), we focus on data of the form
\begin{equation}\label{d2}
u(0,x) = u_0(k_0x+\phi_0(x);k_0+\partial\phi_0(x)) + v_0(x), \qquad \partial_x\phi_0(x)\to k_\pm \mbox{ as } x\to\pm\infty,
\end{equation}
which correspond to wavenumber modulations of the underlying wave train. We will assume that the asymptotic wavenumber offsets $|k_\pm|\ll1$ are small compared to $k_0$. It was then shown in \cite[Theorem~4.8]{DS} that we can write the solution associated with the initial condition (\ref{d2}) in the form
\[
u(x,t) = u_0(k_0x+\phi_0(t,x);k_0+\partial\phi(t,x)) + v(t,x),
\]
where the wavenumber $q(t,x):=\partial_x\phi(t,x)$ satisfies the viscous Burgers equation
\[
q_t = \alpha q_{xx} + \beta \partial_x(q^2).
\]
Furthermore, $v$ stays small on large bounded time intervals: the key question is then whether this estimate can be shown to hold for all times $t$. Our approach will not immediately work as $\partial_x\phi$ will not be bounded in $L^1$. One potential way to address this question is to subtract the anticipated wavenumber profile of the viscous Burgers equation from the wavenumber and focus on the wavenumber offset from this profile, which should be integrable. We leave this for future work.

\paragraph{Acknowledgements:} The first author wishes to thank Yan Guo for many stimulating discussions regarding the energy method in Section \ref{RGA.Section}


\begin{thebibliography}{20}

\bibitem[BKL94]{BKL} J. Bricmont, A. Kupiainen, G. Lin. Renormalization Group and Asymptotics of Solutions of Nonlinear Parabolic Equations. \textit{Comm. Pure. Appl. Math., 47,} (1994), 893-922.

\bibitem[DSSS09]{DS} A. Doelman, B. Sandstede, A. Scheel, G. Schneider, The dynamics of modulated wave trains, \textit{Mem. Amer. Math. Soc.} 199 (934) (2009).

\bibitem[SSSU11]{Sandstede} B. Sandstede, A. Scheel, G. Schneider, H. Uecker. Diffusive mixing of periodic wave trains in Reaction-Diffusion systems. \textit{Journal of Differential Equations.} 252, 3541-3574, (2012). 

\bibitem[JNRZ11-1]{Zumbrun1}  M. Johnson, P. Noble, L.M. Rodrigues, K. Zumbrun, Nonlocalized modulation of periodic reaction diffusion waves: nonlinear stability, \textit{Arch. Rational Mech. Anal. 207} (2013), 693 - 715.

\bibitem[JNRZ11-2]{Zumbrun2} M. Johnson, P. Noble, L.M. Rodrigues, K. Zumbrun, Nonlocalized modulation of periodic reaction diffusion waves: the Whitham equation, \textit{Arch. Rational Mech. Anal.} 207 (2013), 669 - 692. 

\bibitem[JZ11]{Zumbrun3}   M. Johnson, K. Zumbrun, Nonlinear stability of spatially-periodic traveling-wave solutions of systems of reaction diffusion equations, \textit{Ann. Inst. H. Poincare Anal. Non Lineaire} 28 (4) (2011), 499 - 527.


\end{thebibliography}
\end{document}